\newtheorem{thm}{Theorem}[section]
\newtheorem{conj}[thm]{Conjecture}
\newtheorem{cor}[thm]{Corollary}
\newtheorem{lem}[thm]{Lemma}
\newtheorem{prop}[thm]{Proposition}
\theoremstyle{definition}
\newtheorem{defn}[thm]{Definition}
\newtheorem{ex}[thm]{Example}
\theoremstyle{remark}
\newtheorem{rem}[thm]{Remark}
\newtheorem{qu}[thm]{\textbf{Question}}
\numberwithin{equation}{section}
\newtheorem{hypothesis}[thm]{\textbf{Hypothesis}}
\newcommand{\A}{\mathbb{A}}
\newcommand{\Mcodim}{\mathrm{codim}}
\newcommand{\XN}{{\mathcal{X}_0(N)}}
\newcommand{\JN}{{\mathcal{J}_0(N)}}
\newcommand{\cB}{{\mathcal{B}}}
\newcommand{\cF}{{\mathcal{F}}}
\newcommand{\cR}{{\mathcal{R}}}
\newcommand{\cZ}{{\mathcal{Z}}}
\newcommand{\fkg}{{\mathfrak g}}
\newcommand{\fkn}{{\mathfrak n}}
\newcommand{\Fmn}{{\mathfrak n}}
\newcommand{\fkq}{{\mathfrak q}}
\newcommand{\mL}{{\mathscr L}}
\newcommand{\mS}{{\mathscr S}}
\newcommand{\cL}{{\mathcal L}}
\newcommand{\N}{\mathbb{N}}
\newcommand{\Z}{\mathbb{Z}}
\newcommand{\Q}{\mathbb{Q}}
\newcommand{\F}{\mathbb{F}}
\newcommand{\bG}{\mathbf{G}}
\newcommand{\bT}{\mathbf{T}}
\newcommand{\R}{\mathbb{R}}
\newcommand{\T}{\mathbb{T}}
\newcommand{\C}{\mathbb{C}}
\newcommand{\Wh}{\mathrm{Wh}}
\newcommand{\scusp}{\mathrm{sc}}
\newcommand{\Perm}{\mathrm{Perm}}
\newcommand{\ram}{\mathrm{ram}}
\newcommand{\elp}{\mathrm{ell}}
\newcommand{\hs}{\mathrm{hs}}
\newcommand{\Sym}{\mathrm{Sym}}
\newcommand{\ord}{\mathrm{ord}}
\newcommand{\Jac}{\mathrm{Jac}}
\newcommand{\pl}{\hat{\mu}^{\mathrm{pl}}}
\newcommand{\bs}{\backslash}
\newcommand{\Fss}{\mathrm{F}\text{-}\mathrm{ss}}
\newcommand{\semis}{\mathrm{ss}}
\newcommand{\Sh}{\mathrm{Sh}}
\newcommand{\temp}{\mathrm{temp}}
\newcommand{\dep}{\mathrm{dep}}
\newcommand{\cond}{\mathrm{cond}}
\newcommand{\rank}{{\rm rank}}
\def\Gal{{\rm Gal}}
\newcommand{\ind}{{\rm Ind}}
\newcommand{\nind}{{\rm n\textrm{-}ind}}
\newcommand{\Frob}{{\rm Frob}}
\newcommand{\gr}{{\rm gr}}
\newcommand{\Fil}{{\rm Fil}}
\newcommand{\Fr}{{\rm Fr}}
\newcommand{\id}{{\rm id}}
\newcommand{\Sp}{{\rm Sp}}
\newcommand{\sgn}{{\rm sgn}}
\newcommand{\tr}{{\rm tr}\,}
\newcommand{\Hom}{{\rm Hom}}
\newcommand{\End}{{\rm End}}
\newcommand{\Aut}{{\rm Aut}}
\newcommand{\Out}{{\rm Out}}
\newcommand{\GL}{{\rm GL}}
\newcommand{\Lie}{{\rm Lie}\,}
\newcommand{\rec}{{\rm rec}}
\newcommand{\unr}{{\rm unr}}
\newcommand{\et}{{\mathrm{\acute{e}t}}}
\newcommand{\ur}{{\mathrm{ur}}}
\newcommand{\std}{{\mathrm{std}}}
\newcommand{\cE}{{\mathcal{E}}}
\newcommand{\cH}{{\mathcal{H}}}
\newcommand{\cO}{\mathcal{O}}
\newcommand{\ad}{{\rm ad}}
\newcommand{\Irr}{{\rm Irr}}
\newcommand{\vol}{{\rm vol}}
\newcommand{\Res}{\mathrm{Res}}
\def\hat{\widehat}
\def\lg{\langle}
\def\rg{\rangle}
\def\hra{\hookrightarrow}
\def\ra{\rightarrow}
\def\isom{\stackrel{\sim}{\ra}}
\def\ol{\overline}
\def\tilde{\widetilde}
\def\dirlim#1{\lim\limits_{\substack{\longrightarrow\\#1}}}
\def\benu{\begin{enumerate}}
\def\eenu{\end{enumerate}}
\def\beq{\begin{equation}}
\def\eeq{\end{equation}}
\def\bit{\begin{itemize}}
\def\eit{\end{itemize}}
\begin{document}

\title{On Fields of rationality for automorphic representations}

\author{Sug Woo Shin}\email{swshin@math.mit.edu}
\address{Department of Mathematics, Massachusetts Institute of Technology,
77 Massachusetts Avenue, Cambridge, MA 02139, USA$//$ Korea Institute for Advanced Study, 85 Hoegiro,
Dongdaemun-gu, Seoul 130-722, Republic of Korea}
\author{Nicolas Templier}\email{templier@math.princeton.edu}
\address{Department of Mathematics, Princeton University,
Princeton, NJ 08544, USA}

\date{\today; 2010 MSC codes: 11F30, 11F70, 11F80, 11R39, 11S37\\ Key words: field of rationality, automorphic representations, Galois representations, finiteness}

\begin{abstract}

  This paper proves two results on the field of rationality $\Q(\pi)$ for an automorphic representation $\pi$, which is the subfield of $\C$ fixed under the subgroup of $\Aut(\C)$ stabilizing the isomorphism class of the finite part of $\pi$. For general linear groups and classical groups, our first main result is the finiteness of the set of discrete automorphic representations $\pi$ such that $\pi$ is unramified away from a fixed finite set of places, $\pi_\infty$ has a fixed infinitesimal character, and $[\Q(\pi):\Q]$ is bounded. The second main result is that for classical groups, $[\Q(\pi):\Q]$ grows to infinity in a family of automorphic representations in level aspect whose infinite components are discrete series in a fixed $L$-packet under mild conditions.
\end{abstract}

\maketitle


\section{Introduction}\label{s:intro}

\subsection{Modular form case}\label{sub:MF-case}

  Let $S_k(N)$ be the space of cuspforms of weight $k\ge 2$ and level $\Gamma_0(N)$ with $N\ge 1$. Suppose that $f\in S_k(N)$ is an eigenform under the Hecke operator $\{T_p\}$ with eigenvalue $a_p(f)\in \C$ for each prime $p\nmid N$. It is well known that $\{a_p(f)\}_{p\nmid N}$ are algebraic integers and that they generate a finite extension of $\Q$ (in $\C$), to be denoted $\Q(f)$. The field $\Q(f)$ encodes deep arithmetic information about $f$ and is of our main concern here. To wit the significance of $\Q(f)$, the Eichler-Shimura construction associated to a weight 2 form $f$ a $GL_2$-type abelian variety of dimension $[\Q(f):\Q]$ as a quotient of the Jacobian of the modular curve $X_0(N)$. Moreover the two-dimensional $l$-adic Galois representations associated to $f$ are realized with coefficients in the completions of $\Q(f)$ at finite places.


  We are interested in two aspects of $\Q(f)$. The first question is on the growth of $\Q(f)$ in a family of modular forms $f$ with increasing level. Let $\cF_k(N)$ be the set of normalized cuspidal eigenforms of weight $k\ge 2$. These are eigenforms for all $T_p$ ($p\nmid N$) and define $$\cF_k(N)^{\le A}:=\{f\in \cF_k(N): [\Q(f):\Q]\le A\},\quad A\in \Z_{\ge1}.$$
  Serre has proved the following theorem, which serves as a prototype for one of our main results.

\begin{thm}\label{t:intro-Serre}(\cite[Thm 5]{Serre:pl}) Fix $k\ge 2$ and a prime $p$. Then
  $\lim\limits_{N\ra \infty,\atop (N,p)=1} |\cF_k(N)^{\le A}|/|\cF_k(N)|=0$.
\end{thm}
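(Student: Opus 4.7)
The plan is to combine two well-known ingredients: a bound on the number of algebraic integers of bounded degree in a bounded interval, and the equidistribution of Hecke eigenvalues at $p$ due to Serre. The normalization by Deligne's bound $|a_p(f)| \le 2 p^{(k-1)/2}$ is what makes the two sides mesh.

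First I would note that $a_p(f)$ is a totally real algebraic integer with $\Q(a_p(f)) \subseteq \Q(f)$, and (by Deligne) each Galois conjugate of $a_p(f)$ lies in the interval $I_p := [-2p^{(k-1)/2}, 2p^{(k-1)/2}]$. If $[\Q(f):\Q] \le A$ then $a_p(f)$ has degree at most $A$ over $\Q$, so its minimal polynomial is a monic integer polynomial of degree $\le A$ whose roots all lie in $I_p$. The coefficients are then bounded by constants depending only on $p$, $k$, $A$, hence there are only finitely many such minimal polynomials. Therefore the set
\[
E_p(A) := \{ a_p(f) : f \in \cF_\kappa(N)^{\le A},\ N \ge 1,\ (N,p)=1\} \subset I_p
\]
is a finite subset of $I_p$, of cardinality bounded by some constant $C(A,p,k)$ independent of $N$.

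Next I would invoke Serre's equidistribution theorem for the Hecke operator $T_p$ (the case underlying Theorem \ref{t:intro-Serre}): the normalized eigenvalue measures
\[
\mu_{N,p} := \frac{1}{|\cF_\kappa(N)|} \sum_{f \in \cF_\kappa(N)} \delta_{a_p(f)/p^{(k-1)/2}}
\]
converge weakly, as $N \to \infty$ with $(N,p)=1$, to the $p$-adic Plancherel measure $\mu_p^{\mathrm{pl}}$ on $[-2,2]$, which is absolutely continuous with respect to Lebesgue measure. In particular $\mu_p^{\mathrm{pl}}$ assigns mass zero to any finite set.

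Combining the two inputs, scale $E_p(A)$ to $\tilde{E}_p(A) := p^{-(k-1)/2} E_p(A) \subset [-2,2]$; this is a finite set. Any $f \in \cF_\kappa(N)^{\le A}$ satisfies $a_p(f)/p^{(k-1)/2} \in \tilde{E}_p(A)$, so
\[
\frac{|\cF_\kappa(N)^{\le A}|}{|\cF_\kappa(N)|} \le \mu_{N,p}\bigl(\tilde{E}_p(A)\bigr).
\]
One then chooses a sequence of continuous bump functions decreasing to the characteristic function of $\tilde{E}_p(A)$ and uses weak convergence to conclude that the right-hand side tends to $\mu_p^{\mathrm{pl}}(\tilde{E}_p(A)) = 0$.

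The main obstacle is really the equidistribution statement, which rests on the Eichler--Selberg trace formula and a careful asymptotic analysis of the archimedean orbital integrals to identify the limit measure; since this is due to Serre and is being quoted, the remainder of the argument is the elementary packaging above. A minor technical point is to verify that $C(A,p,k)$ is genuinely independent of $N$, which follows from the uniform Deligne bound.
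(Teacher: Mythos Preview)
Your proposal is correct and follows essentially the same approach as the paper's sketch of Serre's argument: first bound the finitely many possible values of $a_p(f)$ using integrality, bounded degree, and the Deligne (Weil) bound, then conclude via the equidistribution of Hecke eigenvalues with respect to the atomless Plancherel measure (what the paper calls the ``trace formula argument''). Your write-up is simply a more detailed unpacking of the same two-step strategy.
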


  Let us briefly recall Serre's argument. The key point is to show that
  \beq\label{e:a_p(f)}\left|\left\{ a_p(f): f\in \cF_k(N)^{\le A} \right\}\right|<\infty.\eeq
  This follows from the fact that $a_p(f)$ is an algebraic integer which is the sum of a Weil $p$-number of weight $k-1$ and its complex conjugate. The condition $[\Q(f):\Q]\le A$ implies that $[\Q(a_p(f)):\Q]\le A$, so such a Weil number is a root of a monic polynomial in $\Z[x]$ whose degree and coefficients are bounded only in terms of $p$, $k$, and $A$. Clearly there are only finitely many such polynomials, hence \eqref{e:a_p(f)}. Finally Theorem \ref{t:intro-Serre} is deduced from \eqref{t:intro-Serre} by using a trace formula argument.

  Serre then asked in \cite[\S6.1]{Serre:pl} whether the same type of result would be true without requiring some auxiliary prime $p$ to be coprime to the level. (For instance is the above result valid if the limit is taken along the sequence $N=2,~(2\cdot 3)^2,~ (2\cdot 3\cdot 5)^3,...$?)
  In our paper we generalize Theorem \ref{t:intro-Serre} to higher rank classical groups and partially settles Serre's question in the generalized setting for a sequence of levels $N\to \infty$ such that there exists a prime whose order in $N$ grows to infinity. Moreover we improve on the rate of decay of the quotient as in Theorem \ref{t:intro-Serre} by a logarithmic order.

  Another aspect of $\Q(f)$ is in relation to a finiteness result. Let us begin with recalling a deep theorem of Faltings, who also proved a stronger version in which ``up to isogeny'' is replaced with ``up to isomorphism'' (the Shafarevich conjecture).

\begin{thm}\label{t:intro-Faltings}(\cite[Thm 5]{Fal86}) Fix $n\in \Z_{\ge 1}$ and a finite set of primes $S$. Then there are only finitely many abelian varieties of dimension $n$ having good reduction outside $S$ up to isogeny.
\end{thm}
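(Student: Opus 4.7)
The plan is to translate the classification of isogeny classes of such abelian varieties into a classification of $\ell$-adic Galois representations, for some auxiliary prime $\ell \notin S$, and then invoke finiteness theorems for Galois representations with controlled ramification. For notational simplicity I take $K=\Q$. To each abelian variety $A/\Q$ of dimension $n$ with good reduction outside $S$, I associate the rational $\ell$-adic Tate module $V_\ell A = T_\ell A \otimes_{\Z_\ell} \Q_\ell$, viewed as a representation $\rho_{A,\ell} : G_\Q \to \GL_{2n}(\Q_\ell)$. The N\'eron--Ogg--Shafarevich criterion ensures $\rho_{A,\ell}$ is unramified outside $S \cup \{\ell\}$.

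The first critical ingredient is Faltings' proof of the Tate conjecture for abelian varieties, which supplies the canonical isomorphism $\Hom(A,B) \otimes \Z_\ell \isom \Hom_{G_\Q}(T_\ell A, T_\ell B)$ together with the semisimplicity of $V_\ell A$. These two inputs imply that $A$ and $B$ are isogenous if and only if $V_\ell A \cong V_\ell B$, so the assignment $A/{\sim} \mapsto V_\ell A$ embeds isogeny classes injectively into isomorphism classes of semisimple $\ell$-adic representations of $G_\Q$ of dimension $2n$. I then impose concrete constraints on the image. For each prime $p \notin S \cup \{\ell\}$, the characteristic polynomial $\det(x - \rho_{A,\ell}(\Frob_p))$ is monic of degree $2n$ in $\Z[x]$, and by the Weil conjectures every complex root has absolute value $p^{1/2}$, so only finitely many such polynomials can arise at each $p$. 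By Chebotarev density, a semisimple $\ell$-adic representation is determined by this collection of polynomials. Finiteness is then established by a compactness argument applied to the family of semisimple representations of $G_\Q$ that are unramified outside $S \cup \{\ell\}$, of dimension $2n$, satisfy the Weil bound at unramified primes, and are crystalline at $\ell$ with Hodge-Tate weights in $\{0,1\}$: away from $\ell$ the residual representation factors through a finite extension of $\Q$ whose discriminant is bounded, so Hermite--Minkowski restricts it to finitely many possibilities, while at $\ell$ Fontaine--Laffaille-type theory forces each residual lift to range over a finite set once the Frobenius characteristic polynomials are fixed.

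The main obstacle is the invocation of the Tate conjecture together with the semisimplicity of $V_\ell A$: this is the heart of Faltings' paper \cite{Fal86} and rests on delicate height estimates on the moduli space of polarized abelian varieties and on the comparison between crystalline and \'etale cohomology. Granting these deep inputs, the remaining ingredients --- the Weil bound and the finiteness of $\ell$-adic representations of $G_\Q$ with controlled ramification and Hodge-Tate weights --- are technically demanding but essentially classical, and the trickiest of these (the crystalline input at $\ell$) is considerably less subtle than the Tate conjecture itself.
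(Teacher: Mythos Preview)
The paper does not supply a proof of this statement: Theorem~\ref{t:intro-Faltings} is quoted from \cite{Fal86} purely as motivation for the automorphic finiteness conjecture (Conjecture~\ref{c:intro-fin}), and the paper makes no attempt to reprove it. So there is no ``paper's own proof'' against which to compare your sketch.

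That said, your outline is broadly the standard reduction, and you correctly flag that the semisimplicity of $V_\ell A$ and the Tate conjecture are the deep inputs from \cite{Fal86}. One caution: the finiteness of semisimple $\ell$-adic representations with the listed constraints is not something you can simply invoke as ``essentially classical'' independent of Faltings. In the literature this finiteness (a case of the Fontaine--Mazur finiteness conjecture) is typically \emph{deduced} from Faltings' theorem rather than used to prove it. Faltings' own argument establishes the requisite finiteness directly via Hermite--Minkowski plus the Weil bounds plus a Chebotarev argument showing that finitely many Frobenius characteristic polynomials pin down the semisimple representation --- and this last step already uses that the characteristic polynomials lie in $\Z[x]$, which for a general $\ell$-adic representation would not be available. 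Your appeal to Fontaine--Laffaille theory at $\ell$ is anachronistic relative to the 1983 proof and, more to the point, is not needed: the constraint at $\ell$ plays no role once you have integrality and the Weil bound at the unramified primes.
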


  The Shimura-Taniyama conjecture, as confirmed by Wiles and Breuil-Conrad-Diamond-Taylor, translates the case $n=1$ of the above theorem into a finiteness result about modular forms: namely there are only finitely many newforms $f$ such that $[\Q(f):\Q]=1$ which are contained in $\cF_2(N)$ for some level $N$ whose prime divisors are all contained in $S$. With this motivation an automorphic analogue of the above finiteness theorem will be pursued in this paper.

  To formulate and make progress toward the problems raised in this subsection we are going to introduce some definitions, concepts, and conjectures before stating the main results.

\subsection{C-algebraic automorphic representations}

Algebraicity of automorphic forms and representations has been studied by Shimura, Waldspurger, Harder, Harris, and many other mathematicians. Regarding automorphic representations of $GL_n$ the definition of algebraicity was first formulated by Clozel~\cite{Clo90} and recently extended to arbitrary connected reductive groups by Buzzard and Gee~\cite{BG}. In fact one main point of their paper is to distinguish between the two possible definitions of algebraicity, namely C-algebraicity and L-algebraicity, the former generalizing Clozel's notion.
  In this article our attention is restricted to C-algebraic representations mainly because these are expected to be exactly the ones having number fields as their fields of rationality. (There is also W-algebraicity recently suggested by Patrikis \cite{Pat}, but again C-algebraicity is believed to be the exact condition to ensure the finiteness of the field of rationality over $\Q$.)

  To be precise let $G$ be a connected reductive group over $\Q$. To avoid vacuous statements we assume throughout the paper that the rank of the groups under consideration is at least one. Let $\pi=\otimes_v \pi_v=\pi^\infty\otimes \pi_\infty$ be an automorphic representation of $G(\A)$. Here $\pi^\infty$ and $\pi_\infty$ denote the finite and infinite components. We say that $\pi$ is C-algebraic if, loosely speaking, the infinitesimal character of $\pi_\infty$ is integral after a shift by the half sum of all positive roots (for some thus for all choices of positivity on the set of roots). When $\sigma$ is a field automorphism of $\C$, let $(\pi^\infty)^\sigma$ denote the $G(\A^\infty)$-representation on the underlying vector space of $\pi^\infty$ twisted by a $\sigma$-linear automorphism. For any $\pi$ define its field of rationality as the field of the definition of its isomorphism class, i.e.
  \beq\label{e:intro-field-of-rat}\Q(\pi):=\{z\in \C: \sigma(z)=z,~\forall\sigma\in \Aut(\C)~\mbox{s.t.}~(\pi^\infty)^{\sigma}\simeq \pi^\infty\}.\eeq
  The following was conjectured by Clozel (for $G=GL_n$) and Buzzard-Gee.

  \begin{conj}\label{c:intro-C-alg}
    $\pi$ is C-algebraic if and only if $\Q(\pi)$ is finite over $\Q$.
  \end{conj}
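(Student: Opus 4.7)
The plan is to treat the two implications of Conjecture~\ref{c:intro-C-alg} separately, since they are of very different depths. For the forward direction (C-algebraic $\Rightarrow [\Q(\pi):\Q]<\infty$), the strategy is to exhibit $\pi^\infty$ as a subquotient of a cohomology space carrying a natural $\Q$-rational structure. By C-algebraicity, the infinitesimal character of $\pi_\infty$, after a $\rho$-shift, matches that of a finite-dimensional algebraic representation $V_\lambda$ of $G$ with integral highest weight $\lambda$. In the cohomological case where $H^*(\fkg,K_\infty;\pi_\infty\otimes V_\lambda)\ne 0$, this places $\pi^\infty$ inside the Betti or coherent cohomology of a locally symmetric space with coefficients in a local system defined over $\Q$; Galois descent on isotypic components then gives that $\Q(\pi^\infty)/\Q$ is finite. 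The non-cohomological C-algebraic case would require a supplementary input, either via an archimedean functorial lift to a cohomological representation on a larger group, or via a direct distribution-theoretic argument pairing matrix coefficients with rationally defined test functions.

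For the backward direction ($[\Q(\pi):\Q]<\infty \Rightarrow$ C-algebraic), my approach would be indirect: attach to $\pi$ (at least for classical $G$) a compatible system of $l$-adic Galois representations $\{\rho_{\pi,\lambda}\}$ with coefficients in the completions of $\Q(\pi)$, via functorial transfer to an appropriate unitary or general linear group followed by the construction of automorphic Galois representations due to Harris--Lan--Taylor--Thorne and its extensions. The Hodge--Tate--Sen cocharacter of $\rho_{\pi,\lambda}$ at primes of good reduction takes values in $\Q(\pi)$, and by local-global compatibility it determines the infinitesimal character of $\pi_\infty$; finiteness of $\Q(\pi)$ together with integrality of Hodge--Tate weights then forces the infinitesimal character to be integral after the $\rho$-shift, which is exactly C-algebraicity.

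The main obstacle is plainly the backward direction, which is a conditional consequence of the automorphic-to-Galois Langlands program: one cannot currently associate Galois representations to an arbitrary automorphic $\pi$ without already assuming some algebraicity condition, so the route above is circular unless substantial new input is available. A genuinely new idea seems needed to extract C-algebraicity from the purely internal, finite-part-only hypothesis $[\Q(\pi):\Q]<\infty$. One speculative route is to combine a \emph{rigidity} principle for $(\fkg,K_\infty)$-modules compatible with a rational $G(\A^\infty)$-structure on $\pi^\infty$ with trace formula arguments of the type used elsewhere in this paper, showing that any $\sigma\in\Aut(\C/\Q(\pi))$ fixes the archimedean infinitesimal character and thereby forces it to be integral after the $\rho$-shift. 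I expect this to be the crux of any direct attack, and this is presumably why the authors proceed by establishing quantitative variants (finiteness under level-and-infinitesimal-character constraints, growth of $[\Q(\pi):\Q]$ in families) rather than attempting the full conjecture.
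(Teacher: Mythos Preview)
This statement is a \emph{conjecture} in the paper, not a theorem; the paper does not prove it, and your proposal is not a proof either but rather a (largely accurate) survey of the obstructions. So there is no ``paper's own proof'' to compare against.

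What the paper does establish is the forward direction in the cohomological case: if $\pi$ is cuspidal and $\xi$-cohomological then $\pi$ is strongly C-arithmetic (Proposition~\ref{p:arithmetic-cohomological}), by exactly the mechanism you describe --- realizing $\pi^\infty$ as a direct summand of the Betti cohomology $H^i(S(G),\cL_\xi)$, which carries a rational structure $\cL_{\xi,E}$ over a number field $E$, and then observing that $\Aut(\C/E)$ permutes finitely many irreducible subquotients. Your sketch of this direction is correct in the cohomological range; the paper makes no claim beyond that, and your suggestion of archimedean lifts or distribution-theoretic arguments for the non-cohomological C-algebraic case is speculation beyond what is known.

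For the backward direction your proposal contains a genuine gap that you yourself identify: the construction of Galois representations attached to $\pi$ currently \emph{presupposes} some algebraicity hypothesis on $\pi_\infty$, so the route via Hodge--Tate weights is circular. The paper does not attempt this direction at all for general $G$. The one case where the full conjecture is known is $G$ a torus (see the remarks around Conjecture~\ref{c:alg-arith}), and there the proof is due to Waldschmidt and rests on \emph{transcendence theory} (Baker-type results on linear forms in logarithms), not on Galois representations. Your ``rigidity'' idea --- that $\Aut(\C/\Q(\pi))$ should fix the archimedean infinitesimal character --- is the right shape of statement to aim for, but no mechanism is known to force this in general; already for $GL_2$ Maass forms with $[\Q(\pi):\Q]<\infty$ the conclusion that the Laplace eigenvalue is $1/4$ is a deep result of Sarnak, as the paper notes.
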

  It is worth noting that in the special but subtle case of Maass cusp forms for $GL_2$ over $\Q$, Sarnak~\cite{Sar02} classified the forms with integer coefficients, showing in particular that they are C-algebraic (i.e. Laplace eigenvalue being $1/4$), and made a remark on the transcendence of $\Q(\pi)$.

   According to the conjecture C-algebraic representations are the most suitable for studying questions on the growth of fields of rationality. To obtain unconditional results, we show that $\Q(\pi)$ is a number field for cohomological representations $\pi$, which form a large subset inside the set of C-algebraic representations, by adapting an argument of Clozel using arithmetic cohomology spaces. See \S\ref{sub:Clozel-BHR} below. Note that if $G$ is semisimple then any $\pi$ such that $\pi_\infty$ is a discrete series is always cohomological.

%

\subsection{Conjectures}

  Let us highlight two interesting conjectures that we were led to formulate during our investigation of fields of rationality for automorphic representations. Some partial results and remarks are found in the next subsection as well as in the main body of our paper.

  The first conjecture, a small refinement of the well-known Fontaine-Mazur conjecture, is not directly concerned with field of rationality but rather with integrality of local parameters (e.g. Satake parameters or Frobenius eigenvalues of a Galois representation). The question arises naturally as a weak form of integrality is needed to answer a generalization of Theorem \ref{t:intro-Serre}.

\begin{conj}\label{c:intro-FM}

  Let $F$ be a number field and $\rho:\Gal(\ol{F}/F)\ra GL_n(\ol{\Q}_l)$ a continuous irreducible representation unramified outside finitely many places. The following are equivalent:
\benu
\item $\rho$ is de Rham at every place $v|l$ with nonnegative Hodge-Tate weights (adopting the convention that the cyclotomic character has Hodge-Tate weight $-1$).
\item the Weil-Deligne representation associated with $\rho$ at every finite place $v\nmid l$ is integral and pure of weight $w\in \Z$ which is independent of $v$,
\item $\rho$ appears as a subquotient of $H_{\et}^i(X\times_F \ol{F},\ol{\Q}_l)$ for some proper smooth scheme $X$ over $F$ and some $i\in\Z_{\ge 0}$.
\eenu

\end{conj}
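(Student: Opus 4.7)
The plan is to separate the three-way equivalence into the ``motivic'' direction (iii)$\Rightarrow$(i) and (iii)$\Rightarrow$(ii), which should follow from $p$-adic Hodge theory and the Weil conjectures, and the reverse implications, which amount to a refinement of the Fontaine--Mazur conjecture and are not currently available in full generality.

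For (iii)$\Rightarrow$(i), fix a place $v | l$ of $F$ and apply the $p$-adic comparison theorem of Faltings and Tsuji to $X/F_v$: this identifies $H^i_{\et}(X_{\ol{F}_v},\ol{\Q}_l)$ as a de Rham $\Gal(\ol{F}_v/F_v)$-representation whose Hodge--Tate weights are the jumps of the Hodge filtration on $H^i_{dR}(X/F_v)$, and therefore lie in $\{0,1,\ldots,i\}$. Both the de Rham property and the nonnegativity of HT weights pass to subquotients, so $\rho$ inherits them.

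For (iii)$\Rightarrow$(ii), let $v\nmid l$. A standard spreading-out argument replaces $X$ by a proper flat model over a large open subscheme of $\mathrm{Spec}\,\mathcal{O}_F$; at places of good reduction smooth base change identifies the Weil--Deligne parameter directly, while in general one reads it off from nearby cycles at $v$. Deligne's Weil II then implies that the Frobenius eigenvalues on $H^i_{\et}(X_{\ol{F}},\ol{\Q}_l)^{I_v}$ are Weil $q_v$-numbers of weight $i$, hence algebraic integers, which delivers integrality and common weight $w = i$ independent of $v$. Purity of the full Weil--Deligne representation is the monodromy--weight conjecture, known for abelian varieties, curves and surfaces, and for varieties with strictly semistable reduction in equal characteristic zero; this is the one nontrivial gap in the ``easy'' direction.

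The truly hard direction is (i) or (ii)$\Rightarrow$(iii), which is essentially a refined Fontaine--Mazur conjecture. A natural strategy, available only under substantial additional hypotheses, is to prove potential automorphy of $\rho$ via the Taylor--Wiles--Kisin method combined with the potential modularity results of Taylor and Barnet-Lamb--Gee--Geraghty--Taylor and their successors, reducing to the case where $\rho$ is attached to a regular algebraic, essentially self-dual cuspidal automorphic representation of some $GL_n$ over a CM field. One would then invoke the construction of such Galois representations inside the cohomology of PEL Shimura varieties (Kottwitz, Clozel, Harris--Taylor, Shin, Chenevier--Harris, Caraiani--Scholze), which exhibits $\rho$ as a subquotient of $H^i_{\et}$ of a proper smooth $F$-scheme after descent and a suitable twist to account for the normalization of Hodge--Tate weights. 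The principal obstacle is that every known automorphy lifting theorem requires regularity of the HT weights, residual irreducibility, and some big-image hypothesis on $\ol{\rho}$, while the conjecture is stated without any such restrictions; a proof in the stated generality therefore lies beyond current technology and would demand both new automorphy theorems valid for arbitrary $l$-adic representations and the full monodromy--weight conjecture.
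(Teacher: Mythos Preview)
The statement is a \emph{conjecture}, not a theorem, and the paper does not claim to prove it. Your proposal correctly recognizes this: you separate the implications, note that (iii)$\Rightarrow$(i) follows from $p$-adic Hodge theory, that (iii)$\Rightarrow$(ii) requires the weight--monodromy conjecture, and that the reverse directions amount to a refined Fontaine--Mazur conjecture beyond current reach. This matches the paper's own discussion almost exactly; see the remarks following the restatement as Conjecture~\ref{c:integrality}, where the authors say precisely that (iii)$\Rightarrow$(i) is known, that (iii)$\Rightarrow$(ii) would follow from Conjecture~\ref{c:weight-monodromy-integrality}, and that (i)$\Leftrightarrow$(iii) without the sign refinement is exactly Fontaine--Mazur. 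The paper's contribution toward the conjecture is the partial result Proposition~\ref{p:Galois-reps}, not a proof of the conjecture itself.

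One small correction to your (iii)$\Rightarrow$(ii) sketch: Weil~II gives integrality and strict purity only at places of good reduction. At bad places, \emph{integrality} of the Frobenius eigenvalues on a subquotient of $H^i_{\et}$ is not an automatic consequence of Weil~II either; in the paper's formulation it is bundled together with purity into Conjecture~\ref{c:weight-monodromy-integrality}. So both ingredients, not just purity, are conjectural at bad places. Your sentence ``hence algebraic integers'' glosses over this.
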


  The motivation for the conjecture comes from our effort to obtain Theorem \ref{t:intro-growth} below (which generalizes Theorem \ref{t:intro-Serre}), where we need a version of the statement that $a_p(f)$ is an algebraic \emph{integer}. We derive a partial result toward Conjecture \ref{c:intro-FM} (Proposition \ref{p:Galois-reps}) for the Galois representations arising from (conjugate) self-dual automorphic representations by exploiting the fact that they appear in the cohomology of Shimura varieties. This serves as a crucial ingredient in the proof of Theorem \ref{t:intro-growth}.

  The second conjecture is on the finiteness of automorphic representations with bounded field of rationality. It is an automorphic analogue of (the isogeny version of) the Shafarevich conjecture and its analogue for Galois representations formulated by Fontaine and Mazur (\cite[I.\S3]{FM95}). Theorem \ref{t:intro-fin} below partially confirms the conjecture.

\begin{conj}\label{c:intro-fin}
 Fix $A\in\Z_{\ge 1}$, $S$ a finite set of places of $F$ containing all infinite places, and an infinitesimal character $\chi_\infty$ for $G(F\otimes_\Q \R)$. Then there are only finitely many discrete automorphic representations $\pi$ of $G(\A_F)$ with infinitesimal character $\chi_\infty$ such that $\pi^S$ is unramified and $[\Q(\pi):\Q]\le A$.
\end{conj}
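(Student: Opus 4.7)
The plan is to pass from the automorphic representations in question to their Satake parameters at an auxiliary place, apply a Northcott-type finiteness there, and combine this with multiplicity bounds from the trace formula.

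Fix a finite place $v_0 \notin S$. Since $\pi^S$ is unramified, $\pi_{v_0}$ is determined by its Satake parameter $c(\pi_{v_0})$, a semisimple conjugacy class in $\hat G(\C)$. Compatibility of the $\Aut(\C)$-action on $\pi^\infty$ with the Satake isomorphism forces the Hecke eigenvalues comprising $c(\pi_{v_0})$ to lie in $\Q(\pi)$, hence in a number field of degree at most $A$. To apply Northcott I would combine this with two further ingredients: (a) integrality of the Satake parameters, which is the content of Conjecture \ref{c:intro-FM}, and which is established unconditionally for $GL_n$ and the classical groups via the Galois representations attached through the cohomology of Shimura varieties (cf.\ Proposition \ref{p:Galois-reps}); and (b) an archimedean bound on $|c(\pi_{v_0})|$ coming from the fixed infinitesimal character $\chi_\infty$ together with purity of Frobenius eigenvalues. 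These three conditions together confine $c(\pi_{v_0})$ to a finite set.

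To promote this to finiteness of $\pi$ itself, I would run a trace-formula argument on $G(\A_F)$ with a test function $f = f_\infty \otimes f^{S \cup \{v_0\}} \otimes f_{v_0} \otimes f_{S \setminus S_\infty}$, where $f_\infty$ is a pseudo-coefficient (or Euler--Poincar\'e function) isolating $\pi_\infty$ with infinitesimal character $\chi_\infty$, $f^{S \cup \{v_0\}}$ is the unramified idempotent outside $S \cup \{v_0\}$, $f_{v_0}$ is a Hecke projector onto the finitely many permissible Satake parameters at $v_0$, and $f_{S \setminus S_\infty}$ is compactly supported on a large open compact subgroup. The spectral side of the trace formula then reduces to a finite sum of multiplicities, provided the conductors of $\pi_v$ at ramified finite $v \in S$ can also be bounded. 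I would bound these conductors via the associated local Weil--Deligne parameters: their Artin conductors are controlled by the bounded degree of the coefficient field together with the integrality input from (a), essentially an integrality-plus-Northcott argument on the Galois side at each $v \in S$.

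The principal obstacle is the integrality of Satake parameters in (a); in general it is a special case of Fontaine--Mazur, so only the partial version obtainable through Shimura varieties is available unconditionally, and this is what restricts the theorem to $GL_n$ and the classical groups. A secondary technical issue is the conductor bound at ramified places in $S$, which requires either direct local-global compatibility of Galois representations at these places or an independent substitute bound: in both respects the argument is most easily run when $\pi$ is (conjugate) self-dual and cohomological, so that the necessary Galois representations and their integrality have been constructed in the existing literature.
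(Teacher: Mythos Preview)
Your outline contains the right endgame (bound conductors at $v\in S$, then invoke Harish-Chandra's finiteness) but misidentifies the mechanism that makes it work, and in doing so sacrifices the main strength of the theorem.

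The paper's proof for $GL_n$ is much shorter than what you propose. The key step is purely local and elementary (Lemma \ref{l:bounded-depth} and Corollary \ref{c:bounded-conductor}): if an $n$-dimensional Weil--Deligne representation of $W_{F_v}$ has field of rationality of degree $\le A$ over $\Q$, then its depth and conductor are bounded in terms of $n$, $A$, and $F_v$ alone. The argument uses only that the inertia image is finite, that its eigenvalues are roots of unity of bounded degree, and some elementary ramification theory. No integrality, no purity, no global Galois representations, no Shimura varieties. Once conductors are bounded at each $v\in S$, Harish-Chandra's finiteness theorem (Proposition \ref{p:HC-finite}) finishes the proof immediately. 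The classical-group case is then reduced to $GL_n$ via Arthur's transfer.

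Your proposal, by contrast, makes the conductor bound at $v\in S$ depend on integrality input ``from (a)'', i.e.\ on the existence of Galois representations realized in the cohomology of Shimura varieties. This is both unnecessary and costly: it would restrict you to the cohomological (regular) range, whereas the paper explicitly emphasizes that Theorem \ref{t:intro-fin} holds for \emph{arbitrary} infinitesimal characters $\chi_\infty$, including those of Maass forms and even transcendental ones. The integrality/purity machinery is what drives the \emph{other} main theorem (Theorem \ref{t:intro-growth}), not this one.

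Your Northcott step at the auxiliary place $v_0$ is a detour: once conductors at $S$ are bounded, Harish-Chandra's theorem already gives finiteness of $\pi$, so nothing is gained by first pinning down Satake parameters at a single unramified place and then running a trace formula. (Finitely many Satake classes at one $v_0$ do not by themselves control $\pi$ anyway.)
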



\subsection{Main results}\label{sub:main-results}

  Let us make it clear at the outset that our results concerning quasi-split classical (i.e. symplectic, orthogonal\footnote{As we will never deal with the usual (disconnected) orthogonal groups, special orthogonal groups will be called orthogonal groups in favor of simpler terminology. We will be precise where we have to be.}, or unitary) groups rely on Arthur's endoscopic classification \cite{Arthur} and its analogue for unitary groups due to Mok \cite{Mok}. (However our finiteness theorem for general linear groups, cf. Theorem \ref{t:intro-fin} below, is unconditional.) The classification is based on some unproven assertions on the stabilization of the twisted trace formula for $GL_n$ and a little more, which are hoped to be proved in the near future. So we are making the same hypotheses as Arthur does in his work. (Also see \cite[1.18]{BMM} and the footnote around Hypothesis \ref{hypo:TwTF} for a discussion of the hypotheses.) We only deal with quasi-split groups mainly because the analogous theorems for inner forms are not complete (see the last chapter of \cite{Arthur} for a sketch), but our argument should apply equally well to the inner forms. With this in mind we have written the argument in such a way that our main theorems remain true for non-quasi-split classical groups with little change in the proof once the necessary classification becomes available. As a matter of fact, Theorem \ref{t:intro-growth} in case (i) is almost an unconditional theorem for (not necessarily quasi-split) unitary groups thanks to the base change results for cohomological representations in \cite{Lab}. (Unlike Arthur's work, the latter are not conditional on the full stabilization of the twisted trace formula or any other hypotheses.)


  Our first main result is a finiteness theorem for automorphic representations with bounded field of rationality. It is worth emphasizing that we allow arbitrary infinitesimal characters (e.g. those corresponding to C-algebraic Maass forms in the case of $GL_2$ over $\Q$) even including transcendental ones (in which case the set of $\pi$ is expected to be empty by Conjecture \ref{c:intro-C-alg}).

\begin{thm}\label{t:intro-fin}(Theorems \ref{t:fin-GL_n}, \ref{t:fin-classical})
  Conjecture \ref{c:intro-fin} is true for general linear groups and quasi-split classical groups.
\end{thm}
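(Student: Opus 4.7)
The plan is to treat the general linear case first and then reduce the quasi-split classical groups to $GL_n$ via endoscopic classification. For $G=GL_n$, fix a discrete automorphic $\pi$ satisfying the hypotheses. Strong multiplicity one for $GL_n$ reduces the count to isomorphism classes of $\pi^\infty$. Applying the Moeglin--Waldspurger description of the discrete spectrum, each such $\pi$ is a Speh representation attached to a unique cuspidal $\tau$ on some $GL_m$ (with $mk=n$), and $\Q(\pi)$ and $\Q(\tau)$ differ at worst by a factor bounded in terms of $n$, so one may assume $\pi$ itself is cuspidal.

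At every finite place $v$, the natural containment $\Q(\pi_v)\subseteq \Q(\pi)$ forces $[\Q(\pi_v):\Q]\le A$. The key local input I would establish is: over a fixed non-archimedean field $F_v$ and fixed $n$, any irreducible admissible representation of $GL_n(F_v)$ whose field of rationality has degree $\le A$ has conductor bounded by a constant depending only on $F_v$, $n$, and $A$. This should follow from the Bushnell--Kutzko theory of types: supercuspidals are compactly induced from characters of compact-mod-center open subgroups, and the level of the inducing datum controls the conductor, while $\Q(\pi_v)$ contains the cyclotomic field generated by the character values, whose degree grows with the character's order. General irreducibles reduce to supercuspidals through Bernstein--Zelevinsky classification. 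Since $S$ is finite this bounds the conductor at every place in $S$, and Harish-Chandra's finiteness of the discrete spectrum at fixed level and fixed infinitesimal character then leaves only finitely many $\pi$.

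For a quasi-split classical group $G$, I would invoke Arthur's endoscopic classification (and Mok for unitary groups) to attach to each discrete automorphic $\pi$ a discrete Arthur parameter $\psi$, which is a formal isobaric sum of (conjugate) self-dual cuspidal representations of general linear groups. The functorial transfer is Galois-equivariant on Satake parameters at unramified places, so the bound $[\Q(\pi):\Q]\le A$ translates into a uniform bound on the fields of rationality of the cuspidal constituents of $\psi$. Applying the $GL_n$ case already proved to each constituent bounds the number of possible $\psi$, and each such $\psi$ gives rise to only a finite A-packet on $G$, which concludes the classical case.

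The hard part, I expect, is the local conductor bound. Showing that a small field of rationality forces a small conductor is a genuinely non-trivial local statement; it is most delicate for wildly ramified supercuspidals at places of small residue characteristic, where one must carefully track the $\Aut(\C)$-action through the Bushnell--Kutzko inducing data and verify that Galois orbits of such data of bounded cardinality have uniformly bounded level.
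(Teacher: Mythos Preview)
Your overall architecture matches the paper's exactly: bound the local conductor from a bound on the field of rationality, apply Harish-Chandra's finiteness for $GL_n$, then reduce the classical groups to $GL_n$ via Arthur/Mok by tracking how the $\Aut(\C)$-action interacts with the endoscopic transfer. The reduction to the cuspidal case via Moeglin--Waldspurger is unnecessary, though: the local conductor bound applies to any irreducible admissible representation of $GL_n(F_v)$, so one can invoke Harish-Chandra directly for the discrete spectrum without first peeling off Speh representations.

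The genuine difference is in how the local conductor bound (your ``hard part'') is established. You propose to work on the automorphic side through Bushnell--Kutzko types; the paper instead passes to the Galois side via the (suitably normalized) local Langlands correspondence, which commutes with the $\Aut(\C)$-action, and argues on Weil--Deligne representations. There the statement becomes elementary: the restriction to inertia has finite image, the eigenvalues are roots of unity lying in a degree $\le nA$ extension of $\Q$, hence of bounded order, and a short argument with Brauer's theorem and reduction mod a well-chosen prime bounds the image of inertia, hence the depth, hence the conductor. This is considerably cleaner than going through types, and it yields explicit bounds with little extra work.

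Your sketch of the type-theoretic route also contains an inaccuracy: supercuspidals of $GL_n$ are compactly induced from extended maximal simple types, which are not characters for $n>1$ but genuine irreducible representations of compact-mod-center subgroups. The link between $\Q(\pi_v)$ and the level of the inducing datum is therefore less direct than ``the cyclotomic field generated by the character values''; making this precise would require tracking the $\Aut(\C)$-action through the full Bushnell--Kutzko construction, which is doable but substantially more work than the Galois-side argument. For the classical-group step, note also (as the paper does) that $\Aut(\C/\Q(\pi))$ only permutes the cuspidal constituents $\Pi_1,\dots,\Pi_r$, so the bound on $[\Q(\Pi_i):\Q]$ picks up a factor of $r!\le n!$; and one must check that the possible infinitesimal characters of the $\Pi_i$ form a finite set determined by $\chi_\infty$.
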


  Our second main result is on the growth of field of rationality in a family of automorphic representations. We work with a quasi-split classical group $G$ over $\Q$ for simplicity (in the main body $G$ is over any totally field) and introduce a family in level aspect with prescribed local conditions as in \cite{ST11cf}. Let $n_x\in\Z_{\ge 1}$, $\xi$ be an irreducible algebraic representation of $G$ over $\C$ whose highest weight is regular, $S_0$ be a finite set of finite primes (which could be empty so that no local condition may be imposed), and $\hat{f}_{S_0}$ be a well-behaved function on the unitary dual of $G(\Q_{S_0})$.
  The family in question is a sequence $$\cF_x=\cF(n_x,\hat{f}_{S_0},\xi),~x\in \Z_{\ge 1}\quad \mbox{such that}\quad
  n_x\ra\infty~\mbox{as}~x\ra\infty,$$ where each $\cF_x$ consists of discrete automorphic representations $\pi$ of $G$ which, loosely speaking, has level $n_x$, weight $\xi$, and prescribed local conditions at $S_0$ by $\hat{f}_{S_0}$. Then each $\cF_x$ is a finite set whose cardinality $|\cF_x|$ tends to infinity as $x\ra\ \infty$. Actually in our formulation $\cF_x$ is a multi-set in that each $\pi$ is weighted by the dimension of the fixed vectors of $\pi^\infty$ under the principal congruence subgroup of level $n_x$. (See \S\ref{sub:growth-level} for the precise definition of $\cF_x$ and $|\cF_x|$.) For $A\in \Z_{\ge1}$ define
  $$\cF_x^{\le A}:=\{\pi\in \cF_x: [\Q(\pi):\Q]\le A\}.$$
  Note that we have $[\Q(\pi):\Q]<\infty$ for every $\pi\in \cF_x$ since $\pi$ is cohomological in that $\pi_\infty\otimes \xi$ has non-vanishing Lie algebra cohomology. We prove a theorem roughly saying that the field of rationality grows generically in the family $\{\cF_x\}_{x\ge 1}$ in the case (i) or (ii) below. Note that (ii) includes the level sequence $2, (2\cdot 3)^2, (2\cdot 3\cdot 5)^3, (2\cdot3\cdot5\cdot7)^4, ...$ for instance. Unfortunately neither (i) nor (ii) includes the sequence $2, 2\cdot 3, 2\cdot 3\cdot 5, ...$

\begin{thm}\label{t:intro-growth}(Theorems \ref{t:growth-coeff}, \ref{p:level-prime}) Let $G\neq \{1\}$ be a quasi-split classical group, or a non-quasi-split unitary group. Suppose there exists a prime $p\not \in S_0$, at which $G$ is unramified, such that either
\benu
\item $(n_x,p)=1$ for all but finitely many $x$, or
\item $\ord_p(n_x)\ra \infty$ as $x\ra\infty$.
\eenu Then for every $A\in \Z_{\ge1}$,
 $\lim_{x\ra\infty} \frac{|\cF_x^{\le A}|}{|\cF_x|}=0.$ Moreover let $S_{\unr}$ be the number of primes $p$ satisfying (i) (which could be infinite) and such that $G$ is unramified at $p$. Put $R_{\unr}:=\sum_{p\in S_{\unr}} \rank\, G_{\Q_p}$. Then
 $$ |\cF_x^{\le A}|=O(|\cF_x|/(\log |\cF_x|)^R),\quad \forall R\le R_{\unr}.$$
\end{thm}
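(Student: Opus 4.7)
The plan is to extend Serre's strategy (recalled in \S\ref{sub:MF-case}) to quasi-split classical groups, combining (a) integrality of Satake parameters for $\pi \in \cF_x^{\le A}$ with (b) Plancherel equidistribution for $\cF_x$ in the level aspect along the lines of \cite{ST11cf}. Note that every $\pi \in \cF_x$ is cohomological (since $\xi$ has regular highest weight and $\pi_\infty$ lies in a discrete-series $L$-packet), so $\Q(\pi)$ is already a number field.

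\emph{Case (i): $(n_x,p)=1$.} For $x$ large, $\pi_p$ is unramified with Satake parameter $s_p(\pi)$. Via Arthur's endoscopic classification (Mok's for unitary groups), $\pi$ transfers to an isobaric sum of cohomological, (conjugate) self-dual discrete automorphic representations on a general linear group; these admit Galois representations appearing in the cohomology of Shimura varieties, and Proposition \ref{p:Galois-reps} forces the Frobenius eigenvalues at $p$ to be algebraic integers whose absolute values are controlled by purity. Together with $[\Q(s_p(\pi)):\Q] \le [\Q(\pi):\Q] \le A$, this confines $s_p(\pi)$ to a finite set $E_p = E_p(A,p,\xi,G)$. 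Invoking Plancherel equidistribution (the empirical measure $|\cF_x|^{-1}\sum_{\pi \in \cF_x}\delta_{s_p(\pi)}$ converges weakly to $\hat\mu_p^{\mathrm{pl}}$, which is atomless on the tempered unramified dual), a bump-function approximation near each point of $E_p$ shows $|\cF_x^{\le A}| = o(|\cF_x|)$.

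For the logarithmic refinement $|\cF_x^{\le A}| = O(|\cF_x|/(\log |\cF_x|)^R)$ with $R \le R_{\unr}$, I would use primes $p_1,\ldots,p_M \in S_{\unr}$ simultaneously. A quantitative form of Plancherel equidistribution, obtained by tracking the error term in the Sauvageot/trace-formula proof, combined with the product structure and approximate independence of local Plancherel measures at distinct places, yields a joint upper bound on $\{\pi \in \cF_x : s_{p_i}(\pi) \in E_{p_i}\text{ for all }i\}$; careful bookkeeping that balances the number of primes used against the allowed smoothing and exploits the $\rank G_{\Q_{p_i}}$-dimensional parameter space at each $p_i$ converts this into a saving of order $(\log |\cF_x|)^{-R}$ once the cutoff for the $p_i$ is optimized in terms of $|\cF_x|$.

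\emph{Case (ii): $\ord_p(n_x) \to \infty$.} Now $\pi_p$ is ramified of conductor at most $p^{\ord_p(n_x)}$, so Satake parameters are unavailable. The strategy is to show that irreducible admissible representations of $G(\Q_p)$ with bounded $[\Q(\pi_p):\Q]$ and growing conductor become sparse in the admissible dual: via Bushnell--Kutzko types (or Yu's construction of tame supercuspidals, available for classical groups), a representation of high depth contains an inducing character of commensurately high conductor whose field of values contains a large cyclotomic field, forcing $[\Q(\pi_p):\Q] \to \infty$ with the conductor. Applying Plancherel equidistribution at $p$ with test functions supported on the bounded-rationality locus then yields $|\cF_x^{\le A}| = o(|\cF_x|)$. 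The main obstacle is the quantitative Plancherel equidistribution uniform in the prime $p$ that is needed for the logarithmic factor: this entails uniform bounds on the geometric side of the trace formula and a delicate smoothing near $E_p$. The integrality input (Proposition \ref{p:Galois-reps}), resting on Arthur's classification and Shimura variety cohomology, is the other critical ingredient, and is the reason the theorem is restricted to classical groups.
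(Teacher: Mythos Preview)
Your treatment of case (i) and the logarithmic refinement is essentially the paper's argument: the finiteness of the set of possible $\pi_p$ (your $E_p$, the paper's Corollary \ref{c:sparsity}/Proposition \ref{p:intro-finite}) comes from integrality and purity of the associated Galois representations, and then the quantitative Plancherel equidistribution of \cite{ST11cf} together with approximation of characteristic functions on the unramified tempered dual by truncated Hecke functions yields the $(\log|\cF_x|)^{-R}$ saving. (Note the logarithmic bound in the paper is stated and proved only under the hypothesis $S_{\unr}\neq\emptyset$, i.e.\ in the setting of (i); your closing paragraph reads as if it applies to (ii) as well.)

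For case (ii) your proposal diverges from the paper and has a gap. You propose a purely local argument via Bushnell--Kutzko/Yu types to show that high depth forces $[\Q(\pi_p):\Q]$ to be large through cyclotomic values of inducing characters. This is not substantiated (inducing characters can be quadratic, for instance, without any cyclotomic growth in the field of values), and even granting a bound on depth you would still have continuous families of unramified twists to contend with. The paper's route is much simpler: it uses the \emph{same} global-arithmetic finiteness result as in case (i), Corollary \ref{c:sparsity}, to conclude that the set $\cZ\subset G(\Q_p)^{\wedge}$ of all possible $\pi_p$ with $[\Q(\pi_p):\Q]\le A$ arising from $\xi$-cohomological automorphic $\pi$ is finite. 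Then Plancherel equidistribution gives $\hat\mu_x(\cZ)\to \hat\mu^{\mathrm{pl}}_p(\cZ)$, which may well be positive (discrete series contribute atoms), but the ratio $|\cF_x^{\le A}|/|\cF_x|$ is $\asymp \vol(U_{x,p})\hat\mu_x(\cZ)$, and $\vol(U_{x,p})\to 0$ because $\ord_p(n_x)\to\infty$. So the integrality/purity input via Galois representations is just as essential in case (ii) as in case (i); what changes is only the endgame (atomlessness in (i) versus volume shrinkage in (ii)).
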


Especially pleasing features of the theorem are that some arbitrarily high ramification can be treated as seen in (ii) and that the upper bound has a logarithmic power-saving. The case (ii) seems to be new already in the case of modular forms while the logarithmic saving generalizes~\cite{Royer:dimension-rang,GJS:spectra}.
  It would be nice to prove (or disprove) the theorem without (i) and (ii). We can do it under some restrictive hypotheses (which are too special to be discussed here) but do not know any general type of result.

  It is natural to ask whether $|\cF_x^{\le A}|=O(|\cF_x|^{\delta})$ for some $\delta<1$ for a level-aspect family $\cF_x$ (whose level $n_x\ra\infty$) for an arbitrary reductive group $G$, cf. Question \ref{q:coeff1} below.  This is already challenging for $G=\GL(2)$~\cite[p.89]{Serre:pl}. The above theorem does not achieve this. However we do provide a nearly optimal answer under a hypothesis on $\{n_x\}_{x\ge 1}$ (Corollary \ref{c:growth-finite}).
  Let $G$ be a group as in Theorem \ref{t:intro-fin} and suppose that $\{n_x\}_{x\ge 1}$ is supported on a finite set $S$ of finite primes in the sense that for all but finitely many $x$, every prime divisor of $n_x$ is in $S$. Then $|\cF_x^{\le A}|=O(1)$.
  This is actually an easy corollary of Theorem \ref{t:intro-fin}. Again no condition on infinitesimal characters at $\infty$ is needed (so it applies to C-algebraic Maass forms when $G=GL_2$ for instance).

  In the following we sketch the proof of Theorem \ref{t:intro-fin} and Theorem \ref{t:intro-growth}.
  Both theorems take local finiteness results as key inputs. The former theorem in the case of $GL_n$ uses
\begin{prop}\label{p:intro-bounded-ram} Fix $A\ge 1$ and a prime $p$ and an integer $n\ge 2$. There exists a constant $C=C(A,p,n)$ such that every irreducible smooth representation of $GL_n(\Q_p)$
  with $[\Q(\pi_p):\Q]\le A$ has conductor $\le C$. (Here $\Q(\pi_p)$ is the field of rationality for $\pi_p$ defined as in \eqref{e:intro-field-of-rat}.)
\end{prop}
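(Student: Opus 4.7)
My plan is to pass via local Langlands to the Weil-Deligne parameter of $\pi_p$ and then to bound its conductor by combining a little finite group theory with a standard ramification estimate for bounded-degree extensions of $\Q_p$. Specifically, I attach to $\pi_p$ a Frobenius-semisimple Weil-Deligne parameter $(r, N)$ on an $n$-dimensional $\C$-vector space $V$ via the Harris-Taylor-Henniart local Langlands correspondence. The conductor of $\pi_p$ equals that of $(r, N)$, which is at most $a(r) + n$, where $a(r)$ is the Artin conductor of $r|_{I_{\Q_p}}$. The $\Aut(\C)$-equivariance of local Langlands (up to a harmless twist by a half-integer power of the norm character) yields $[\Q(r):\Q] \le 2A$.

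Next I would bound the order of the finite subgroup $r(I_{\Q_p}) \subseteq GL_n(\C)$. For any $g$ in the inertia group $I = I_{\Q_p}$, the eigenvalues $\zeta_1, \ldots, \zeta_n$ of $r(g)$ are roots of unity, and since $\tr r(g^k) \in \Q(r)$ for every $k \ge 1$, Newton's identities place the characteristic polynomial of $r(g)$ in $\Q(r)[x]$. Hence each $\zeta_i$ lies in a field of degree at most $2An$ over $\Q$; as a root of unity of order $m_i$ this forces $\varphi(m_i) \le 2An$, so $m_i \le M = M(A, n)$. Thus $r(I)$ has exponent at most $M$, and Jordan's theorem yields an abelian normal subgroup of index $\le J(n)$, which is diagonalizable of exponent $\le M$ and hence of order $\le M^n$. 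It follows that $|r(I)| \le J(n) M^n =: N$.

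Finally, $H := \ker(r|_I) \subseteq I$ is open of index $\le N$, so its fixed field $L = (\bar\Q_p)^H$ is an extension of $\Q_p^{\ur}$ of degree $\le N$. A standard uniform bound --- following, for example, from Sen's estimate on upper-numbering ramification breaks in finite $p$-adic extensions, applied to the Galois closure of $L/\Q_p^{\ur}$ --- produces $u_0 = u_0(N, p)$ such that every such $H$ contains $I^{u_0}$. Therefore $r$ is trivial on $I^{u_0}$, and
\[
a(r) \;=\; (n - \dim V^I) + \int_0^{u_0} \bigl(n - \dim V^{I^u}\bigr)\, du \;\le\; n(u_0 + 1),
\]
giving the required constant $C = C(A, n, p)$. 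The main obstacle is precisely this last uniform ramification estimate; the rest of the argument is routine, combining $\Aut(\C)$-equivariance of local Langlands with Newton's identities and Jordan's theorem.
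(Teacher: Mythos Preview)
Your argument is correct and follows the same architecture as the paper's proof (Lemma~3.10, Corollary~3.13, Lemma~3.14): pass to the Weil--Deligne side via local Langlands, use the rationality bound to control the orders of eigenvalues of the inertia image, bound $|r(I)|$ itself, then invoke a uniform ramification estimate and the inequality $\cond\le n(\dep+1)$.

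The one substantive difference is in how you bound $|r(I)|$. You use Jordan's theorem: the finite group $r(I)\subset GL_n(\C)$ has an abelian normal subgroup of index $\le J(n)$, diagonalizable of exponent $\le M$, hence of order $\le M^n$. The paper instead uses Brauer's theorem to realize $r|_I$ over the cyclotomic field $\Q(\mu_f)$ (where $f$ is the exponent bound), chooses a prime $l\nmid f$, and reduces modulo a place above $l$ to embed $r(I)$ into $GL_n$ of a finite field. Your route is more direct; the paper's route feeds naturally into the sharper explicit estimate $d_{n,K,A}\le c_{n,K}A^n$ obtained later via Roquette/Minkowski--Schur bounds on $p$-subgroups of $GL_n(E)$. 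Two minor remarks: using the normalization $\mL_K(\pi_p)=\rec_K(\pi_p)\otimes|\cdot|^{-(n-1)/2}$, which is exactly $\Aut(\C)$-equivariant and leaves the conductor unchanged, lets you drop the factor of $2$; and for the final step the paper cites the elementary fact (Serre, \emph{Local Fields}, IV.2, Exercise~3(c)) that $\Gal(L/\hat K^{\ur})_{d'}=\{1\}$ for $d'>e_L/(p-1)$, which suffices in place of Sen.
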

  For the proof of the proposition we pass to the Galois side via the local Langlands correspondence and examine the representation of the inertia group. Note that a suitable normalization of the local Langlands correspondence preserves the field of rationality. Since the inertia representation must have finite image, it is possible to conclude with some elementary representation theory and ramification theory for local fields. Once the proposition is in place, Theorem \ref{t:intro-fin} is an easy consequence of Harish-Chandra's finiteness theorem for automorphic forms.

  Theorem \ref{t:intro-growth} requires a more arithmetic kind of local finiteness theorem. When $G$ is a quasi-split classical group, we show the following far-reaching generalization of the finiteness of Weil numbers
  (\S\ref{sub:MF-case}) to the case for higher rank groups allowing arbitrary ramification at $p$.

\begin{prop}\label{p:intro-finite}(Corollary \ref{c:sparsity}) Fix $A\ge 1$, a prime $p$, and an irreducible algebraic representation $\xi$ of $G$. Then the set of irreducible tempered representations $\pi_p$ of $G(\Q_p)$ with $[\Q(\pi_p):\Q]\le A$ which may be realized as the $p$-components of discrete $\xi$-cohomological automorphic representations $\pi$ of $G(\A)$ is finite.
\end{prop}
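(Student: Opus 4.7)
The plan is to transfer the local problem at $p$ to $\GL_N$ via Arthur's endoscopic classification, attach a compatible system of Galois representations to the resulting self-dual automorphic lift, and exploit Proposition \ref{p:Galois-reps} to extract integrality and purity of the associated Weil--Deligne parameter. Finiteness of the possible local parameters then reduces to a finite-ramification computation combined with Kronecker's theorem applied to Frobenius eigenvalues.

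More precisely, by Arthur's classification (or Mok's in the unitary case), a discrete $\xi$-cohomological $\pi$ of $G(\A)$ contributes to a global A-parameter $\psi$ whose underlying isobaric automorphic representation $\Pi$ of $\GL_N(\A)$, with $N$ the dimension of the standard embedding $\std\colon {}^LG \hookrightarrow \GL_N(\C)$, is cohomological, (conjugate) self-dual, and has infinitesimal character at $\infty$ determined by $\xi$. Since $\pi_p$ is tempered, its L-parameter is tempered and determined up to a finite L-packet whose cardinality is bounded solely in terms of $N$; thus $[\Q(\Pi_p):\Q]$ is bounded in terms of $A$ and $N$. The results of Shin, Chenevier--Harris and Caraiani (generalizing Kottwitz and Harris--Taylor) produce, at the level of the cuspidal constituents of $\Pi$, Galois representations $\rho\colon \Gal(\ol\Q/\Q)\to\GL_{N_i}(\Qlb)$ for any auxiliary prime $l\ne p$ whose Weil--Deligne representations at $p$ match the corresponding local factors of $\Pi_p$ under local Langlands. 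Assembling these via the A-parameter structure, Proposition \ref{p:Galois-reps} guarantees that the Weil--Deligne parameter attached to $\Pi_p$ is integral and pure of some weight $w$ depending only on $\xi$.

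It therefore suffices to bound the set of Weil--Deligne parameters $(r,N_r)$ on $W_{\Q_p}\times \mathrm{SL}_2(\C)$ of dimension $N$, with $[\Q(r):\Q]$ bounded, whose semisimple part is integral and pure of weight $w$. The monodromy $N_r$ takes only finitely many values up to conjugacy, being indexed by partitions of $N$. The restriction $r|_{I_{\Q_p}}$ has finite image of some exponent $M$; every eigenvalue of $r(g)$ for $g\in I_{\Q_p}$ is an $M$-th root of unity and is algebraic of degree $\le N$ over $\Q(r|_{I_{\Q_p}})\subseteq\Q(r)$, so $\varphi(M)\le N\cdot[\Q(r):\Q]$. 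Hence $M$ is bounded, and by Burnside's theorem $r(I_{\Q_p})$ is finite of order bounded in terms of $N$ and $A$, leaving only finitely many isomorphism classes of $r|_{I_{\Q_p}}$. Finally, the eigenvalues of $r(\Frob_p)$ on the inertia invariants are algebraic integers of bounded degree over $\Q$ and of absolute value $p^{w/2}$ under every complex embedding, so Kronecker's theorem applied to their characteristic polynomials yields only finitely many possibilities. Combining these observations yields finitely many $\Pi_p$, and each produces boundedly many tempered $\pi_p$ via the corresponding local Arthur packet.

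The principal obstacle is the careful propagation of fields of rationality through the Arthur transfer: one must verify a normalization of the local Langlands correspondence preserving $\Q(\cdot)$ (standard for $\GL_N$ with the arithmetic normalization and propagated to classical groups via $\std$), and confirm that the $\Aut(\C)$-action on local A-packets permutes their elements in orbits of uniformly bounded length, so that $[\Q(\pi_p):\Q]\le A$ indeed forces a uniform bound on $[\Q(\Pi_p):\Q]$. A secondary technical point is ensuring that Proposition \ref{p:Galois-reps} applies to all isobaric constituents arising in Arthur's classification; this ultimately rests on realizing them in the cohomology of unitary Shimura varieties and tracking the integrality through the Speh construction underlying the non-tempered $\psi$-components.
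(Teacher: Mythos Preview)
Your overall strategy is correct and is essentially the one the paper follows: transfer $\pi$ to an isobaric (conjugate) self-dual $\Pi$ on $\GL_N$ via Arthur/Mok, invoke the Galois representations of Proposition~\ref{p:Galois-reps} to obtain purity and integrality of the Weil--Deligne parameter at $p$, and then count such parameters. A few points deserve sharpening.

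First, the paper uses the regularity hypothesis on $\xi$ (which is part of the statement of Corollary~\ref{c:sparsity}) to conclude that the Arthur parameter $\psi$ has all $\nu_i=1$, i.e.\ is a genuine tempered $L$-parameter. This sidesteps entirely the Speh issue you flag at the end: there are no non-tempered $\psi$-components to track through. You should use this simplification rather than worrying about integrality through Speh constructions.

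Second, your Frobenius step has a genuine gap: you constrain the eigenvalues of $r(\Frob_p)$ only \emph{on the inertia invariants}. For a ramified $\Pi_p$ this is not enough to pin down the Weil--Deligne representation; you need the Frobenius eigenvalues on all of $V$. Purity and integrality from Proposition~\ref{p:Galois-reps} do control all of these, so the fix is easy, but what you wrote does not suffice. The paper organizes this step differently (Proposition~\ref{p:sparsity-GL(n)}): the bound on $[\Q(\Pi_p):\Q]$ first forces bounded depth via Lemma~\ref{l:bounded-depth}, hence finitely many Bernstein components; within a fixed component the representations differ only by unramified twists $\lambda_i$, and these are then bounded as Weil integers of bounded degree. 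This ``inertial type plus unramified twist'' decomposition is cleaner than trying to reassemble $\rho$ from $r|_{I_{\Q_p}}$ together with a global eigenvalue multiset.

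Third, the rationality of the endoscopic transfer, which you correctly identify as the principal obstacle, is actually \emph{proved} in the paper (Proposition~\ref{p:coeff-field-G-GL(n)}), not assumed. One shows that the Whittaker-normalized transfer factors take values in $\Q$ (Proposition~\ref{p:trans-rational}, relying on Waldspurger's explicit computations), whence twisting the spectral transfer identity by any $\sigma\in\Aut(\C)$ yields $\eta_*(\pi_p^\sigma)=(\eta_*\pi_p)^\sigma$. This gives directly $\Q(\Pi_p)\subset\Q(\pi_p)$, so $[\Q(\Pi_p):\Q]\le A$ with no dependence on $N$; the packet-size bound is only relevant for the reverse inequality, which you do not need here.
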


A crucial input in the proof is the properties of the Galois representations associated with $\pi$ concerning weight and integrality, which we justify along the way. The integrality here is the same kind as in Conjecture \ref{c:intro-FM}.(ii). In fact this consideration led us to formulate the conjecture. To associate Galois representations, work of Arthur and Mok is applied to transfer $\pi$ to a suitable general linear group, and the field $\Q(\pi)$ has to be kept track of during the transfer. To this end we check the nontrivial fact that the transfer from $G$ to the general linear group is rational in the sense that it commutes with the $\Aut(\C)$-action on the coefficients. It would be of independent interest that a similar argument would show that many other endoscopic transfers are rational 	(sometimes with respect to the $\Aut(\C/F)$-action for a number field $F$).

Both (i) and (ii) of Theorem \ref{t:intro-growth} are deduced from Proposition~\ref{p:intro-finite} via the theorem (proved earlier by us in \cite{Shi-Plan} and \cite{ST11cf}) that $\pi_p$'s are equidistributed with respect to the Plancherel measure for $G(\Q_p)$. The equidistribution reduces the proof to showing that the set in Proposition \ref{p:intro-finite} has negligible Plancherel measure in the subset of the unitary dual of $G(\Q_p)$ consisting of representations whose levels are at most (the $p$-part of) $n_x$. Part (i) results from the fact that the Plancherel measure is atomless when restricted to the unramified unitary dual. The saving by $(\log |\cF_x|)^R$ in the denominator comes from the quantitative Plancherel equidistribution theorem~\cite{ST11cf} and a uniform approximation of characteristic functions in the unramified unitary dual by Hecke functions of bounded degree. For part (ii) observe that the condition there implies that the mass of the set in Proposition \ref{p:intro-finite}, which may \emph{not} be zero since some points may correspond to discrete series, becomes negligible relative to the mass of the level $\le n_x$ part of the unitary dual as $\ord_(n_x)\to \infty$.

\subsection{Organization}

Section~\ref{s:field-of-rat} introduces basic notions such as C-algebraic, C-arithmetic, and cohomological automorphic representations as well as the field of rationality for local and global representations, and then builds background materials. The key result is that cohomological representations are C-algebraic and strongly C-arithmetic, indicating that a good playground to study field of rationality is the world of cohomological representations. We included many supplementary results which do not play roles in proving main theorems but are interesting in their own right.
Section 3 is mainly local and Galois-theoretic. We prove the fundamental proposition that a Weil-Deligne representation with bounded field of rationality has bounded ramification and transfer the result to the automorphic side via the local Langlands correspondence for $GL_n$. Section 4 is global in nature and draws deep facts from both Galois and automorphic sides. It is shown that the Galois representations associated with (conjugate) self-dual automorphic representations of $GL_n$ are pure and integral. The remainder of Section~\ref{s:aut-classical} is concerned with the twisted endoscopic transfer and classification theorems for quasi-split classical groups relative to $GL_n$. This is where Arthur's work is invoked. Section~\ref{s:finiteness} proves key local finiteness results to be used in the proof for main theorems. The basic strategy is to prove something for $GL_n$ and transfer the result to classical groups or vice versa. To play this game the rationality of endoscopic transfer as proved in \S\ref{sub:rational-transfer} is essential. The culmination of Section 5 is the finiteness theorems in \S\ref{sub:fin-results}.
  In the last Section \ref{s:growth-coeff} we prove several results on the field of rationality for families of automorphic representations in level aspect and conclude with remarks on counting elliptic curves and some outlook.

\subsection{Acknowledgments}

  We are grateful to Wee Teck Gan, Peter Sarnak, Jean-Pierre Serre and David Vogan for their helpful comments and the referee for a careful reading.
  The authors acknowledge support from the National Science Foundation under agreements No. DMS-1162250 and DMS-1200684.

\subsection{Notation and convention}\label{sub:notation}

\begin{itemize}

  \item $\ol{k}$ denotes an algebraic closure of $k$ for any field $k$.
  \item $\Res_{k'/k}$ denotes the Weil restriction of scalars from a finite extension field $k'$ to $k$.
  \item $\ind$ and $\nind$ denote the unnormalized and normalized inductions from parabolic subgroups, respectively.
  \item $F$ is a number field, $\Gamma_F:=\Gal(\ol{F}/F)$, and $W_F$ is the Weil group.
  \item $q_v$ denotes the cardinality of the residue field and $\Frob_v$ is the geometric Frobenius element at $v$ if $v$ is a finite place of $F$,
  \item $S_\infty$ is the set of all infinite places of $F$,
  \item $\A_F$ is the ring of ad\`{e}les over $F$; $\A_F^S$ is the restricted product of $F_v$ for all $v\notin S$;
  $\A_F^\infty:=\A_F^{S_\infty}$.
  \item $G$ is a connected reductive group over $F$,
  \item $\hat{G}$ is the dual group, ${}^L G$ is the $L$-group.
  \item $G(F_v)^{\wedge}$ is the unitary dual of $G(F_v)$.
  \item $\Irr(G(F_v))$ denotes the set of isomorphism classes of irreducible smooth representations of $G(F_v)$. Write $\Irr^{\temp}(G(F_v))$ (resp. $\Irr^{\ur}(G(F_v))$) for the subset consisting of tempered (resp. unramified - for a choice\footnote{Such a choice will always be implicit whenever we mention unramified representations in this article.} of a hyperspecial subgroup of $G(F_v)$ if it exists) representations.	
  \item $\rho\in X^*(T)\otimes_{\Z}\Q$ is the half sum of all positive roots when a choice is made of a maximal torus $T$ and a Borel subgroup $B$ such that $T\subset B$ ($\rho$ is also viewed as the half sum of all positive coroots on the dual side, cf. \S\ref{sub:C-alg} below).
  \item $\cH(H,k)$ denotes the $k$-algebra of locally constant compactly supported functions on $H$ where $H$ is a locally compact totally disconnected group and $k$ is a field, and $\cH_U(H,k)$ the sub $k$-algebra of bi-$U$-invariant functions where $U$ is an open compact subgroup of $H$. (For instance $H=G(\A_F^\infty)$ or $H=G(F_v)$ in the notation above.)
  \item Given $G$ as above, hyperspecial subgroups $U^{\hs}_v$ are fixed at finite places $v$ outside the set $S_{\ram}$ of finitely many $v$ such that $G$ is ramified over $F_v$. We identify $\cH(G(\A_F^\infty),k)$ with the restricted tensor product $\otimes'_{v\nmid \infty} \cH(G(F_v),k)$ with respect to $\cH_{U^{\hs}_v}(G(F_v),k)$ and decompose an irreducible admissible representation $\pi$ of $G(\A_F^\infty)$ as $\pi=\otimes'_{v\nmid \infty} \pi_v$. We speak of unramified representations at finite places $v\notin S_{\ram}$ with respect to $U^{\hs}$.
  \item $\varphi_v:W_{F_v}\times SL_2(\C)\ra {}^L G$ ($v$ finite) and $\varphi_v:W_{F_v}\ra {}^L G$ ($v$ infinite) are notation for local $L$-parameters; the associated local $L$-packets are denoted $LP(\varphi_v)$ (in the cases where the local Langlands correspondence is established).
  \item Fix a field embedding $\ol{\Q}\ra \C$ and $\ol{\Q}_l\ra \C$ for each prime $l$ once and for all.
  \item All twisted characters (and intertwining operators for $\theta$ defining them) are normalized as in Arthur's book.
\end{itemize}

\section{Field of rationality}\label{s:field-of-rat}

  The reader may want to compare the contents of our \S\ref{sub:C-alg} and \S\ref{sub:Clozel-BHR} to \S3.1 and \S7 of \cite{BG}.

\subsection{C-algebraicity and coefficient fields}\label{sub:C-alg}



  Let $\pi=\otimes'_v \pi_v$ be an automorphic representation of $G(\A_F)$.
   Let $S$ be a finite set of places of $F$ containing $S_\infty$.
   We recall the definition of C-algebraicity from \cite[Def 3.1.2]{BG} (generalizing the notion of algebraicity in \cite{Clo90}). For each infinite place $v$ of $F$, denote by $\varphi_{\pi_v}:W_{F_v}\ra {}^L G$ the associated parameter via the local Langlands correspondence (\cite{Lan88}).

\begin{defn}\label{d:C-algebraic} For $v|\infty$, $\pi_v$ is \textbf{C-algebraic} if there exists a maximal torus $\hat{T}$ of $\hat{G}$ satisfying $\varphi_{\pi_v}(W_{\C})\subset \hat{T}\times W_{\C}$ with the property that $\varphi_{\pi_v} |_{W_{\C}}:W_{\C}\ra \hat{T}$ (via any $\R$-embedding $\sigma:F_v\hra \C$, after projecting down to $\hat{T}$) belongs to $\rho+X_*(\hat{T})$, where $\rho$ is the half sum of all positive coroots in $\hat{T}$ with respect to a Borel subgroup $\hat{B}$ containing $\hat{T}$. (The latter property is independent of the choice of $\sigma$, $\hat{T}$, and $\hat{B}$. See \cite[2.3]{BG}.) We say that $\pi_v$ is \textbf{regular} if $\varphi_{\pi_v} |_{W_{\C}}$ is not invariant under any nontrivial element of the Weyl group for $\hat{T}$ in $\hat{G}$. If $\pi_v$ is C-algebraic (resp. regular) for every infinite place $v$ then $\pi$ is said to be \textbf{C-algebraic} (resp. \textbf{regular}).
\end{defn}

  We remark that when $G=GL_n$, our notion of $\pi$ being algebraic (resp. regular) coincides with the one in \cite{Clo90}. For the next definition we introduce a twist of a complex representation. For $\tau\in \Aut(\C)$ and a complex representation $(\Pi,V)$ of a group $\Gamma$, denote by $\Pi^\tau$ the representation of $\Gamma$ on $V\otimes_{\C,\tau^{-1}}\C$ via $\Pi\otimes 1$.

\begin{defn}
  The \textbf{field of rationality} $\Q(\pi^S)$ is the fixed field of $\C$ under the group $\{\tau\in \Aut(\C): (\pi^S)^{\tau}\simeq \pi^S\}$.
  If $S=S_\infty$, simply write $\Q(\pi)$ for $\Q(\pi^{S_\infty})$. For a finite place $v$ of $F$, $\Q(\pi_v)$ is defined to be the fixed field under the group $\{\tau\in \Aut(\C): \pi_v^{\tau}\simeq \pi_v\}$.
\end{defn}

  An easy observation is that $\Q(\pi)$ is the composite field of $\Q(\pi_v)$ for all finite $v$ (as a subfield of $\C$).

\begin{rem} Here is another possible notion of rationality, which will not be used in this paper.
  We say that $\pi$ is \emph{defined over} a subfield $E$ of $\C$ if there exists a smooth $E[G(\A_F^\infty)]$-module $\pi^\infty_E$ such that $\pi^\infty_E\otimes_E \C\simeq \pi^\infty$. Similarly $\pi_v$ is said to be defined over $E$ for a finite place $v$ if there exists a smooth $E[G(F_v)]$-module $\pi_{v,E}$ such that $\pi_{v,E}\otimes_E \C\simeq \pi_v$.
  If $\pi$ (resp. $\pi_v$) is defined over $E$ then clearly $\Q(\pi)$ (resp. $\Q(\pi_v)$) contains $E$. A natural question is whether $\pi$ (resp. $\pi_v$) can be defined over $\Q(\pi)$ (resp. $\Q(\pi_v)$) itself.
  If $\pi_v$ is unramified, it is not hard to see that $\pi_v$ is defined over $E$ (independently of the choice of a hyperspecial subgroup of $G(F_v)$) if and only if $E\supset \Q(\pi_v)$, cf. \cite[Lem 2.2.3, Cor 2.2.4]{BG}. The authors do not know whether the analogue holds for general generic $\pi_v$ or $\pi^\infty$. In the case of $G=GL_n$, this has been shown in \cite{Clo90} using the theory of new vectors.
\end{rem}

\begin{rem} Let $v$ be a finite place of $F$ where $\pi_v$ is unramified. It is in general false that the Satake parameters of $\pi_v$ are defined over $\Q(\pi_v)$ (let alone $\Q(\pi)$) in the sense of \cite[Def 2.2.2]{BG} due to an issue with the square root of $q_v$.
\end{rem}

\begin{defn}
  For a finite $v$, we say $\pi_v$ is C-arithmetic if $\Q(\pi_v)$ is finite over $\Q$. An automorphic representation $\pi$ is \textbf{C-arithmetic} if $\Q(\pi^S)$ is finite over $\Q$ for some finite set $S$ containing $S_\infty$. It is \textbf{strongly C-arithmetic} if $\Q(\pi)$ is finite over $\Q$.
\end{defn}

\begin{rem}\label{r:arith-strong-arith}
  Our C-arithmeticity is equivalent to that of \cite{BG}. It is reasonable to believe that $\pi$ is C-arithmetic if and only if it is strongly C-arithmetic, but the only if part does not seem easy to prove directly. At least when $G$ is a torus it can be verified that C-arithmeticity is equivalent to strong C-arithmeticity. Indeed the only if part is true if $G$ is a split torus by strong approximation. If $G$ is a general torus the proof is reduced to the split case via a finite extension $F'/F$ splitting $G$ by employing the fact (see the proof of the theorem 4.1.9 in \cite{BG}) that $G(F)$ and the image of $G(\A^{\infty}_{F'})$ under the norm map together generate an open and closed subgroup of $G(\A^{\infty}_F)$ of finite index.
\end{rem}

\begin{rem}
  Even if $\pi_v$ is C-arithmetic at every finite $v$, it may happen that $\pi$ is not C-arithmetic. For instance when $G=GL_1$ over $\Q$ and
  $\pi=|\cdot|^{1/2}$, we have $\Q(\pi_p)=\Q(p^{1/2})$ for each prime $p$ so $\pi_p$ is C-arithmetic. However $\Q(\pi^S)$ is an infinite algebraic extension of $\Q$. Note that $|\cdot|^{1/2}$ is not C-algebraic.
\end{rem}

  In general there is no reason to expect that $\Q(\pi)$ is finite or algebraic over $\Q$. In this optic the significance of C-algebraicity stems from Conjecture \ref{c:alg-arith} below. An expectedly equivalent conjecture was formulated in \cite[Conj 3.1.6]{BG}, where they put C-arithmetic in place of strongly C-arithmetic. When $G$ is a torus, the two versions of the conjecture are indeed equivalent (Remark \ref{r:arith-strong-arith}), so our conjecture is known to be true by \cite[Thm 4.1.9]{BG} based on work of Weil and Waldschmidt.

\begin{conj}\label{c:alg-arith}
  $\pi$ is C-algebraic if and only if it is strongly C-arithmetic.
\end{conj}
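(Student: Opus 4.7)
The plan is to treat the two directions of the biconditional separately, since they require very different tools. The torus case (known by \cite[Thm 4.1.9]{BG} via Weil and Waldschmidt) serves as the template and already signals that the ``only if'' direction will be transcendence-theoretic, while the ``if'' direction will be Galois/cohomology-theoretic.

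First I would attack the implication C-algebraic $\Rightarrow$ strongly C-arithmetic by passing through cohomological representations. If $\pi$ is C-algebraic then the infinitesimal character of $\pi_\infty$, after the $\rho$-shift, lies in the coweight lattice; after possibly twisting by a central character I would try to produce an algebraic representation $\xi$ so that $\pi_\infty \otimes \xi$ is cohomological (this is automatic for many archimedean classes, and reduces to a Vogan-type classification in general). Once in the cohomological setting, the plan is to adapt the argument of Clozel recalled in \S\ref{sub:Clozel-BHR}: the $\Q$-structure on $H^*(\Gamma \bsl X, \mathcal V_\xi)$, together with the $\Aut(\C)$-equivariance of the Hecke action on this cohomology, forces the isomorphism class of $\pi^\infty$ to lie in a finite $\Aut(\C)$-orbit, so $\Q(\pi) = \Q(\pi^{S_\infty})$ is a number field. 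The obstacle here is the non-cohomological C-algebraic case (e.g.\ certain limits of discrete series, or Maass forms on $GL_2$), where one would need a functorial transfer to a group where the representation becomes cohomological; this is only known in restricted situations.

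The converse direction, strongly C-arithmetic $\Rightarrow$ C-algebraic, is the harder half. The plan is: given $\pi$ with $\Q(\pi)$ a number field, use strong multiplicity one (for $GL_n$) or the Arthur--Mok stable multiplicity formula (for classical groups) to conclude that the isomorphism class of $\pi_\infty$ is determined by that of $\pi^\infty$ up to a finite ambiguity, so that $\pi_\infty$ has finite $\Aut(\C)$-orbit. Writing the archimedean parameter on $W_\C$ as $z\mapsto z^\lambda \bar z^\mu$ for $\lambda,\mu \in X_*(\hat T)\otimes \C$ with $\lambda-\mu\in X_*(\hat T)$, the finite-orbit property should force $(\lambda,\mu) \in X_*(\hat T)\otimes \ol{\Q}$. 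The remaining and decisive step is to promote this algebraicity to $\lambda + \rho \in X_*(\hat T)$, which is the defining condition of C-algebraicity. In the torus case this last step is exactly the content of the Weil--Waldschmidt argument: algebraicity of a Hecke character on almost all id\`eles forces its archimedean exponent to be an integer, via a transcendence result on linear forms in logarithms.

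The main obstacle, and the reason this remains a conjecture, is the last step in the previous paragraph in the non-abelian setting: there is no known analogue of Waldschmidt's transcendence theorem that would handle higher rank $G$, nor even a clear formulation of what replaces the values of a Hecke character at id\`eles. A realistic partial target would be: for $G$ split and $\pi$ with $\pi_\infty$ an (essentially) tempered principal series induced from a unitary character of a maximal torus $T(\R)$, restrict the rationality information to $T$ and deduce C-algebraicity of $\pi_\infty$ from the torus case. Combined with the cohomological argument in the first direction, this would yield the conjecture for a substantial class of representations, but the genuinely non-abelian, non-cohomological case seems to lie beyond current transcendence technology.
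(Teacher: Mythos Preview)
This statement is a \emph{conjecture} in the paper, not a theorem; the paper offers no proof and does not claim one. What the paper does establish is a fragment of the ``if'' direction: Proposition~\ref{p:arithmetic-cohomological} (and its Corollary~\ref{c:Clozel-noncuspidal}) shows that cuspidal \emph{cohomological} representations are strongly C-arithmetic, via the Clozel-style argument using the $E$-structure on $H^i(S(G),\cL_{\xi,E})$. The paper also records (just above the conjecture) that the torus case is known by Buzzard--Gee via Weil and Waldschmidt.

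Your proposal is not a proof either, and you correctly say so. Your outline of the C-algebraic $\Rightarrow$ strongly C-arithmetic direction matches exactly what the paper actually proves in the cohomological case, and you correctly flag the non-cohomological C-algebraic representations (Maass forms, certain limits of discrete series) as the obstruction. One inaccuracy: you suggest that ``after possibly twisting by a central character I would try to produce an algebraic representation $\xi$ so that $\pi_\infty\otimes\xi$ is cohomological''; this is not generally possible---regular C-algebraic is the right hypothesis for cohomologicality (cf.\ Lemma~\ref{l:GL(n)-cohomlogical} and the remark following it), and no twist repairs an irregular infinitesimal character. For the converse direction, your identification of the transcendence-theoretic bottleneck is accurate and matches the state of the art; the paper makes no attempt at this direction beyond citing the torus case.

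In short: there is no ``paper's own proof'' to compare against, your assessment of what is provable and what is not is essentially correct, and the minor overreach is the twisting-to-cohomological step.
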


  We remark that there are other reasons why C-algebraic automorphic representations stand out. One reason is that C-algebraicity is a natural necessary condition in the cuspidal case (and not too far from being a sufficient condition) to contribute to cohomology, cf. Lemma \ref{l:coh=C-alg} below. Another reason is that $l$-adic Galois representations are expected to be associated with C-algebraic representations, cf. \cite[Conj 5.3.4]{BG}. (In a simpler way Galois representations should also be attached to L-algebraic representations, which differ from C-algebraic ones by ``twisting''. See Conjectures 3.2.1 and 3.2.2 of \cite{BG}.)



  C-algebraicity and C-arithmeticity are preserved under \emph{unnormalized} parabolic induction. (Compare with \cite[Lem 7.1.1]{BG} and the paragraph above it.)

\begin{lem}\label{l:arithmetic-par-ind}
  Let $M$ be a Levi subgroup of an $F$-rational parabolic subgroup $P$ of $G$. Let $\Pi_M$ be an automorphic representation of $M(\A_F)$. Suppose that $\Pi$ is an irreducible subquotient of the unnormalized induction $\ind^G_P(\Pi_M)$. Then $\Pi_M$ is C-algebraic if and only if $\Pi$ is C-algebraic. If $\Pi_M$ is C-arithmetic (resp. strongly C-arithmetic) then so is $\Pi$.
\end{lem}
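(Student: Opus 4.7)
The plan is to treat the three statements separately. The C-algebraicity equivalence is essentially a calculation of archimedean Langlands parameters, while the (strong) C-arithmeticity claims are algebraic consequences of the $\sigma$-equivariance of unnormalized parabolic induction plus elementary finite-orbit arguments.

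For the C-algebraicity equivalence, I would work one archimedean place $v$ at a time. All irreducible subquotients of $\ind^G_P(\Pi_{M,v})$ share the infinitesimal character of the full induction, so it is enough to compare the infinitesimal character of the induction with that of $\Pi_{M,v}$. Via local Langlands at $v$, writing the parameters as maps $W_\C \to \hat{T}$ for a maximal torus $\hat{T} \subset \hat{M} \subset \hat{G}$, the archimedean parameter of the unnormalized induction equals $\varphi_{\Pi_{M,v}}\vert_{W_\C} + \rho_P$, where $\rho_P \in X_*(\hat T)_{\Q}$ is the half-sum of those positive coroots of $\hat G$ not lying in $\hat M$; this shift exactly encodes the passage from normalized to unnormalized induction (equivalently, the twist by $\delta_P^{1/2}$, which corresponds under archimedean LLC to $\rho_P$). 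Using the standard decomposition $\rho_{\hat G} = \rho_{\hat M} + \rho_P$ for compatible Borel choices, the two conditions $\varphi_{\Pi_v}\vert_{W_\C} \in \rho_{\hat G} + X_*(\hat T)$ and $\varphi_{\Pi_{M,v}}\vert_{W_\C} \in \rho_{\hat M} + X_*(\hat T)$ of Definition~\ref{d:C-algebraic} become equivalent; taking the conjunction over $v \in S_\infty$ yields the first assertion.

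For C-arithmeticity, choose a finite $S \supset S_\infty$ such that $\Pi_M$ and $\Pi$ are both unramified outside $S$. The crucial algebraic observation is that unnormalized parabolic induction is a purely algebraic functor on smooth representations, and therefore commutes with every $\sigma \in \Aut(\C)$, i.e. $\ind^G_P(\Pi_{M,v}^\sigma) \simeq \ind^G_P(\Pi_{M,v})^\sigma$. (This is precisely where the unnormalized version is essential; normalized induction involves the non-algebraic character $\delta_P^{1/2}$.) Thus if $\sigma$ stabilizes $\Pi_M^S$, then for each $v \notin S$ the twist $\Pi_v^\sigma$ is an irreducible subquotient of $\ind^G_P(\Pi_{M,v})$; since $\Pi_{M,v}$ is unramified, this induction has a unique unramified irreducible subquotient, so $\Pi_v^\sigma \simeq \Pi_v$. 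Hence $(\Pi^S)^\sigma \simeq \Pi^S$, giving $\Q(\Pi^S) \subset \Q(\Pi_M^S)$, which is finite by hypothesis. For strong C-arithmeticity, I would also handle the ramified finite places: at each finite $v$, $\ind^G_P(\Pi_{M,v})$ has finite length, so the orbit of $\Pi_v$ under the $\Aut(\C)$-stabilizer of $\Pi_{M,v}$ is contained in a finite set. Combined with the unramified-place observation and the assumed finite $\Aut(\C)$-orbit of $\Pi_M^\infty$, the full $\Aut(\C)$-orbit of $\Pi^\infty$ is finite. The classical argument that elementary symmetric functions along a finite $\Aut(\C)$-orbit are $\Q$-valued then gives a uniform degree bound on elements of $\Q(\Pi^\infty)$, so $\Q(\Pi^\infty)/\Q$ is finite by the primitive element theorem.

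The expected main obstacle is the archimedean bookkeeping: identifying the shift between the parameters of $\Pi_{M,v}$ and of $\ind^G_P(\Pi_{M,v})$ as exactly $\rho_P$ (not $2\rho_P$, nor $-\rho_P$) and verifying $\rho_{\hat G} = \rho_{\hat M} + \rho_P$ in conventions matching Definition~\ref{d:C-algebraic}. Everything else is formal: the algebraicity of unnormalized induction on smooth representations, uniqueness of unramified subquotients, finite length of parabolic inductions at finite places, and elementary Galois-theoretic finite-orbit arguments.
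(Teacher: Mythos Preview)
Your proposal is correct and follows essentially the same approach as the paper. The paper phrases the C-algebraicity step in terms of infinitesimal characters (citing Vogan to identify these with the parameters $\chi_{\pi_\infty}$), observing that $\lambda_{\Pi_\infty}$ and $\lambda_{\Pi_{M,\infty}}+(\rho-\rho_M)$ lie in the same $W(G,T)$-orbit; your $\rho_P$ is exactly their $\rho-\rho_M$, so the two arguments are the same up to language. For (strong) C-arithmeticity the paper uses precisely your three ingredients---$\sigma$-equivariance of unnormalized induction, uniqueness of the unramified subquotient at $v\notin S$, and finite length at $v\in S$---but concludes slightly more directly by observing that each $\Q(\Pi_v)$ for $v\in S$ is finite over $\Q(\Pi_M)$ and that $\Q(\Pi)$ lies in the compositum, rather than passing through the global finite-orbit/primitive-element argument you sketch.
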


\begin{rem}
  The lemma is in fact purely local and the same argument proves the analogue for $M(F\otimes_\Q\R)$-representations.
  For normalized induction, one can prove similar statements with L- in place of C-.
\end{rem}

\begin{proof}
  We may assume $F=\Q$ by reducing the general case via restriction of scalars. Let $T$ be a maximal torus of $M$ over $\C$ and $B$ a Borel subgroup of $G$ over $\C$ containing $T$. Put $B_M:=M\cap B$. Then $\rho,\rho_M\in X^*(T)\otimes_\Z \Q$ are defined. Let $\chi_{\Pi_{M,\infty}}$ (resp. $\chi_{\Pi_\infty}$) denote the character of $X_*(\hat{T})=X^*(T)$ associated to $\varphi_{\Pi_{M,\infty}}$ (resp. $\varphi_{\Pi_{\infty}}$) as in Definition \ref{d:C-algebraic} well-defined up to $W(M,T)$-conjugacy (resp. $W(G,T)$-conjugacy).  Let $\lambda_{\Pi_{M,\infty}}$ (resp. $\lambda_{\Pi_\infty}$) denote the infinitesimal character of $\Pi_{M,\infty}$ (resp. $\Pi_\infty$).

  The condition of the lemma tells us that $\lambda_{\Pi_\infty}$ and $\lambda_{\Pi_{M,\infty}}+(\rho-\rho_M)$ are in the same $W(G,T)$-orbit in $X^*(T)\otimes_\Z \C$. On the other hand, $\lambda_{\Pi_{M,\infty}}$ and $\chi_{\Pi_{M,\infty}}$ are in the same $W(M,T)$-orbit and similarly $\lambda_{\Pi_{\infty}}$ and $\chi_{\Pi_{\infty}}$ are in the same $W(G,T)$-orbit (\cite[Prop 7.4]{Vog93}). Therefore if $\Pi_M$ is C-algebraic then so is $\Pi$.

  We check that $\Pi$ is strongly C-arithmetic if $\Pi_M$ is strongly C-arithmetic. Let $S$ be the finite set of places (including $S_\infty$) outside which $\Pi_M$ is unramified. The assumption tells us that $\Pi_v$ is a subquotient of $\ind^G_P(\Pi_{M,v})$ at every finite place $v$. Hence $\Pi_v^\sigma$ is a subquotient of $\ind^G_P(\Pi_{M,v}^\sigma)$ at every $v$ for every $\sigma\in \Aut(\C)$. (The latter implication fails if normalized induction was used and if $\sigma$ does not fix $q_v^{1/2}$.) For $v\notin S$ and $\sigma\in \Aut(\C/\Q(\Pi_M))$ we see that $\Pi_v$ and $\Pi^\sigma_v$ are isomorphic as both of them are the unique unramified subquotient of $\ind^G_P(\Pi_{M,v})$.
  For finite $v\in S$, $\Pi_v$ is C-arithmetic since $\sigma\in \Aut(\C/\Q(\Pi_M))$ permutes the finitely many irreducible subquotients of $\ind^G_P(\Pi_{M,v}^\sigma)$. Therefore $\Q(\Pi)$ is contained in the finite field extension of $\Q(\Pi_M)$ generated by $\Q(\Pi_v)$ for $v\in S$, hence $\Pi$ is strongly C-arithmetic.

  The above proof also shows that if $\Pi_M$ is C-arithmetic then $\Pi$ is C-arithmetic.
\end{proof}

\subsection{Rationality for cohomological representations}\label{sub:Clozel-BHR}

%

  Temporarily let $G$ be a connected reductive group over $\Q$. Let $\pi$ be an automorphic representation of $G(\A)$. Let $K_\infty$ be a subgroup of $G(\R)$ whose image in $G^{\ad}(\R)$ is a maximal compact subgroup. Let $K^0_\infty$ be the neutral component of $K_\infty$ with respect to the real topology. Let $Q$ be a parabolic subgroup of $G(\C)$ with Levi component $K_{\infty,\C}$. Put $\fkg:=\Lie G(\C)$ and $\fkq:=\Lie Q(\C)$.

\begin{defn}\label{d:cohomological}
  We say that $\pi$ is \textbf{cohomological} (resp. $\ol{\partial}$-\textbf{cohomological}) if $H^i(\fkg,K^0_\infty,\pi_\infty\otimes \xi)\neq 0$ (resp. $H^i(\fkq,K^0_\infty,\pi_\infty\otimes \xi)\neq 0$) for some $i\ge 0$ and some irreducible algebraic representation $\xi$ of $G(\C)$ (resp. $K_{\infty,\C}$). In this case $\pi$ is said to be $\xi$-cohomological (resp. $\xi$-$\ol{\partial}$-cohomological).
\end{defn}

\begin{lem}\label{l:GL(n)-cohomlogical}
  If $G=GL_n$ then every cuspidal regular C-algebraic automorphic representation $\pi$ of $G(\A_F)$ is cohomological.
\end{lem}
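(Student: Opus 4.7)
The strategy is to follow Clozel's argument in \cite{Clo90}. I would analyze $\pi_v$ at each archimedean place $v$ separately and show it is cohomological with coefficients in a suitable algebraic representation $\xi_v$ of $G(F_v\otimes_\R \C)$; the required $\xi$ is then the tensor product $\otimes_v \xi_v$.

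First I would use that for $G=GL_n$, cuspidality implies that $\pi_v$ is generic at every archimedean place $v$ (Jacquet--Shalika, Shalika), hence unitary generic. By the classification of generic irreducible unitary representations of $GL_n(F_v)$ (Vogan), $\pi_v$ is fully parabolically induced from an essentially tempered representation of a Levi subgroup; more precisely, the inducing data is a product of essentially discrete series of $GL_1$ and $GL_2$ blocks when $F_v=\R$, and essentially unitary characters of $GL_1$ blocks when $F_v=\C$. Here ``fully induced'' means no Langlands quotient is taken, and the unnormalized induction is already irreducible.

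Next I would translate the regular C-algebraic condition into a condition on the infinitesimal character $\lambda_{\pi_v}$. By Definition \ref{d:C-algebraic} and the regularity assumption, $\varphi_{\pi_v}|_{W_\C}$ factors through a maximal torus $\hat T\subset\hat G$ and lies in $\rho + X_*(\hat T)$ and is not fixed by any nontrivial Weyl element. Via the Harish-Chandra isomorphism and \cite[Prop.~7.4]{Vog93} (already invoked in the proof of Lemma \ref{l:arithmetic-par-ind}), this precisely says $\lambda_{\pi_v}$ is the (regular) infinitesimal character of some irreducible algebraic representation $\xi_v$ of $G(F_v\otimes_\R\C)$, whose highest weight can be read off from the parameter of $\pi_v$.

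Finally I would apply the Vogan--Zuckerman / Borel--Wallach classification of unitary representations contributing to $(\fkg,K_\infty^0)$-cohomology of $GL_n$ (see \cite[III.3 and VI.5]{BorelWallach} or \cite[Lemme 3.14]{Clo90}): for $GL_n$, the cohomological representations with coefficients in $\xi_v$ are exactly the full parabolic inductions from essentially discrete series on $GL_1\times GL_2$-type Levis (resp. from characters in the $F_v=\C$ case) whose infinitesimal character matches that of $\xi_v$. Combining this classification with Steps~1 and~2 forces $\pi_v$ to be $\xi_v$-cohomological, completing the proof.

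The main obstacle is bookkeeping: one must verify that the ``algebraic + regular'' condition of \cite{Clo90} coincides with ``C-algebraic + regular'' in the sense of Definition \ref{d:C-algebraic} (the two notions match because both incorporate the shift by $\rho$), and one must carefully identify the inducing data provided by Vogan's classification of generic unitary duals with the inducing data appearing in the Borel--Wallach classification of cohomological representations, including the precise twist making the infinitesimal character match $\xi_v$. No new ideas beyond Clozel's are required, but a clean proof demands checking these identifications at both real and complex archimedean places.
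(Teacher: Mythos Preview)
Your proposal is correct and takes essentially the same approach as the paper: the paper's proof is simply a citation to \cite[Lem 3.14]{Clo90}, and what you have written is precisely an unpacking of Clozel's argument there. The identification of Clozel's ``algebraic'' with ``C-algebraic'' that you flag as a bookkeeping issue is already recorded in the paper immediately after Definition \ref{d:C-algebraic}.
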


\begin{proof}
  Follows from \cite[Lem 3.14]{Clo90}.
\end{proof}

\begin{rem}
  If $\pi_\infty$ is an \textit{arbitrary} regular C-algebraic representation of $GL_n(\R)$, $GL_n(\C)$, or a product thereof, then there is no reason for $\pi_\infty$ to have non-vanishing cohomology as in Definition \ref{d:cohomological}. What makes the above lemma work is the condition that $\pi_\infty$ is (essentially) tempered, which is implied by the cuspidality of $\pi$, cf. \cite[Lem 4.9]{Clo90}.
\end{rem}

  From now on, let $F$ be a number field and $G$ a connected reductive group over $F$. By applying the above definition to $\Res_{F/\Q}G$ we define $K_\infty$, $Q$, $\fkg$, $\fkq$ and make sense of ($\ol{\partial}$-)cohomological representations. In light of the above remark, a sensible generalization of Lemma \ref{l:GL(n)-cohomlogical} would be the following assertion: For any connected reductive group $G$ over $F$, every cuspidal regular C-algebraic automorphic representation of $G(\A_F)$ is cohomological if its infinite component is tempered. (For a general $G$ the latter condition is not a consequence of global cuspidality. Early counterexamples are due to Kurokawa, Howe and Piatetski-Shapiro.)
  We believe that the assertion is true but were not able to verify it. In the converse direction we have

\begin{lem}\label{l:coh=C-alg}
  Any cohomological automorphic representation $\pi$ of $G(\A_F)$ is C-algebraic.
\end{lem}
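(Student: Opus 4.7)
The plan is to translate the cohomological hypothesis into a statement about the infinitesimal character of $\pi_\infty$ and then match it with the combinatorial description of C-algebraicity given in Definition~\ref{d:C-algebraic}. First I would reduce to the case $F=\Q$ by Weil restriction of scalars: replacing $G$ by $\Res_{F/\Q}G$ does not affect either the cohomological hypothesis (which is built for $\Res_{F/\Q}G$) or the notion of C-algebraicity (since the $L$-parameters at infinity for $G$ and $\Res_{F/\Q}G$ carry the same information).

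Next, the main input is the standard ``matching of infinitesimal characters'' consequence of Wigner's lemma: if $H^i(\fkg,K_\infty^0,\pi_\infty\otimes\xi)\neq 0$ for some $i$ and some irreducible algebraic $\xi$, then $\pi_\infty$ and $\xi^\vee$ share the same infinitesimal character. If $\lambda\in X^*(T)$ denotes the highest weight of $\xi^\vee$ with respect to $(T,B)$, then by the Harish-Chandra isomorphism this common infinitesimal character is $\lambda_{\pi_\infty}=\lambda+\rho\in X^*(T)\otimes_\Z \C$, viewed up to $W(G,T)$-conjugacy. In particular $\lambda_{\pi_\infty}\in\rho+X^*(T)=\rho+X_*(\hat T)$.

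Now invoke the compatibility between the infinitesimal character and the Langlands parameter at infinity (this is \cite[Prop.~7.4]{Vog93}, already used in the proof of Lemma~\ref{l:arithmetic-par-ind}): up to $W(G,T)$-action, the character $\chi_{\pi_\infty}\colon W_\C\to\hat T$ coming from $\varphi_{\pi_\infty}|_{W_\C}$ coincides with $\lambda_{\pi_\infty}$. Hence some Weyl conjugate of $\chi_{\pi_\infty}$ lies in $\rho+X_*(\hat T)$. To finish, I note that the set $\rho+X_*(\hat T)$ is stable under $W(\hat G,\hat T)=W(G,T)$: for any $w\in W$ and any $\mu\in X_*(\hat T)$ we have $w(\rho+\mu)=w\rho+w\mu$, and $\rho-w\rho$ lies in the coroot lattice, which is contained in $X_*(\hat T)$. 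Therefore $\chi_{\pi_\infty}\in\rho+X_*(\hat T)$ itself, which is precisely the C-algebraicity condition.

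I expect no serious obstacle: the argument is essentially the translation between two standard parameterizations of the archimedean data. The one spot requiring a little care is the bookkeeping between $\xi$ and $\xi^\vee$ (and the sign conventions for highest weights and $\rho$) when reading off the infinitesimal character, but the final conclusion is insensitive to this because $-w_0$ preserves $\rho+X^*(T)$. The reduction to $F=\Q$ and the verification that the isomorphism $X^*(T)\cong X_*(\hat T)$ intertwines the two $W$-actions are both routine.
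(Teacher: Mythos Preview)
Your proposal is correct and follows essentially the same approach as the paper: reduce to $F=\Q$, use that $\xi$-cohomological forces the infinitesimal character of $\pi_\infty$ to equal $\lambda_{\xi^\vee}+\rho$, and then invoke \cite[Prop.~7.4]{Vog93} to identify this with $\chi_{\pi_\infty}$ up to Weyl conjugacy. Your explicit check that $\rho+X_*(\hat T)$ is $W$-stable (via $\rho-w\rho$ lying in the coroot lattice) is a slightly more detailed version of what the paper records as ``independently of the choices so far''.
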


\begin{proof} We may assume $F=\Q$. Let $T$ be a maximal torus over $\C$ and $B$ a Borel subgroup of $G$ over $\C$ containing $T$.
  Let $\lambda_{\xi^\vee}\in X^*(T)$ be the highest weight vector for $\xi^\vee$ with respect to $(B,T)$ where $\xi$ is as above. Let $\chi_{\pi_\infty}\in X_*(\hat{T})\otimes_\Z \C=X^*(T)\otimes_\Z \C$ be the character determined by $\varphi_{\pi_\infty}|_{W_{\C}}$ as in Definition \ref{d:C-algebraic}. Then $\chi_{\pi_\infty}$ is well-defined up to $W(G,T)$-conjugacy.
  If $\pi$ is $\xi$-cohomological then the infinitesimal character of $\pi_\infty$ is the same as that of $\xi^\vee$, namely $\lambda_{\xi^\vee}+\rho$. Hence $\chi_{\pi_\infty}$ and $\lambda_{\xi^\vee}+\rho$ are in the same $W(G,T)$-orbit (\cite[Prop 7.4]{Vog93}). We conclude that $\chi_{\pi_\infty}-\rho\in X^*(T)$ independently of the choices so far and that $\pi_\infty$ is C-algebraic.
\end{proof}

  Roughly speaking, cohomological (cuspidal) automorphic representations are important in that they are realized in the Betti cohomology (or \'{e}tale cohomology via comparison theorem) of locally symmetric quotients associated with $G$. This plays a fundamental role in Clozel's work for $G=GL_n$, cf. Remark \ref{r:Clozel} below. In work of Blasius-Harris-Ramakrishnan (cf. Proposition \ref{p:BHR} below) they prove C-arithmeticity by realizing cuspidal automorphic representations in the coherent cohomology of Shimura varieties, which is possible for $\ol{\partial}$-cohomological representations.

  We would like to show C-arithmeticity for a large class of cohomological representations by realizing them in the Betti cohomology of locally symmetric quotients with coefficient sheaves defined over number fields. This must be well known to experts, the idea being similar to \cite{Wal85a} and \cite{Clo90}, but we provide some details as there does not seem to be a handy reference for the general case.

  For any sufficiently small open compact subgroup $U\subset G(\A_F^\infty)$, consider the manifold $$S_U(G):=G(F)\bs G(\A_F)/ UK^0_{\infty}$$ with finitely many connected components. Let $\xi$ be an irreducible algebraic representation of $\Res_{F/\Q}G$ over $\C$ and denote by $\cL_\xi$ the associated local system of $\C$-vector spaces on $S_U(G)$. (By abuse of notation we omit the reference to $U$ in $\cL_\xi$.) Such a $\xi$ admits a model $\xi_E$ over a number field $E$ (so that $\xi_E\otimes_E \C\simeq \xi$) and one can use the highest weight theory to show that $\cL_\xi$ also admits a model $\cL_{\xi,E}$, a local system of $E$-vector spaces. For $i\ge 0$ define
\beq\label{e:H(Betti)}
  H^i(S(G),\cL_\xi):=\dirlim{U} H^i(S_U(G),\cL_\xi)
\eeq
  and similarly $H^i(S(G),\cL_{\xi,E})$. The usual Hecke action equips $H^i(S(G),\cL_\xi)$ (resp. $H^i(S(G),\cL_{\xi,E})$) with the structure of admissible $\C[G(\A^\infty)]$-module (resp. $E[G(\A^\infty)]$-module), where admissibility corresponds to the fact that $H^i(S_U(G),\cL_\xi)=H^i(S(G),\cL_\xi)^U$ is finite dimensional.

   Much work has been done to decompose $H^i(S(G),\cL_\xi)$ by means of automorphic representations. When $S_U(G)$ are compact, Matsushima's formula does the job. Results in the general case are due to Franke, Harder, Li, Schwermer and others. 
   This enables us to show C-arithmeticity for cuspidal representations.

\begin{prop}\label{p:arithmetic-cohomological}
  Let $\pi$ be a cuspidal $\xi$-cohomological automorphic representation of $G(\A_F)$. Then
 \benu
 \item $\pi^\infty$ is a $G(\A^\infty_F)$-module direct summand of $H^i(S(G),\cL_\xi)$ for some $i\ge 0$.
 \item $\pi$ is strongly C-arithmetic.
 \eenu
\end{prop}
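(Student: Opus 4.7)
The plan is to realize $\pi^\infty$ inside the Betti cohomology of arithmetic locally symmetric spaces with coefficients in a local system defined over a number field $E$, then use the finite-dimensionality of the $E$-rational cohomology to bound the $\Aut(\C)$-orbit of $\pi^\infty$, and finally deduce that $\Q(\pi)$ is finite over $\Q$.

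For part (i), I would first descend $\xi$ to an irreducible representation $\xi_E$ over a suitable number field $E$, producing a local system $\cL_{\xi,E}$ of $E$-vector spaces on $S_U(G)$ with $\cL_{\xi,E}\otimes_E\C=\cL_\xi$. Next I would invoke the well-known decomposition of cuspidal cohomology (Matsushima in the compact case, and Borel, Franke, and Schwermer in the non-compact case): the cuspidal cohomology
$$H^i_\cusp(S(G),\cL_\xi)=\bigoplus_{\pi'\textrm{ cuspidal}}m(\pi')\,(\pi')^\infty\otimes H^i(\fkg,K^0_\infty,\pi'_\infty\otimes\xi)$$
is a Hecke-stable direct summand of $H^i(S(G),\cL_\xi)$. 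The $\xi$-cohomological hypothesis on $\pi$ then guarantees that $\pi^\infty$ appears with positive multiplicity in some $H^i(S(G),\cL_\xi)$, establishing (i).

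For part (ii), the fundamental observation is that $H^i(S_U(G),\cL_{\xi,E})$ is a finite-dimensional $E$-vector space for each sufficiently small open compact $U\subset G(\A_F^\infty)$, and the Hecke action on it is $E$-linear since the Hecke correspondences are defined over $\Q$. Any $\sigma\in\Aut(\C/E)$ acts $\sigma$-semilinearly on $H^i(S_U(G),\cL_\xi)=H^i(S_U(G),\cL_{\xi,E})\otimes_E\C$ by $1\otimes\sigma$, commuting with the Hecke action, so it sends the $\pi^\infty$-isotypic summand to the $(\pi^\infty)^\sigma$-isotypic summand. Fixing $U$ small enough that $(\pi^\infty)^U\neq 0$, each representative of the $\Aut(\C/E)$-orbit of $\pi^\infty$ contributes a summand of the same positive $\C$-dimension to $H^i(S_U(G),\cL_\xi)$, but the total dimension is bounded by $\dim_E H^i(S_U(G),\cL_{\xi,E})<\infty$. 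Hence the orbit is finite. Enlarging $E$ if necessary to be Galois over $\Q$, the restriction map $\Aut(\C)\to \Aut(E/\Q)$ is surjective with kernel $\Aut(\C/E)$, so $[\Aut(\C):\Aut(\C/E)]=[E:\Q]<\infty$; consequently the orbit of $\pi^\infty$ under the full group $\Aut(\C)$ is also finite, say of cardinality $N$. The stabilizer $H$ of $\pi^\infty$ in $\Aut(\C)$ has index $\leq N$, so every $z\in\Q(\pi)=\C^H$ has an $\Aut(\C)$-orbit of size $\leq N$, hence is algebraic over $\Q$ of degree $\leq N$. A subfield of $\ol{\Q}$ every element of which has degree $\leq N$ over $\Q$ is itself a finite extension of degree $\leq N$ (choose an element of maximal degree; any strictly larger subfield would contain, by the primitive element theorem, an element of larger degree), yielding $[\Q(\pi):\Q]\leq N$.

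The main obstacle is the input to part (i) from the general theory of automorphic forms: one must know that cuspidal cohomology is a Hecke-equivariant direct summand, not merely a subquotient, of the full Betti cohomology in the non-compact case, which ultimately rests on Franke's theorem on the decomposition of the space of automorphic forms. Once this is granted, the remainder is a fairly standard descent argument in the spirit of Clozel's treatment of $GL_n$ in \cite{Clo90} and of Blasius--Harris--Ramakrishnan in the coherent setting; the only delicate point is the $\Q$-rationality of the Hecke correspondences, which is automatic from their definition via $G(\A_F^\infty)$-double cosets.
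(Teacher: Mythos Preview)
Your proof is correct and follows essentially the same strategy as the paper: realize $\pi^\infty$ in Betti cohomology with coefficients in an $E$-rational local system, then use finite-dimensionality to bound the Galois orbit. Part (i) is handled identically (citing Franke--Schwermer for the cuspidal summand).

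The one noteworthy difference is in the descent step for (ii). The paper first passes to $\ol{\Q}$-coefficients: it invokes Burnside's theorem to show that the irreducible $\cH_U(G(\A^\infty_F),\ol{\Q})$-subquotients of $H^i(S_U(G),\cL_{\xi,E}\otimes_E\ol{\Q})$ correspond bijectively to those over $\C$, finds a $\ol{\Q}$-model $W$ of $(\pi^\infty)^U$, and then lets $\Gal(\ol{\Q}/E)$ permute the finitely many Jordan--H\"older factors. You instead work directly over $\C$, letting $\Aut(\C/E)$ act $\sigma$-semilinearly on $H^i(S_U(G),\cL_\xi)$ and bounding the orbit by dimension. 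Your route is slightly more elementary in that it avoids Burnside, at the cost of one extra step (passing from $\Aut(\C/E)$ to $\Aut(\C)$ and then bounding $[\Q(\pi):\Q]$ via the primitive-element argument). One small imprecision: your phrase ``contributes a summand'' tacitly assumes the distinct $(\pi^\infty)^\sigma$ sit in independent pieces, which is not automatic if the full cohomology is not semisimple; it is cleaner to count Jordan--H\"older constituents (as the paper does), which gives the same conclusion without that assumption.
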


\begin{rem}\label{r:Clozel}
  Clozel has shown this for general linear groups (\cite[Th 3.13, Lem 3.14, 3.15]{Clo90}). We are adapting his ideas to the case of arbitrary reductive groups. (See also the last paragraph of \S7 in \cite{BG} for the case of trivial coefficients.)
\end{rem}

\begin{rem}
  When $G=GL_n$ we know moreover that $\Q(\pi)$ is a totally real or CM field, cf. \cite[Cor 6.2.3]{Pat}. The argument requires to know the subtle point that twists of $\pi^\infty$ by $\Aut(\C)$ are finite parts of automorphic representations of $G(\A_F)$. As this is not known in general, it seems difficult to check whether $\Q(\pi)$ is a totally real or CM field for an arbitrary reductive group. However see Proposition \ref{p:BHR}.(ii) below.
\end{rem}

\begin{proof}

  Part (i) follows from the description of the cuspidal part of $H^i(S(G),\cL_\xi)$ via Lie algebra cohomology (\cite[(13.6)]{Sch10}, \cite{FS98}). Note that the cuspidal part is a direct summand, cf. page 242 of \cite{Sch10}. Part (ii) can be shown by arguing as in the proof of \cite[Prop 3.16]{Clo90}. The argument is sketched here for the convenience of the reader.

  Let $U=\prod_{v\nmid \infty} U_v\subset G(\A^\infty_F)$ be a sufficiently small open compact subgroup such that $(\pi^\infty)^U\neq 0$. Then $(\pi^\infty)^U$ is a direct summand of $H^i(S_U(G),\cL_\xi)$ and moreover irreducible as a $\cH_U(G(\A^\infty_F),\C)$-module. (This follows from the irreducibility criterion of \cite[p.179]{Fla79}.)
  Since the field of definition is the same for $\pi^\infty$ as a $G(\A^\infty_F)$-module and for $(\pi^\infty)^U$ as a $\cH_U(G(\A^\infty_F),\C)$-module, it is enough to show that the isomorphism class of $(\pi^\infty)^U$ is fixed under a finite index subgroup of $\Aut(\C)$.

   We start by finding a model of $(\pi^\infty)^U$ on a $\ol{\Q}$-vector space.
   Burnside's theorem implies that irreducible $\cH_U(G(\A^\infty_F),\C)$-module subquotients of $H^i(S_U(G),\cL_\xi)$ and those of the $\cH_U(G(\A^\infty_F),\ol{\Q})$-module $H^i(S_U(G),\cL_{\xi,E}\otimes_E \ol{\Q})$ correspond bijectively.\footnote{Consider the Jordan-H\"older quotients $M_1,...,M_k$ of $H^i(S_U(G),\cL_{\xi,E}\otimes_E \ol{\Q})$. By Burnside's theorem, the $\ol{\Q}$-algebra morphism from $\cH_U(G(\A^\infty_F),\ol{\Q})$ to $\End_{\ol{\Q}}(M_j)$ is onto. So the Jordan-H\"older quotients remain irreducible after $\otimes_{\ol{\Q}} \C$.} In particular there is an irreducible $\cH_U(G(\A^\infty_F),\ol{\Q})$-module subquotient $W$ of $H^i(S_U(G),\cL_{\xi,E}\otimes_E \ol{\Q})$ such that $W\otimes_{\ol{\Q}} \C\simeq (\pi^\infty)^U$. Since $\sigma\in \Gal(\ol{\Q}/E)$ induces a $\sigma$-linear self-automorphism of $H^i(S_U(G),\cL_{\xi,E}\otimes_E \ol{\Q})$ as a $\cH_U(G(\A^\infty_F),\ol{\Q})$-module, the induced action permutes the irreducible subquotients of $H^i(S_U(G),\cL_{\xi,E}\otimes_E \ol{\Q})$ (the point being that $\cH_U(G(\A^\infty_F),\ol{\Q})$ has a natural $\Q$-structure). We see from the finite-dimensionality of the latter space that the isomorphism class of $W$ is fixed by a finite index subgroup of $\Gal(\ol{\Q}/E)$ as desired.
\end{proof}

\begin{cor}\label{c:Clozel-noncuspidal} Let $M$ be a Levi subgroup of an $F$-rational parabolic subgroup of $G$.
  Any automorphic representation of $G(\A_F)$ appearing as a subquotient of an unnormalized parabolic induction of a cuspidal cohomological automorphic representation of $M(\A_F)$ is C-algebraic and strongly C-arithmetic.
\end{cor}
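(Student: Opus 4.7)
The plan is to deduce the corollary directly by chaining together three results already established in the excerpt: Lemma \ref{l:coh=C-alg}, Proposition \ref{p:arithmetic-cohomological}, and Lemma \ref{l:arithmetic-par-ind}. Let $\sigma_M$ denote the cuspidal cohomological automorphic representation of $M(\A_F)$ in the hypothesis, and let $\Pi$ be an irreducible subquotient of $\ind^G_P(\sigma_M)$ for some $F$-rational parabolic $P$ with Levi $M$.

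First, I would observe that the notions of cohomological representation, C-algebraic representation, and (strong) C-arithmetic representation are defined for any connected reductive group over a number field, so they apply verbatim to the reductive group $M$ over $F$. Then Lemma \ref{l:coh=C-alg}, applied to $M$ in place of $G$, shows that $\sigma_M$ is C-algebraic, and Proposition \ref{p:arithmetic-cohomological}(ii), applied to $M$ in place of $G$, shows that $\sigma_M$ is strongly C-arithmetic. (Here I am using that the proof of Proposition \ref{p:arithmetic-cohomological} only requires $M$ to be a connected reductive group over $F$ and uses no special feature of $G$ other than that.)

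Finally, Lemma \ref{l:arithmetic-par-ind}, applied to the pair $(M,G)$ and the representation $\sigma_M$, transports these two properties across unnormalized parabolic induction: the C-algebraicity of $\sigma_M$ is equivalent to the C-algebraicity of $\Pi$, and the strong C-arithmeticity of $\sigma_M$ implies the strong C-arithmeticity of $\Pi$. This immediately yields the conclusion. There is essentially no obstacle here; the only point requiring care is to verify that the chain of implications uses unnormalized induction throughout, since normalized induction can fail to preserve strong C-arithmeticity when $\sigma \in \Aut(\C)$ does not fix the local square roots $q_v^{1/2}$, a subtlety already addressed in the proof of Lemma \ref{l:arithmetic-par-ind}.
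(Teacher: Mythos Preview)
Your proposal is correct and follows exactly the same approach as the paper, which simply cites Lemma \ref{l:arithmetic-par-ind}, Lemma \ref{l:coh=C-alg}, and Proposition \ref{p:arithmetic-cohomological} as an immediate consequence. Your additional remarks about applying the lemmas with $M$ in place of $G$ and about the role of unnormalized induction are accurate elaborations of the same argument.
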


\begin{proof}
  Immediate from Lemma \ref{l:arithmetic-par-ind}, Lemma \ref{l:coh=C-alg} and Proposition \ref{p:arithmetic-cohomological}.
\end{proof}

  In the rest of this subsection we briefly recall some results of Blasius, Harris and Ramakrishnan for the sake of completeness, even though their results will not be used in this paper. Under a restrictive hypothesis (cf. \cite[\S0.1]{BHR94}), namely that $\Res_{F/\Q}G$ is of hermitian symmetric type so that $G(F\otimes_\Q\R)/K_\infty$ admits a $G(F\otimes_\Q\R)$-invariant complex structure, the three authors have shown:

\begin{prop}\label{p:BHR} Keep the hypothesis in the above paragraph. Let $\pi$ be any automorphic representation of $G(\A_F)$ such that $\pi_\infty$ is a nondegenerate limit of discrete series or a discrete series representation of $G(F\otimes_\Q \R)$ whose restriction to the maximal $\R$-split torus of $(\Res_{F/\Q}G)(\R)$ is algebraic. Then
\benu
\item any such $\pi$ is $\ol{\partial}$-cohomological, C-algebraic and
\item if $\pi$ is moreover cuspidal then $\Q(\pi)$ is either a totally real or a CM field (in particular $\pi$ is strongly C-arithmetic).
\eenu
\end{prop}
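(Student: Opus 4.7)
The plan is to realize $\pi^\infty$ as a Hecke-module summand of the coherent cohomology of the Shimura variety attached to the hermitian symmetric datum. The hypothesis endows $(\Res_{F/\Q}G)(\R)/K_\infty$ with a $G(F\otimes_\Q\R)$-invariant complex structure, and together with a Shimura-datum morphism this makes the tower $\{S_U(G)\}$ into a Shimura variety $\Sh_U(G,X)$ with a canonical model over its reflex field $E=E(G,X)\subset\ol{\Q}$; by a theorem of Deligne, $E$ is either totally real or a CM field.

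For part (i), I would invoke the realization theorem of Schmid for discrete series (and its extension to nondegenerate limits of discrete series): given the algebraicity hypothesis on the restriction of the central character of $\pi_\infty$ to the maximal $\R$-split torus, there exists an irreducible algebraic representation $W$ of $K_{\infty,\C}$ such that $H^i(\fkq,K^0_\infty,\pi_\infty\otimes W)\neq 0$ for some $i\ge 0$, which is $\ol{\partial}$-cohomologicality. C-algebraicity then follows by the argument of Lemma \ref{l:coh=C-alg}: the infinitesimal character of $\pi_\infty$ must match that of $W^\vee$, namely $\lambda_{W^\vee}+\rho\in X^*(T)$, forcing the character $\chi_{\pi_\infty}$ of Definition \ref{d:C-algebraic} to lie in $\rho+X^*(T)$.

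For part (ii), the $\ol{\partial}$-cohomologicality of (i) places $(\pi^\infty)^U$ inside $H^i(\Sh_U(G,X),\cE_W)$, where $\cE_W$ is the automorphic vector bundle associated with $W$. Both the Shimura variety and $\cE_W$ admit canonical models over $E$, so a Burnside-theorem argument parallel to that in the proof of Proposition \ref{p:arithmetic-cohomological} (with coherent cohomology replacing Betti cohomology and $E$ replacing $\Q$) produces a $\ol{\Q}$-model of $(\pi^\infty)^U$ whose isomorphism class is stabilized by an open subgroup of $\Gal(\ol{\Q}/E)$. This yields that $\Q(\pi)\cdot E$, and therefore $\Q(\pi)$, is contained in a number field, giving strong C-arithmeticity. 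To upgrade "number field" to "totally real or CM", I would couple this with the involution on coherent cohomology induced by complex conjugation acting on $\Sh_U(G,X)$ via its canonical model over $E$, matched on the automorphic side with the contragredient; since complex conjugation acts on $E$ in a controlled way (either trivially, or by the CM involution), this extra symmetry forces $\Q(\pi)$ to be stable under complex conjugation in the appropriate manner.

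The main obstacle is the last step: producing the complex-conjugation involution on coherent cohomology and verifying its compatibility with both the Hecke action and the contragredient on the automorphic side. This is the technical heart of the Blasius--Harris--Ramakrishnan argument and depends essentially on the Hodge-theoretic origin of the Shimura variety (and on a careful choice of rational structure on $\cE_W$ compatible with the canonical model over $E$), rather than on any formal manipulation of cohomology. Once this compatibility is established, the CM-or-totally-real conclusion is a bookkeeping exercise in Galois descent.
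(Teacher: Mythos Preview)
Your outline is essentially a sketch of the Blasius--Harris--Ramakrishnan argument itself, which is exactly what the paper invokes: the paper's ``proof'' is simply a citation to \cite[Thms~3.2.1 and 4.4.1]{BHR94}, together with a one-line remark that C-algebraicity follows from the description of the infinitesimal character in \cite[Thm~3.2.1]{BHR94} by the method of Lemma~\ref{l:coh=C-alg}, and the observation that any subfield of a CM field is totally real or CM. So you are not taking a different route; you are unpacking what the reference contains.

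One technical point in part (i) deserves correction. Your formula ``the infinitesimal character of $\pi_\infty$ must match that of $W^\vee$, namely $\lambda_{W^\vee}+\rho$'' is not right as written: $W$ is a $K_{\infty,\C}$-representation, not a $G(\C)$-representation, so the Wigner-lemma mechanism behind Lemma~\ref{l:coh=C-alg} does not apply verbatim to $(\fkq,K^0_\infty)$-cohomology. The cleaner route (and the one implicit in the paper's remark) is to use directly that the Harish-Chandra parameter of a discrete series or nondegenerate limit of discrete series lies in $\rho + X^*(T)$ for a compact Cartan $T$ of the derived group, and then to use the algebraicity hypothesis on the maximal $\R$-split torus to handle the central direction. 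This is what \cite[Thm~3.2.1]{BHR94} records, and it is the ``description of the infinitesimal character'' the paper points to.

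For part (ii) your outline is accurate and you correctly isolate the genuine content: constructing the complex-conjugation involution on coherent cohomology and checking its compatibility with the Hecke action and with the contragredient. Note also the small but useful endgame observation the paper makes: once BHR show $\Q(\pi)$ sits inside a CM field, the fact that any subfield of a CM field is itself totally real or CM finishes the argument without further work on your ``stable under complex conjugation'' step.
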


\begin{rem}
  One can extend part (ii) beyond the cuspidal case by applying Lemma \ref{l:arithmetic-par-ind} as it was done in Corollary \ref{c:Clozel-noncuspidal}.
\end{rem}

\begin{proof}
 This is Theorems 3.2.1 and 4.4.1 of \cite{BHR94} except for the C-algebraicity of $\pi$, which is easy to deduce from the description of the infinitesimal character of $\pi_\infty$ in \cite[Thm 3.2.1]{BHR94} by an argument as in the proof of Lemma \ref{l:coh=C-alg}. Note that a subfield of a CM field is either totally real or CM.
\end{proof}

\subsection{Satake parameters under functoriality}\label{sub:Satake-parameters}

  Let $H$ and $G$ be connected reductive groups over a number field $F$. We form their $L$-groups using the full Galois group over $F$ rather than a finite Galois group or the Weil group. (Later we use the Weil group in the case of even orthogonal groups. In that case the material of this subsection can still be adapted. See \S\ref{sub:transfer-app}.) Let $\eta:{}^L H\ra {}^L G$ be an L-morphism. Let $(\hat{B}_H,\hat{T}_H)$ (resp. $(\hat{B},\hat{T})$) be a pair of a Borel subgroup of $\hat{H}$ (resp. $\hat{G}$) and a maximal torus contained in it. We may choose $(\hat{B},\hat{T})$ such that $\eta(\hat{T}_H)\subset \hat{T}$ (and $\eta(\hat{B}_H)\subset \hat{B}$ but the latter is unnecessary for us). These data determine $\rho_H\in \frac{1}{2} X_*(\hat{T}_H)$ and $\rho\in \frac{1}{2} X_*(\hat{T})$ as the half sums of all positive coroots in $\hat{T}_H$ and $\hat{T}$, respectively. Moreover $\eta$ induces $\eta_*:X_*(\hat{T}_H)\ra X_*(\hat{T})$.

\begin{defn}\label{d:C-preserving}
    An $L$-morphism $\eta:{}^L H \ra {}^L G$ is said to be \textbf{C-preserving} if $\rho-\eta_*(\rho_H)$ at each $v|\infty$ belongs to $X_*(\hat{T})$ (rather than just $\frac{1}{2}X_*(\hat{T})$).
\end{defn}

In view of Definition \ref{d:C-algebraic}, a C-preserving L-morphism carries L-packets of C-algebraic representations to L-packets of C-algebraic representations at infinite places.
 The C-preserving property does not depend on the choice of maximal tori and Borel subgroups. Indeed one can go between different maximal tori in $\hat{H}$ (resp. $\hat{G}$) by conjugation. Moreover if $\hat{T}_H$ is fixed, another choice of $\hat{B}_H$ changes $\rho_H$ by a Weyl group element $w_H$ for $\hat{H}$, but clearly $w_H\rho_H-\rho_H\in X_*(\hat{T}_H)$ so $\rho-\eta_*(\rho_H)$ is shifted by an element of $X_*(\hat{T})$ (rather than just $\frac{1}{2} X_*(\hat{T})$). A similar argument shows the independence of the choice of $\hat{B}$ as well.

  The aim of this subsection is to show that for a C-preserving $L$-morphism, the transfer of unramified representations is compatible with twisting by field automorphisms of $\C$. We begin with some preparation. Let $S$ be a finite set of places of $F$ containing $S_\infty$ such that $H$, $G$ and $\eta$ are unramified whenever $v\notin S$. From now on assume $v\notin S$. Let $A_v$ be a maximal $F_v$-split torus of $G$, and $T_v$ be the centralizer of $A_v$ in $G$ over $F_v$. Let $B_v$ be a Borel subgroup of $G$ containing $T_v$. Define $\rho_{v}\in \frac{1}{2} X_*(A_v)$ to be the half sum of all $F_v$-rational $B_v$-positive roots relative to $A_v$. Write $q_v^{1/2}$ for the positive square root of $q_v$. Denote by $\sgn_{\sigma,\rho_v}:T_v(F_v)\ra \{\pm1\}$ a character defined via the following composite map
  $$T_v(F_v)\ra T_v(F_v)/T_v(\cO_v) \simeq X_*(A_v) \ra \{\pm 1\}$$
  where $\lambda\in X_*(A_v)$ is sent to $\lambda(\varpi_v)\in T_v(F_v)/T_v(\cO_v)$ under the isomorphism in the middle and to $(\sigma(q_v^{1/2})/q_v^{1/2})^{\lg \lambda,2\rho_{v}\rg}\in \{\pm 1\}$ under the last map. (In particular $\sgn_{\sigma,\rho_v}(\lambda)=1$ if either $q_v^{1/2}\in \Q$ or $\lg \lambda,\rho_{v}\rg\in \Z$.) Likewise $A_{H,v}$, $T_{H,v}$, $B_{H,v}$, $\rho_{H,v}$ and $\sgn_{\sigma,\rho_{H,v}}$ are defined for $H$.
  Write $\delta^{1/2}_{B_v}:T_v(F_v)\ra \R_{>0}^\times$ for the modulus character, which factors through the character $\lambda\mapsto (q_v^{1/2})^{\lg\lambda,2\rho_v\rg}$ from $X_*(A_v)$ to $\R_{>0}^\times$.

\begin{lem}\label{l:twist-unram-rep} Suppose $v\notin S$ and let $\chi_v:T_v(F_v)\ra \C^\times$ be a continuous character.
  If $\pi_v\in \Irr^{\ur}(G(F_v))$ is a subquotient of $\nind^{G(F_v)}_{B_v(F_v)}(\chi_v)$ then for every $\sigma\in \Aut(\C)$, $\pi_v^\sigma$ is a subquotient of $\nind^{G(F_v)}_{B_v(F_v)}(\chi^\sigma_v\otimes \sgn_{\sigma,\rho_v})$. The exact analogue holds true for $H$.
\end{lem}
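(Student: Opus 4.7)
The plan is to reduce the normalized induction to the unnormalized one, which commutes cleanly with the $\sigma$-twist, and then track how the normalizing factor $\delta_{B_v}^{1/2}$ transforms under $\sigma$; the discrepancy will turn out to be exactly the sign character $\sgn_{\sigma,\rho_v}$, by construction of the latter.

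First I would observe that the unnormalized induction functor commutes with $\Aut(\C)$-twists. For any continuous character $\chi$ of $T_v(F_v)$ inflated to $B_v(F_v)$, the defining equivariance condition $f(bg)=\chi(b)f(g)$ involves no $q_v^{1/2}$, so pointwise application of a $\sigma$-semilinear isomorphism on the coefficient line yields a $G(F_v)$-equivariant identification
$$\ind^{G(F_v)}_{B_v(F_v)}(\chi)^\sigma\simeq \ind^{G(F_v)}_{B_v(F_v)}(\chi^\sigma).$$
Second, I would compute $(\delta_{B_v}^{1/2})^\sigma=\sigma\circ\delta_{B_v}^{1/2}$. Since $\delta_{B_v}^{1/2}$ takes values in $q_v^{(1/2)\Z}$ and $T_v(\cO_v)$ is compact, $\delta_{B_v}^{1/2}$ is trivial on $T_v(\cO_v)$ and factors through $T_v(F_v)/T_v(\cO_v)\simeq X_*(A_v)$ as $\lambda\mapsto (q_v^{1/2})^{\langle\lambda,2\rho_v\rangle}$. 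Applying $\sigma$ and dividing by $\delta_{B_v}^{1/2}$ recovers exactly the defining formula for $\sgn_{\sigma,\rho_v}$, giving the identity
$$(\delta_{B_v}^{1/2})^\sigma=\delta_{B_v}^{1/2}\cdot \sgn_{\sigma,\rho_v}$$
of characters on $T_v(F_v)$.

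Combining the two steps, if $\pi_v$ is a subquotient of $\nind^{G(F_v)}_{B_v(F_v)}(\chi_v)=\ind^{G(F_v)}_{B_v(F_v)}(\chi_v\cdot\delta_{B_v}^{1/2})$, then $\pi_v^\sigma$ is a subquotient of
$$\ind^{G(F_v)}_{B_v(F_v)}\bigl((\chi_v\cdot\delta_{B_v}^{1/2})^\sigma\bigr)=\ind^{G(F_v)}_{B_v(F_v)}\bigl(\chi_v^\sigma\cdot\delta_{B_v}^{1/2}\cdot\sgn_{\sigma,\rho_v}\bigr)=\nind^{G(F_v)}_{B_v(F_v)}\bigl(\chi_v^\sigma\otimes\sgn_{\sigma,\rho_v}\bigr),$$
which is the claim; the case of $H$ is identical. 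The only (mild) obstacle will be bookkeeping around $q_v^{1/2}$: one must check that both $\delta_{B_v}^{1/2}$ and $\sgn_{\sigma,\rho_v}$ factor through the same quotient $T_v(F_v)/T_v(\cO_v)\simeq X_*(A_v)$, which reduces to the integrality of $\langle\lambda,2\rho_v\rangle$ since $2\rho_v$ lies in the $A_v$-root lattice, and that $\sgn_{\sigma,\rho_v}$ really is a character with values in $\{\pm 1\}$, which holds because $\sigma(q_v^{1/2})/q_v^{1/2}=\pm 1$ as $\sigma$ fixes $q_v\in\Q$.
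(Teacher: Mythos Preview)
Your proof is correct and follows essentially the same approach as the paper: reduce to unnormalized induction (which commutes with $\sigma$-twisting), then compute $(\delta_{B_v}^{1/2})^{\sigma}/\delta_{B_v}^{1/2}=\sgn_{\sigma,\rho_v}$ directly from the definitions. Your write-up is in fact slightly more explicit than the paper's in justifying the two ingredients.
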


\begin{proof}
  Recall that the unnormalized parabolic induction commutes with $\sigma$-twisting, cf. Lemma \ref{l:arithmetic-par-ind}. So $\pi^{\sigma}_v$ is an unramified subquotient of the following representation: all inductions below are from $B_v(F_v)$ to $G(F_v)$.
  $$\nind(\chi_v)^{\sigma} = \ind(\chi_v\otimes \delta_{B_v}^{1/2})^{\sigma}
  = \ind(\chi^{\sigma}_v\otimes (\delta^{1/2}_{B_v})^{\sigma}) = \nind(\chi^{\sigma}_v\otimes (\delta^{1/2}_{B_v})^{\sigma}/\delta^{1/2}_{B_v})).$$
  By definition $(\delta^{1/2}_{B_v})^{\sigma}/\delta^{1/2}_{B_v}=\sgn_{\sigma,\rho_v}$. Since a principal series representation has a unique unramified subquotient, the first part of the lemma follows. The argument for $H$ is the same.
\end{proof}

  We have that $\eta$ is unramified at $v\notin S$, so it comes from a map on $\Fr_v$-cosets $\hat{H}\rtimes \Fr_v\ra \hat{G}\rtimes \Fr_v$, again denoted $\eta$. The Satake isomorphism provides a canonical bijection between the set of $\hat{G}$-conjugacy classes in $\hat{G}\rtimes \Fr_v$ (resp. ($\hat{H}$-conjugacy classes in $\hat{H}\rtimes \Fr_v$) with $\Irr^{\ur}(G(F_v))$ (resp. $\Irr^{\ur}(H(F_v))$). Write $$\eta_*:\Irr^{\ur}(H(F_v))\ra\Irr^{\ur}(G(F_v))$$ for the map induced by $\eta$.

\begin{lem}\label{l:bound-fiber}
  Let $v\notin S$ and suppose that $\eta:{}^L H \ra {}^L G$ is an $L$-morphism with finite kernel. (So $\eta$ is unramified.) Then there exists $N\in \Z_{>0}$ such that every fiber of $\eta_*$ has cardinality at most $N$.
\end{lem}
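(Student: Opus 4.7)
The plan is to translate the problem via the Satake isomorphism into counting fibers of a morphism between Weyl-group quotients of complex tori, and then bound those fibers by a Weyl group order times the kernel of a torus morphism, uniformly in $v$. The hard part will be the bookkeeping: compatibly choosing Frobenius-stable tori, identifying $\eta_*$ with the torus-level map after Satake, and controlling a finite cohomological contribution uniformly in $v$.

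First I would fix $\Fr_v$-stable maximal tori $\hat T_H \subset \hat H$ and $\hat T_G \subset \hat G$ with $\eta(\hat T_H) \subset \hat T_G$. Since $\eta$ is Galois-equivariant, $\eta(\hat T_H)$ is a $\Fr_v$-stable torus in $\hat G$, so its centralizer $Z_{\hat G}(\eta(\hat T_H))^\circ$ is connected reductive and $\Fr_v$-stable, and by Steinberg's theorem contains a $\Fr_v$-stable maximal torus, which is then a maximal torus of $\hat G$ containing $\eta(\hat T_H)$. Under the Satake isomorphism, $\Irr^{\ur}(G(F_v))$ is in bijection with $(\hat T_G/(1-\Fr_v)\hat T_G)/W(\hat G,\hat T_G)^{\Fr_v}$, and analogously for $H$; via these identifications the map $\eta_*$ is induced by the restriction $\eta|_{\hat T_H}:\hat T_H\to\hat T_G$.

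The key step is to bound the kernel of the induced morphism $\bar\eta:\hat T_H/(1-\Fr_v)\hat T_H\to\hat T_G/(1-\Fr_v)\hat T_G$. The restriction $\eta|_{\hat T_H}$ has kernel contained in $\ker\eta$, hence finite, and taking $\Fr_v$-coinvariants of the short exact sequence $1\to\eta(\hat T_H)\to\hat T_G\to\hat T_G/\eta(\hat T_H)\to 1$ yields a long exact sequence expressing $\ker\bar\eta$ as an extension involving $\ker(\eta|_{\hat T_H})$ and a Tate cohomology group of $\Fr_v$ acting on the cocharacter lattice $X_*(\hat T_G/\eta(\hat T_H))$. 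Since $\Fr_v$ acts on this free $\Z$-module of finite rank through a finite quotient of $\Gamma_F$, and only finitely many such actions occur as $v$ varies over $v\notin S$, the Tate cohomology group is finite of bounded order. The fiber of $\eta_*$ over any point then has cardinality at most $|W(\hat G,\hat T_G)|\cdot|\ker\bar\eta|$, since any lift to $\hat T_G/(1-\Fr_v)\hat T_G$ has at most $|W(\hat G,\hat T_G)^{\Fr_v}|$ Weyl conjugates and each has at most $|\ker\bar\eta|$ preimages under $\bar\eta$; taking the supremum of this bound over $v\notin S$ gives the desired uniform $N$.
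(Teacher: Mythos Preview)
Your approach is essentially the same as the paper's: both reduce to the torus level via the Satake parametrization (the paper phrases this through the surjections $i_*,i_{H,*}$ given by unramified parabolic induction, whose fibers are Weyl orbits, which is exactly your quotient by $W^{\Fr_v}$), and then both invoke finiteness of the kernel of the induced map on $\Fr_v$-coinvariants of the maximal tori. The paper simply cites \cite[\S6.3,(2)]{Bor79} for that last finiteness, whereas your cohomological sketch (``a Tate cohomology group of $\Fr_v$ acting on $X_*(\hat T_G/\eta(\hat T_H))$'') is a little imprecise as stated---the connecting group is more naturally $\mathrm{coker}\big(\hat T_G^{\Fr_v}\to(\hat T_G/\eta(\hat T_H))^{\Fr_v}\big)$, whose finiteness needs an extra word (e.g.\ surjectivity on Lie algebras forces it to be discrete, and it is visibly compact)---but the conclusion and the strategy are the same.
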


\begin{rem}
  The $N$ in the lemma can be chosen independently of $v$. For this observe that the order of the Weyl group in $G$ is clearly bounded independently of $v$ and that the size of the kernel of $\eta_{T,*}$ is also uniformly bounded since there are only finitely many $\Fr_v$-actions on $\hat{T}_{H,v}$ and $\hat{T}_v$ as $v$ varies (up to Weyl group actions).
\end{rem}

\begin{proof}

  Obviously the proof is reduced to the case where $\eta$ is injective, which will be assumed throughout.
  Let ${}^L B_{H,v}$ be a Borel subgroup of ${}^L H$ relative to the base field $F_v$ (see \cite[\S3]{Bor79} for this and other related notions in the proof). Then $\hat{B}_{H,v}:={}^L B_{H,v}\cap \hat{H}$ is a Borel subgroup of $\hat{H}$. Since $\eta(\hat{B}_{H,v})$ is a closed solvable subgroup of $\hat{G}$, it is contained in some Borel subgroup $\hat{B}_v$ of $\hat{G}$. Then the normalizer ${}^L B_v$ of $\hat{B}_v$ in $^L G$ is a Borel subgroup of $^L G$. Let $i_{H}:{}^L B_{H,v}\hra {}^L H$ and $i:{}^L B_v\hra {}^L G$ denote the inclusions. Write $\hat{T}_{H,v}$ and $\hat{T}_v$ for the maximal tori in $\hat{B}_{H,v}$ and $\hat{B}_v$. The normalizer $^L T_{H,v}$ of $\hat{T}_{H,v}$ in $^L B_{H,v}$ is a Levi subgroup of $^L B_{H,v}$, and similarly we have a Levi subgroup $^L T_v $ of $^L B_v$. We can identify $^L T_{H,v}$ and $^L T_v$ with the $L$-groups for minimal Levi subgroups $T_{H,v}$ and $T_v$ of $H$ and $G$ over $F_v$, respectively. Clearly we have $\eta({}^L B_{H,v})\subset {}^L B_v$ and so $\eta({}^L T_{H,v})\subset {}^L T_v$. Denote the induced map ${}^L T_{H,v}\ra {}^L T$ by $\eta_{T}$. Notice that $i$, $i_{H}$ and $\eta_T$ are unramified. We have a commutative diagram as below on the left, which induces a commutative diagram on the unramified spectra.
    \beq\label{e:unr-diagram} \xymatrix{{}^L T_{H,v} \ar[r]^-{\eta_{T}} \ar[d]^-{i_{H}} & {}^L T_v \ar[d]^-{i} \\
  {}^L H \ar[r]^-{\eta}  & {}^L G  }, \qquad \xymatrix{\Irr^{\ur}(T_{H,v}(F_v)) \ar[r]^-{\eta_{T,*}} \ar[d]^-{i_{H,*}} & \Irr^{\ur}( T_v(F_v)) \ar[d]^-{i_{*}} \\
  \Irr^{\ur}(H(F_v)) \ar[r]^-{\eta_*}  & \Irr^{\ur}(G(F_v)) }\eeq
  We know (\cite[\S10.4]{Bor79}) how to describe $i_*$ and $i_{H,*}$ using parabolic induction: $i_*(\chi_v)$ is the unique unramified subquotient of $\nind(\chi_v)$ and the analogue is true for $i_{H,*}$. According to the well known classification of unramified representations, we know firstly that $i_*$ and $i_{H,*}$ are surjective and secondly that the fiber of $i_*$ (resp. $i_{H,*}$) has cardinality at most the order of the Weyl group for $T_v$ in $G$ (resp. for $T_{H,v}$ in $H$). This order can be bounded uniformly in $v$. On the other hand $\eta_{T,*}$ has finite fibers. Indeed \cite[\S9.4]{Bor79} identifies $\eta_{T,*}$ with a group homomorphism $$\hat{T}_{H,v}/(\Fr_v-1)\hat{T}_{H,v}\ra \hat{T}_v/(\Fr_v-1)\hat{T}_v$$
  ($\Fr_v$ denoting the geometric Frobenius action), and the above map has finite kernel (\cite[\S6.3, (2)]{Bor79}). All in all, the fibers of $\eta_*$ are finite.

\end{proof}

\begin{lem}\label{l:unram-functoriality} Suppose that $\eta:{}^L H \ra {}^L G$ is C-preserving. Let $v\notin S$.
For each $\pi_{H,v}\in \Irr^{\ur}(H(F_v))$,
\benu
\item  $(\eta_* \pi_{H,v})^{\sigma}=\eta_*(\pi_{H,v}^{\sigma})$.
\item $\Q(\eta_* \pi_{H,v})\subset \Q(\pi_{H,v})$.
\item If $\eta$ has finite kernel and $N$ is as in Lemma \ref{l:bound-fiber} then $[\Q(\pi_{H,v}):\Q(\eta_* \pi_{H,v})]\le N! $.
\eenu
\end{lem}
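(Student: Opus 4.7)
The plan is to establish (i) first, then derive (ii) and (iii) essentially formally, with Lemma~\ref{l:bound-fiber} entering only in (iii).

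For (i), I would realize both $(\eta_*\pi_{H,v})^\sigma$ and $\eta_*(\pi_{H,v}^\sigma)$ as unique unramified subquotients of explicit principal series and then compare inducing characters. Write $\pi_{H,v}$ as the unique unramified subquotient of $\nind_{B_{H,v}(F_v)}^{H(F_v)}(\chi_{H,v})$ for an unramified character $\chi_{H,v}$. The commutative diagram~\eqref{e:unr-diagram}, combined with the description of $i_*$ and $i_{H,*}$ as taking unique unramified subquotients of normalized induction, identifies $\eta_*\pi_{H,v}$ with the unique unramified subquotient of $\nind(\eta_{T,*}\chi_{H,v})$. Applying Lemma~\ref{l:twist-unram-rep} on both the $H$- and the $G$-side, (i) reduces to the coincidence, up to Weyl conjugacy, of
\[
(\eta_{T,*}\chi_{H,v})^\sigma\otimes\sgn_{\sigma,\rho_v}
\quad\text{and}\quad
\eta_{T,*}\bigl(\chi_{H,v}^\sigma\otimes\sgn_{\sigma,\rho_{H,v}}\bigr)
\]
as unramified characters of $T_v(F_v)$.

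Since $\eta_T$ is an algebraic morphism of tori, $\eta_{T,*}$ on characters commutes with $\sigma$-twisting and with multiplicative tensoring, so the comparison reduces further to the identity of sign characters
\[
\sgn_{\sigma,\rho_v}=\eta_{T,*}(\sgn_{\sigma,\rho_{H,v}}).
\]
Evaluating both sides on $\lambda(\varpi_v)$ for $\lambda\in X_*(A_v)$ and unwinding the Satake description of $\eta_{T,*}$ at the level of cocharacter lattices, this amounts to the congruence $\lg \lambda,2\rho_v\rg\equiv\lg\lambda,2\eta_*(\rho_{H,v})\rg\pmod 2$ for all such $\lambda$, i.e., $\rho_v-\eta_*(\rho_{H,v})\in X^*(A_v)$. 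This is precisely what the C-preserving hypothesis $\rho-\eta_*(\rho_H)\in X_*(\hat T)$ delivers after restricting to the $F_v$-split torus $A_v$: at an unramified $v$, $2\rho_{H,v}$ and $2\rho_v$ are the restrictions to $A_{H,v}$ and $A_v$ of the global $2\rho_H$ and $2\rho$, since positive roots not vanishing on the split torus pair up into $\Fr_v$-orbits.

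Part (ii) is then immediate: if $\sigma$ fixes $\pi_{H,v}$ up to isomorphism, (i) shows it also fixes $\eta_*\pi_{H,v}$, giving $\Aut(\C/\Q(\pi_{H,v}))\subset\Aut(\C/\Q(\eta_*\pi_{H,v}))$ and hence the reverse inclusion of fixed fields. For (iii), set $K=\Q(\eta_*\pi_{H,v})$. By (i), $\Aut(\C/K)$ permutes the fiber $\eta_*^{-1}(\eta_*\pi_{H,v})$, which has cardinality at most $N$ by Lemma~\ref{l:bound-fiber}; thus the $\Aut(\C/K)$-orbit of $\pi_{H,v}$ has size at most $N$, and the compositum $L$ of $\Q(\pi')$ as $\pi'$ ranges over this orbit is a finite Galois extension of $K$ of degree at most $N!$ that contains $\Q(\pi_{H,v})$. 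Therefore $[\Q(\pi_{H,v}):K]\le[L:K]\le N!$.

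The hard step will be the sign-character identity in (i). While the underlying content is straightforward, one must carefully match the ``global dual'' data $(\hat T_H,\hat T,\rho_H,\rho)$ entering the C-preserving hypothesis with the ``relative local'' data $(A_{H,v},A_v,\rho_{H,v},\rho_v)$ entering $\sgn_{\sigma,\rho_{H,v}}$ and $\sgn_{\sigma,\rho_v}$, particularly when $G$ or $H$ is nonsplit (but unramified) at $v$ and $\Fr_v$ acts nontrivially on the cocharacter lattices.
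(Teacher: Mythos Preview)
Your proposal is correct and follows essentially the same route as the paper: realize both sides of (i) as unique unramified subquotients via the commutative diagram~\eqref{e:unr-diagram}, apply Lemma~\ref{l:twist-unram-rep} on the $H$- and $G$-sides, and reduce to the sign-character identity $\sgn_{\sigma,\rho_v}=\eta_{T,*}(\sgn_{\sigma,\rho_{H,v}})$, which is then deduced from the C-preserving hypothesis; (ii) and (iii) follow formally via the permutation action on the fiber. If anything you are more explicit than the paper, which dispatches the sign-character step in one line (``Since $\eta$ is C-preserving, $\sgn_{\sigma,\rho_v}=\sgn_{\sigma,\eta_*(\rho_{H,v})}$'') without spelling out the passage from the absolute data $(\hat T,\rho)$ in Definition~\ref{d:C-preserving} to the relative local data $(A_v,\rho_v)$; your flagging of this as the point requiring care is appropriate.
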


\begin{proof}
  Let us prove (i). Adopt the setting in the proof of the last lemma. The first observation is that when $H=T_H$ and $G=T$ are tori, (i) follows from the fact that $\eta_*$ is naturally defined over $\Q$ since an algebraic map ${}^L T_H\ra {}^L T$ corresponds to a $\Fr_v$-equivariant map $X_*(\hat{T}_H)=X^*(T_H)\ra X_*(\hat{T})=X^*(T)$. Now consider the general case. For simplicity of notation only in this proof, we use $\nind$ to mean the unique unramified subquotient of the normalized induction. Now by surjectivity of $i_{H,*}$ write $\pi_{H,v}=i_{H,*}(\chi_{H,v})=\nind^H_{B_{H,v}}(\chi_{H,v})$ for a smooth character $\chi_{H,v}:T_{H,v}(F_v)\ra \C^\times$. Put $\chi_v:=\eta_{T,*}(\chi_{H,v})$. From the case of tori we know that $$\eta_{T,*}(\chi^\sigma_{H,v})=\chi_v^\sigma.$$
  Using Lemma \ref{l:twist-unram-rep} and the commutativity of \eqref{e:unr-diagram} we compute
  \beq\label{e:pf-2.25-1}(\eta_*\pi_{H,v})^\sigma=(\eta_*i_{H,*}\chi_{H,v})^\sigma = (i_*\chi_v)^\sigma=\nind(\chi_v)^\sigma= \nind(\chi^\sigma_v\otimes \sgn_{\sigma,\rho_v}).\eeq
  Similarly, noting in addition that $\eta_{T,*}$ is a homomorphism,
  \beq\label{e:pf-2.25-2}\eta_*(\pi_{H,v}^\sigma)=\eta_*(i_{H,*}(\chi_{H,v})^\sigma)=\eta_*(i_{H,*}(\chi^\sigma_{H,v}\otimes \sgn_{\sigma,\rho_{H,v}})\eeq
  $$=\nind(\eta_{T,*}(\chi^\sigma_{H,v}\otimes \sgn_{\sigma,\rho_{H,v}}))=\nind(\eta_{T,*}(\chi^\sigma_{H,v})\otimes\eta_{T,*}( \sgn_{\sigma,\rho_{H,v}}))=\nind(\chi^\sigma_v\otimes \sgn_{\sigma,\eta_*(\rho_{H,v})}).$$
  Since $\eta$ is C-preserving, $\sgn_{\sigma,\rho_v}=\sgn_{\sigma,\eta_*(\rho_{H,v})}$ and thus the proof of (i) is complete.

  Part (ii) is clear from (i). To verify (iii), put $\pi_v:=\eta_*\pi_{H,v}$. By (i), $\pi_v^\sigma\in \eta_*^{-1}(\pi_v)$ for every $\sigma\in \Aut(\C/\Q(\pi_v))$. This yields a homomorphism from $\Aut(\C/\Q(\pi_v))$ to the permutation group on $\eta_*^{-1}(\pi_v)$. Since $|\eta_*^{-1}(\pi_v)|\le N$, the kernel has finite index at most $N!$. This proves (iii).
\end{proof}

\begin{cor} Keep the assumptions of Lemma \ref{l:unram-functoriality}.
\benu
\item Let $v\notin S$. If $\pi_{H,v} \in\Irr^{\ur}(H(F_v))$ is C-arithmetic then $\eta_* \pi_{H,v}$ is C-arithmetic. The converse is true if there is a constant $\kappa$ such that every fiber of $\eta_*:\Irr^{\ur}(H(F_v))\ra \Irr^{\ur}(G(F_v))$ has cardinality at most $\kappa$.
\item Let $\pi_H$ and $\pi$ be automorphic representations of $H(\A_F)$ and $G(\A_F)$ such that $\pi_v=\eta_*(\pi_{H,v})$ for all $v\notin S$. If $\pi_H$ is C-arithmetic then so is $\pi$.
\eenu
\end{cor}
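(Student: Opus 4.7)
My plan is to deduce both parts of the corollary directly from Lemma~\ref{l:unram-functoriality}, using only elementary Galois-theoretic bookkeeping on top of it.

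For part (i), the forward implication is immediate: by Lemma~\ref{l:unram-functoriality}(ii) one has $\Q(\eta_*\pi_{H,v})\subset \Q(\pi_{H,v})$, so C-arithmeticity of $\pi_{H,v}$ passes to $\eta_*\pi_{H,v}$. For the converse I would mimic the trick used to prove Lemma~\ref{l:unram-functoriality}(iii). Assume $\Q(\eta_*\pi_{H,v})$ is a number field and set $\pi_v:=\eta_*\pi_{H,v}$. For any $\sigma\in\Aut(\C/\Q(\pi_v))$, Lemma~\ref{l:unram-functoriality}(i) gives
\[
\eta_*(\pi_{H,v}^\sigma)=(\eta_*\pi_{H,v})^\sigma=\pi_v,
\]
so $\pi_{H,v}^\sigma$ lies in the fiber $\eta_*^{-1}(\pi_v)$, whose cardinality is at most $\kappa$ by hypothesis. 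The resulting map $\Aut(\C/\Q(\pi_v))\to \Sym(\eta_*^{-1}(\pi_v))$ has kernel $\Aut(\C/\Q(\pi_{H,v}))$, yielding $[\Q(\pi_{H,v}):\Q(\pi_v)]\le \kappa!$, hence $\Q(\pi_{H,v})$ is finite over $\Q$.

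For part (ii), I would globalize via the observation made just after the definition of the field of rationality that $\Q(\pi^S)$ is the compositum of $\Q(\pi_v)$ over finite $v\notin S$. Assume $\pi_H$ is C-arithmetic, and choose $S'\supset S_\infty$ with $\Q(\pi_H^{S'})$ finite over $\Q$; enlarge to $S'':=S\cup S'$. For every finite $v\notin S''$ we have $\pi_v=\eta_*\pi_{H,v}$ by hypothesis, and Lemma~\ref{l:unram-functoriality}(ii) gives $\Q(\pi_v)\subset \Q(\pi_{H,v})\subset \Q(\pi_H^{S'})$. Taking compositums over $v\notin S''$ yields $\Q(\pi^{S''})\subset \Q(\pi_H^{S'})$, which is finite over $\Q$; thus $\pi$ is C-arithmetic.

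There is no genuine obstacle here since Lemma~\ref{l:unram-functoriality} does all the heavy lifting; the only mildly delicate point is the converse in (i), where one must remember to phrase the argument in terms of the stabilizer in $\Aut(\C)$ rather than a finite Galois group, so that the kernel really is $\Aut(\C/\Q(\pi_{H,v}))$ and the resulting index bound translates into a bound on $[\Q(\pi_{H,v}):\Q(\pi_v)]$.
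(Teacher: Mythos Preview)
Your proposal is correct and matches the paper's approach, which simply cites parts (ii) and (iii) of Lemma~\ref{l:unram-functoriality}. One minor inaccuracy: in the converse of (i), the kernel of your map $\Aut(\C/\Q(\pi_v))\to \Sym(\eta_*^{-1}(\pi_v))$ is the intersection of the stabilizers of \emph{all} fiber elements, not just $\Aut(\C/\Q(\pi_{H,v}))$; but since this kernel is contained in $\Aut(\C/\Q(\pi_{H,v}))$ (or, more directly, by orbit--stabilizer applied to the orbit of $\pi_{H,v}$ alone), your conclusion $[\Q(\pi_{H,v}):\Q(\pi_v)]\le \kappa!$ still follows.
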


\begin{proof}
  Immediate from (ii) and (iii) of Lemma \ref{l:unram-functoriality}.
\end{proof}

%
%
%
%
%

\begin{rem} Compare our results with the lemmas 6.2 and 6.3 of \cite{BG}, where it is shown that any L-morphism $\eta:{}^L H\ra {}^L G$ carries L-algebraic (resp. L-arithmetic) representations to L-algebraic (resp. L-arithmetic) representations. (It is worth noting that they use Galois groups to form the L-groups; it can fail to be true if Weil groups are used.) One could try to derive our results in \S\ref{sub:transfer-app} directly from their results by twisting but this is not automatic for two reasons: some groups lack twisting elements (in the sense of \cite[\S5.2]{BG}) and some others admit no L-algebraic representations at all. 
\end{rem}

\section{Purity and rationality of local components}\label{s:purity}

  The contents of this section are purely local and the following notation will be used.
\bit
\item $K$ is a finite extension of $\Q_p$ with residue field $\F_q$, $\cO_K$ is its integer ring, and $\Frob_K$ is the geometric Frobenius element in $\Gal(K^{\ur}/K)$.	
\item $W_K$ and $I_K$ are the Weil and inertia groups of $K$,
\item $\Omega$ is an algebraically closed field of characteristic 0 with the same cardinality as $\C$ (usually $\Omega$ is taken to be $\C$ or $\ol{\Q}_l$ for a prime $l$),
\item $\Frob_K\in W_K/I_K$ is the geometric Frobenius element,
\item $v:W_K\ra \Z$ is defined as $W_K\twoheadrightarrow W_K/I_K\simeq \Z$ where the last isomorphism carries $\Frob_K$ to 1,
\item $|\cdot|_{W_K}:W_K\ra \Q^\times$ is a character given by $\tau\mapsto q^{-v(\tau)}$.
  \item $\scusp(\pi)$ denotes the supercuspidal support of $\pi\in \Irr(G(K))$.
  \eit

\subsection{Pure Weil-Deligne representations}\label{sub:pure-WD}

  Our basic definitions are based on those of \cite[p.471]{TY07}. Their definition is slightly more general in that the weight is allowed to be a real number. For our purpose it suffices to consider only integral weights.

  A Weil-Deligne representation (or \textbf{WD representation} for simplicity) of $W_K$ (over $\Omega$) is a triple $(V,\rho,N)$ where $V$ is a finite dimensional $\Omega$-vector space, $\rho:W_K\ra GL(V)$ is a group homomorphism such that $\rho(I_K)$ is finite, and $N\in \End_\Omega(V)$ is a nilpotent operator such that $\rho(\tau)N\rho(\tau)^{-1}=|\tau|_{W_K} N$. It is said to be unramified if $\rho(I_K)$ is the identity and $N=0$, \textbf{Frobenius semisimple} (or ``F-ss'' for short) if $\rho$ is semisimple, and irreducible if $\rho$ is irreducible and $N=0$. Let $(V,\rho,N)^{\Fss}:=(V,\rho^{\semis},N)$ denote the Frobenius semisimplification of $(V,\rho,N)$, where $\rho^{\semis}$ is defined as follows: Fix a lift $\phi\in W_K$ of $\Frob_K$ and let $\rho(\phi)=su$ be the Jordan decomposition with semisimple part $s$. Then $\rho^{\semis}(\phi^n \tau):=s^n\rho(\tau)$ for all $n\in \Z$ and for all $\tau\in I_K$, which defines $\rho^{\semis}$ independently of the choice.

    Let $n\in \Z_{\ge1}$. For a continuous $l$-adic representation $r:\Gal(\ol{K}/K)\ra GL_n(\ol{\Q}_l)$, there is a standard way (depending on whether $l\neq p$ or $l=p$) to associate a Weil Deligne representation $WD(r)$ of $W_K$ as explained on pp.467-470 of \cite{TY07}. (One can view $WD$ as a functor on appropriate categories.)

  We recall the key definitions about purity. Let $w\in \Z$. A $q$-\textbf{Weil number} (resp. \textbf{integer}) of weight $w$ is an algebraic number $\alpha$ (resp. an algebraic integer $\alpha$) such that $|\iota(\alpha)|=q^{w/2}$ for any field embedding $\iota:\ol{\Q}\hra \C$. A WD representation $(V,\rho,N)$ of $W_K$ is \textbf{strictly pure} of weight $w$ if every eigenvalue of the image under $\rho$ of some (hence every) lift of $\Frob_K$ is a $q$-Weil number of weight $w$. We say that $(V,\rho,N)$ is \textbf{mixed} if there exists an increasing filtration of sub WD representations $\{\Fil_iV\}_{i\in \Z}$ on $V$ such that $\Fil_i V=0$ if $i\ll 0$, $\Fil_i V=V$ if $i\gg0$ and $\gr_i V:=\Fil_i V/\Fil_{i+1} V$ is strictly pure of weight $i$ for every $i\in \Z$. A mixed $(V,\rho,N)$ admits a unique filtration such that $N(\Fil_i V)\subset \Fil_{i-2} V$. Let us say $(V,\rho,N)$ is \textbf{pure} of weight $w$ if it is mixed and if $N^i:\gr_{w+i}V\ra \gr_{w-i} V$ is an isomorphism for every $i$ with respect to the unique filtration just mentioned. More generally let $\underline{w}$ be a finite multi-set such that the elements of $\underline{w}$ are distinct integers $w_1,...,w_r$ with multiplicities $m_1,...,m_r$. Then $(V,\rho,N)$ is said to be \textbf{pure of weight $\underline{w}$} if $V=\oplus_{i=1}^r (V_i,\rho_i,N_i)$ with each $(V_i,\rho_i,N_i)$ being pure of weight $w_i$ and of dimension $m_i$. (If so, we have in particular $|\underline{w}|=m_1+\cdots+m_r=\dim V$.)
  Finally a mixed $(V,\rho,N)$ is \textbf{integral} if for some (hence every) lift $\phi\in W_K$ of $\Frob_K$, every eigenvalue of $\phi$ on $V$ is an algebraic \emph{integer} (so that every eigenvalue of $\phi$ on $\gr_i V$ is a $q$-Weil \emph{integer} of weight $i$).

  The above definitions are motivated by Deligne's weight-monodromy conjecture in its integral form (cf. \cite{Del-Hodge1}, \cite[Conj 0.3, 0.5]{Sai03}). The conjecture is equivalent to the one without Frobenius semisimple/semisimplification in the statement.

\begin{conj}\label{c:weight-monodromy-integrality}(cf. \cite[Conj 0.3, 0.5]{Sai03}) Let $l$ be any prime (which could be equal to $p$).
  Let $(V,\rho,N)$ be an F-ss WD representation on a $\ol{\Q}_l$-vector space. If $(V,\rho,N)$ is a subquotient of $WD(H_{\et}^i(X\times_K \ol{K},\ol{\Q}_l))^{\Fss}$ for some proper smooth scheme $X$ over $K$ then it is pure of weight $i$ and integral.
\end{conj}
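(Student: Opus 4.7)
The plan is to approach Conjecture \ref{c:weight-monodromy-integrality} by separating the purity (weight--monodromy) assertion from the integrality assertion, splitting further according to whether $l \neq p$ or $l = p$, and reducing to the case of semistable reduction via de Jong's alterations. First I would observe that both properties are inherited by subquotients: purity passes to subquotients because the monodromy filtration is functorial and preserved under direct summands, while integrality passes to subquotients cut out by a Frobenius-stable $\ol{\Z}_l$-lattice. So it suffices to treat the case $(V,\rho,N) = WD(H^i_{\et}(X\times_K \ol{K},\ol{\Q}_l))^{\Fss}$.

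Next, by de Jong's theorem one can find a proper surjective generically finite morphism $X' \to X$ of some degree $d$ such that $X'$ admits a strictly semistable projective model $\mathfrak{X}'$ over $\cO_{K'}$ for some finite extension $K'/K$. Since $f_* f^*$ is multiplication by $d$ on cohomology, $H^i_{\et}(X_{\ol{K}},\ol{\Q}_l)$ embeds as a $W_K$-stable direct summand of $H^i_{\et}(X'_{\ol{K}},\ol{\Q}_l)$, so it suffices to verify purity and integrality for $X'$ viewed as a $W_{K'}$-representation (purity and integrality being insensitive to restriction to an open subgroup of $W_K$). The Rapoport--Zink weight spectral sequence then computes the graded pieces of a natural filtration on the nearby cycles of $\mathfrak{X}'$ in terms of $\ell$-adic cohomology of the smooth proper strata of the special fibre. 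Deligne's Weil~II theorem (for $l \neq p$) and the comparison theorems of Tsuji and Faltings together with Fontaine's functor $D_{\mathrm{pst}}$ (for $l = p$) show that these graded pieces are strictly pure of the expected integer weights; integrality follows because Frobenius eigenvalues on $\ol{\Z}_l$-cohomology of smooth proper varieties over finite fields are algebraic integers, and this property propagates through the spectral sequence.

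The delicate point, and the main obstacle, is to show that this candidate weight filtration agrees with the monodromy filtration determined by $N$. This identification is precisely the weight--monodromy conjecture of Deligne, which is known for curves and for surfaces (Deligne, Rapoport--Zink), in low dimensions by further work of Rapoport--Zink, in the equal-characteristic case by Ito, and for complete intersections in projective toric varieties by Scholze, but remains open for an arbitrary proper smooth $X$ over a $p$-adic field. Consequently the reductions above show that Conjecture \ref{c:weight-monodromy-integrality} is equivalent to the classical weight--monodromy conjecture together with the (much easier) integrality assertion, but a proof in the stated generality cannot be achieved with currently available techniques. For the purposes of the present paper, a more modest target suffices, namely the special case in which $(V,\rho,N)$ arises from an automorphic Galois representation attached to a (conjugate) self-dual representation of $GL_n$, where Shimura-variety methods supply the required purity and integrality directly.
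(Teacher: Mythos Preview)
Your assessment is correct and aligns with the paper's treatment. The statement is a \emph{conjecture}, not a theorem; the paper does not prove it but only records the known cases (good reduction via Deligne, $\dim X \le 2$ via Rapoport--Zink, certain Shimura varieties, and Scholze's result for complete intersections in projective smooth toric varieties), which are exactly the cases you enumerate. You have correctly identified that the obstruction is the weight--monodromy conjecture itself, and that the integrality part is the easier half. Your final observation is also on target: for the applications in the paper the full conjecture is not needed, and the required special case (for Galois representations attached to conjugate-self-dual automorphic representations of $GL_n$) is established directly via the geometry of compact Shimura varieties in Proposition~\ref{p:Galois-reps}.

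One small caution: your claim that purity and integrality pass to arbitrary subquotients of F-ss WD representations is fine for purity, but the phrase ``subquotients cut out by a Frobenius-stable $\ol{\Z}_l$-lattice'' is not quite the right mechanism for integrality in this setting; rather, integrality is a statement about eigenvalues of a lift of Frobenius on $V$, and these eigenvalues for a subquotient are a subset of those for $V$, so inheritance is immediate.
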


It is worth noting that when $X$ has a proper smooth integral model over $\cO_K$ and $l\neq p$, the conjecture is known by Deligne's work on the Weil conjectures. In that case the WD representation below is unramified and strictly pure of weight $i$. In the non-smooth (bad reduction) case the conjecture is known when $\dim X\le 2$ by Rapoport and Zink and in some special cases, for instance for certain Shimura varieties. A recent breakthrough by Scholze (\cite{Sch-WM}) provides a proof for any complete intersection in a projective smooth toric variety.
The converse of Conjecture \ref{c:weight-monodromy-integrality}, which is not as deep as the original conjecture, also seems true. (A proof was announced by Teruyoshi Yoshida but has not appeared in print at the time of writing.)

Motivated by Conjecture \ref{c:weight-monodromy-integrality} (as well as its converse) and the Fontaine-Mazur conjecture (\cite[Conj 1]{FM95}, also see \cite[Conj 1.3]{Tay04}), we speculate on the following \emph{global} conjecture, which in particular slightly refines the conjecture by Fontaine and Mazur in a sign aspect. More precisely, their conjecture says that (i) and (iii) below are equivalent if nonnegativity is dropped in (i) and a Tate twist of cohomology is allowed in (iii). The conjecture can be stated for all primes $l$ simultaneously in the language of compatible systems, cf. \cite{Tay04}.


\begin{conj}\label{c:integrality} Let $F$ be a number field and $\rho:\Gal(\ol{F}/F)\ra GL_n(\ol{\Q}_l)$ a continuous semisimple representation unramified outside finitely many places. The following are equivalent.
\benu
\item $\rho$ is de Rham at every place $v|l$ with nonnegative Hodge-Tate weights.\footnote{In our convention the cyclotomic character has Hodge-Tate weight $-1$ (rather than $1$).}
\item the WD representation associated with $\rho$ at every finite place $v$ is integral and pure of weight $\underline{w}$, which is independent of $v$, has entries in $\Z$ and satisfies $|\underline{w}|=n$.
\item $\rho$ appears as a subquotient of $\oplus_{i\ge 0} H_{\et}^i(X\times_F \ol{F},\ol{\Q}_l)$ for some proper smooth scheme $X$ over $F$.
\eenu
\end{conj}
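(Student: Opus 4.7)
The plan is to prove the cycle (iii) $\Ra$ (i), (i) $\Ra$ (iii), (iii) $\Ra$ (ii), and (ii) $\Ra$ (iii). The first and third implications rest on now-standard machinery, while the two $\Ra$ (iii) implications encapsulate forms of the Fontaine--Mazur conjecture and are the serious content.

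I would begin with (iii) $\Ra$ (i). If $\rho$ occurs in $H^i_{\et}(X\times_F\ol{F},\ol{\Q}_l)$ for a proper smooth $X/F$, then at every $v\mid l$ the restriction of $\rho$ is de Rham by the $p$-adic comparison theorems of Faltings and Tsuji. Under the sign convention fixed in the paper the Hodge--Tate weights of $H^i$ lie in $\{0,1,\ldots,i\}$, so they are nonnegative. Next, (iii) $\Ra$ (ii) is essentially Conjecture \ref{c:weight-monodromy-integrality}, with $|\underline{w}|=n$ read off from $\dim V$ and the independence of $v$ coming from the fact that the $\underline{w}$ arising is the multi-set of indices $i$ for which $H^i$ contributes to $\rho$. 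At finite $v\nmid l$ of good reduction for $X$, Deligne's proof of the Weil conjectures gives strict purity and integrality of weight $i$. At bad reduction places one invokes weight-monodromy and integrality, known in many geometric families (abelian-type Shimura varieties, and the projective smooth toric complete intersections handled by Scholze) but open in general. At $v\mid l$ one transfers weight and integrality from an auxiliary $l'$-adic realization ($l'\neq p$) via a compatible-system argument.

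The main obstacle is (i) $\Ra$ (iii). The strategy would be first to establish automorphy of $\rho$ via potential automorphy theorems in the style of Harris--Shepherd-Barron--Taylor and Barnet-Lamb--Gee--Geraghty--Taylor, after reducing if needed to the case of regular, distinct Hodge--Tate weights by a suitable Tate twist and arranging (conjugate) self-duality via a tensor construction. Once $\rho$ corresponds to a cuspidal cohomological automorphic representation $\pi$ of a unitary or classical group, the attached Shimura variety realizes $\rho$, up to twists and a descent step, inside its \'etale cohomology, giving (iii). Both the regularity reduction and the descent from the soluble extension over which $\pi$ is initially produced are delicate; the irregular and genuinely non-self-dual cases remain essentially out of reach.

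For (ii) $\Ra$ (iii) the route I would try is to exploit purity and integrality at every finite place, together with the nonnegativity of $\underline{w}$, to define a completed $L$-function for $\rho$ that is entire and of the expected functional form, then invoke a converse theorem to produce an automorphic representation of $GL_n(\A_F)$, and finally deduce (iii) exactly as in the previous paragraph. This is the point at which the Fontaine--Mazur and Langlands philosophies intersect, and is the step I expect to require the most genuinely new ideas; it is why the statement is correctly recorded here as a conjecture rather than a theorem.
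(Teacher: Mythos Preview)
This statement is recorded in the paper as a \emph{conjecture}, not a theorem, and the paper offers no proof. What the paper does provide is a series of remarks indicating which implications are known and which are not: (iii)$\Ra$(i) is known by $p$-adic Hodge theory (the $C_{\mathrm{pst}}$ comparison and the nonnegativity of the Hodge filtration on $H^i$); (iii)$\Ra$(ii) would follow from the weight-monodromy conjecture (Conjecture~\ref{c:weight-monodromy-integrality}); and (i)$\Leftrightarrow$(ii) is described only as an arithmetic analogue of Conjecture~\ref{c:alg-arith}, with no proof strategy offered. The implications (i)$\Ra$(iii) and (ii)$\Ra$(iii) are left entirely open, the former being essentially the Fontaine--Mazur conjecture.

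Your proposal is therefore not to be compared against a proof in the paper, since there is none. That said, your assessment of the landscape matches the paper's remarks closely: you correctly isolate (iii)$\Ra$(i) as known via the comparison theorems, correctly attribute (iii)$\Ra$(ii) to weight-monodromy plus integrality (and you are right that bad-reduction places and $v\mid l$ are where the difficulty lies), and correctly flag (i)$\Ra$(iii) and (ii)$\Ra$(iii) as the genuinely open content. Your sketched routes for the latter two---potential automorphy followed by realization in Shimura-variety cohomology for (i)$\Ra$(iii), and a converse-theorem approach for (ii)$\Ra$(iii)---go beyond anything the paper attempts; they are reasonable heuristics but, as you yourself note, remain well out of reach in the generality stated (irregular Hodge--Tate weights, non-self-dual $\rho$, etc.). Your closing sentence, that this is why the statement is a conjecture, is exactly right.
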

When $\rho$ is furthermore irreducible, we may replace the condition in (ii) with ``integral and pure of weight $w$ for some $w\in \Z$'', and the condition in (iii) with ``...a subquotient of $H_{\et}^i(X\times_F \ol{F},\ol{\Q}_l)$ for some $i\ge 0$...''. (For a given $\rho$ the corresponding $w$ and $i$ are expected to be equal, cf. Conjecture \ref{c:weight-monodromy-integrality}.)

 In Proposition \ref{p:Galois-reps} below we derive a partial result toward Conjecture \ref{c:integrality} from the well known results concerning the construction of Galois representations from automorphic representations. That result will be a key to the finiteness result of \S\ref{sub:sparsity}, where the role of integrality will become clear. This was our original motivation. However Conjecture \ref{c:integrality} is interesting in its own right and we plan to discuss it in more on some other occasion.


\begin{rem}
 Part (ii) may be equivalent to (ii)$'$ below, allowing to exclude finitely many $v$:
\benu
\item[(ii)$'$] the WD representation associated with $\rho$ at \emph{almost} every finite $v$ is pure and integral.
\eenu
Conjecture \ref{c:alg-arith} suggests that it would also be equivalent to:
\benu
\item[(ii)$''$] the WD representation associated with $\rho$ at (almost) every finite $v$ is integral and has its field of rationality contained in some number field $E$ independent of $v$.
\eenu
\end{rem}

\begin{rem}
   We are reduced to a more standard conjecture if we get rid of ``with nonnegative Hodge-Tate weights'' in (i), ``integral'' in (ii) and allow a Tate twist in (iii). In this form we already mentioned that the equivalence of (i) and (iii) is exactly the Fontaine-Mazur conjecture.
\end{rem}

\begin{rem}
  The implication (iii)$\Rightarrow$(i) is known by results of $p$-adic Hodge theory (the solution of the $C_{\mathrm{pst}}$ conjecture and comparison of filtrations in complex and $p$-adic Hodge theories) and the fact that the Hodge filtration on $H_{\et}^i(X\times_F \C,\ol{\Q}_l)$ has jumps only in nonnegative indices with respect to any field embedding $F\hra \C$. According to Conjecture \ref{c:weight-monodromy-integrality}, (iii) should imply (ii). Finally we remark that (i)$\Leftrightarrow$(ii) may be viewed as the arithmetic analogue of Conjecture \ref{c:alg-arith}.
\end{rem}

\begin{rem}
  When $\rho$ is associated with a (classical) cuspidal holomorphic eigenform $f=\sum_{n\ge 1} a_n q^{n}$ of weight $k\in \Z_{\ge1}$ with $a_1=1$ so that $a_n$ are algebraic integers for all $n\ge 1$, then (under a suitable normalization) $\rho$ satisfies (i), (ii) and (iii) with Hodge-Tate weights 0 and $k-1$. Now assume that $a_n\in \Z$ for all $n$. The equivalence (i)$\Leftrightarrow$(ii)$'$, applied to the twist of $\rho$ by the cyclotomic character, amounts to the assertion that $f$ is ordinary, i.e. $a_p$ is a $p$-unit for infinitely many primes $p$.
\end{rem}


  It is useful to know a preservation property under base field extensions.

\begin{lem}\label{l:BC}
  Let $\tilde{\rho}=(V,\rho,N)$ be a WD representation of $W_K$, and $L/K$ be a finite extension. Then $\tilde{\rho}|_{W_{L}}$ is pure (resp. integral) if and only if $\tilde{\rho}$ is pure (resp. integral).
\end{lem}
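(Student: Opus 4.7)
The plan is to compare eigenvalues of Frobenius lifts in $W_K$ and $W_L$ acting on $V$, which directly handles strict purity and integrality, and then to use the uniqueness of the weight filtration to handle purity in general.

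Let $f=f(L/K)$ be the residual degree, so the residue field of $L$ has $q_L:=q^f$ elements. Any lift $\phi_L\in W_L$ of $\Frob_L$ satisfies $v(\phi_L)=f$, hence $\phi_L=\phi_K^f\sigma$ for a chosen lift $\phi_K$ of $\Frob_K$ in $W_K$ and some $\sigma\in I_K$. Since $\rho(I_K)$ is finite by the WD definition, conjugation by $\phi_K$ on it has finite order; choose $N\in\Z_{>0}$ such that $\rho(\phi_K)^N$ centralizes $\rho(I_K)$. Expanding $(\phi_K^f\sigma)^N$ in $W_K$ yields $\rho(\phi_L)^N=\rho(\phi_K)^{fN}\rho(\sigma')$ for some $\sigma'\in I_K$, with the two factors commuting. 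The operator $\rho(\sigma')$ has finite order, so its eigenvalues $\zeta_j$ are roots of unity, and by the joint generalized eigenspace decomposition the eigenvalues of the commuting product are of the form $\alpha_i^{fN}\zeta_j$, where $\{\alpha_i\}$ are the eigenvalues of $\rho(\phi_K)$. Strict purity follows at once: $|\alpha_i|=q^{w/2}$ for all $i$ iff $|\alpha_i^{fN}\zeta_j|=q^{fNw/2}=q_L^{Nw/2}$ for the pairs occurring iff the eigenvalues of $\rho(\phi_L)$ have absolute value $q_L^{w/2}$. Integrality follows from the same relation, since roots of unity and $N$-th roots of algebraic integers both lie in $\ol{\Z}$.

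For the full purity statement, the forward direction is immediate: the weight filtration of $\tilde{\rho}$ is also a filtration by sub $W_L$-WD representations, the $N$-shift condition is unchanged, each $\gr_iV$ is strictly pure of weight $i$ on both sides by the preceding paragraph, and the $N^i$-isomorphism condition depends only on the triple $(V,N,\{\Fil_iV\})$. For the backward direction, I would first reduce to the case that $L/K$ is Galois by passing to the Galois closure $M$ of $L$ over $K$ and chaining the Galois form of the lemma for $M/K$ and $M/L$ (both of which are Galois). When $L/K$ is Galois, $W_L$ is normal in $W_K$, and since $W_K/I_K$ is abelian, conjugation by any $g\in W_K$ acts trivially on $W_L/I_L$; concretely $g\phi_Lg^{-1}=\phi_L\tau_g$ for some $\tau_g\in I_L$. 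Consequently, for each $g$, $\{\rho(g)\Fil_iV\}$ is another weight filtration for $\tilde{\rho}|_{W_L}$: stability under $\rho(W_L)$ holds by normality, the $N$-shift condition is preserved up to a scalar on $N$, and strict purity of each $\rho(g)\gr_iV$ as a $W_L$-representation reduces via $\rho(g)^{-1}\rho(\phi_L)\rho(g)=\rho(\phi_L)\rho(\tau_g^{-1})$ to the eigenvalue computation of the previous paragraph. By the uniqueness of the weight filtration recalled in the paper, $\rho(g)\Fil_iV=\Fil_iV$ for every $g\in W_K$, so each $\Fil_iV$ is a sub $W_K$-WD representation, exhibiting $\tilde{\rho}$ as pure of weight $w$.

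The main obstacle is this last step: establishing that the $W_L$-weight filtration is $\rho(W_K)$-stable. The key inputs are the reduction to Galois extensions, the triviality of the conjugation action of $W_K$ on $W_L/I_L$ (using that $W_K/I_K$ is abelian), and the uniqueness of the weight filtration.
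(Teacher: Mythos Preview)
Your proof is correct and fills in the details the paper omits (the paper simply says ``Straightforward'' and cites \cite[Lem~1.4.2]{TY07} for the purity part). The eigenvalue comparison in your first paragraph handles strict purity and integrality cleanly, and your descent argument for the weight filtration---reducing to Galois $L/K$, then using normality of $W_L$ together with the fact that conjugation by $W_K$ fixes $W_L/I_L$ (so $g^{-1}\phi_L g$ is again a lift of $\Frob_L$), and finally invoking uniqueness of the weight filtration---is exactly the standard route. One small remark: your strict purity check for $\rho(g)\gr_iV$ can be shortened by noting directly that $g^{-1}\phi_L g$ is another lift of $\Frob_L$ in $W_L$, so the ``some (hence every) lift'' clause in the definition applies immediately without needing the commuting-product eigenvalue analysis a second time.
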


\begin{proof}
  Straightforward. (The preservation of purity is Lemma 1.4.2 of \cite{TY07}).
\end{proof}

  Given $(V,\rho,N)$ as above and $s\in \Z_{\ge1}$, one constructs a new Weil-Deligne representation $$\Sp_s(V):=(V^s,\rho|\cdot|_{W_K}^{s-1}\oplus \cdots \oplus \rho|\cdot|_{W_K}\oplus \rho,N)$$ such that $N:\rho|\cdot|_{W_K}^{i}\isom \rho|\cdot|_{W_K}^{i-1}$ for $i=1,...,s-1$ and $N=0$ on $\rho$. Note that $\Sp_s(V)$ is uniquely determined up to isomorphism.
  If $(V,\rho,N)$ is pure of weight $w$ then $\Sp_s(V)$ is pure of weight $w+s-1$.

\begin{lem}\label{l:pure-shape}
  Let $n\ge 1$ and $(V,\rho,N)$ be an $n$-dimensional F-ss WD representation of $W_K$. Then there exist
   \bit
   \item $m\in \Z_{\ge1}$, $s_1,...,s_m\in \Z_{\ge 1}$ and
   \item a collection of irreducible $n_i$-dimensional F-ss WD representations $(V_i,\rho_i,0)$, $i=1,...,m$,
   \eit
  such that $V=\oplus_{i=1}^m \Sp_{s_i}(V_i)$. Moreover if $(V,\rho,N)$ is pure of weight $w\in \Z$ then each $V_i$ is strictly pure of weight $w-s_i+1$.
\end{lem}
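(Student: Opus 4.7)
My plan is to reduce the decomposition to the representation theory of the equioriented type-$A$ quiver. Since $(V,\rho,N)$ is F-semisimple and $\rho(I_K)$ is finite, the restriction $\rho|_{W_K}$ is a finite direct sum of irreducible smooth representations, so I decompose $V=\bigoplus_\sigma V^{(\sigma)}$ into $W_K$-isotypic components. The Weil--Deligne relation $\rho(\tau)N\rho(\tau)^{-1}=|\tau|_{W_K}N$ translates to the statement that $N(V^{(\sigma)})\subset V^{(\sigma\otimes|\cdot|_{W_K})}$. Since the character $|\cdot|_{W_K}$ has infinite order (its value on Frobenius is $q^{-1}\neq 1$), each orbit of the action $\sigma\mapsto\sigma\otimes|\cdot|_{W_K}$ on isomorphism classes of irreducible $W_K$-representations is itself infinite, yet meets the finite set of $\sigma$ appearing in $V$ in only finitely many points. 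Grouping isotypic components according to these orbits yields a WD-subrepresentation decomposition $V=\bigoplus_{O}V^{[O]}$, reducing the problem to a single orbit.

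Next, fix an orbit $O$ with some representative $\sigma_0$, and write $V^{[O]}=\bigoplus_{j\in\Z}V_j$, where $V_j:=V^{(\sigma_0\otimes|\cdot|_{W_K}^j)}$ and only finitely many $V_j$ are nonzero. Identifying $V_j\simeq M_j\otimes U_{\sigma_0}$ for a multiplicity space $M_j$ (with $U_{\sigma_0}$ the underlying space of $\sigma_0$), Schur's lemma---applied after absorbing the twist by $|\cdot|_{W_K}^j$, which leaves the underlying space of $\sigma_0$ unchanged---shows that $N|_{V^{[O]}}$ is determined by a family of linear maps $n_j:M_j\to M_{j+1}$ tensored with $\id_{U_{\sigma_0}}$. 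This datum is precisely a finite-dimensional representation of the equioriented type-$A$ quiver, and by the elementary Gabriel-type classification it decomposes uniquely, up to isomorphism and reordering, as a direct sum of indecomposable interval representations: each indecomposable is supported on some finite interval $[a,a+s-1]$ of indices, has one-dimensional $M_j$ on the interval and identity transition maps. Unwinding the identification, each such interval piece is isomorphic as a WD representation to $\Sp_s(\sigma_0\otimes|\cdot|_{W_K}^a)$ (for the appropriate endpoint $a$ of the interval), and assembling over all orbits and all intervals produces the desired decomposition $V=\bigoplus_{i=1}^{m}\Sp_{s_i}(V_i)$ with each $V_i$ an irreducible F-ss WD representation.

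For the purity claim, suppose $(V,\rho,N)$ is pure of weight $w$. The monodromy filtration on $V$ is the unique filtration satisfying $N\Fil_i\subset\Fil_{i-2}$ together with the isomorphisms $N^i:\gr_{w+i}\isom\gr_{w-i}$; by uniqueness it must restrict to the analogous filtration on each direct summand produced above, so each $\Sp_{s_i}(V_i)$ is itself pure of weight $w$. Combined with the statement recalled just before the lemma that $\Sp_s(V')$ is pure of weight $w'+s-1$ whenever $V'$ is strictly pure of weight $w'$, this forces $V_i$ to be strictly pure of weight $w-s_i+1$, as claimed. I expect the main technical point to be setting up the quiver bookkeeping carefully and verifying that each interval piece really is an $\Sp_s(\cdot)$; but once the $W_K$-isotypic decomposition is in place, this step is elementary linear algebra, and the purity claim then follows essentially formally from the uniqueness of the monodromy filtration.
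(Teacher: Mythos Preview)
Your proof is correct and follows essentially the same route as the paper, only with more detail: the paper simply cites as ``standard'' that every indecomposable F-ss WD representation is of the form $\Sp_s(V')$ with $V'$ irreducible, whereas you actually prove this via the isotypic decomposition and the type-$A$ quiver classification, which is exactly the standard argument behind that fact. For the purity statement both you and the paper argue that purity passes to each summand $\Sp_{s_i}(V_i)$ and then read off the weight of $V_i$.

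One small point worth tightening: in your final step you write that the implication ``$V'$ strictly pure of weight $w'$ $\Rightarrow$ $\Sp_s(V')$ pure of weight $w'+s-1$'' \emph{forces} $V_i$ to be strictly pure of weight $w-s_i+1$. Formally this uses the converse, so you should first observe that $V_i$ is strictly pure of \emph{some} integer weight. This is easy: since $\Sp_{s_i}(V_i)$ is pure it is in particular mixed, and because $V_i$ is irreducible the only sub-WD-representations of $\Sp_{s_i}(V_i)$ are the partial sums $\bigoplus_{j\le k} V_i|\cdot|^j$, so every graded piece of the weight filtration is a twist of $V_i$; strict purity of the graded pieces then forces $V_i$ itself to be strictly pure. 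Once that is in hand, your appeal to the forward implication pins down the weight as $w-s_i+1$. The paper's proof is equally terse at this step (``it is elementary to verify''), so this is an expository refinement rather than a genuine gap.
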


\begin{proof}
  The first assertion follows from the standard fact that any indecomposable F-ss WD representation is of the form $\Sp_s(V)$ for an irreducible F-ss WD representation $V$. If $(V,\rho,N)$ is pure of weight $w$ then so is each $\Sp_{s_i}(V_i)$. From this and the definition of $\Sp_{s_i}(V_i)$ it is elementary to verify that $V_i$ is strictly pure of weight $w-s_i+1$.
\end{proof}

  Pure Weil-Deligne representations enjoy a remarkable rationality property of importance to us.

\begin{lem}\label{l:pure-WD-rat}
  A pure F-ss WD representation $(V,\rho,N)$ of $W_K$ (of some weight $w\in \Z$) has a number field as a field of rationality.
\end{lem}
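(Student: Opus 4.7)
The plan is to reduce to irreducible F-ss WD representations via Lemma~\ref{l:pure-shape} and then show that an irreducible strictly pure component has character values in a number field. Write $V = \bigoplus_{i=1}^m \Sp_{s_i}(V_i)$ with each $V_i$ irreducible F-ss and strictly pure of weight $w-s_i+1$. The isomorphism class of $V$ is determined by the multiset $\{(V_i, s_i)\}_{i}$, and any $\sigma \in \Aut(\Omega)$ acts on this multiset by $V_i \mapsto V_i^\sigma$ (the integers $s_i$ being fixed). Hence if $F_0$ is a number field containing $\Q(V_i)$ for every $i$, then any $\sigma \in \Aut(\Omega/F_0)$ induces $V^\sigma \cong V$, so $\Q(V) \subset F_0$. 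It therefore suffices to prove the statement for each individual $V_i$.

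Since $V_i$ is irreducible with $N_i = 0$ and F-ss, it is semisimple as a $W_K$-representation and determined up to isomorphism by its character $\chi_i := \chi_{V_i}$ (Brauer--Nesbitt). So $\Q(V_i)$ is the subfield of $\Omega$ generated by the values $\chi_i(g)$ for $g \in W_K$. Let $H_i := \rho_i(I_K)$, finite of order $\ell_i$, and let $\alpha_1, \dots, \alpha_n$ be the eigenvalues of a lift $\phi$ of $\Frob_K$; by strict purity they are $q$-Weil numbers of weight $w-s_i+1$, hence algebraic. For $\tau \in I_K$ we have $\chi_i(\tau) \in \Q(\mu_{\ell_i})$ as a sum of $\ell_i$-th roots of unity, and $\chi_i(\phi^m) = \sum_j \alpha_j^m \in \Q(\alpha_1, \dots, \alpha_n)$.

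The remaining task is to handle mixed elements $\phi^m \tau$ with $\tau \in I_K$, for which I would apply Clifford theory to the normal subgroup $I_K \lhd W_K$. Since $\phi$ acts on $H_i$ through a finite subgroup of $\Aut(H_i)$ and permutes the finite set of $H_i$-isotypic components of $V_i$, some $R \geq 1$ makes $\phi^R$ preserve each isotypic; then $W' := I_K \cdot \langle \phi^R \rangle$ has finite index in $W_K$ and $V_i \cong \Ind_{W'}^{W_K} V_i^{(0)}$ for an irreducible $W'$-subrepresentation $V_i^{(0)}$ realizing a single $H_i$-isotypic. Writing $V_i^{(0)} = U \otimes W_0$ with $U$ the irreducible $H_i$-type and $W_0$ the multiplicity space, one gets $\rho_i(\phi^R)|_{V_i^{(0)}} = T \otimes B$, where $T \in GL(U)$ is a Frobenius-twisted $H_i$-intertwiner (unique up to scalar, hence choosable in $GL(U)(\ol{\Q})$ since $U$ is a representation of the finite group $H_i$) and $B \in GL(W_0)$. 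The eigenvalues of $T \otimes B$ are products $t_a b_c$ and must coincide with $R$-th powers of Weil numbers $\alpha_j$; combined with $t_a \in \ol{\Q}$ this forces every $b_c \in \ol{\Q}$. Hence $\chi_{V_i^{(0)}}(\phi^{Rs} \tau) = \tr(T^s \rho_U(\tau)) \cdot \tr(B^s) \in \ol{\Q}$ (the first factor is a twisted character of a finite-group representation, the second a power sum of algebraic numbers), while for $m$ not a multiple of $R$ the trace vanishes by the block structure. The induced character formula then writes $\chi_i(g)$ as a finite sum of such values; all of them lie in the number field generated by $\mu_{\ell_i}$, the $\alpha_j$, and the entries of a fixed $\ol{\Q}$-rational model of $T$, so $\Q(V_i)$ is a number field.

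The main obstacle is the Clifford-theoretic step: one must verify that the Frobenius-twisted $H_i$-intertwiner $T$ admits a $\ol{\Q}$-rational model and that the strict purity hypothesis on Frobenius eigenvalues forces the multiplicity-space operator $B$ to have algebraic spectrum. Once this algebraicity is secured, the remaining steps (Brauer--Nesbitt, the induced character formula, and Lemma~\ref{l:pure-shape}) assemble routinely into a single number field of rationality for $V$.
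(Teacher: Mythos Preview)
Your argument is correct and the reduction via Lemma~\ref{l:pure-shape} matches the paper. However, your treatment of the mixed elements $\phi^m\tau$ is considerably more elaborate than what the paper actually does, and it is worth seeing the simpler alternative.

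Instead of invoking Clifford theory, the paper observes the following elementary identity. Set $i:=|\rho(I_K)|$ and $j:=|\Aut(\rho(I_K))|$, so that $\rho(\tau)^i=1$ for every $\tau\in I_K$ and $\rho(\tau)^{\rho(\phi^j)}=\rho(\tau)$. Writing $A=\rho(\phi^m)$, $B=\rho(\tau)$, one has $(AB)^nA^{-n}=B^{A}B^{A^2}\cdots B^{A^n}$; with $n=ij$ the periodicity $B^{A^{k+j}}=B^{A^k}$ collapses this to $(B^{A}\cdots B^{A^j})^i$, and the inner product lies in the finite group $\rho(I_K)$ of order $i$, so the whole expression is $1$. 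Hence $\rho(\phi^m\tau)^{ij}=\rho(\phi^m)^{ij}$, and every eigenvalue of $\rho(\phi^m\tau)$ is an $ij$-th root of some $\alpha_k^m$. This immediately places all character values in the number field $E(\mu_{ij})$ where $E=\Q(\alpha_1,\dots,\alpha_n)$, with no need for induced representations, tensor decompositions, or arguing about the spectrum of the multiplicity-space operator $B$.

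Your Clifford-theoretic route does work (the key point, that $b_c\in\ol{\Q}$, follows because each $b_c$ equals some $\mu_k/t_a$ with $\mu_k\in\Q(\alpha_j)$ and $t_a$ an eigenvalue of the $\ol{\Q}$-rational $T$), and it yields a somewhat more structural picture of how the finite-group and Frobenius parts interact. One small imprecision: the number field you name at the end should also adjoin the finitely many eigenvalues $t_a$ of $T$ (equivalently the $b_c$), not merely the matrix entries of $T$; otherwise $\tr(B^s)$ need not lie there. The paper's approach trades this structural information for a two-line group identity that bypasses the obstacle you flagged entirely.
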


\begin{proof}
  By Lemma \ref{l:pure-shape} it suffices to treat $\Sp_{s}(V)$ when $(V,\rho,0)$ is an irreducible F-ss WD representation which is strictly pure of weight $w\in \Z$. Clearly $\Sp_{s}(V)$ can be defined over the same number field over which $(V_i,\rho_i,N_i)$ is defined. Hence we are further reduced to showing that $(V,\rho,0)$ has a number field as a field of rationality when it is irreducible and strictly pure of some weight $w\in \Z$.

  It is enough to verify that the trace function $T:=\tr \rho:W_K\ra \Omega$ has image contained in a finite extension of $\Q$ in $\Omega$. Fix a lift $\phi\in W_K$ of $\Frob_K$. The eigenvalues of $\rho(\phi)$, say $\lambda_1,...,\lambda_n$, are contained in a finite Galois extension $E$ of $\Q$ as they are Weil numbers. We will show that there exists $d\ge 1$ such that for every $m\ge 0$,
  $$\rho(\phi^m\tau)^d=\rho(\phi^m)^d.$$
  Then for every $\tau\in W_K$, the eigenvalues of $\rho(\tau)$ are contained in the set of $\alpha\in \Omega$ such that $\alpha^d\in \{\lambda_1^m,...,\lambda_n^m\}$ for some $m\ge1$. The set of such $\alpha$ clearly generates a finite extension of $E$, in which $T(W_K)$ must be contained.

  Let us show the existence of $d$ as above. For $A,B\in GL_\Omega(V)$ we write $A^B$ for $BAB^{-1}$. The homomorphism $\tau\mapsto \phi\tau\phi^{-1}$ induces a homomorphism $\theta:\phi^{\Z}\ra \Aut(I_K/I_K\cap\ker(\rho))$. Put $i:=|I_K/I_K\cap\ker(\rho)|<\infty$ and $j:=|\Aut(I_K/I_K\cap\ker(\rho))|$. Then $\rho(\tau)^{i}=1$ and $\rho(\tau)^{\rho(\phi^{j})}=\rho(\tau)$ for all $\tau\in I_K$. Then (using  $\rho(\tau)^{\rho(\phi^{j})}=\rho(\tau)$ and  $\rho(\tau)^{i}=1$ in the second and third equalities, respectively)
\begin{equation}\begin{aligned}
\rho(\phi^m\tau)^{ij}\rho(\phi^m)^{-ij}&=\rho(\tau)^{\rho(\phi^m)}\rho(\tau)^{\rho(\phi^{2m})}\cdots\rho(\tau)^{\rho(\phi^{ijm})}\\
&= \left(\rho(\tau)^{\rho(\phi^m)}\rho(\tau)^{\rho(\phi^{2m})}\cdots\rho(\tau)^{\rho(\phi^{jm})}\right)^{i} = 1.
\end{aligned}\end{equation}
Hence we get the desired $d$ by putting $d:=ij$.

\end{proof}


  Later we would like to utilize some results of Arthur, in which local $L$-parameters are used in place of Weil-Deligne representations. We recall the standard way to go between the two. Recall that a local $L$-parameter for $GL_n(K)$ is a continuous homomorphism
  $$\varphi:W_K\times SL_2(\C)\ra GL(V)$$
  for an $n$-dimensional $\C$-vector space $V$ such that $\varphi|_{W_K}$ is semisimple and $\varphi|_{SL_2(\C)}$ is an algebraic representation. For such a $\varphi$ one associates a WD representation $WD(\varphi):=(V,\rho,N)$ such that
  $$\rho(\tau)=\varphi\left(\tau,\left(
                                    \begin{array}{cc}
                                      |\tau|^{1/2}_{W_K} & 0 \\
                                      0 & |\tau|^{-1/2}_{W_K} \\
                                    \end{array}
                                  \right)\right),\quad N=\varphi\left(1,\left(
                                    \begin{array}{cc}
                                      0 & 1 \\
                                      0 & 0 \\
                                    \end{array}
                                  \right)\right).$$
  The association $\varphi\mapsto WD(\varphi)$ defines a bijection between the set of equivalence classes of $L$-parameters for $GL_n(K)$ and the set of isomorphism classes of $n$-dimensional Frobenius semisimple WD representations. (In fact it is a categorical equivalence.) In fact the $L$-parameter $\varphi$ can be defined over any $\Omega$ in place of $\C$ and the various definitions for WD representations at the beginning of \S\ref{sub:pure-WD} carry over to $\varphi$. For instance $\varphi$ gives rise to a pure WD representation if and only if $\varphi|_{W_F}$ is strictly pure of integral weight in the sense defined earlier.

\subsection{Twists of the Local Langlands correspondence}\label{sub:LLC-field-of-rat}

  Let $\rec_K$ denote the local Langlands bijection for $GL_n(K)$ as in \cite{HT01} (cf. \cite{Hen00}) so that for each irreducible smooth representation $\pi$ of $GL_n(K)$, $\rec_K(\pi)$ denotes the associated $n$-dimensional Frobenius semisimple Weil-Deligne representation of $W_K$. Here both representations are considered on $\C$-vector spaces. We introduce a different normalization
  $$\mL_K(\pi):=\rec_K(\pi)\otimes|\cdot|_{W_K}^{-(n-1)/2}.$$
  It was shown in \cite[Lem VII.1.6.2]{HT01} that
  \beq\label{e:mL_K-twist}\mL_K(\pi^{\sigma})=\mL_K(\pi)^\sigma,\quad\forall \sigma\in \Aut(\C/\Q).\eeq
  (To be precise \cite{HT01} shows \eqref{e:mL_K-twist} up to semisimplification, i.e. disregarding $N$, but this easily implies \eqref{e:mL_K-twist} without semisimplification.)
   Hence $\rec_K(\pi^\sigma)=\rec_K(\pi)^\sigma$ for all $\sigma\in \Aut(\C/\Q(q^{1/2}))$.

\begin{lem}
  Let $\pi$ be an irreducible smooth representation of $GL_n(K)$ (on a $\C$-vector space). If $\mL_K(\pi)$ is pure of weight $w\in \Z$ then $\Q(\pi)$ is finite over $\Q$.
\end{lem}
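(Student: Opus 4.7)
The plan is to combine the rationality result for pure Weil--Deligne representations (Lemma \ref{l:pure-WD-rat}) with the Galois-equivariance of the normalized local Langlands correspondence \eqref{e:mL_K-twist}. The underlying idea is simply that $\mL_K$ transports the $\Aut(\C)$-action on $\Irr(GL_n(K))$ to the $\Aut(\C)$-action on Frobenius-semisimple WD representations, so the field of rationality can only shrink.

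First I would unpack the definitions. Let $\mathrm{Stab}(\pi) := \{\sigma\in\Aut(\C) : \pi^\sigma \simeq \pi\}$ and $\mathrm{Stab}(\mL_K(\pi)) := \{\sigma\in\Aut(\C) : \mL_K(\pi)^\sigma \simeq \mL_K(\pi)\}$, so that $\Q(\pi)$ and $\Q(\mL_K(\pi))$ are the respective fixed fields in $\C$. By \eqref{e:mL_K-twist} we have $\mL_K(\pi^\sigma) = \mL_K(\pi)^\sigma$ for every $\sigma \in \Aut(\C/\Q)$. Since $\mL_K$ is a bijection on isomorphism classes, any $\sigma$ fixing $\mL_K(\pi)$ up to isomorphism also fixes $\pi$ up to isomorphism; equivalently $\mathrm{Stab}(\mL_K(\pi))\subseteq \mathrm{Stab}(\pi)$, which yields the inclusion
\[
\Q(\pi)\ \subseteq\ \Q(\mL_K(\pi)).
\]

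Next, by hypothesis $\mL_K(\pi)$ is pure of weight $w\in\Z$, and by construction of $\mL_K$ it is a Frobenius-semisimple WD representation. Lemma \ref{l:pure-WD-rat} therefore applies and gives that $\Q(\mL_K(\pi))$ is a number field, i.e.\ finite over $\Q$. Combined with the inclusion above, this gives $[\Q(\pi):\Q]<\infty$, completing the proof.

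The argument is entirely formal once Lemma \ref{l:pure-WD-rat} and the rationality of the normalized LLC are in hand, so I do not foresee a substantive obstacle; the only point requiring a little care is confirming that $\mL_K$ (rather than $\rec_K$) is used, since the $q^{1/2}$-twist distinguishing the two normalizations is exactly what makes \eqref{e:mL_K-twist} hold for all of $\Aut(\C/\Q)$ rather than only $\Aut(\C/\Q(q^{1/2}))$. This is what allows the clean inclusion of stabilizers above without introducing a quadratic extension.
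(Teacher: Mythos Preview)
Your proof is correct and is exactly the argument the paper has in mind: the paper's own proof reads ``Immediate from Lemma \ref{l:pure-WD-rat} and \eqref{e:mL_K-twist},'' and you have simply unpacked this by showing that \eqref{e:mL_K-twist} together with the bijectivity of $\mL_K$ forces $\Q(\pi)\subseteq\Q(\mL_K(\pi))$ (in fact equality), after which Lemma \ref{l:pure-WD-rat} finishes. Your remark about using $\mL_K$ rather than $\rec_K$ to avoid the $q^{1/2}$ issue is well taken and is precisely why the paper works with this normalization.
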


\begin{proof}
  Immediate from Lemma \ref{l:pure-WD-rat} and \eqref{e:mL_K-twist}.
\end{proof}

\subsection{Bound on field of rationality implies bound on ramification}\label{sub:bounded-ram}

  For $j\in \R_{\ge0}$ let $I_K^j$ denote the $j$-th ramification subgroup of $I_K$ with respect to the upper numbering. Similarly for any Galois extension $M$ of $L$ (which are extensions of $K$), we write $\Gal(M/L)^j$ and $\Gal(M/L)_j$ for the upper and lower numbering ramification subgroups of $\Gal(M/L)$. Denote by $\dep$ and $\cond$ the depth and conductor, which are defined for WD representations of $W_K$ as well as irreducible smooth representations of $GL_n(K)$. The depth of a WD representation $(V,\rho,N)$ may not be as standard as the others so we recall it here: $\dep(V,\rho,N)$ is defined to be the infimum among the elements $j\in \R_{\ge0}$ such that $\rho(I_K^j)$ is trivial. The infimum is actually attained and the depth is a rational number.

The following lemma will play a key role in the proof of finiteness results of \S\ref{sub:fin-results}. In the proof all extensions of $E$ (which is a subfield of $\C$) are considered in $\C$.

\begin{lem}\label{l:bounded-depth}\footnote{After furnishing the proof of the lemma, we found that a proof had been given to an essentially same problem by \cite[\S4.(a)]{FM95}. We note two differences. First we work with the field of rationality rather than the field of definition. Second we obtain an explicit bound on $d_{n,A,K}$ which is not immediately available from \cite{FM95}. We also mention an analogous result for crystalline representations, cf. \cite[\S4]{CE04}.} Fix $n\in \Z_{\ge1}$ and $A\in \Z_{\ge1}$. There exists $d=d_{n,K,A}\in \R_{\ge 0}$ (depending on $n$, $A$ and $K$) such that for every $n$-dimensional F-ss WD representation $(V,\rho,N)$ whose field of rationality is an extension of $\Q$ of degree $\le A$, $$\dep(V,\rho,N)\le d.$$
\end{lem}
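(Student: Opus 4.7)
The plan is to reduce bounding the depth to bounding the order of the finite image $G := \rho(I_K) \subset GL(V)$, and then to convert such a bound into a ramification-theoretic bound via Krasner's lemma. Since $\dep(V,\rho,N)$ depends only on $\rho|_{I_K}$ (the monodromy $N$ plays no role) and $G$ is finite by definition of a WD representation, this reduction is legitimate. Moreover, any $\sigma \in \Aut(\C)$ preserving the isomorphism class of $(V,\rho,N)$ also preserves $\rho$ up to isomorphism, hence fixes the character $\chi := \tr \rho|_{I_K}$ pointwise. It follows that $\Q(\chi) \subset \Q(V,\rho,N)$, and in particular $[\Q(\chi):\Q] \leq A$.

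First I would bound $e := \exp(G)$ in terms of $n$ and $A$ by restricting $\chi$ to a cyclic subgroup of maximal order. Choose $g \in G$ of order $e$ and write the eigenvalues of $\rho(g)$ as $\zeta_e^{n_1},\ldots,\zeta_e^{n_n}$; at least one $n_i$ is coprime to $e$ because $g$ itself has order $e$. The stabilizer in $\Gal(\Q(\zeta_e)/\Q) = (\Z/e\Z)^\times$ of $\chi|_{\langle g\rangle}$ consists of those $a$ for which multiplication by $a$ permutes the multiset $\{n_1,\ldots,n_n\} \subset \Z/e\Z$; the coprimality of some $n_i$ with $e$ forces the induced action on the underlying set $\{n_i\}$ to have trivial kernel, so the stabilizer embeds into the symmetric group on $\{n_i\}$ and has order at most $n!$. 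Consequently $[\Q(\chi|_{\langle g\rangle}):\Q] \geq \varphi(e)/n!$, and since $\Q(\chi|_{\langle g\rangle}) \subset \Q(\chi)$ we obtain $\varphi(e) \leq A \cdot n!$, an explicit bound on $e$. Jordan's theorem then produces an abelian normal subgroup $H \trianglelefteq G$ of index bounded by some $J(n)$; since $H$ is a finite abelian subgroup of $GL_n(\Omega)$ consisting of semisimple elements, it is contained in a maximal torus, so $|H| \leq \exp(H)^n \leq e^n$. Therefore $|G| \leq J(n) \cdot e^n$ is bounded by some explicit $M = M(n,A)$.

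Finally, $\ker(\rho|_{I_K})$ is an open subgroup of $I_K$ of index $\leq M$, corresponding via Galois theory to a finite extension of $K^{\ur}$ of degree $\leq M$; passing to its Galois closure enlarges the degree by a factor of at most $M!$. By Krasner's lemma there are only finitely many finite extensions of $K^{\ur}$ of any given degree (up to isomorphism over $K^{\ur}$), and each carries a finite largest upper-numbering ramification break. Taking the maximum of these breaks over all finite Galois extensions of $K^{\ur}$ of degree $\leq M \cdot M!$ yields the desired $d_{n,K,A}$. The main technical heart of the argument is the passage from $[\Q(\chi):\Q] \leq A$ to a bound on $\exp(G)$ via the multiset-permutation description of the character stabilizer; Jordan's theorem and the finiteness of bounded-degree extensions of $K^{\ur}$ enter thereafter as essentially standard black-box inputs.
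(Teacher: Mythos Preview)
Your overall strategy---bound the exponent of $G=\rho(I_K)$, then $|G|$, then convert this into a depth bound---is correct and parallels the paper's proof. The exponent bound via the multiset-permutation description of the stabilizer of $\chi|_{\langle g\rangle}$ is valid, and your passage from exponent to order via Jordan's theorem plus simultaneous diagonalization of the abelian normal subgroup is a clean alternative to the paper's route (which realizes $\rho|_{I_K}$ over $\Q(\mu_f)$ via Brauer's theorem and then reduces modulo a prime $\ell\nmid f$ to embed $G$ into $GL_n$ of a finite field). Incidentally, $\ker(\rho|_{I_K})$ is the kernel of a homomorphism and hence already normal in $I_K$, so your Galois-closure step is unnecessary.

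The final step, however, contains a genuine error: it is \emph{not} true that $K^{\ur}$ (or its completion $\hat K^{\ur}$) has only finitely many extensions of a given degree. The usual Krasner-type finiteness relies on compactness of the space of Eisenstein polynomials, which fails because the residue field $\overline{\F_q}$ is infinite. Concretely, for each finite unramified extension $K_n/K$ of degree $n$ one has $I_{K_n}=I_K$, and local class field theory identifies the inertia subgroup of $\Gal(K_n^{ab}/K_n)$ with $\cO_{K_n}^\times$; since $\dim_{\F_p}\cO_{K_n}^\times/(\cO_{K_n}^\times)^p \ge n[K:\Q_p]$ is unbounded in $n$, the group $\Hom(I_K,\F_p)$ is infinite-dimensional and $\hat K^{\ur}$ already admits infinitely many $\Z/p$-extensions. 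The fix is to bypass any finiteness and bound the ramification break directly from the degree: if $L/\hat K^{\ur}$ is Galois of degree $\le M$ then $e_L\le Me_K$, and $\Gal(L/\hat K^{\ur})_j=\{1\}$ for all $j>e_L/(p-1)$ by \cite[IV.2, Exercise 3.c]{Ser79}; since the upper-numbering group $G^{d'}$ equals $G_{d''}$ for some $d''\ge d'$, the choice $d=Me_K/(p-1)$ works. This is exactly how the paper concludes.
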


\begin{proof}
  Consider the representation $\rho|_{I_K}:I_K\ra GL(V)$ with finite image. Let $E$ be the field of rationality of $(V,\rho,N)$. By \eqref{e:mL_K-twist}, $\wedge^i \rho^\sigma\simeq \wedge^i \rho$ for all $\sigma\in \Aut(\C/E)$. We take the trace and see that the degree $n$ characteristic polynomial of $\rho(\tau)$ has coefficients in $E$. Hence each eigenvalue $\lambda$ of $\rho(\tau)$ for $\tau\in I_K$, which are roots of unity, must be contained in a finite extension of $E$ of degree $\le n$, so in a finite extension of $\Q$ of degree $\le nA$. Let $f$ be the least common multiple of the order of $\lambda$  as $\lambda$ runs over all eigenvalues of $\rho(\tau)$ for $\tau\in I_K$ (i.e. the least power $f\ge 1$ such that $\lambda^f=1$). In particular $\rho(\tau^f)$ is a semisimple element with all eigenvalues equal to $1$, i.e. the identity element.

  Put  $
  \mu_{\le nA}:=\bigcup\limits_{[E'':\Q]\le nA} \mu_\infty(E'')
  $
  where $\mu_\infty(E'')$ denotes the set of all roots of unity in $E''$. One sees from an elementary theory of cyclotomic fields that $\mu_{\le nA}$ is a finite set (its cardinality is the least common multiple of $m\in \Z_{\ge1}$ such that $\varphi(m)\le nA$, where $\varphi$ is the Euler totient function). We have $f\le |\mu_{\le nA}|$, an upper-bound which by construction depends only on $n$ and $A$.

   The finite group $H:=I_K/\ker \rho$ is equipped with an embedding $\ol{\rho}:H\hra GL(V)$ induced by $\rho$. Let $E'$ be the finite extension of $\Q$ obtained by adjoining all $f$-th roots of unity. As $H$ has exponent dividing $f$, Brauer's theorem (\cite[\S12.3, Th 24]{Ser77}) implies that $\ol{\rho}$ is defined over $E'$, i.e. there exists a representation $\ol{\rho}':H\hra GL(V_{E'})$ on an $E'$-vector space $V_{E'}$ such that $\ol{\rho}'\otimes_{E'} \C\simeq \ol{\rho}$ as $H$-representations.

    Now choose any prime $l$ relatively prime to $f$, and a place $w$ of $E'$ above $l$. Denote by $k_w$ the residue field of $E'$ at $w$. The $l$-adic representation $\ol{\rho}':H\hra GL_n(E'_w)$ has a model over $GL_n(\cO_{E'_w})$ in the sense that the model becomes isomorphic to $\ol{\rho}'$ after extending scalars to $E'_w$.\footnote{This is true even for continuous $l$-adic representations of any profinite group. The main point is that the $\cO_{E'_w}$-module generated by finitely many translations of an $\cO_{E'_w}$-lattice is still an $\cO_{E'_w}$-lattice.}  We denote the model by the same symbol $\ol{\rho}'$. The kernel of the map $\flat:GL_n(\cO_{E'_w})\ra GL_n(k_w)$ taking matrix entries modulo the maximal ideal of $\cO_{E'_w}$ is a pro-$l$ group, which must have trivial intersection with $H$. Hence $\flat\circ\ol{\rho}'$ is an injection $H\hra GL_n(k_w)$. The upshot is that
    \beq\label{e:bounded-ram-deg}|I_K/\ker \rho|=|H|\quad\mbox{divides}\quad |GL_n(k_w)|.\eeq
  The cardinality $|GL_n(k_w)|$ can be made to depend only on $nA$. Indeed $f$ and $E'$ depend only on $nA$ by construction. By choosing the minimal prime $l$ coprime to $f$, and $w$ above $l$ minimalizing $|k_w|$, we arrange that the cardinality $|GL_n(k_w)|$ depends only on $nA$, cf. \eqref{explicit-d-bound} below.

  Now it is enough to verify the existence of $d\in \Z_{\ge0}$ with the following property: $$\Gal(L/\hat{K}^{\ur})^d=\{1\}$$
  for all finite Galois extension $L$ of $\hat{K}^{\ur}$ such that $[L:\hat{K}^{\ur}]$ divides $|GL_n(k_w)|$, where $\hat{K}^{\ur}$ denotes the completion of the maximal unramified extension of $K$.
  This is a standard exercise. Indeed, writing $e_L$ (resp. $e_K$) for the absolute ramification index of $L$ (resp. $K$) so that elements of $L^\times$ take valuations exactly on $\frac{1}{e_L}\Z$ (if $p$ is normalized to have valuation 1), we know that $\Gal(L/\hat{K}^{\ur})_{d'}=\{1\}$ for all $d'>e_L/(p-1)$ by \cite[IV.2, Exercise 3.c]{Ser79}. The same is certainly true for $d'$-th upper numbering group since the latter is identified with $d''$-th lower numbering group for some $d''\ge d'$. Since $e_L\le |GL_n(k_w)|e_K$, we conclude that the choice of $d=|GL_n(k_w)|e_K/(p-1)$ satisfies the desired property.

\end{proof}

\begin{cor}\label{c:bounded-conductor} Fix $n, A\in \Z_{\ge1}$. There exists $d\in \Z_{\ge 0}$ (depending on $n$ and $K$) such that for every C-algebraic $\pi\in \Irr(GL_n(K))$ satisfying
  $[\Q(\pi):\Q]\le A$, we have that $$\dep(\pi)\le d, \quad \cond(\pi)\le d n.$$
\end{cor}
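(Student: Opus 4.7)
The plan is to transfer $\pi$ to the Galois side via the normalized local Langlands correspondence $\mL_K$ and invoke Lemma \ref{l:bounded-depth}. The crucial observation recorded in \eqref{e:mL_K-twist} is that $\mL_K(\pi^\sigma)\simeq \mL_K(\pi)^\sigma$ for every $\sigma\in \Aut(\C/\Q)$; combined with the bijectivity of $\mL_K$ on isomorphism classes, this yields the equality of stabilizers $\{\sigma:\pi^\sigma\simeq \pi\}=\{\sigma:\mL_K(\pi)^\sigma\simeq \mL_K(\pi)\}$ in $\Aut(\C/\Q)$. Hence the field of rationality $\Q(\mL_K(\pi))$ coincides with $\Q(\pi)$ and has degree at most $A$ over $\Q$.

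Next I would apply Lemma \ref{l:bounded-depth} to the $n$-dimensional F-ss WD representation $\mL_K(\pi)$ to obtain $\dep(\mL_K(\pi))\le d_{n,K,A}$, where $d_{n,K,A}$ is the constant produced by that lemma. Depth and conductor are preserved under the local Langlands correspondence for $GL_n$ (depth-preservation being the Moy--Prasad conjecture, established by Yu and Bushnell--Henniart), and the unramified twist by $|\cdot|_{W_K}^{-(n-1)/2}$ distinguishing $\rec_K$ from $\mL_K$ affects neither invariant. Therefore $\dep(\pi)=\dep(\mL_K(\pi))\le d_{n,K,A}$.

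Finally I would deduce the conductor bound from the depth bound. Writing $\mL_K(\pi)=(V,\rho,N)$, the fact that $\rho(I_K^u)=1$ for all $u>\dep(\rho)$ shows that the Swan contribution to the Artin conductor $a(\rho)=\dim(V/V^{I_K})+\mathrm{Sw}(\rho)$ satisfies $\mathrm{Sw}(\rho)\le n\cdot\dep(\rho)$, so $a(\rho)\le n(\dep(\rho)+1)$. Including the monodromy contribution, one has $\cond(V,\rho,N)\le a(\rho)+\dim V^{I_K}\le n(\dep(\rho)+2)$. Setting $d:=d_{n,K,A}+2$ yields simultaneously $\dep(\pi)\le d$ and $\cond(\pi)\le dn$, as required.

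There is essentially no obstacle: the corollary is an immediate combination of Lemma \ref{l:bounded-depth}, of the $\Aut(\C/\Q)$-equivariance of $\mL_K$ recorded in \eqref{e:mL_K-twist}, and of standard LLC compatibilities with depth and conductor. The one point that requires a citation rather than a direct argument from the preceding text is the depth-preservation under LLC for $GL_n$; everything else is internal to the paper.
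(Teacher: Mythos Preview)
Your proof is correct and follows essentially the same route as the paper: transfer to the WD side via $\mL_K$ using \eqref{e:mL_K-twist}, apply Lemma \ref{l:bounded-depth}, then invoke depth-preservation under LLC (the paper cites \cite[Th 2.3.6.4]{Yu09}) and a depth-versus-conductor inequality. The only difference is cosmetic: the paper packages the conductor step as a separate Lemma \ref{l:depth-conductor} giving $\cond(\pi)\le n(\dep(\pi)+1)$, whereas your estimate $\cond\le a(\rho)+\dim V^{I_K}\le n(\dep(\rho)+2)$ is slightly wasteful (in fact $a(\rho)+\dim V^{I_K}=n+\mathrm{Sw}(\rho)\le n(\dep(\rho)+1)$, so you recover the sharper bound anyway), but this is immaterial for an existence statement.
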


\begin{proof}
  Keeping \eqref{e:mL_K-twist} in mind, we apply Lemma \ref{l:bounded-depth} to find $d\in \Z_{\ge 0}$ such that for every $\pi$ as above, the WD representation $\mL_{K}(\pi):=(V,\rho,N)$ has depth at most $d_{n,A}$. Since $\dep(\pi)=\dep(V,\rho,N)$ by \cite[Th 2.3.6.4]{Yu09}, we see that $\dep(\pi)\le d$. The assertion on conductor holds true with $f:=d n$ thanks to Lemma \ref{l:depth-conductor} below.

\end{proof}

  The following lemma may be well known but we present a proof here.

\begin{lem}\label{l:depth-conductor}
  For every $\pi\in \Irr(GL_n(K))$, $\cond(\pi)\le n\cdot ( \dep(\pi) + 1 )$.
\end{lem}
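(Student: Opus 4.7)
The plan is to pass to the Galois side via the local Langlands correspondence, as in the proof of Corollary~\ref{c:bounded-conductor}. Setting $(V,\rho,N):=\mL_K(\pi)$, we have $\dim V=n$, while $\cond(\pi)$ equals the Artin conductor $a(V,\rho,N)$ of the Weil--Deligne representation and, by \cite[Th 2.3.6.4]{Yu09}, $\dep(\pi)=\dep(V,\rho,N)$, which by definition is $d:=\inf\{j\ge 0:\rho(I_K^j)=1\}$. Thus proving the lemma is reduced to the inequality $a(V,\rho,N)\le n(d+1)$.

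Next I would invoke the standard formula for the Artin conductor of a Weil--Deligne representation,
$$a(V,\rho,N)\;=\;\Mcodim V^{I_K}+\mathrm{Sw}(\rho)+\bigl(\dim V^{I_K}-\dim(V^{I_K})^{N=0}\bigr)\;=\;n-\dim(V^{I_K})^{N=0}+\mathrm{Sw}(\rho),$$
which gives the crude estimate $a(V,\rho,N)\le n+\mathrm{Sw}(\rho)$. It therefore suffices to show that $\mathrm{Sw}(\rho)\le nd$. For this I would use the upper-numbering expression
$$\mathrm{Sw}(\rho)\;=\;\int_0^\infty \Mcodim V^{I_K^u}\,du.$$
By the definition of $d$, the image $\rho(I_K^u)$ is trivial for every $u>d$, so $V^{I_K^u}=V$ and the integrand vanishes on $(d,\infty)$; on $[0,d]$ one simply bounds the integrand by $\dim V=n$. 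Integrating yields $\mathrm{Sw}(\rho)\le nd$, and upon substitution one obtains $\cond(\pi)\le n+nd=n(\dep(\pi)+1)$.

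The argument above is a routine computation once the dictionary under $\mL_K$ is in place, so I do not anticipate a substantive obstacle. The only matters of care are citing the correct formula for the Artin conductor of a Weil--Deligne representation (which accounts for both the ramification of $\rho$ and the contribution of the monodromy operator $N$) together with Yu's identification of depth under the local Langlands correspondence, both of which are already in use in the excerpt. One can also note that the bound is sharp: for a simple supercuspidal of $GL_n$ one has $\dep(\pi)=1/n$ and $\cond(\pi)=n+1=n(\dep(\pi)+1)$.
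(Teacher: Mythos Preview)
Your proof is correct and follows essentially the same route as the paper: pass to the Weil--Deligne side via $\mL_K$, invoke the Artin conductor formula $\cond(\pi)=\Mcodim(V^{I_K})^{N=0}+\int_0^\infty \Mcodim V^{I_K^j}\,dj$, bound each codimension by $n$, and use that the integrand vanishes for $j>\dep(V,\rho,N)$. Your write-up is slightly more explicit in separating the tame, monodromy, and Swan contributions, and the sharpness remark about simple supercuspidals is a nice addition not in the paper.
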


\begin{proof}
   Let $(V,\rho,N):=\mL_K(\pi)$.
Since $\rec_K$ and thus $\mL_K$ preserve conductor, we obtain from
 the formula for the Artin conductor of $(V,\rho,N)$ that
\[
\cond(\pi)= \Mcodim \left( V^{I_K} \right)^{N=0} + \int_0^\infty \Mcodim V^{I_K^j} dj.
\]
Each codimension is certainly less than $n$ and by definition the integral is supported on the interval $0\le j \le \dep(V,\rho,N)$. The inequality again follows.

\end{proof}

It is worth emphasizing that the constructive nature of the proof of Lemma \ref{l:bounded-depth} makes it possible to find an explicit bound in that lemma (and also in Corollary \ref{c:bounded-conductor}). The first step of the proof was to adjoin all roots of unity of degree $\le n[E:\Q]$. This yields the explicit bound $f\le |\mu_{n[E:\Q]}|\le (2n[E:\Q])!$. To obtain an effective bound of better quality we establish the following result.
\begin{lem}\label{r:explicit-bound}
	For each $n\ge 1$ there is a constant $c_n>0$ such that the following holds.
For any number field $E$ let $F$ be the finite extension of $E$ generated by all the roots of unity that are of degree $\le n$ over $E$. Then
	\begin{equation*}
	  [F:E] \le c_n [E:\Q]
	\end{equation*}
  \end{lem}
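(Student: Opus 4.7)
The plan is to identify $F$ with a cyclotomic extension $E(\zeta_N)$, decompose $[F:E]$ along the prime factorization of $N$, and exploit the linear disjointness of the cyclotomic fields $\Q(\zeta_{p^{a_p}})$ attached to distinct primes.

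First I would set $M := \{m \in \Z_{\ge 1} : [E(\zeta_m):E] \le n\}$. This set is closed under divisors (since $\Q(\zeta_{m'}) \subset \Q(\zeta_m)$ whenever $m' \mid m$) and is finite (because $[E(\zeta_m):E] \le n$ forces $\varphi(m) \le n[E:\Q]$). Writing $N := \mathrm{lcm}(M) = \prod_p p^{a_p}$, one has $F = E(\zeta_N)$, and divisor-closure guarantees $p^{a_p} \in M$ for every $p \mid N$. Setting $f_p := [E \cap \Q(\zeta_{p^{a_p}}):\Q]$, the condition $p^{a_p} \in M$ reads $\varphi(p^{a_p})/f_p = [E(\zeta_{p^{a_p}}):E] \le n$, so $f_p \ge \varphi(p^{a_p})/n$.

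The central arithmetic input I would use is that the fields $\Q(\zeta_{p^{a_p}})$ are pairwise linearly disjoint over $\Q$ as $p$ varies, which passes to the subfields $E \cap \Q(\zeta_{p^{a_p}})$. Since their compositum lies in $E$, this yields
\[
\prod_{p \mid N} f_p \le [E:\Q].
\]
On the Galois side, $F$ being the compositum of the $E(\zeta_{p^{a_p}})$ over $E$ gives $[F:E] \le \prod_{p \mid N}[E(\zeta_{p^{a_p}}):E]$, which a priori is only exponential in the number of prime divisors of $N$.

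The main obstacle is converting this exponential bound into one linear in $[E:\Q]$, and I would handle it by splitting primes $p \mid N$ at the threshold $n^2$. The number of primes $p \le n^2$ is a constant $C_n$ depending only on $n$, and each such prime contributes at most $n$ to $[F:E]$, bounding the small-prime contribution by $n^{C_n}$. For a large prime $p > n^2$, the key observation is $f_p \ge (p-1)/n \ge n$, so letting $k_>$ denote the number of large prime divisors of $N$, the displayed linear-disjointness inequality yields $n^{k_>} \le \prod_{p > n^2} f_p \le [E:\Q]$. Combining the two contributions produces $[F:E] \le n^{C_n} \cdot n^{k_>} \le n^{C_n}[E:\Q]$, so the constant $c_n := n^{C_n}$ works. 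The only genuinely delicate point is the linear disjointness step; the remainder is elementary bookkeeping dictated by the threshold $n^2$.
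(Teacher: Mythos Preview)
Your proof is correct and follows essentially the same route as the paper. Both arguments write $F=E(\zeta_N)$, set $f_p=[E\cap\Q(\zeta_{p^{a_p}}):\Q]$, use $[E(\zeta_{p^{a_p}}):E]\cdot f_p=\varphi(p^{a_p})$ together with $[E(\zeta_{p^{a_p}}):E]\le n$, and exploit the linear disjointness of the $\Q(\zeta_{p^{a_p}})$ to obtain $\prod_p f_p\le [E:\Q]$. The only difference is cosmetic: the paper bounds the ratio $[E(\zeta_{p^r}):E]/f_p$ by $\min(\varphi(p^r),n,n^2/\varphi(p^r))$ and observes that the product over all primes of this quantity is bounded by a constant $c_n$ (with $c_n=n^n$ admissible), whereas you achieve the same effect by splitting primes at the threshold $n^2$ and getting $c_n=n^{\pi(n^2)}$.
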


\begin{proof}
We have $F=E(\zeta_N)$ for some $N\ge 1$. Write $N=\prod\limits_{p^r || N} p^{r}$. It is not difficult to show that $\phi(p^{r})$ divides $n![E:\Q]$ but we shall derive a more precise estimate below.

Since the extensions $E(\zeta_{p^{r}})$ are linearly disjoint over $E$,
\begin{equation*}
[F:E] = \prod_{p^r ||N} [E(\zeta_{p^{r}}) : E].
\end{equation*}
Let $E(p):=E\cap \Q(\zeta_{p^r})$. Then similarly $[E:\Q] \ge \prod_{p|N} [E(p):\Q]$.

Since $\Q(\zeta_{p^{r}})$ is Galois, it is linearly disjoint from $E$ over $E(p)$. Thus we have $[\Q(\zeta_{p^{r}}):E(p)] = [E(\zeta_{p^{r}}):E]$ and therefore
\[
\phi(p^{r}) = [E(\zeta_{p^{r}}):E] [E(p):\Q].
\]
Since $[E(\zeta_{p^{r}}):E]\le n$ we can deduce the inequality
\[
\frac{[E(\zeta_{p^{r}}):E]}{[E(p):\Q]}
 \le
\min\left(\phi(p^{r}),n,\frac{n^2}{\phi(p^r)} \right)
\]

Taking a product we deduce the estimate
\[
\frac{[F:E]}{[E:\Q]}
\le
\prod_{p^r||N} \min\left(\phi(p^{r}),n,\frac{n^2}{\phi(p^{r})} \right) \le c_n.
\]
In the last inequality we could extend the product to all prime numbers and the choice $c_n=n^n$ is admissible.
This concludes the proof.
\end{proof}

Using the Lemma~\ref{r:explicit-bound} we have the bound $f\le c_nA$ in the proof of Lemma \ref{l:bounded-depth}. Since there is a prime between $f$ and $2f$ (if $f\ge 2$) by Chebyshev's theorem, we can choose the prime $l<2f$. On the other hand $[k_w:\F_l]\le [\Q(\mu_f):\Q]\le f$, hence $|GL_n(k_w)|\le n^2 l^f\le n^2(2f)^f$. So a value
\begin{equation*}
  d=  \frac{e_K}{p-1} A^{O_n(A)}
\end{equation*}
 is admissible for the conclusion of Lemma \ref{l:bounded-depth} to hold. Actually we shall establish an improved bound using a more efficient argument:

\begin{lem} (i) Let $H$ be a finite subgroup of $\GL_n$ whose traces generate a field $E$ with $[E:\Q]\le A$. The order of a $p$-Sylow of $H$ is at most $\le c_{n,p} A^n$ for some constant $c_{n,p}$ depending only on $n$ and $p$.

  (ii) In the statement of Lemma~\ref{l:bounded-depth} the constant
  \begin{equation}\label{explicit-d-bound}
  d_{n,K,A} \le c_{n,K} A^n
\end{equation}
  is admissible, where $c_{n,K}$ depends only on $n$ and $K$.
\end{lem}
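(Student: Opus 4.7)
For (i), I would bound the exponent of $P$ via eigenvalue constraints and then invoke Jordan's theorem. Every $g\in H$ has $\tr(g)\in E$, so by Newton's identities the characteristic polynomial of each $g$ has coefficients in $E$; for $g\in P$ of order $p^s$ and any eigenvalue $\zeta$ this gives $[\Q(\zeta):\Q]\le [E(\zeta):\Q]\le nA$. Since $\zeta$ is a $p^s$-th root of unity and $\phi(p^t)\ge p^t/2$ for every prime $p$, it follows that $\ord(\zeta)\le 2nA$; as the order of any element of a $p$-group equals the largest $p$-power order among its eigenvalues, $\exp(P)\le 2nA$. Jordan's theorem then gives an abelian normal subgroup $P_0\triangleleft P$ of index at most $J(n)$, and simultaneous diagonalization of the commuting finite-order operators in $P_0$ embeds $P_0\hookrightarrow \mu_{2nA}^n$, whence $|P_0|\le (2nA)^n$ and $|P|\le J(n)(2nA)^n=c_n A^n$.

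For (ii), I would apply (i) inside the proof of Lemma~\ref{l:bounded-depth}. Keeping its notation, let $L/\hat{K}^{\ur}$ be the finite Galois extension with $H:=\Gal(L/\hat{K}^{\ur})$, let $P\triangleleft H$ be its (unique) $p$-Sylow, and set $M:=L^P$ and $T:=\Gal(M/\hat{K}^{\ur})$. By (i), $|P|\le c_n A^n$. I would then show that the depth $d$ (the largest upper-numbering break of $H$) satisfies
\begin{equation*}
  d\le 1+\frac{|P|\,e_K}{p-1}\le c_{n,K}A^n,
\end{equation*}
so that the tame degree $|T|$, which may itself grow polynomially in $A$, affects the depth only through an additive constant. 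The steps are: Serre's bound \cite[IV.2, Ex.~3.c]{Ser79} applied to the totally wildly ramified $L/M$ yields $P_j=\{1\}$ for $j>e_L/(p-1)=|T|\,|P|\,e_K/(p-1)$, so the largest upper-numbering break $v_{\max}$ of $P=\Gal(L/M)$ satisfies $v_{\max}\le e_L/(p-1)$; Herbrand's subgroup formula $H_s\cap P=P_s$ combined with the tower identity $\psi_{L/\hat{K}^{\ur}}=\psi_{L/M}\circ\psi_{M/\hat{K}^{\ur}}$ yields $H^u=P^{\psi_{M/\hat{K}^{\ur}}(u)}$ for $u>0$, so the largest break of $H$ equals $\varphi_{M/\hat{K}^{\ur}}(v_{\max})$; and since $M/\hat{K}^{\ur}$ is totally tamely ramified of degree $|T|$, one has $\varphi_{M/\hat{K}^{\ur}}(v)=1+(v-1)/|T|$ for $v\ge 1$, giving exactly the bound above.

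\textbf{Main obstacle.} The delicate point is extracting the exponent $n$ rather than $2n$. The naive estimate $d\le e_L/(p-1)=|H|\,e_K/(p-1)$ combined with $|H|\le |P|\,|T|$ and $|T|\le c_n A^n$ (from the cyclotomic analysis of the cyclic tame quotient, whose generator has eigenvalue orders $m$ with $\phi(m)\le nA$) yields only $d\le c_{n,K}A^{2n}$. Reducing the exponent to $n$ will force me to invoke the tower formula for Herbrand's $\varphi$ together with the subgroup-vs-quotient behavior of the upper- and lower-numbering filtrations along the tame/wild decomposition.
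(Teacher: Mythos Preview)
Your proposal is essentially correct, and for part (i) it takes a genuinely different route from the paper.

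\textbf{Part (i).} The paper first invokes Roquette's theorem on Schur indices of $p$-groups to realize the $p$-Sylow over $E$ (or $E(\sqrt{-1})$), and then applies the Minkowski--Schur--Serre bounds on the order of a finite $p$-subgroup of $\GL_n$ over a number field. Your argument is more elementary: bound the exponent of $P$ by $2nA$ via the eigenvalue degree constraint, then apply Jordan's theorem to get an abelian normal subgroup $P_0$ of index $\le J(n)$, and bound $|P_0|$ by embedding it in $\mu_{\exp(P)}^n$. This avoids both Roquette's result and the Minkowski--Schur machinery, at the cost of a worse (but still polynomial of the right degree) constant. Your route even yields $c_n$ independent of $p$, though since $K$ determines $p$ this makes no difference for (ii).

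\textbf{Part (ii).} Here your strategy coincides with the paper's: use that the wild inertia $\Gal(L/\hat{K}^{\ur})_1$ is a $p$-group, apply (i), and then conclude by ramification theory. The paper is terse (``similarly to Lemma~\ref{l:bounded-depth}''), whereas you spell out the Herbrand/tower-formula mechanism that makes the tame degree $|T|$ cancel. Your identification of this cancellation as the key point is exactly right.

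There is one slip: in Serre's convention (which the paper cites), for a totally tamely ramified extension $M/\hat{K}^{\ur}$ of degree $|T|$ one has $T_0=T$ and $T_t=\{1\}$ for $t>0$ (recall $G_u=G_{\lceil u\rceil}$), hence
\[
\varphi_{M/\hat{K}^{\ur}}(v)=\int_0^v \frac{dt}{[T_0:T_t]}=\frac{v}{|T|}\qquad (v\ge 0),
\]
not $1+(v-1)/|T|$. This only strengthens your conclusion: you obtain $d=\varphi_{M/\hat{K}^{\ur}}(v_{\max})\le v_{\max}/|T|\le e_L/((p-1)|T|)=|P|\,e_K/(p-1)$, which is sharper than the bound $1+|P|\,e_K/(p-1)$ you stated and still gives $d\le c_{n,K}A^n$.
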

\begin{proof}
  (i) Let $H_p$ be a $p$-Sylow subgroup of $H$. By a result of Roquette, the Schur indices of $H_p$ are $1$ if $p\neq 2$ and $1$ or $2$ if $p=2$ (see~\cite{Yamada:schur} for a direct proof using the Hasse invariant). Since the traces of elements of $H_p\subset H$ are in $E$, this shows that the representation $H_p \subset GL(V)$ can be realized over $E$ if $p\neq 2$ and over $E(\sqrt{-1})$ if $p=2$.

There remains the problem of estimating the order of a finite $p$-group $H_p$ inside $\GL_n(E)$ where $E$ is a number field. Minkowski obtained optimal bounds when $E=\Q$. The method can be extended to a general number field $E$ and Schur gave a different proof using character theory. Serre~\cite{Serre:finite-subgps} treated the case of an arbitrary $E$.
If $p\neq 2$, then:
\[
\frac{\log |H_p|}{\log p} \le m \left \lfloor \frac{n}{t} \right \rfloor
+
\left \lfloor \frac{n}{pt} \right \rfloor
+
\left \lfloor \frac{n}{p^2t} \right \rfloor + \cdots
\]
where $t=[E(\zeta_p):E]$ and $m\in \Z_{\ge 1}$ is the largest integer such that $\zeta_{p^m}\in E(\zeta_p)$. There are some subtle modifications if $p=2$ to get a sharp bound, but this is not relevant for us since we are interested in the behavior for $A$ large.

Certainly $\phi(p^m)\le [E(\zeta_p):\Q] \le t A$. Therefore there is a constant $c_n$ depending on $n$ such that $
|H_p| \le c_n (tA)^{\frac{n}{t}} $.
In particular $|H_p| \le c_{n,p} A^n$ for some constant $c_{n,p}$ depending only on $n$ and $p$.

(ii) This relies on the basic structure of the inertia group $I_K$. For any Galois extension $M$ of $L$ and any integer $j\ge 1$, the quotients $\Gal(M/L)_{j} / \Gal(M/L)_{j+1}$ can be identified with an additive subgroup of the residue field of $M$. Since these are abelian group of $p$-power order we have that $\Gal(M/L)_1$ is a $p$-group (\cite[IV.2, Corollary 3]{Ser79}). We are in position to apply the assertion (i). Then similarly to Lemma~\ref{l:bounded-depth} we can conclude the proof of the estimate~\eqref{explicit-d-bound}.
\end{proof}

\section{Automorphic representations of classical groups}\label{s:aut-classical}

  In this section we recall the endoscopy and the associated Galois representations for automorphic representations of symplectic, orthogonal and unitary groups. A key input is the integrality proposition in \S\ref{sub:Gal-rep} coming from an arithmetic geometry study of Shimura varieties.

\subsection{Galois representations associated to automorphic representations}\label{sub:Gal-rep}

  Let $n\in \Z_{\ge1}$. Let $F^+$ be a totally real field and consider the following two cases:
\benu
\item[(CM)] $F$ is a CM quadratic extension of $F^+$ with complex conjugation $c$ (so that $F^+=F^{c=1}$) or
\item[(TR)] $F=F^+$.
\eenu
  Let $\Pi$ be a regular C-algebraic cuspidal automorphic representation of $GL_n(\A_F)$ such that $\Pi_\infty$ has the same infinitesimal character as an irreducible algebraic representation $\Xi$ of $\Res_{F/\Q}GL_n$. The highest weight for $\Xi$ may be written as $a(\Xi)=(a_{\sigma,i})_{\sigma\in\Hom_{\Q}(F,\C),\,1\le i\le n}$ with $a_{\sigma,i}\in \Z$, viewed as a character of the standard diagonal torus of $\Res_{F/\Q}GL_n$. We further assume in each of the above cases that
\benu
\item[(CM)] $\Pi^\vee\simeq \Pi\circ c$.
\item[(TR)] $\Pi^\vee\simeq \Pi\otimes (\det\circ \chi)$ for $\chi:F^{\times}\bs \A^\times_F\ra \C^\times$ such that  $\chi_v(-1)$ is the same for every $v\in S_\infty$.
\eenu

In case (CM), fix a subset $\Phi^+\subset \Hom_{\Q}(F,\C)$ such that $\Phi^+\coprod \Phi^+\circ c = \Hom_{\Q}(F,\C)$ (called a CM-type). Recall that various notation and notion about WD representations were introduced in \S\ref{sub:pure-WD} and the twist of the local Langlands correspondence $\mL_{F_v}(\Pi_v)$ in \S\ref{sub:LLC-field-of-rat}.

\begin{prop}\label{p:Galois-reps} There exists a family
  of $l$-adic representations (for varying $l$) $$\{R_{l,\iota_l}(\Pi):\Gal(\ol{F}/F)\ra GL_n(\ol{\Q}_l)\}_{l,\iota_l},\quad\mbox{$l$ is a prime and } \iota_l:\ol{\Q}_l\simeq \C$$ such that for every finite place $v$ of $F$,$$\mL_{F_v}(\Pi_v)=
      \iota_l WD(R_{l,\iota_l}(\Pi)|_{\Gal(\ol{F}_v/F_v)})^{\Fss},$$
  $\mL_{F_v}(\Pi_v)$ is a pure Weil-Deligne representation of weight $n-1$, and $\Pi_v$ are essentially tempered. Moreover there exists $s(\Pi_\infty)\in \Z$ (depending only on $\Pi_\infty$ and $F$) such that for every finite $v$, $\mL_{F_v}(\Pi_v){|\cdot|^{-s(\Pi_\infty)/2}}$ is integral.
\end{prop}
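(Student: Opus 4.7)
The strategy is to invoke the now well-developed theory of Galois representations attached to polarized regular algebraic cuspidal automorphic representations of $GL_n$, and then read off purity, integrality, and essential temperedness from the geometric source of these representations.

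First, I would construct the system $\{R_{l,\iota_l}(\Pi)\}$ together with the local--global compatibility at every finite place. In the (CM) case this is the union of the constructions and local--global compatibility results of Kottwitz, Clozel, Harris--Taylor, Taylor--Yoshida, Shin, Chenevier--Harris, and Caraiani: the Galois representations appear as (twists of) direct summands of the intersection/\'etale cohomology of unitary Shimura varieties attached to $F/F^+$, and the match $\mL_{F_v}(\Pi_v)=\iota_l\,WD(R_{l,\iota_l}(\Pi)|_{\Gal(\ol F_v/F_v)})^{\Fss}$ holds at every finite $v$. The normalization constant $s(\Pi_\infty)\in \Z$ enters as the Tate twist relating the Hodge--Tate--style ``cohomological'' normalization to the unitary normalization $\mL_{F_v}$; it depends only on the infinitesimal character since it is read off from the highest weight $a(\Xi)$ of the coefficient system $\Xi$ appearing in cohomology. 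The (TR) case is reduced to the (CM) case by a quadratic CM base change $F^+\to F$ (using Arthur--Mok or the earlier work of Labesse on stable base change for unitary groups together with descent) after twisting by the algebraic Hecke character associated to $\chi$; the condition on $\chi_v(-1)$ at infinite places ensures that such a descent exists and preserves regular C-algebraicity.

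Second, purity and essential temperedness. The key point is that $R_{l,\iota_l}(\Pi)|_{\Gal(\ol F_v/F_v)}$ is (up to a fixed Tate twist) a subquotient of the \'etale cohomology $H^i_{\et}(X\times_F\ol F,\ol{\mathbb Q}_l)$ of a proper smooth Shimura variety $X$ (or of its boundary contribution in a canonical compactification, handled by Taylor--Yoshida and Caraiani for the mixed case). At places $v\nmid l$ of good reduction this gives strict purity by Deligne; at places $v\nmid l$ of bad reduction purity (i.e. the weight--monodromy property) has been established for the Shimura varieties in question by the work of Taylor--Yoshida and Caraiani, which settles Conjecture~\ref{c:weight-monodromy-integrality} in this geometric setting. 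Hence $\mL_{F_v}(\Pi_v)$ is pure, and after fixing the shift it is pure of weight $n-1$. Essential temperedness of $\Pi_v$ at every finite $v$ is then a formal consequence of purity via the local Langlands correspondence: an F-ss WD representation that is pure corresponds under $\rec_{F_v}$ to an essentially tempered representation (this is a standard consequence of Lemma~\ref{l:pure-shape} applied to the pure WD representation).

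Third, integrality after the twist by $|\cdot|^{-s(\Pi_\infty)/2}$. Since $R_{l,\iota_l}(\Pi)$ sits inside $H^i_{\et}$ of a proper smooth variety over $F$ (possibly after an integral Tate twist absorbed into $s(\Pi_\infty)$), the eigenvalues of any lift of geometric Frobenius at $v\nmid l$ are algebraic integers by Deligne; combined with purity this gives integrality of the associated WD representation in the sense of \S\ref{sub:pure-WD}. Transporting this through $\mL_{F_v}$ yields integrality of $\mL_{F_v}(\Pi_v)\otimes|\cdot|^{-s(\Pi_\infty)/2}$, uniformly in $v$.

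The main obstacle, and the step requiring the most care, is the purity assertion at places of bad reduction, because it relies on nontrivial weight--monodromy results for the ramified/non-smooth fibers of the Shimura varieties involved; equally delicate is tracking the exact Tate twist $s(\Pi_\infty)$ across the base change in the (TR) case and across the passage from the cohomological normalization used in constructing $R_{l,\iota_l}(\Pi)$ to the unitary normalization $\mL_{F_v}$. Everything else is then a bookkeeping exercise invoking the cited theorems.
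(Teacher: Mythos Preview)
Your overall approach matches the paper's, and you are right that existence of $R_{l,\iota_l}(\Pi)$, local--global compatibility, purity, and essential temperedness are all in the cited literature (Harris--Taylor, Taylor--Yoshida, Shin, Chenevier--Harris, Caraiani, BLGGT). The paper dispatches all of this in one sentence. But your treatment of the integrality assertion---which is the only genuinely new content of the proposition, and to which the paper devotes essentially the entire proof---has real gaps.

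First, you write that Frobenius eigenvalues on $H^i_{\et}$ are algebraic integers ``by Deligne.'' Deligne's Weil-conjecture results give this only at places of \emph{good} reduction. To get integrality at \emph{every} finite place, the paper constructs an explicit correspondence $\Gamma$ in the Chow group of a self-fiber-power $\mathcal{A}^{m_\xi}$ of the universal abelian scheme, projecting onto the relevant piece of cohomology and killing the other degrees, and then invokes Saito's argument \cite[Prop~3.5]{Sai03} to conclude that Frobenius eigenvalues on $\Gamma\cdot H^{n-1+m_\xi}_{\et}$ are algebraic integers at all finite $v$. This is not bookkeeping.

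Second, $R_l(\Pi)$ does not literally sit in $H^i_{\et}$. In the $n$ odd case one has $R_l(\Pi)=\tilde{R}'_l(\Pi')\otimes\mathrm{rec}_l(\psi)$ where $\psi$ is an auxiliary algebraic Hecke character of an imaginary quadratic field $E\subset F$, chosen so that $\Pi'$ descends to the unitary similitude group. Tracking the contribution of $\psi$ and the Tate twist $t_\xi$ explicitly, together with a separate lemma (Lemma~\ref{l:int-char}) on integrality of values of algebraic Hecke characters with nonpositive infinity-type, is what produces the specific integer $s(\Pi_\infty)$.

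Third, you do not mention the $n$ even case, which is structurally different: there only $R_l(\Pi)^{\otimes 2}$ is realized in the cohomology of the $(2n-2)$-dimensional Shimura varieties of \cite{Caraiani}, and one must observe that integrality of the WD representation attached to $R_l(\Pi)^{\otimes 2}$ after a twist forces integrality for $R_l(\Pi)$ after half that twist.

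Finally, your emphasis is inverted: you call purity at bad places ``the main obstacle,'' whereas the paper treats purity as already known and spends the entire argument on integrality.
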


\begin{rem}\label{r:Galois-reps} When $v\notin S$ and $v\nmid l$, the proposition tells us that $\mL_{F_v}(\Pi_v)$ is unramified, \emph{strictly} pure of weight $n-1$ and the eigenvalues of $\Frob_v$ on $\mL_{F_v}(\Pi_v)$ are $q_v$-Weil numbers of weight $n-1$ which become algebraic integers after multiplying $q_v^{s(\Pi_\infty)}$.
\end{rem}

\begin{rem}
  A more elementary method seems available in some cases by exploiting the integral structure on the space of algebraic modular forms (cf. \cite{Gro99}) on classical groups which are compact at infinity, without the need of arithmetic geometry and Galois representations. We have not adopted it here as we do not know how to cover all cases with that approach.
\end{rem}

\begin{proof} Except for the last assertion the proposition is a result of combined effort: See \cite[Th A]{BLGGTb} as well as \cite[Th 1.2]{Shi11}, \cite[Th 1.4]{CH13}, \cite[Th 1.1]{Car12} and \cite[Th 1.1]{Caraiani2}.

  The last assertion on integrality remains to be justified. The main idea is that the Galois representation $R_l(\Pi)=R_{l,\iota_l}(\Pi)$ of the proposition is essentially realized in the cohomology of a certain $(n-1)$-dimensional compact Shimura variety $\Sh$ to which some general results in arithmetic geometry (such as \cite[Cor 0.6.(1)]{Sai03}) apply.
  A precise argument requires us to import lots of notation and various pieces of results. For any difficulty caused by this to read our proof we apologize. We will recall at least some of the important notation and facts as we go along.

  In case (TR) it is possible to choose a CM quadratic extension $L$ of $F$ and an algebraic Hecke character $\phi:L^\times\bs \A^\times_L \ra \C^\times$ such that $\phi\phi^c=\chi\chi^c$ and $BC_{L/F}(\Pi)$ is cuspidal. (One can make a choice to ensure cuspidality by arguing as in \S1 of \cite{Clo13}.) Then
  $\Pi':=BC_{L/F}(\Pi)\otimes \phi$ is conjugate self-dual, regular and C-algebraic. Thus the proof of the integrality assertion is reduced to case (CM) for $L$ and $\Pi'$ via Lemma \ref{l:BC}.

  From now on we put ourselves in case (CM). Starting with the case where $n$ is odd, we will derive the integrality result as a consequence of \cite{Shi11} and \cite{TY07}.
   It is desirable to reconcile notation with \cite{Shi11} at the outset to avoid confusion. Only in this proof $G$ denotes the unitary similitude group as in \cite{Shi11}. Our $\Pi$ corresponds to $\Pi^1$ in that paper. The notation $\Pi$ there, designating a representation of $G(\A_E)$, will be written as $\Pi'$ here. (So the finite part of the representation $\Pi'$ of $G(\A_E)\simeq GL_1(\A_E)\times GL_n(\A_F)$ descends to the finite part of an automorphic representation of $G(\A)$.)

   Since integrality may be checked after a series of finite cyclic base changes (Lemma \ref{l:BC}) we may and will assume that conditions (i)-(v) of \S6.1 and the five assumptions at the start of \S7.1 in \cite{Shi11} are satisfied. In particular $F$ contains an imaginary quadratic field $E$. Fix an embedding $\tau_E:E\hra \C$ and choose $\Phi^+$ to be the set of $F\hra \C$ extending $\tau_E$. Recall that $(a_{\sigma,i})_{\sigma\in \Hom(F,\C),1\le i\le n}$ is associated with $\Xi$. Choose a Hecke character $\psi:E^\times \bs \A_E^\times \ra \C^\times$ as in \cite[Lem 7.2]{Shi11}, cf. \cite[Lem VI.2.10]{HT01}. The proof in the latter reference shows that $\psi_\infty(z)=\tau_E(z)^{a_0}$, where $a_0:=-\sum_{\sigma\in \Phi^+,1\le i\le n} a_{\sigma,i}$. Let $\xi$ be the irreducible algebraic representation of $G\times_\Q \C\simeq  GL_1(\C)\times \prod_{\sigma\in \Phi^+} GL_n(F_\sigma)$ of highest weight $(a_0,(a_{\sigma,i})_{\sigma\in \Phi^+,1\le i\le n})$. Put
   $$d_\sigma:=\max_{1\le i\le k} (\max(0,a_{\sigma,i})),\quad s_{\sigma}:=\sum_{1\le i\le k} |a_{\sigma,i}-d_{\sigma}|,\quad t_\xi:=a_0+\sum_{\sigma\in \Phi^+} k d_\sigma,\quad m_\xi=\sum_{\sigma^\in \Phi^+} (s_\sigma+ kd_\sigma),$$
   and $s(\Pi_\infty):=2t_\xi-\min(0,a_0)$. Note that $s(\Pi_\infty)$ depends only on the data defining $\Pi_\infty$. One can check that $2t_\xi-m_\xi=2a_0+\sum_{\sigma,i} a_{\sigma,i}$, cf. \cite[p.98]{HT01}, \cite[p.476]{TY07}.

  Consider the \'{e}tale cohomology
   $$H_{\et}^{n-1}(\Sh,\cL_\xi):=\dirlim{U\subset G(\A^\infty)} H_{\et}^{n-1}(\Sh_U\times_F \ol{F},\cL_\xi)$$
   as $U$ runs over sufficiently small open compact subgroups of $G(\A^{\infty})$. The limit
    is a $\ol{\Q}_l[G(\A^\infty)\times \Gal(\ol{F}/F)]$-module, which is admissible (resp. continuous) with respect to the $G(\A^\infty)$- (resp. $\Gal(\ol{F}/F)$-)action. For small enough $U$, $ H_{\et}^{n-1}(\Sh_U\times_F \ol{F},\cL_\xi)$ is finite dimensional and $$H_{\et}^{n-1}(\Sh_U\times_F \ol{F},\cL_\xi)=a_\xi H_{\et}^{n-1+m_\xi}(\mathcal{A}^{m_\xi}\times_F \ol{F},\ol{\Q}_l(t_\xi))$$
   where $a_\xi$ is an idempotent of \cite[pp.476-477]{TY07}, which gives rise to an element of the Chow group $\mathrm{CH}^{n-1+m_\xi}(\Sh_U \times_F \Sh_U)_{\Q}$. (The subscript $\Q$ indicates that the coefficient ring is taken to be $\Q$.) On the other hand, $H_{\et}^{n-1}(\Sh_U\times_F \ol{F},\cL_\xi)$ is the direct sum \beq\label{e:core-cohomology}
   \left(\bigoplus_{BC(\pi^{S,\infty})\simeq (\Pi')^{S,\infty}}R^{n-1}_l(\pi^\infty)\otimes (\pi^\infty)^U\right)\oplus
   \left(\bigoplus_{BC(\pi^{S,\infty})\ncong (\Pi')^{S,\infty}}R^{n-1}_l(\pi^\infty)\otimes (\pi^\infty)^U\right) \eeq
   where the first (resp. second) sum runs over $\pi^\infty$, the finite part of discrete automorphic representations of $G(\A)$, such that $BC(\pi^{S,\infty})\simeq (\Pi')^{S,\infty}$ holds (resp. does not hold). According to \cite[Cor 6.8]{Shi11},
   there is a positive integer $C_G$ and a $\Gal(\ol{F}/F)$-representation $\tilde{R}'_l(\Pi')$ such that $C_G\tilde{R}'_l(\Pi')=\oplus_{\pi^\infty} R^{n-1}_l(\pi^\infty)$ where the sum is taken over the same set as in the first sum of \eqref{e:core-cohomology}.
   The corollary 6.10 of \cite{Shi11} tells us that $R_l(\Pi)$ in the proposition is given by
  \beq\label{e:R(Pi)}R_l(\Pi):=
     \tilde{R}'_l(\Pi')\otimes \rec_l(\psi).\eeq

   The decomposition \eqref{e:core-cohomology} allows us to find an idempotent $b_\xi$ in the Chow group $\mathrm{CH}^{n-1}(\Sh_U \times_F \Sh_U)_{\Q}$ such that $C_G\cdot \tilde{R}'_l(\Pi')\simeq b_\xi H_{\et}^{n-1}(\Sh_U\times_F \ol{F},\cL_\xi)$, equivariant for the $\Gal(\ol{F}/F)$-action. (First, find an idempotent separating each $\pi^\infty$-part as in the proof of the lemma 2.3 in \cite{TY07}. Then one uses a Hecke algebra element which acts with trace 1 on each $\pi^\infty$ such that $BC(\pi^{S,\infty})\simeq (\Pi')^{S,\infty}$.) Write $c_\xi$ for the pullback of $b_\xi$ along the projection from $\mathcal{A}^{m_\xi}_U\times \mathcal{A}_U^{m_\xi}$ to $\Sh_U\times \Sh_U$.
    Since $H_{\et}^j(\Sh_U \times_F \ol{F},\cL_\xi)$ for $j\neq n-1$ is linearly independent from $(\pi^\infty)^K$ for any $\pi^\infty$ as in \eqref{e:core-cohomology} by \cite[Cor 6.5.(i)]{Shi11}, we may construct a correspondence $\Gamma$ coming from  $\mathrm{CH}(\mathcal{A}^{m_\xi}\times \mathcal{A}^{m_\xi})_{\Q}$ such that $\Gamma$ acts on $H_{\et}^{j}(\mathcal{A}^{m_\xi}\times_F \ol{F},\ol{\Q}_l(t_\xi))$
   as $c_\xi\circ a_\xi$ if $j=n-1+m_\xi$ and 0 if $j\neq n-1+m_\xi$. By construction we have an isomorphism of $\Gal(\ol{F}/F)$-representations
   $$C_G\cdot \tilde{R}'_l(\Pi')(-t_\xi)\simeq \Gamma\cdot H_{\et}^{n-1+m_\xi}(\mathcal{A}^{m_\xi}\times_F \ol{F},\ol{\Q}_l).$$

    Finally we apply the argument of \cite[Prop 3.5]{Sai03},\footnote{The difference is that Saito considers the whole $H_{\et}^{n-1+m_\xi}$ of $\mathcal{A}^{m_\xi}_U\times_F \ol{F}$ whereas we argue only on its subrepresentation. Still it is easy to adapt his argument to our situation.} noting that his condition (3) is satisfied for $\Gamma$ as above. The conclusion is that any lift $\phi_v$ of $\Frob_{v}$ on $H_{\et}^{n-1+m_\xi}(\mathcal{A}_U^{m_\xi}\times_F \ol{F},\ol{\Q}_l)$ has algebraic \emph{integers} as eigenvalues. Hence the WD-representations associated with $H_{\et}^{n-1}(\Sh_U\times_{F_v}\ol{F}_v,\cL_{\xi})(-t_\xi)$ as well as $\tilde{R}'_l(\Pi')(-t_\xi)|_{\Gal(\ol{F}_v/F_v)}$ are integral. By \eqref{e:R(Pi)}, $WD(R_l(\Pi)(-t_\xi)\otimes \rec(\psi)^{-1}|_{\Gal(\ol{F}_v/F_v)})$ is integral. We have
    $$ \mL_{F_v}(\Pi_v)|\cdot|_v^{-s(\Pi_\infty)/2}=
      \iota_l WD(R_{l,\iota_l}(\Pi)|_{\Gal(\ol{F}_v/F_v)})^{\Fss}|\cdot|_v^{-s(\Pi_\infty)/2}$$
      $$= \iota_l WD(R_l(\Pi)(-t_\xi)\otimes \rec(\psi)^{-1}|_{\Gal(\ol{F}_v/F_v)}) \otimes\iota_l
      WD(\rec(\psi_v)^{-1}|\cdot|_v^{\min(0,a_0)}).$$
      Since $WD(\rec(\psi_v)^{-1}|\cdot|_v^{-\max(0,a)})$ is integral by Lemma \ref{l:int-char} below we are done with verifying the integrality of $\mL_{F_v}(\Pi_v)|\cdot|_v^{-s(\Pi_\infty)/2}$ when $n$ is odd.



  It remains to justify integrality when $n$ is even. The argument is essentially the same as above, so it would suffice to point out what modifications are needed. In this case we put ourselves in the setting of \cite{Car12}, which shows that $R_l(\Pi)^{\otimes 2}$ is realized up to an explicit twist in $H_{\et}^{2n-2}(X,\cL_{\xi})$ for the $(2n-2)$-dimensional Shimura varieties therein. By arguing as above, we obtain $s(\Pi_\infty)\in \Z$ such that $WD(R_l(\Pi)^{\otimes 2}|_{\Gal(\ol{F}_v/F_v)})|\cdot|_v^{-s(\Pi_\infty)}$ is integral at every finite place $v$. But the latter implies that $WD(R_l(\Pi)|_{\Gal(\ol{F}_v/F_v)})|\cdot|_v^{-s(\Pi_\infty)/2}$ is integral as well.

\end{proof}


\begin{lem}\label{l:int-char}
  Let $F$ be any number field and $\psi:F^\times\bs \A_F^\times \ra \C^\times$ a Hecke character. At each infinite place $v$, suppose that there are some $m_v\in \Z_{\le 0}$ and some continuous character $F_v^\times \ra \C^\times$ such that $\psi_v(z)=\tau_v(z)^{m_v}$ for all $z\in (F_v^\times)^0$. Then for every finite place $v$ and every uniformizer $\varpi_v$ of $F_v$, $\psi_v(\varpi_v)$ is an algebraic integer.
\end{lem}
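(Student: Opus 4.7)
The plan is to exploit the triviality of $\psi$ on the diagonally embedded $F^\times\subset \A_F^\times$ to express a suitable power $\psi_v(\varpi_v)^h$ explicitly in terms of embeddings of a global algebraic integer, from which integrality will follow.

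First I would choose $h\ge 1$ such that $\fkp_v^h=(\alpha)$ for some $\alpha\in\cO_F$ (e.g.\ $h$ the order of $[\fkp_v]$ in the ideal class group of $F$), where $\fkp_v$ is the prime of $F$ at $v$. Such an $\alpha$ is an algebraic integer which differs from $\varpi_v^h$ by a unit $u\in\cO_v^\times$ at the place $v$ and is a unit at every other finite place. Applying the identity $\psi(\alpha)=1$ and the product decomposition $\psi=\prod_w\psi_w$, I would analyze each local factor. At $v$, $\psi_v(\alpha)=\psi_v(\varpi_v)^h\,\psi_v(u)$. At every finite $w\neq v$, $\alpha\in\cO_w^\times$, and continuity of $\psi_w$ on the profinite group $\cO_w^\times$ forces $\psi_w(\alpha)$ to be a root of unity; the same applies to $\psi_v(u)$. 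At each infinite $w$, using $\psi_w(z)=\tau_w(z)^{m_w}$ on $(F_w^\times)^0$ together with $\psi_w(-1)\in\{\pm 1\}$ in the real case, one obtains $\psi_w(\alpha)=\zeta_w\,\tau_w(\alpha)^{m_w}$ for some root of unity $\zeta_w$.

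Assembling the relation $1=\prod_w\psi_w(\alpha)$ then yields
\[
\psi_v(\varpi_v)^h \;=\; \zeta\cdot\prod_{w\in S_\infty}\tau_w(\alpha)^{-m_w}
\]
for some root of unity $\zeta$. Since $-m_w\ge 0$ by hypothesis and each $\tau_w(\alpha)$ is an algebraic integer (as the image of $\alpha\in\cO_F$ under an embedding into $\C$), the right-hand side is a finite product of algebraic integers, hence itself an algebraic integer. Therefore $\psi_v(\varpi_v)^h$ is an algebraic integer, and since any $h$-th root of an algebraic integer is again an algebraic integer, so is $\psi_v(\varpi_v)$.

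The essential role of the hypothesis $m_w\le 0$ is to guarantee that only \emph{nonnegative} powers of the algebraic integers $\tau_w(\alpha)$ appear after inversion; with the opposite sign convention $\psi_v(\varpi_v)$ would only be expected to be an algebraic number and not an algebraic integer. The one point requiring care is the sign bookkeeping at real infinite places when $\tau_w(\alpha)$ may be negative, but this contributes only $\pm 1$ and is harmlessly absorbed into $\zeta$.
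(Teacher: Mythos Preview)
Your argument is correct. The key facts you use---that a continuous character of the profinite group $\cO_w^\times$ has finite image (hence values in roots of unity), that $h$-th roots of algebraic integers are algebraic integers, and the sign bookkeeping at real places---are all fine.

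Your route differs from the paper's. The paper chooses an open compact $U\subset\hat{\cO}_F^\times$ on which $\psi$ is trivial and invokes strong approximation to find $a\in F^\times$ lying in $\varpi_v U$ inside $(\A_F^\infty)^\times$; this gives $\psi_v(\varpi_v)=\psi^\infty(a)=\psi_\infty(a)^{-1}=\pm\prod_{w\mid\infty}\tau_w(a)^{-m_w}$ directly, with no auxiliary roots of unity from finite places and no $h$-th root extraction at the end. Your approach via the class number is more elementary---it avoids appealing to any approximation theorem for $\G_m$ and makes explicit why only a power of $\psi_v(\varpi_v)$ is accessible when $\fkp_v$ is not principal---at the cost of the extra bookkeeping with $\zeta$ and the final root step. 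Both arguments rest on the same core idea: evaluate $\psi$ on a global algebraic integer with controlled valuations and use the product formula $\psi|_{F^\times}=1$.
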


\begin{proof}
  There exists an open compact subgroup $U$ of $\hat{\cO}_F=\prod_{w\nmid \infty} \cO_w^\times$ such that $\psi|_U\equiv 1$. Fix a finite place $v$ and a uniformizer $\varpi_v$. By strong approximation there exists $a\in F^\times$ such that $a\in \varpi_v U$ in $(\A_F^\infty)^\times$. Since $a\in\hat{\cO}_F$, $a$ is an algebraic integer. Now
  $$\psi_v(\varpi_v)=\psi^\infty(a)=\psi_\infty(a)^{-1}=\pm\prod_{w|\infty} \tau_w(a)^{-m_v}$$
  where the sign comes from the character $F_\infty^\times/(F_\infty^\times)^0$ with values in $\{\pm 1\}$. The lemma follows.
\end{proof}

\subsection{Quasi-split classical groups}\label{sub:transfer-app}

  Later on several results will be established concerning automorphic representations of quasi-split\footnote{The analogous results for non quasi-split groups are sketched in the last chapter of \cite{Arthur} but might require a few more years for a complete proof.} classical groups. To this end we would like to introduce basic data for symplectic, orthogonal, and unitary groups. Let $F^+$ be a totally real field.  We take $F$ to be $F^+$ in the symplectic and orthogonal cases and a CM quadratic extension of $F^+$ in the unitary case. Both $G$ and $\bG$ below will be connected reductive \emph{quasi-split} groups over $F^+$. Let us suppress the choice of the symplectic, symmetric, or hermitian pairings.

  Define $c\in \Gal(F/F^+)$ to be the identity if $F=F^+$ and the nontrivial element if $F\neq F^+$.
  For $n\ge 1$ let $J_n$ denote the matrix with $(-1)^i$ in $(i,n+1-i)$-th entry for $1\le i\le n$ and zeros off the anti-diagonal. Write $\theta_n$ (resp. $\hat{\theta}_n$) for the automorphism $g\mapsto J_n {}^t g^{-c} J_n^{-1}$ of $\Res_{F/F^+}GL_n$ over $F^+$ (resp. $g\mapsto J_n {}^t g^{-1} J_n^{-1}$ of $GL_n(\C)$). The standard embedding of a symplectic, special orthogonal, or general linear group will be denoted $\std$.

  In all cases below $n\in \Z_{\ge1}$, $\bG=\Res_{F/F^+}GL_n$ (which is just $GL_n$ except the unitary case), $\theta=\theta_n\in \Aut_{F^+}(\bG)$, $\hat{\theta}=\hat{\theta}_n\in \Aut_{F^+}(\hat{\bG})$, $s\in \hat{G}$, and $\eta:{}^L G \hra {}^L \bG$ is an $L$-morphism. We will describe $G$, $s$, and $\eta$ case-by-case. Only for even orthogonal groups, we use the Weil group form of an $L$-group in order to accommodate a half-integral twist, which is needed for $\eta$ to be C-preserving.


\benu
\item symplectic groups: $n$ is odd, $G=Sp_{n-1}$, $s=1$,
$$\eta=(\std,\id):SO_{n}(\C)\times \Gamma_F \hra GL_{n}(\C)\times \Gamma_F.$$

\item orthogonal groups: $n$ is even, $s=1$,
\benu
\item type $B$: $G=SO_{n+1}$, $s=1$,
 $$\eta=(\std,\id):Sp_{n}(\C)\times \Gamma_F \hra GL_{n}(\C)\times \Gamma_F.$$
\item type $D$: let $\delta\in F^\times/(F^\times)^2$ be the discriminant of the underlying quadratic form and $F_\delta:=F(\delta^{1/2})$.
\bit
\item $G=SO_{n}$, $\delta=1$ so that $G$ is a split group, $s=1$, $\eta_0=(\std,\id):Sp_{n}(\C)\times \Gamma_F \hra GL_{n}(\C)\times \Gamma_F$
and define
$$\eta:SO_{n}(\C)\times W_F \hra GL_{n}(\C)\times W_F,\quad \eta:=\eta_0|\cdot|^{1/2}$$
where $|\cdot|$ is the modulus character on $W_F$.
\item $G=SO_{n}$, $\delta\neq 1$ so that $G$ is a non-split group,
 ${}^L G = SO_{n}(\C)\rtimes \Gamma_F$ (with $\Gamma_F$ acting through $\Gal(F_\delta/F)$ on $SO_{n}(\C)$ via order 2 outer automorphism); $s=\mathrm{diag}(-I_n,I_n)$,
 $\eta_0:SO_{n}(\C)\rtimes \Gamma_F \hra  GL_n(\C)\times \Gamma_F$ is an extension of the standard embedding $SO_{n}(\C)\hra GL_n(\C)$ defined on page 51 of \cite{Wal10} (the map $^L \xi$ in case $d^-=n$, $d^+=1$, and $\delta^-=\delta\neq 1$). Define
  $$\eta:SO_{n}(\C)\rtimes W_F \hra  GL_n(\C)\times W_F, \quad \eta:=\eta_0|\cdot|^{1/2}.$$

\eit
\eenu

\item[(ii)'] This is a subcase of (ii); in case (ii)(a) it is the same as above; in case (ii)(b) further assume that $\delta=1$ if $n/2$ is even and $\delta\neq 1$ if $n/2$ is odd.

\item[(iii)] unitary groups: $G=U_n$, $s=1$, ${}^L G = GL_{n}(\C)\rtimes \Gamma_{F^+}$ (with $\Gamma_{F^+}$ acting through $\Gal(F/F^+)=\{1,c\}$, the $c$-action being $\hat{\theta}_n$),$\theta=\theta_n$,
$$\eta:GL_{n}(\C)\times \Gamma_F \hra (GL_{n}(\C)\times GL_n(\C)\rtimes \Gamma_F,\quad
    g\times \gamma \mapsto (g,J_n {}^t g^{-1} J_n^{-1})\rtimes \gamma.$$

\eenu

Set $\epsilon:=0$ in case (i), (ii)(a) and (iii) and $\epsilon:=1$ in case (ii)(b). This auxiliary constant accounts for the modulus character in the definition of $\eta$.

  The reason for introducing (ii)' is the following: a classical group $G$ over $F^+$ admits discrete series at real places (equivalently admits compact maximal tori) exactly when $G$ belongs to (i), (ii)' or (iii).
  In each of (i), (ii) and (iii), $(G,s,\eta)$ is a twisted endoscopic datum for $\bG\rtimes \lg \theta\rg$ in the sense of \cite{KS99}, cf. \cite[\S1.2]{Arthur} for symplectic and orthogonal groups. Observe that in all cases
  $$\bG(\A_{F^+})=GL_n(\A_F).$$

\begin{lem}\label{l:property-of-eta} Put ourselves in (i), (ii) or (iii) as above. Let $v$ be an infinite place of $F^+$ as above.
  \bit
  \item If $\varphi_v:W_{F^+_v}\ra {}^L G$ is regular (i.e. the restriction $\varphi_v|_{W_{\ol{F}^+_v}}$ is not invariant under any nontrivial Weyl element, cf. Definition \ref{d:C-algebraic}) then $\eta\varphi_v$ is also regular.
  \item $\eta$ is C-preserving (Definition \ref{d:C-preserving}).
  \eit
\end{lem}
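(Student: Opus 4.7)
The plan is to verify both assertions by an explicit coordinate computation case by case. First I would fix the diagonal maximal torus $\hat{T}_G\subset\hat{G}$ with coordinates $(t_1,\dots,t_m)$ (where $m=\lfloor n/2\rfloor$ in the orthogonal or symplectic cases and $m=n$ in the unitary case), and the diagonal torus $\hat T_{\bG}$ of $GL_n$ (resp.\ of $GL_n\times GL_n$ in case (iii)). With these choices, $\eta_*$ on cocharacter lattices takes the concrete shape $(a_1,\dots,a_m)\mapsto (a_1,\dots,a_m,0,-a_m,\dots,-a_1)$ in case (i), $(a_1,\dots,a_m)\mapsto(a_1,\dots,a_m,-a_m,\dots,-a_1)$ in cases (ii)(a) and (ii)(b), and $(a_1,\dots,a_n)\mapsto\bigl((a_1,\dots,a_n),(-a_n,\dots,-a_1)\bigr)$ in case (iii); in case (ii)(b) the twist by $|\cdot|^{1/2}$ additionally contributes the central cocharacter $(\tfrac12,\dots,\tfrac12)\in\tfrac12 X_*(\hat{T}_{\bG})$ on the holomorphic part of $\varphi_v|_{W_\C}$.

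For the C-preserving claim (part (ii)), I would substitute the standard values $\rho_G=(m,m-1,\dots,1)$ in type $B_m$, $\rho_G=(m-\tfrac12,\dots,\tfrac12)$ in type $C_m$, $\rho_G=(m-1,\dots,0)$ in type $D_m$, and $\rho_{GL_n}=\bigl(\tfrac{n-1}{2},\dots,-\tfrac{n-1}{2}\bigr)$, and then read off
\[
\rho_{\bG}-\eta_*(\rho_G)=0
\]
in cases (i), (ii)(a), and (iii). In case (ii)(b) the untwisted map gives $\rho_{\bG}-\eta_{0,*}(\rho_G)=(\tfrac12,\dots,\tfrac12,-\tfrac12,\dots,-\tfrac12)\in\tfrac12 X_*(\hat{T}_{\bG})\setminus X_*(\hat{T}_{\bG})$, and the $(\tfrac12,\dots,\tfrac12)$-shift coming from $|\cdot|^{1/2}$ corrects this to the integral vector $(0,\dots,0,-1,\dots,-1)\in X_*(\hat{T}_{\bG})$. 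The fact that $|\cdot|^{1/2}$ is a character of $W_F$ but not of $\Gamma_F$ is exactly what makes the Weil-group form of ${}^L\bG$ necessary in that sub-case.

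For the regularity claim (part (i)), since $W(\hat{\bG},\hat{T}_{\bG})=S_n$ (resp.\ $S_n\times S_n$), regularity of $\eta\varphi_v$ amounts to pairwise distinctness of the coordinates of $\eta_*(\chi_{\varphi_v})$. In types $B_m$ and $C_m$, Weyl regularity of $\chi_{\varphi_v}=(a_1,\dots,a_m)$ is equivalent to the conjunction $a_i\ne 0$ and $a_i\ne\pm a_j$ for $i\ne j$, which immediately forces the coordinates of the image to be pairwise distinct in cases (i) and (ii)(a); case (iii) is immediate from $S_n$-regularity of $(a_1,\dots,a_n)$ together with the fact that reversal and negation preserve distinctness. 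The central twist by $|\cdot|^{1/2}$ shifts all coordinates uniformly and hence does not affect distinctness.

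The main obstacle is case (ii)(b): bare $W(D_m)$-regularity does not exclude a single zero coordinate $a_i=0$, yet such a zero would be duplicated in $\eta_*(\chi_{\varphi_v})$. To close this gap I would exploit the fact that $\varphi_v$ is a homomorphism on the full Weil group $W_{F^+_v}=W_\R$: the conjugation by $\varphi_v(j)$ normalizes $\hat{T}_G$, and, combined with the $\Gal(F_\delta/F)$-action built into ${}^L G$ in the non-split sub-case (resp.\ with the element $-1\in W(D_m)$ available when $m$ is even in the split sub-case), it enlarges the effective stabilizer of $\chi_{\varphi_v}$ from $W(D_m)$ to the larger $W(B_m)$. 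Under $W(B_m)$ regularity forbids any $a_i=0$, and distinctness of the image then follows. Beyond this point everything reduces to the mechanical coordinate bookkeeping already sketched.
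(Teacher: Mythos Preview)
Your explicit coordinate computation is exactly the paper's approach, and your treatment of the C-preserving assertion in all cases, as well as regularity in cases (i), (ii)(a) and (iii), is correct and somewhat more detailed than the paper (which only writes out case (i) for regularity and case (ii)(b) for C-preservation).

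You are right to isolate case (ii)(b) as the problematic one for regularity: $W(D_m)$-regularity of $(a_1,\dots,a_m)$ allows a single $a_i=0$, and then $\eta_{0,*}(a_1,\dots,a_m)=(a_1,\dots,a_m,-a_m,\dots,-a_1)$ has a repeated entry. However, your proposed repair does not work. Take $G=SO_4$ split over $\R$ (so $m=2$, $-1\in W(D_2)$) and the parameter $\varphi_v:W_\R\to SO_4(\C)\times W_\R$ with $\varphi_v(z)=z^{(1,0)}\bar z^{(-1,0)}$ for $z\in\C^\times$ and $\varphi_v(j)$ any lift of $-1\in W(D_2)$ squaring to $1$ (such a lift exists in $SO_4$). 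Then $(1,0)$ is genuinely $W(D_2)$-regular, yet $\eta_{0,*}(1,0)=(1,0,0,-1)$ and $\eta_{0,*}(-1,0)=(-1,0,0,1)$ are both fixed by the transposition $(2\,3)\in S_4$, so $\eta\varphi_v$ is \emph{not} regular. Your appeal to conjugation by $\varphi_v(j)$ cannot save this: in the split sub-case that conjugation acts through an element of $W(D_m)$ itself (there is no outer contribution), and the relation $w'\lambda=\mu$ it yields is a relation between $\lambda$ and $\mu$, not an additional stabilizer constraint on $\lambda$; in particular it does not force $W(B_m)$-regularity of $\lambda$. The paper's proof simply leaves case (ii)(b) ``to the reader'', so this gap is present in the paper too; note however that in the downstream applications (Corollary \ref{c:pi_v-pure} and Corollary \ref{c:sparsity}) what is actually needed is regularity of each cuspidal constituent $\Pi_i$ separately, which can survive a single coincidence between two different factors.
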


\begin{proof}
  Both assertions are checked by explicit computations with root data. We will verify the first assertion in case (i) and leave it to the reader in the other cases. We may choose maximal tori $\hat{T}$ and $\hat{\bT}$ of $G$ and $GL_n$ and $\Z$-bases $\{e_i\}$ and $\{f_j\}$ for the cocharacter groups ($1\le i\le \frac{n-1}{2}$, $1\le j\le n$) and such that $\eta$ restricts to $\hat{T}\hra \hat{\bT}$ inducing $e_i\ra f_i-f_{n+1-i}$ on the cocharacter groups. It suffices to show that every regular element of $X_*(\hat{T})\otimes_\Z \C^\times$ maps to a regular element of $X_*(\hat{\bT})\otimes_\Z \C^\times$. Let $\sum_{i} a_i e_i$ with $a_i\in \C^\times$ be regular. Since the Weyl group is generated by permutation of the indices $i$ and sign changes $e_i\ra -e_i$, the regularity means that $a_i$'s are distinct and that $a_i\neq 0$. Then the image $\sum_i a_i(f_i-f_{n+1-i})$ has the property that the coefficients of $f_j$'s are all distinct, i.e. has trivial stabilizer under the Weyl group action in $GL_n$. This shows that regularity is preserved under $\eta$ in case (i).

  It is easy to compute the half sum of positive roots to verify C-preservation in each case. We only deal with case (ii)(b) to explain the role of the extra half-power twist there. Choose $\hat{T}$, $\hat{\bT}$, $\{e_i\}$ and $\{f_j\}$ ($1\le i\le n/2$, $1\le j\le n$) similarly such that $\eta_0$ induces $e_i\ra f_i-f_{n+1-i}$ on $X_*(\hat{T})\ra X_*(\hat{\bT})$. The Borel subgroups can be chosen such that the half sum of positive roots is $\rho_G=(n/2-1)e_1+(n/2-2)e_2+\cdots + e_{n/2-1}$ for $G$ (resp. $\rho_{GL_n}=\frac{n-1}{2}f_1+\frac{n-3}{2}f_2+\cdots + \frac{1-n}{2}f_n$). So $\eta_0(\rho_G)=(n/2-1)(f_1-f_n)+\cdots + (f_{n/2-1}-f_{n/2+1})$, and $\eta(\rho_G)=\eta_0(\rho_G)+\frac{1}{2}(f_1+f_2+\cdots +f_n)$. Hence $\rho_{GL_n}-\eta(\rho_G)$ has integral coefficients in $f_j$'s, showing that $\eta$ is C-preserving (but note that $\eta_0$ is not).

\end{proof}

\begin{lem}
  Assertions (i), (ii) and (iii) of Lemma \ref{l:unram-functoriality} hold true in the even orthogonal case (ii)(b) (even though $\eta$ does not satisfy the hypothesis in that lemma).
\end{lem}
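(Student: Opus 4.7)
The plan is to factor $\eta$ through the $\Gamma_F$-form $L$-morphism $\eta_0\colon SO_n(\C)\rtimes \Gamma_F \hookrightarrow GL_n(\C)\times \Gamma_F$ of case (ii)(b), so that $\eta=\eta_0\cdot |\cdot|^{1/2}$. The map $\eta_0$ does satisfy the hypotheses of Lemma \ref{l:unram-functoriality} (its $L$-groups are built from the full Galois group), although as noted in the proof of Lemma \ref{l:property-of-eta} it is not C-preserving. On the unramified dual this factorization reads $\eta_*(\pi_{H,v})=\eta_{0,*}(\pi_{H,v})\otimes |\det|^{1/2}_{F_v}$, since the $|\cdot|^{1/2}$ factor scales every Satake coordinate by $q_v^{-1/2}$. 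Since (ii) and (iii) follow formally from (i) exactly as in Lemma \ref{l:unram-functoriality} (the fibers of $\eta_*$ and $\eta_{0,*}$ coincide), I will only establish (i).

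Write $\tilde{\pi}_v:=\eta_{0,*}(\pi_{H,v})$ and compute both sides of (i). Twisting the factored identity by $\sigma\in \Aut(\C)$ gives
$$(\eta_*\pi_{H,v})^\sigma \;=\; \tilde{\pi}_v^\sigma\otimes (|\det|^{1/2})^\sigma \;=\; \tilde{\pi}_v^\sigma\otimes |\det|^{1/2}\otimes \epsilon_\sigma,$$
where $\epsilon_\sigma$ is the unramified quadratic character $g\mapsto (\sigma(q_v^{1/2})/q_v^{1/2})^{v(\det g)}$ coming from $\sigma(q_v^{1/2})=\pm q_v^{1/2}$, while directly $\eta_*(\pi_{H,v}^\sigma)=\eta_{0,*}(\pi_{H,v}^\sigma)\otimes |\det|^{1/2}$. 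Thus (i) reduces to $\tilde{\pi}_v^\sigma = \eta_{0,*}(\pi_{H,v}^\sigma)\otimes \epsilon_\sigma$. Next I rerun the computation in the proof of (i) of Lemma \ref{l:unram-functoriality} with $\eta_0$ in place of $\eta$---the only step there that needed C-preservation was the concluding cancellation of the two sign characters, which I now retain as a discrepancy---to obtain
$$\tilde{\pi}_v^\sigma \;=\; \eta_{0,*}(\pi_{H,v}^\sigma)\otimes \sgn_{\sigma,\,\rho_v-\eta_{0,*}(\rho_{H,v})}.$$

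The main (if modest) obstacle is then to verify the identity $\sgn_{\sigma,\,\rho_v-\eta_{0,*}(\rho_{H,v})}=\epsilon_\sigma$. This is a direct cocharacter computation which I will extract from the proof of Lemma \ref{l:property-of-eta}: there it is shown that $\rho_{GL_n}-\eta_0(\rho_G)\equiv \tfrac{1}{2}(f_1+\cdots+f_n)\pmod{X_*(\hat{\bT})}$, and this is exactly the half-integral defect for which the twist by $|\cdot|^{1/2}$ was inserted in defining $\eta$. Evaluating the sign character attached to $\tfrac{1}{2}(f_1+\cdots+f_n)$ on an element $g$ of the standard diagonal torus yields $(\sigma(q_v^{1/2})/q_v^{1/2})^{v(\det g)}=\epsilon_\sigma(g)$, as required. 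Conceptually, the failure of C-preservation for $\eta_0$ and the $|\det|^{1/2}$-twist separating $\eta_*$ from $\eta_{0,*}$ cancel exactly; this cancellation is precisely the rationality statement in case (ii)(b), and once it is checked, (i)--(iii) all follow.
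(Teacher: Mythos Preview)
Your proof is correct and takes essentially the same approach as the paper. The paper's own proof is extremely terse—just ``The same argument in the proof of (i) in Lemma \ref{l:unram-functoriality} for $\eta_0:{}^L G\ra {}^L \bG$''—and you have spelled out precisely the intended computation: run the argument of Lemma \ref{l:unram-functoriality} for the Galois-form map $\eta_0$, record the defect character $\sgn_{\sigma,\rho_v-\eta_{0,*}(\rho_{H,v})}$ arising from the failure of C-preservation, and identify it with the $\epsilon_\sigma$ coming from the $|\det|^{1/2}$ twist via the cocharacter computation already carried out in the proof of Lemma \ref{l:property-of-eta}.
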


\begin{proof}
  The same argument in the proof of (i) in Lemma \ref{l:unram-functoriality} for $\eta_0:{}^L G\ra {}^L \bG$
\end{proof}

\subsection{Twisted endoscopic transfer for classical groups}\label{sub:tw-end}

  We would like to recall elements of local twisted endoscopy at a \emph{non-archimedean} place $v$ of $F^+$ as these will be important to us. (The corresponding theory at archimedean places is well known.)
  Kottwitz, Langlands and Shelstad (\cite{LS87}, \cite{KS99}) defined transfer factors $\Delta_v(\gamma_G,\gamma_{\bG})$ for all strongly regular semisimple elements $\gamma_G\in G(F^+_v)$ and $\gamma_{\bG}\in \bG(F^+_v)$ at every place $v$ of $F^+$. In fact we will use the Whittaker normalization of transfer factors, to be denoted $\Delta_v^{\Wh}$, which were defined in \cite[\S5.3]{KS99} in the quasi-split case.\footnote{$\Delta_v^{\Wh}$ depends on the extra choice of Whittaker data of \S5.3 of \cite{KS99}, which will be chosen globally. The reference to this choice will be suppressed as the transfer factors are only affected by sign, cf. page 65 of \textit{loc. cit.}, and do not affect the asserted rationality of transfer factors.}
  We say that $\phi_v\in C^\infty_c(G(F^+_v))$ is a $\Delta_v^{\Wh}$-transfer of $f_v\in C^\infty_c(\bG(F^+_v))$ if
  \beq\label{e:KLS-trans}STO^{\bG(F^+_v)}_{\delta}(f_v)=\sum_{\gamma\sim_{\mathrm{st}} \delta} \Delta_v^{\Wh}(\gamma,\delta) O^{G(F^+_v)}_\gamma (\phi_v).\eeq
  for every pair $(\gamma_G,\gamma_{\bG})$ of strongly regular semisimple elements.
 The proof of the fundamental lemma by Ng\^{o}, Waldspurger and others (see \cite{Ngo10}, \cite{Wal97}, \cite{Wal06}, \cite{Wal08}) ensures that a $\Delta_v^{\Wh}$-transfer of $f_v$ exists for every $f_v$ as above.

\begin{prop}\label{p:trans-rational}

  In cases (i), (ii) and (iii) of the previous subsection,
 \beq\label{e:Wh-trans}\Delta_v^{\Wh}(\gamma_G,\gamma_{\bG})\in\Q\eeq
  for all strongly regular semisimple elements $\gamma_G\in G(F^+_v)$ and $\gamma_{\bG}\in \bG(F^+_v)$.
  (In fact $\Delta_v^{\Wh}(\gamma_G,\gamma_{\bG})\in\{0,\pm 1\}$ with the exception of (ii)(b).)




\end{prop}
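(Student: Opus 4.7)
The plan is to decompose $\Delta_v^{\Wh}(\gamma_G,\gamma_{\bG})$ into its standard Kottwitz--Shelstad constituents and verify rationality factor by factor, largely by assembling explicit formulas of Waldspurger \cite{Wal10} and Arthur \cite[Chap.~1]{Arthur} for the classical groups considered. Concretely, write
\[
  \Delta_v^{\Wh} = \varepsilon_v \cdot \Delta_I \cdot \Delta_{II} \cdot \Delta_{III} \cdot \Delta_{IV},
\]
where $\varepsilon_v$ is the normalizing factor of \cite[\S5.3]{KS99} coming from the Whittaker datum, and $\Delta_I,\ldots,\Delta_{IV}$ are as in \cite[\S4]{KS99}. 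The claim of the proposition amounts to showing that each of these five factors takes rational values, and in fact values in $\{\pm 1\}$ outside of case (ii)(b).

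First I would handle $\varepsilon_v$: it is the local root number of an orthogonally self-dual virtual Weil--Deligne representation, hence lies in $\{\pm 1\}$ by standard properties of $\varepsilon$-factors. Next, $\Delta_{IV}$ is a quotient of Weyl-type discriminants of $\gamma_G$ and $\gamma_{\bG}$; for the standard embedding $\std$ of a classical group into $GL_n$, matched strongly regular pairs have the \emph{same} (suitably twisted) characteristic polynomial up to a known sign pattern, so $\Delta_{IV}$ reduces to $1$ on the transfer locus. For $\Delta_I$, the pairing is with the class of $\hat{s}$ in a Tate--Nakayama cohomology group, so with $s=1$ (cases (i), (ii)(a), (iii)) the factor is trivial, and with $s=\mathrm{diag}(-I_n,I_n)$ of order two (case (ii)(b), non-split) the pairing lands in $\{\pm 1\}$. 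The factor $\Delta_{II}$ is a product over $\theta$-orbits of roots $\alpha$ of values $\chi_\alpha((N\gamma_{\bG}-1)/a_\alpha)$, where $\chi_\alpha$ is either a quadratic character (symmetric roots) or a unitary character of the form $z\mapsto \kappa(z/\bar z)$ (asymmetric roots in the unitary case). With the canonical choice of $a$-data for classical groups, both types of character evaluate to signs, and Waldspurger's explicit computation shows $\Delta_{II}\in\{\pm 1\}$. The analogous analysis of $\Delta_{III}$ via Kottwitz's cohomological pairing, carried out in \cite[\S1]{Wal10} and used throughout \cite[Chap.~1]{Arthur}, yields $\Delta_{III}\in\{\pm 1\}$ as well.

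The main obstacle is the extra half-integral twist $|\cdot|^{1/2}$ appearing in the $L$-embedding $\eta$ in case (ii)(b). Because $\eta = \eta_0\otimes|\cdot|^{1/2}_{W_F}$, it shifts the transfer factor by a factor of the form $|\det N\gamma_{\bG}|_v^{1/2}$ raised to an integral power determined by $n$. The crucial observation is that when $\gamma_{\bG}$ is a norm from $SO_n(F_v^+)$, the element $\det N\gamma_{\bG}$ lies in $(F_v^+)^{\times 2}$ (in fact equals $1$ up to the discriminant $\delta$), so the half-power of the absolute value contributes an integer power of $q_v$ rather than an element of $\Q(q_v^{1/2})\setminus\Q$. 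Combined with the $\{\pm 1\}$-analysis of the remaining factors, this gives $\Delta_v^{\Wh}(\gamma_G,\gamma_{\bG})\in\Q$ in case (ii)(b), while precluding the $\{0,\pm1\}$-conclusion. In all other cases the decomposition directly yields $\Delta_v^{\Wh}\in\{0,\pm 1\}$, completing the proposition. The most delicate bookkeeping is tracking the sign conventions so that the Whittaker normalization of \cite{KS99} aligns with Arthur's normalization used in the applications of \S\ref{sub:tw-end}; this is a routine but error-prone matching of signs.
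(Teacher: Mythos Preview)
Your overall strategy—decompose into the Kottwitz--Shelstad factors and invoke Waldspurger's explicit computations in \cite{Wal10}—is exactly the paper's approach. The treatment of $\varepsilon_v$ and of cases (i), (ii)(a), (iii) is essentially correct and matches the paper: Waldspurger computes $\Delta_I\Delta_{II}\Delta_{III}\in\{0,\pm1\}$ (he suppresses $\Delta_{IV}$ in his formulas), and separately $\Delta_{IV}=1$ in those three cases by inspection of \cite[\S4.5]{KS99}.

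The gap is in case (ii)(b). You assert that $\Delta_{IV}$ reduces to $1$ on the transfer locus in \emph{all} cases, but this is false for even special orthogonal groups: with the untwisted embedding $\eta_0$, the factor $\Delta_{IV}$ (cf.\ \cite[(4.5.1)]{KS99}) genuinely involves a half-power of the modulus character, so $\Delta_0=\Delta_I\Delta_{II}\Delta_{III}\Delta_{IV}$ for $(G,s,\eta_0)$ takes values in $\Q(q_v^{1/2})$, not in $\Q$. The half-integral twist $|\cdot|^{1/2}$ built into $\eta=\eta_0|\cdot|^{1/2}$ was introduced precisely so that passing from $\eta_0$ to $\eta$ multiplies the transfer factor by another half-power of the modulus character, cancelling the one already sitting in $\Delta_{IV}$. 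Your argument has this backwards: you treat $\Delta_{IV}$ as harmless and then worry that the twist might introduce irrationality, whereas in fact $\Delta_{IV}$ is the source of the $q_v^{1/2}$ and the twist is the cure. Moreover, your proposed mechanism—that the twist contributes $|\det N\gamma_{\bG}|_v^{1/2}$, rational because $\det N\gamma_{\bG}$ is a square—collapses: since $\theta(g)=J_n{}^tg^{-1}J_n^{-1}$ one has $\det N\gamma_{\bG}=1$ identically, so by your own formula the twist would contribute nothing and you would be left with the uncancelled $q_v^{1/2}$ from $\Delta_{IV}$. The effect of twisting the $L$-embedding by a central character does not enter through $|\det N\gamma|$ in this naive way; it enters through the $\Delta_{III_2}$/$z$-pair formalism of \cite[\S4.4]{KS99}.
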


\begin{proof}

 We will only sketch the argument. Since $\Delta_v^{\Wh}$ differs from $\Delta_0$ of \cite[\S5.3]{KS99} by $\pm 1$ (\cite[p.65]{KS99}) it suffices to prove the claim for $\Delta_0$.
  The transfer factors for classical groups were computed in \cite{Wal10}. In the cases of interest, it is shown that the transfer factor $\Delta_I\Delta_{II}\Delta_{III}$ belongs to $\{0,\pm1\}$ for $(G,s,\eta)$ as in (i), (ii)(a), (iii) and for $(G,s,\eta_0)$ as in (ii)(b). (See the cases of twisted linear groups in \cite[\S1.10]{Wal10}, noting that $\chi$ is a character of order dividing 2 in the odd twisted linear case and that $\mu^-$ and $\mu^+$ may be chosen to be trivial in the case of base change for unitary groups.) Note that Waldspurger suppressed $\Delta_{IV}$ in his formulas but the transfer factor $\Delta_0$ is $\Delta_I\Delta_{II}\Delta_{III}\Delta_{IV}$. In case (i), (ii)(a) and (iii) we see $\Delta_{IV}=1$ following the definition of \cite[\S4.5]{KS99}, so the values of $\Delta_0$ range in $\{0,\pm1\}$. In case (ii)(b) $\Delta_{IV}$ is a nontrivial function involving a half-power of the modulus character (cf. \cite[(4.5.1)]{KS99}) so $\Delta_0$ for $(G,s,\eta_0)$ takes values in $\Q(q_v^{1/2})$ but replacing $\eta_0$ with $\eta$ twists the transfer factor by an extra half-power of the modulus character. As a result $\Delta_0$ with respect to $(G,s,\eta)$ has values in $\Q$.
\end{proof}

  From here until \S\ref{sub:unitary-groups} we will restrict our attention to cases (i) and (ii) above. Case (iii) is excluded until there only because our understanding of representations of unitary groups is still limited. Nevertheless we will treat all three kinds of classical groups on an equal footing at the expense of burdening notation (e.g. we distinguish between $F$ and $F^+$, which is unnecessary in (i) and (ii)) so that the results in this article apply to unitary groups as soon as the analogue of \cite{Arthur} for unitary groups is worked out. In fact our results already produce some partial results in the case of unitary groups by appealing to the progress on twisted endoscopy (base change) for unitary groups in \cite{KK05}, \cite{Moe07}, and \cite{Lab} among others.

  To use results for automorphic representations on quasi-split classical groups as in \cite{Arthur} (symplectic and orthogonal) and \cite{Mok} (unitary), we assume\label{footnote10}\footnote{One can be optimistic that the hypothesis will become unnecessary before long. At the time of revision, Waldspurger has released a series of five preprints (more to come) on the stabilization of the general twisted trace formula. For an extra careful reader, we remark that both \cite{Arthur} and \cite{Mok} depend on the papers [A25] and [A26] of \cite{Arthur}, which have not appeared up to now, and that the proof of the weighted fundamental lemma has not been completely written up, cf. footnote in Appendix A of \cite{BMM}. Proposition 8.2.5 of \cite{Mok} asserts that Ban's result, cited as [Ban] there and proved only for split groups, extends to quasi-split unitary groups but this appears to be a nontrivial point to be justified. Arthur, as well as Mok, refers to work in progress by Mezo and Shelstad on twisted endoscopy for real groups and by Waldspurger on the local twisted trace formula. This seems fine: The former is basically addressed in the preprints cited as [Me] and [S8] in \cite{Arthur}; they have been updated or expanded since Arthur's book was published. The latter appeared in the preprint ``La formule des traces locales tordue''.}

\begin{hypothesis}\label{hypo:TwTF}
  Suppose that the twisted trace formula for $GL_n$ and twisted even orthogonal groups can be stabilized in the sense of \cite[Hypo 3.2.1]{Arthur} and \cite[Hypo 4.2.1]{Mok}.
\end{hypothesis}

 Even though we do not to strive to extract an optimal partial result from the current knowledge, see \S\ref{sub:unitary-groups} for some unconditional results not replying on the above hypothesis in case $G$ is unitary. Now recall from \S\ref{sub:transfer-app} that $\epsilon=0$ unless in case (ii)(b) where $\epsilon=1$.

%
%



\begin{prop}\label{p:trans-rational2}
  For every (finite and infinite) place $v$ of $F^+$, every $f_v\in C^\infty_c(\bG(F^+_v))$, every $\Delta_v^{\Wh}$-transfer $\phi_v\in C^\infty_c(G(F^+_v))$ of $f_v$, and every tempered $L$-parameter $\varphi_v:W_{F^+_v}\times SL_2(\C)\ra {}^L G$, we have an identity
  \beq\label{e:spec-trans}\sum_{\pi_v\in \tilde{LP}(\varphi_v)}  \Theta_{\pi_v}(\phi_v)=\Theta_{\Pi_v,\theta}(f_v),
  \quad \mathrm{where}~\Pi_v=\rec^{-1}(\eta\varphi_v)|\cdot|^{\epsilon/2}\eeq
  for a unique finite subset $\tilde{LP}(\varphi_v)$ of $\Irr^{\temp}(G(F^+_v))$ (independent of $f_v$ and $\phi_v$). The subsets $\tilde{LP}(\varphi_v)$ give a partition of $\Irr^{\temp}(G(F^+_v))$ where $\tilde{LP}(\varphi_v)$ and $\tilde{LP}(\varphi'_v)$ coincide exactly when $\eta\varphi_v$ is equivalent to $\eta\varphi'_v$ as $L$-parameters for $\bG(F^+_v)$ (and are disjoint otherwise).

\end{prop}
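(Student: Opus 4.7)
The plan is to deduce this as a direct consequence of the local classification theorems of Arthur (in the symplectic and orthogonal cases) and Mok (in the unitary case), which are both available under Hypothesis~\ref{hypo:TwTF}. For a tempered $L$-parameter $\varphi_v$ I would define $\tilde{LP}(\varphi_v)$ to be the tempered $L$-packet $\Pi_{\varphi_v}$ constructed in \cite{Arthur} (resp.\ \cite{Mok}), which is by construction a finite subset of $\Irr^{\temp}(G(F_v^+))$. The endoscopic character identity in those works, evaluated at the trivial character of the component group (equivalently, with $s=1$ in cases (i), (ii)(a), (iii), or the $s$ specified in (ii)(b)), yields an identity of the form \eqref{e:spec-trans}. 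The right-hand side of the identity is the stable tempered character attached on the $\bG$-side to the parameter $\eta\varphi_v$, which by the local Langlands correspondence for $GL_n$ equals the twisted trace of the unique $\bG(F_v^+)$-representation with Langlands parameter $\eta\varphi_v$, up to the twist by $|\cdot|^{\epsilon/2}$ discussed below.

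Next I would verify that Arthur's and Mok's normalizations of the transfer factors coincide with the Whittaker normalization $\Delta_v^{\Wh}$ that we have fixed, so that \eqref{e:spec-trans} holds for every $\Delta_v^{\Wh}$-transfer $\phi_v$ of $f_v$. In cases (i), (ii)(a), and (iii) Arthur and Mok set things up using compatible Whittaker data, and Proposition~\ref{p:trans-rational} ensures that no spurious factors of $q_v^{1/2}$ are introduced. Case (ii)(b) is the subtle one: Arthur's classification is naturally phrased using $\eta_0$ rather than $\eta = \eta_0|\cdot|^{1/2}$. Composing $\eta_0$ with the character $|\cdot|^{1/2}$ on the $L$-group side corresponds under the local Langlands correspondence for $GL_n$ to tensoring the $GL_n$-representation by $|\det|^{1/2}$, which is exactly the factor $|\cdot|^{\epsilon/2}$ appearing in the definition of $\Pi_v$. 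This twist is chosen precisely so that $\eta$ is C-preserving in the sense of Definition~\ref{d:C-preserving}, cf.\ Lemma~\ref{l:property-of-eta}.

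Finally, uniqueness of $\tilde{LP}(\varphi_v)$ follows from Harish-Chandra's linear independence of tempered characters applied to \eqref{e:spec-trans}: two finite subsets of $\Irr^{\temp}(G(F_v^+))$ that produce the same distribution on $C^\infty_c(G(F_v^+))$ must coincide. The partition property and the characterization of when $\tilde{LP}(\varphi_v)=\tilde{LP}(\varphi'_v)$ are part of Arthur's and Mok's theorems: the tempered packets $\Pi_{\varphi_v}$ are pairwise disjoint and their union exhausts $\Irr^{\temp}(G(F_v^+))$, with $\Pi_{\varphi_v}=\Pi_{\varphi'_v}$ exactly when $\varphi_v$ and $\varphi'_v$ are $\hat{G}$-conjugate (up to the relevant outer automorphism in case (ii)(b)), which in turn translates to $\eta\varphi_v\simeq\eta\varphi'_v$ as $L$-parameters for $\bG(F_v^+)$.

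The main obstacle in writing out the full proof is the careful bookkeeping of transfer factor normalizations, particularly the half-power twist in case (ii)(b) and the treatment of the even-orthogonal outer automorphism, but this is entirely handled by Proposition~\ref{p:trans-rational} together with the explicit form of $\eta$ given in \S\ref{sub:transfer-app}.
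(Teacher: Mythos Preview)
Your proposal is correct and takes essentially the same approach as the paper: the paper's own proof is simply a citation to Arthur's Theorems 1.5.1 and 2.2.1 (symplectic/orthogonal case) and Mok's Theorems 2.5.1 and 3.2.1 (unitary case), so your more detailed unpacking of those references is entirely in line with what the authors intend. One minor refinement: in the even orthogonal case (ii)(b) the set $\tilde{LP}(\varphi_v)$ is not the $L$-packet $\Pi_{\varphi_v}$ itself but rather Arthur's $\tilde{\Pi}_\phi$, which may be a union of one or two $L$-packets related by the outer automorphism; you allude to this at the end, and the characterization via equivalence of $\eta\varphi_v$ and $\eta\varphi'_v$ absorbs this ambiguity correctly.
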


\begin{rem}
  Even though this would be clear to the reader, let us clarify the meaning of $\Pi_v$ in the proposition when $v$ splits as $ww^c$ in $F$, which can only happen in the unitary case (then $G(F^+_v)$ is a general linear group). Then $F_v=F\otimes_{F^+} F^+_v\simeq F_w\times F_{w^c}$, thereby one may write $\Pi_v=\Pi_w\otimes \Pi_{w^c}$. On the other hand, $\eta\varphi_v:W_{F^+_v}\times SL_2(\C)\ra GL_n(\C)^{\Hom_{F^+}(F,\C)}$ determines an $L$-parameter $\Phi_w$ for $GL_n(F_{w})$ and an $L$-parameter $\Phi_{w^c}$ for $GL_n(F_{w^c})$. Then $\Pi_v=\rec^{-1}(\eta\varphi_v)$ is defined by $\Pi_w=\rec^{-1}(\Phi_w)$ and $\Pi_{w^c}=\rec^{-1}(\Phi_w)$ in the usual sense. Actually in this case, $\Pi_v=\pi_v\otimes \pi_v$ where $\pi_v$ is the unique member of $\tilde{LP}(\varphi_v)$.
   Similarly if $v=ww^c$ in $F$ in the setting of Corollary \ref{c:pi_v-pure}, we interpret $\mL_{F_v}(\Pi_v)$ and $|\cdot|_v$ as $\mL_{F_w}(\Pi_w)\otimes \mL_{F_{w^c}}(\Pi_{w^c})$ and $|\cdot|_w|\cdot|_{w^c}$, respectively.
\end{rem}

\begin{rem}
  The set $\tilde{LP}(\varphi_v)$ is the local $L$-packet for $\varphi_v$ except when $G$ is an even orthogonal group, in which case it is a union of one or two $L$-packets. See the discussion above and below the theorem 1.5.1 of \cite{Arthur}. Our notation $\tilde{LP}(\varphi_v)$ corresponds to his $\tilde{\Pi}_\phi$.
\end{rem}

\begin{rem} Arthur also proved that
  when $\varphi_v$ is a non-tempered $A$-parameter, the analogue of \eqref{e:spec-trans} holds true if $\Theta_{\pi_v}(\phi_v)$ are summed with suitable signs. We will not need this for our theorems.
\end{rem}

\begin{proof}

  This is part of the main local theorem by Arthur (Theorems 1.5.1 and 2.2.1 of \cite{Arthur}) when $G$ is symplectic or orthogonal and by Mok (Theorems 2.5.1 and 3.2.1 of \cite{Mok}) when $G$ is unitary.


\end{proof}

  The above proposition tells us that for each $\pi_v\in \Irr^{\temp}(G(F^+_v))$ there is a unique (up to equivalence) tempered $L$-parameter $\varphi_v$ such that $\pi_v\in \tilde{LP}(\varphi_v)$. In this case we will write
  \beq\label{e:eta_*}\eta_*(\pi_v):=\rec^{-1}(\eta\varphi_v)|\cdot|^{\epsilon/2},\eeq
 cf. \eqref{e:spec-trans}. Namely $\eta_*: \Irr^{\temp}(G(F^+_v))\ra  \Irr^{\temp}(\bG(F^+_v))$ denotes the local functorial lifting given by $\eta$.

\begin{prop}\label{p:packet-size} There exists $m_G\in \Z_{\ge 1}$ such that for every finite place $v$ and every tempered $L$-parameter $\varphi_v$,
  $|\tilde{LP}(\varphi_v)|\le m_G$. To be explicit, one can choose $m_G=2^n$.
\end{prop}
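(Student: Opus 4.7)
The plan is to reduce the bound on $|\tilde{LP}(\varphi_v)|$ to a bound on the order of the component group of the centralizer of $\varphi_v$ in $\hat G$, and then carry out an elementary bookkeeping on the $L$-parameter viewed through $\eta$.

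First I would invoke the main local classification theorems cited just above the proposition: by \cite[Thm~2.2.1]{Arthur} in the symplectic/orthogonal cases and \cite[Thm~3.2.1]{Mok} in the unitary case, the finite set $\tilde{LP}(\varphi_v)$ is in bijection with the character group $\widehat{\tilde S_{\varphi_v}}$ of a finite abelian $2$-group $\tilde S_{\varphi_v}$, which is a quotient of the component group
\[
S_{\varphi_v}:=\pi_0\bigl(\Cent(\varphi_v(W_{F^+_v}\times SL_2(\C)),\hat G)/Z(\hat G)^{\Gamma_{F^+_v}}\bigr).
\]
(In case (ii)(b) one quotients further by the $\Out_N(G)$-action; in case (iii) one uses the analogous component group for the $L$-group with its $\Gamma_{F^+}$-action. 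In all cases the resulting group is still a $2$-group and $|\widehat{\tilde S_{\varphi_v}}|=|\tilde S_{\varphi_v}|\le |S_{\varphi_v}|$.) Hence it is enough to prove $|S_{\varphi_v}|\le 2^n$.

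Next I would analyze $S_{\varphi_v}$ through the embedding $\eta\colon {}^L G\hookrightarrow {}^L \bG$. Composition with $\eta$ yields an $n$-dimensional semisimple representation $\eta\circ\varphi_v\colon W_{F^+_v}\times SL_2(\C)\to GL_n(\C)$, which decomposes as
\[
\eta\circ\varphi_v\;\cong\;\bigoplus_{i=1}^{r} m_i\,\psi_i
\]
with the $\psi_i$ pairwise inequivalent irreducible summands of dimensions $n_i$, so that $\sum_{i=1}^r m_i n_i=n$. The centralizer of $\eta\circ\varphi_v$ in $GL_n(\C)$ is $\prod_{i=1}^r GL_{m_i}(\C)$, and the centralizer of $\varphi_v$ inside $\hat G$ is cut out from this product by the bilinear form (symplectic/orthogonal) or by the $c$-involution (unitary case) defining $\hat G$. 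Standard Clifford-type analysis (see e.g.\ \cite[\S1.4]{Arthur}) shows that each factor $GL_{m_i}(\C)$ contributes to $\pi_0$ at most a $\Z/2\Z$, and does so only when $\psi_i$ is self-dual of the type opposite to $\hat G$ (with an analogous condition relative to $c$ in the unitary case); factors corresponding to $\psi_i$ not of the correct self-dual type contribute a connected factor.

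Finally, the number $k$ of indices $i$ for which $\psi_i$ contributes a $\Z/2\Z$ is bounded above by the total number of distinct summands $r\le \sum_i m_i n_i=n$. Hence
\[
|S_{\varphi_v}|\le 2^k\le 2^n,
\]
and consequently $|\tilde{LP}(\varphi_v)|=|\widehat{\tilde S_{\varphi_v}}|\le 2^n$, so $m_G=2^n$ works uniformly in $v$ and $\varphi_v$. The only delicate point is keeping straight the self-duality conventions (orthogonal vs.\ symplectic vs.\ conjugate-self-dual) across the three cases and the modification by $\Out_N(G)$ in case (ii)(b); this is the one step where care is required, but the quotient only decreases $|\tilde S_{\varphi_v}|$, so the bound is unaffected.
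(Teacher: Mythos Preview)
Your proof is correct and follows essentially the same approach as the paper: both invoke the local classification theorems of Arthur and Mok to identify $\tilde{LP}(\varphi_v)$ with the character group of the finite abelian $2$-group $\mathcal{S}_{\varphi_v}$, and then bound the latter by $2^n$. The paper simply cites \cite[(1.4.9)]{Arthur} and \cite[(2.4.14)]{Mok} for this bound, whereas you unpack the centralizer computation explicitly via the decomposition $\eta\circ\varphi_v\cong\bigoplus_i m_i\psi_i$; your version is more self-contained but otherwise the same argument.

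One small slip: the $\Z/2\Z$ factors come from self-dual $\psi_i$ of the \emph{same} sign as the form defining $\hat G$ (giving an $O(m_i)$-factor in the centralizer), not the opposite sign (which gives a connected $Sp(m_i)$-factor). This does not affect your bound, since in either case each irreducible summand contributes at most one $\Z/2\Z$ and $r\le n$.
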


\begin{proof}
  When $G$ is symplectic or orthogonal, the theorem 2.2.1.(b) of \cite{Arthur} says that there is a bijection between $\tilde{LP}(\varphi_v)$ and the set of characters on the group $\mathcal{S}_{\varphi_v}$ (denoted $\mathcal{S}_{\psi}$ in therein). According to \cite[(1.4.9)]{Arthur} $\mathcal{S}_{\varphi_v}$ is an abelian group whose order divides $2^n$ so the proposition follows. In the case of unitary groups one argues similarly using \cite[(2.4.14)]{Mok}.
\end{proof}

  Now we summarize some results on the global functoriality for classical groups that we will need. (A good number of cases also follow from the method of converse theorem and integral representations but we do not discuss them here). We will cite only \cite{Arthur} (which treats symplectic/orthogonal groups) in the remainder of this subsection without further comments on the unitary group case, believing that the reader understands by now that the completely analogous results in the latter case can be found easily in \cite{Mok}.

  Let us introduce some new notation, which is mostly consistent with that of \cite{Arthur} but not always. Write $\tilde{\Psi}_{\elp}(GL_n)$ for the set of quadruples  $$\psi=(r,\{(n_i,\Pi_i,\nu_i)\}_{i=1}^r)$$  (in which $(n_i,\Pi_i,\nu_i)$ are unordered relative to the index $i$) where
\bit
\item
  $r\in \Z_{\ge 1}$, $n_i\in \Z_{\ge 1}$, $\nu_i\in \Z_{\ge1}$, $\sum_{i=1}^r n_i\nu_i=n$,
\item
   $\Pi_i$ are cuspidal automorphic representations of $GL_{n_i}(\A_F)$, and such that
$\Pi_i^\vee\simeq \Pi_i^c$ for every $i$ and $\Pi_i\ncong \Pi_j$ for ever pair $i\neq j$.
 \eit
  Let $\tilde{\cE}^{\elp}(GL_n)$ denote the set of isomorphism classes of (twisted) endoscopic data for $\bG\rtimes \lg \theta\rg$ as defined in \cite[\S1.2]{Arthur}. Whether we are in case (i), (ii) or (iii), $(G,s,\eta)$ belongs to $\tilde{\cE}^{\elp}(GL_n)$. (In case (iii) our $\eta$ corresponds to the $L$-morphism $\xi_{\chi_+}$ of \cite{Mok}. His $\xi_{\chi_-}$ is not used in our paper.) According to a classification of self-dual parameters as in \cite[\S1.2]{Arthur}, there is a natural decomposition
  $$\tilde{\Psi}_{\elp}(GL_n)=\coprod_{H\in \tilde{\cE}^{\elp}(GL_n)} \tilde{\Psi}_2(H)$$
  so that $\psi$ belongs to $\tilde{\Psi}_2(H)$ if, loosely speaking, it satisfies the characterizing properties of the parameters coming from $H$. See the paragraph preceding \cite[(1.4.7)]{Arthur}.


  Let us explain the construction of local parameters from $\psi\in \tilde{\Psi}_{\elp}(GL_n)$. Put $\cL_{F^+_v}:=W_{F^+_v}\times SL_2(\C)$ if $v\nmid \infty$ and $\cL_{F^+_v}:=W_{F^+_v}$ if $v|\infty$. Define $$\psi_v:\cL_{F^+_v}\times SL_2(\C)\ra {}^L \bG$$ to be the $L$-parameter for $\bG(F^+_v)$ given by $\oplus_{i=1}^r \rec(\Pi_i)\otimes \Sym^{\nu_i-1}(\C^2)$, where each direct summand is the exterior tensor product of $\rec(\Pi_i)$ on $\cL_{F^+_v}$ and $\Sym^{\nu_i-1}(\C^2)$ on $SL_2(\C)$. If $\psi\in \tilde{\Psi}_2(G)$ then it is a nontrivial theorem that $\psi_v$ (or an isomorphic parameter thereof) factors through only $\eta:{}^L G\hra {}^L \bG$ and no embedding of other elliptic endoscopic groups. (See the theorem 1.4.2 and the discussion above (1.5.3) in \cite{Arthur}.) This determines  $\psi^{\flat}_v:\cL_{F^+_v}\times SL_2(\C)\ra {}^L G$ such that $\eta\psi^{\flat}_v\simeq \psi_v$ canonically up to $\Out(G)$-action. (The outer automorphism group has order 1 or 2. See \cite[\S1.2]{Arthur} for details.) It turns out that $\psi_v$ always lands in $\tilde{\Psi}^+_{\mathrm{unit}}(G(F^+_v))$ in the notation of Arthur, which is designed to accommodate local components of discrete automorphic representations of $G$. The precise definition of $\tilde{\Psi}^+_{\mathrm{unit}}(G(F^+_v))$ is not needed for our purpose so not recalled here.

  Now we turn to the purely local setting and explain some local inputs beyond the tempered objects to be used in this paper. Arthur associates to each $\psi_v\in \tilde{\Psi}^+_{\mathrm{unit}}(G(F^+_v))$ (which may not come from a global parameter $\psi$) a finite set $\tilde{\Pi}_{\psi_v}$ consisting of finite length $G(F^+_v)$-representations by extending the definition of tempered $L$-packets, i.e. $\tilde{\Pi}_{\psi_v}$ is the tempered $L$-packet (cf. paragraph above Proposition \ref{p:packet-size}) if $\psi_v$ is a tempered $L$-parameter. Although $\tilde{\Pi}_{\psi_v}$ is designed to play the role of local $A$-packets, it should be noted that members of $\tilde{\Pi}_{\psi_v}$ may be reducible or non-unitary. Let us define $\tilde{AP}(\psi_v)$ to be the set consisting of irreducible subquotients of the members of $\tilde{\Pi}_{\psi_v}$. 




\begin{prop}\label{p:twisted-endoscopy}
  Consider cases (i), (ii), or (iii) of \S\ref{sub:transfer-app}. Suppose that $\pi$ is a discrete automorphic representation of $G(\A_{F^+})$ unramified outside a finite set $S$. Then there exists a unique $\psi=(r,\{(n_i,\Pi_i,\nu_i)\}_{i=1}^r)\in \tilde{\Psi}_2(G)$ such that
   \benu
   \item $\pi_v\in \tilde{AP}(\psi_v)$ at every place $v$ of $F^+$,
      \item If $\pi_v$ is tempered and all $\nu_i$ are trivial then $\eta_*(\pi_v)=\boxplus_{i=1}^r \Pi_{i,v}|\cdot|^{\epsilon/2}$
		at each place $v$ of $F^+$,
   \item at every finite place $v\notin S$, $\pi_v$ is isomorphic to the unramified member of $\tilde{AP}(\psi_v)$, which is unique (relative to the fixed hyperspecial subgroup $U_v^{\hs}$).
  \eenu

\end{prop}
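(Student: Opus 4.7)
The plan is to extract the proposition from the global classification of discrete automorphic representations for quasi-split classical groups, namely Arthur's Theorems 1.5.1--1.5.2 of \cite{Arthur} when $G$ is symplectic or orthogonal, and Mok's corresponding results in \cite{Mok} when $G$ is unitary, both conditional on Hypothesis \ref{hypo:TwTF}. Those theorems decompose the discrete automorphic spectrum of $G(\A_{F^+})$ as a sum indexed by $\psi \in \tilde{\Psi}_2(G)$, with the property that the members of the space attached to $\psi$ have local components at every place $v$ lying in the Arthur packet $\tilde{\Pi}_{\psi_v}$. Passing to an irreducible subquotient yields $\pi_v \in \tilde{AP}(\psi_v)$, which simultaneously gives existence of $\psi$ and assertion (i).

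For uniqueness of $\psi$, I would argue at unramified places. At any $v\notin S$ the parameter $\psi_v$ is unramified, so through $\eta$ and the unramified local Langlands correspondence for $\bG(F^+_v)=GL_n(F_v)$, the Satake parameter of $\pi_v$ determines the Satake parameters of the $\Pi_{i,v}$'s and the multiplicities $\nu_i$. Strong multiplicity one for isobaric representations of $GL_n$ over $F$ (Jacquet--Shalika) then pins down the unordered multi-set $\{(n_i,\Pi_i,\nu_i)\}$ from these data at almost all $v$, so $\psi$ is determined by $\pi$.

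Assertion (ii) is a direct translation once the tempered case is isolated. If $\pi_v$ is tempered and every $\nu_i=1$, then the $SL_2(\C)$-factor of $\psi_v$ is trivial, so $\psi_v=\oplus_{i=1}^r\rec(\Pi_{i,v})$ is itself a tempered $L$-parameter into ${}^L\bG$. By the definition of $\tilde{\Psi}_2(G)$ it factors through $\eta$ as $\psi_v=\eta\circ\varphi_v$, where $\varphi_v$ is necessarily (up to the $\Out(G)$-action) the tempered $L$-parameter of $\pi_v$ produced by Proposition \ref{p:trans-rational2}. The defining formula \eqref{e:eta_*} then yields
\[
\eta_*(\pi_v)=\rec^{-1}(\eta\varphi_v)|\cdot|^{\epsilon/2}=\rec^{-1}(\psi_v)|\cdot|^{\epsilon/2}=\boxplus_{i=1}^r\Pi_{i,v}|\cdot|^{\epsilon/2},
\]
which is (ii).

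Finally (iii) is a local unramified argument: at $v\notin S$ the parameter $\psi_v$ is unramified, so $\tilde{AP}(\psi_v)$ contains exactly one irreducible unramified representation relative to the chosen $U_v^{\hs}$, namely the Langlands quotient whose Satake class is obtained from the semisimple conjugacy class $\psi_v^{\flat}(\Frob_v\times\mathrm{diag}(q_v^{1/2},q_v^{-1/2}))$ in $\hat{G}$. Since $\pi_v$ is unramified and lies in $\tilde{AP}(\psi_v)$ by (i), it must equal this unique unramified member. The main obstacle throughout is bookkeeping of normalizations: the $|\cdot|^{\epsilon/2}$ twist in the even orthogonal case (ii)(b), the $\Out(G)$-ambiguity in the factorization $\psi_v=\eta\circ\psi_v^{\flat}$, and the compatibility of the Whittaker-normalized transfer from Proposition \ref{p:trans-rational2} with the local packet structure of Arthur/Mok, so that the conclusions of their theorems match the precise statement formulated here.
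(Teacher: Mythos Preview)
Your proposal is correct and follows essentially the same approach as the paper: cite Arthur's Theorems 1.5.1--1.5.2 (and Mok's analogues) for existence of $\psi$ and assertion (i), derive (ii) from the characterization of $\eta_*$ in Proposition \ref{p:trans-rational2} (the paper points specifically to (2.2.3) of \cite{Arthur}), and obtain (iii) from the uniqueness of the unramified member of $\tilde{AP}(\psi_v)$ via Theorem 1.5.1 and Proposition 7.4.1 of \cite{Arthur}. The only minor difference is that you supply a separate uniqueness argument for $\psi$ via strong multiplicity one on $GL_n$, whereas the paper simply absorbs this into the citation of Theorem 1.5.2, where uniqueness is already part of the global classification.
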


\begin{rem}\label{r:nontempered-case}
  In case some $\nu_i$ is nontrivial so that we are in the nontempered case, one knows from \cite{Arthur} only an equality of infinitesimal characters (i.e. supercuspidal support when $v$ is a finite place) in (ii). If we knew the Ramanujan conjecture for general linear groups, it would be enough assume in (ii) only that $\nu_i$ are trivial.
\end{rem}

\begin{proof}

  The first assertion is implied by \cite[Th 1.5.2]{Arthur}.
  In (ii) $\eta_*(\pi_v)$ is characterized by Proposition \ref{p:trans-rational2}, so the assertion follows from (2.2.3) of \cite{Arthur}.

  The last assertion is deduced from the theorem 1.5.1 of \cite{Arthur}, which implies that $\tilde{AP}(\psi_v)$ possesses at most one unramified representation. (One can identify $\pi_v$ a little more explicitly. When $\pi_v$ is unramified, $\psi_v$ is also unramified (i.e. all $\Pi_i$ are unramified at $v$). Then $\tilde{AP}(\psi_v)$ contains a local $L$-packet for the unramified $L$-parameter given by $\psi$, cf. \cite[Prop 7.4.1]{Arthur}, so $\pi_v$ is the one corresponding to the latter $L$-parameter via the unramified Langlands correspondence.)



\end{proof}

\begin{cor}\label{c:pi_v-pure}
   In the setting of Proposition \ref{p:twisted-endoscopy}, let $\psi=(r,\{(n_i,\Pi_i,\nu_i)\}_{i=1}^r)$ be the associated data to $\pi$ and suppose that $\pi$ is $\xi$-cohomological. Then there exists $s(\xi)\in\Z_{\ge0}$ depending only on $G$ and $\xi$ such that
    \bit
    \item for every finite place $v$, $\mL_{F_v}(\Pi_{i,v}|\cdot|^{\frac{\epsilon+n_i-n}{2}})|\cdot|_v^{-s(\xi)/2}$ is pure of weight $s(\xi)+n-1-\epsilon$ and integral.
     \eit
     If moreover the highest weight of $\xi$ is regular then
        \bit
        \item $\pi_v$ are tempered at all places $v$,
        \item $\eta\varphi_{\pi_v}$ are pure WD representations of weight $-\epsilon$ for all finite places $v$,
         \item $\eta\varphi_{\pi_v}$ are unramified and strictly pure of weight $-\epsilon$ if $v\notin S$.
         \eit


\end{cor}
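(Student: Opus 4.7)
The plan is to invoke Proposition~\ref{p:twisted-endoscopy} to pass from $\pi$ on $G$ to the cuspidal constituents $\Pi_i$ on $GL_{n_i}(\A_F)$ appearing in the associated parameter $\psi=(r,\{(n_i,\Pi_i,\nu_i)\}_{i=1}^r)\in\tilde{\Psi}_2(G)$, and then to apply Proposition~\ref{p:Galois-reps} to each $\Pi_i$ after a suitable twist. By the very definition of $\tilde{\Psi}_2(G)$, every $\Pi_i$ is cuspidal and (conjugate) self-dual; the work is to exhibit regular C-algebraicity (possibly after a twist) and then to bookkeep weights.

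First I would show that $\Pi_i|\cdot|^{(\epsilon+n_i-n)/2}$ is regular C-algebraic for every $i$. Since $\pi$ is $\xi$-cohomological, the proof of Lemma~\ref{l:coh=C-alg} gives the infinitesimal character of $\pi_\infty$ as $\lambda_{\xi^\vee}+\rho_G$ up to the Weyl group; because $\eta$ is C-preserving (Lemma~\ref{l:property-of-eta}(ii)), the identification $\eta\psi_v^\flat\simeq\psi_v=\bigoplus_i\rec(\Pi_{i,v})\otimes\Sym^{\nu_i-1}(\C^2)$ at each $v\mid\infty$ then forces the entries of the infinitesimal character of $\Pi_{i,\infty}$ to lie in $\tfrac{n-1-\epsilon}{2}+\Z$; the shift by $|\cdot|^{(\epsilon+n_i-n)/2}$ places them in $\tfrac{n_i-1}{2}+\Z$, the C-algebraic range for $GL_{n_i}$. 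Regularity within each block follows from the cuspidality and (conjugate) self-duality of $\Pi_i$: the duplications that may spoil regularity of $\psi_v$ as a whole are absorbed into distinct Speh factors $\Sym^{\nu_i-1}$, not into a single $\Pi_i$. Now Proposition~\ref{p:Galois-reps} applies to each $\Pi_i|\cdot|^{(\epsilon+n_i-n)/2}$: $\mL_{F_v}$ of it is pure of weight $n_i-1$ and integral after a twist by $|\cdot|_v^{-s(\Pi_i)/2}$. Using $\mL(\Pi_{i,v}|\cdot|^c)=\mL(\Pi_{i,v})\otimes|\cdot|^c$ and that $|\cdot|^c$ has weight $-2c$, the weight of $\mL_{F_v}(\Pi_{i,v}|\cdot|^{(\epsilon+n_i-n)/2})$ equals $n_i-1-2\cdot\tfrac{\epsilon+n_i-n}{2}=n-1-\epsilon$, independent of $i$. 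A further twist by $|\cdot|_v^{-s(\xi)/2}$ shifts this to $s(\xi)+n-1-\epsilon$. Setting $s(\xi):=\max_i s(\Pi_i)$ yields a uniform constant depending only on $G$ and $\xi$, since the possible infinitesimal characters of $\Pi_{i,\infty}$ are constrained to a finite list by $\xi$ together with the combinatorics of admissible parameters $\psi$.

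For the second half, assume the highest weight of $\xi$ is regular. Then Lemma~\ref{l:property-of-eta}(i) together with the decomposition $\psi_v=\bigoplus_i\rec(\Pi_{i,v})\otimes\Sym^{\nu_i-1}$ at $v\mid\infty$ forces $\nu_i=1$ for every $i$: any $\Sym^{\nu_i-1}$ with $\nu_i>1$ would introduce coincidences in the infinitesimal character incompatible with regularity after transfer through $\eta$. Consequently Proposition~\ref{p:twisted-endoscopy}(ii) applies globally and $\eta_*(\pi_v)=\boxplus_{i=1}^r\Pi_{i,v}|\cdot|^{\epsilon/2}$ at every place $v$; since the $\Pi_i$ are tempered everywhere (a byproduct of the purity inputs to Proposition~\ref{p:Galois-reps}), the isobaric sum is tempered, hence so is $\pi_v$. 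For finite $v$, $\eta\varphi_{\pi_v}$ is the WD representation of $\bigoplus_i\rec(\Pi_{i,v})\otimes|\cdot|^{\epsilon/2}$; each summand $\rec(\Pi_{i,v})$ is pure of weight $0$ because $\mL_{F_v}(\Pi_{i,v})$ is pure of weight $n_i-1$ by Proposition~\ref{p:Galois-reps} and $\rec=\mL\otimes|\cdot|^{(n_i-1)/2}$. Tensoring with $|\cdot|^{\epsilon/2}$ shifts each summand to pure weight $-\epsilon$, and a direct sum of pure WD representations of the same weight is pure. For $v\notin S$, unramifiedness of $\pi_v$ propagates through the unramified map $\eta$, giving unramified $\eta\varphi_{\pi_v}$; the monodromy operator $N$ vanishes because $\varphi_{\pi_v}$ is an unramified $L$-parameter, hence strict purity.

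The principal obstacle is the first step: establishing regular C-algebraicity of $\Pi_i|\cdot|^{(\epsilon+n_i-n)/2}$ when $\xi$ is not itself regular. Here the non-regularity of $\psi_v$ at infinity must be carefully attributed to the Speh factors $\Sym^{\nu_i-1}$ rather than to coincidences within a single $\Pi_i$; this requires exploiting both the cuspidality of each $\Pi_i$ and the precise way Arthur's classification packages the infinitesimal character of $\pi_\infty$ into the components of $\psi_v$. A secondary technical point is justifying the uniform constant $s(\xi)$, which amounts to observing that, for fixed $\xi$ and $G$, only finitely many tuples $(n_i,\Pi_{i,\infty})$ can occur, so the individual Shin--Caraiani integrality twists $s(\Pi_i)$ admit a finite upper bound depending only on $\xi$ and $G$.
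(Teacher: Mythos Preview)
Your overall strategy matches the paper's: transfer to $GL_n$ via Proposition~\ref{p:twisted-endoscopy} and then invoke Proposition~\ref{p:Galois-reps}. There are, however, two gaps.

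The main one is your treatment of regularity of $\Pi_i|\cdot|^{(\epsilon+n_i-n)/2}$ at infinity, which you flag as the ``principal obstacle.'' Your proposed justification---that cuspidality and (conjugate) self-duality of $\Pi_i$ force regularity, with any duplications absorbed into Speh factors---does not work: cuspidal self-dual automorphic representations of $GL_m$ can perfectly well have non-regular archimedean parameters (think of $GL_2$ Maass forms with Laplace eigenvalue $1/4$). The paper's resolution is much simpler, and you have almost written it down yourself. Since $\pi$ is $\xi$-cohomological, the infinitesimal character of $\pi_\infty$ is $\lambda_{\xi^\vee}+\rho_G$, and this is \emph{always} regular (strictly dominant), regardless of whether the highest weight of $\xi$ is regular. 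You then need part (i) of Lemma~\ref{l:property-of-eta}, not only part (ii): the map $\eta$ carries regular parameters to regular parameters. Hence $\eta\psi_w|_{W_{\overline{F_w}}}$ has $n$ distinct entries, so in particular the $n_i$ entries contributed by any single $\Pi_{i,w}$ are distinct, i.e.\ $\Pi_{i,w}|\cdot|^{(\epsilon+n_i-n)/2}$ is regular. There is no obstacle here at all.

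A secondary logical slip occurs in the regular-$\xi$ part. You invoke Proposition~\ref{p:twisted-endoscopy}(ii) to write $\eta_*(\pi_v)=\boxplus_i\Pi_{i,v}|\cdot|^{\epsilon/2}$ and then deduce from this that $\pi_v$ is tempered; but that proposition has ``$\pi_v$ tempered'' as a \emph{hypothesis}, so your argument is circular. The paper orders things correctly: first Proposition~\ref{p:Galois-reps} gives that each $\Pi_{i,v}$ is (essentially, hence by unitarity genuinely) tempered, so $\psi_v$ is a tempered $L$-parameter; then $\tilde{AP}(\psi_v)$ coincides with the tempered $L$-packet $\tilde{LP}(\psi_v|_{\cL_{F^+_v}})$ by Proposition~\ref{p:trans-rational2}, whence $\pi_v\in\tilde{AP}(\psi_v)$ is tempered; only now does Proposition~\ref{p:twisted-endoscopy}(ii) apply and yield the formula for $\eta\varphi_{\pi_v}$ used in the remaining weight computation.
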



\begin{proof}

  Let us begin by proving the first assertion.
     Proposition \ref{p:twisted-endoscopy}.(i) at infinite places implies, by the comparison of infinitesimal characters, that $\eta\psi_w|_{W_{\ol{F_w}}}$ is isomorphic to the direct sum over all infinite places $w$ of $F$ of the $L$-parameter for $\Pi_{i,w}$ restricted to $W_{\ol{F_w}}$. (Of course $\ol{F_w}\simeq \C$ for $w|\infty$.) Since $\pi$ is $\xi$-cohomological thus regular and C-algebraic, Lemma \ref{l:property-of-eta} implies that $\eta\psi_w$ is a regular $C$-algebraic parameter.
     From this it follows that $\Pi_{i,w}|\cdot|^{\frac{\epsilon+n_i-n}{2}}$ at $w|\infty$ are regular and C-algebraic.
   One deduces from Proposition \ref{p:Galois-reps} that there exists $s(\Pi_{i,\infty})\in \Z_{\ge 0}$ depending only on the infinite component $\Pi_{i,\infty}$ of $\Pi_{i}$ such that $\mL_{F_y}(\Pi_{i,y}|\cdot|^{\frac{\epsilon+n_i-n}{2}})|\cdot|_y^{-s(\Pi_{i,\infty})/2}$ is pure of weight $s(\Pi_{i,\infty})+n-1-\epsilon$ and integral for each $1\le i\le r$ for every finite place $y$ of $F$.
   Clearly there are only finitely many $W_{\ol{F_w}}$-subrepresentation of $\eta\psi_w|_{W_{\ol{F_w}}}$, so the number of all possible infinitesimal characters for $\Pi_{1,w},...,\Pi_{r,w}$ is finite at each $w|\infty$.
   Since there are only finitely many irreducible representations of $GL_m(\R)$ or $GL_m(\R)$ with fixed $m\in \Z_{\ge 1}$ and fixed infinitesimal character, there are only finitely many possibilities for $\Pi_{i,\infty}$. The proof of the first assertion is complete as soon as $s(\xi)$ is taken to be the maximum of $s(\Pi_{i,\infty})$ over all possible $\{\Pi_{i,\infty}\}_{1\le i\le r}$.



   Now suppose that the highest weight of $\xi$ is regular. According to a standard result on Lie algebra cohomology, $\pi_v$ at $v|\infty$ must be discrete series to be $\xi$-cohomological. Considering infinitesimal characters for $\psi_v$ at $v|\infty$, we see that $\nu_i=1$ for all $1\le i\le r$. Since $\Pi_i|\cdot|^{\frac{\epsilon+n_i-n}{2}}$ is of type (TR) or (CM) for each $i$, Proposition \ref{p:Galois-reps} tells us that $\Pi_{i,v}$ are essentially tempered at all finite places $v$. Since $\Pi_{i,v}$ is already known to be unitary, $\Pi_{i,v}$ is tempered. Hence $\psi_v$ is tempered and  $\tilde{AP}(\psi_v)$ is nothing but the tempered $L$-packet $\tilde{LP}(\psi_v|_{\cL_{F_v^+}})$ at each $v\nmid \infty$, cf. Proposition \ref{p:trans-rational2}. In particular $\pi_v\in\tilde{AP}(\psi_v)$ is tempered. Since $$\eta\varphi_{\pi_v}=\oplus_{i=1}^r \rec(\Pi_{i,v})|\cdot|^{\epsilon/2}=\oplus_{i=1}^r \mL_{F^+_v}(\Pi_{i,v}|\cdot|^{\frac{\epsilon+n_i-n}{2}})|\cdot|^{\frac{n-1}{2}}, \quad v\nmid \infty,$$
   Proposition \ref{p:Galois-reps} and Remark \ref{r:Galois-reps} allow us to verify the properties of $\eta\varphi_{\pi_v}$ in the corollary.


\end{proof}

\section{Finiteness results}\label{s:finiteness}

  The first two subsections prove local finiteness results for unramified and arbitrary representations. After stating a global finiteness conjecture (Conjecture \ref{c:finiteness} below) for C-algebraic representations with bounded coefficient fields in a fairly general setting, we establish the conjecture for general linear groups and quasi-split classical groups.

\subsection{Finiteness for unramified representations}\label{sub:finite-unr}

  Put ourselves in the setting of \S\ref{sub:transfer-app}.


\begin{lem}\label{l:finite-unr} Fix $s\in \Z_{\ge 0}$, $A\in \Z_{\ge1}$, and a finite place $v$ of $F^+$.
  There are only finitely many $\pi_v\in \Irr^{\ur}(G(F^+_v))$ 
  such that\bit \item $\mL_{F^+_v}(\eta_*\pi_v)|\cdot|_v^{-s/2}$ is strictly pure of weight $n-1+s-\epsilon$ and integral and
  \item $[\Q(\pi_v):\Q]\le A$.
  \eit
\end{lem}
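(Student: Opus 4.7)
The plan is to transfer the hypotheses across $\eta_*$ to the general linear side $\bG(F^+_v)$, where a classical finiteness statement about $q_v$-Weil integers of bounded degree will finish the job.

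First, I would reduce to a finiteness assertion for $\Pi_v := \eta_*\pi_v$. Lemma \ref{l:unram-functoriality}(ii), together with its extension to the even orthogonal case recorded just after Lemma \ref{l:property-of-eta}, gives $\Q(\Pi_v)\subseteq \Q(\pi_v)$, so $[\Q(\Pi_v):\Q]\le A$. Moreover Lemma \ref{l:bound-fiber} tells us that the fibres of $\eta_*:\Irr^{\ur}(G(F^+_v))\to \Irr^{\ur}(\bG(F^+_v))$ have cardinality bounded in terms of $G$ alone. It therefore suffices to bound the number of unramified $\Pi_v$ of $\bG(F^+_v)$ with $[\Q(\Pi_v):\Q]\le A$ and $\mL_{F^+_v}(\Pi_v)|\cdot|_v^{-s/2}$ strictly pure of weight $n-1+s-\epsilon$ and integral.

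Next I translate the purity and integrality hypotheses into arithmetic conditions on Frobenius eigenvalues. Let $\beta_1,\ldots,\beta_n$ denote the eigenvalues of $\Frob_v$ acting on $\mL_{F^+_v}(\Pi_v)$ (when $v$ splits as $ww^c$ in the unitary case, the same argument is applied independently to $\Pi_w$ and $\Pi_{w^c}$), and put $\gamma_i := q_v^{s/2}\beta_i$. The strict purity hypothesis amounts to the statement $|\iota(\gamma_i)| = q_v^{(n-1+s-\epsilon)/2}$ for every embedding $\iota:\ol{\Q}\hra \C$, while integrality says each $\gamma_i$ is an algebraic integer. The bound on the degree of the $\gamma_i$ comes from \eqref{e:mL_K-twist}: for any $\sigma\in \Aut(\C)$ fixing $\Pi_v$ one has $\mL_{F^+_v}(\Pi_v)^\sigma = \mL_{F^+_v}(\Pi_v^\sigma)\simeq \mL_{F^+_v}(\Pi_v)$ as Weil--Deligne representations, so the coefficients of the characteristic polynomial of $\Frob_v$ on $\mL_{F^+_v}(\Pi_v)$ (and hence on $\mL_{F^+_v}(\Pi_v)|\cdot|_v^{-s/2}$) lie in $\Q(\Pi_v)$. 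Therefore each $\gamma_i$ is of degree at most $n$ over $\Q(\Pi_v)$, and hence of degree at most $nA$ over $\Q$.

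Finally, an algebraic integer of degree at most $nA$ all of whose Galois conjugates have absolute value exactly $q_v^{(n-1+s-\epsilon)/2}$ is a root of a monic polynomial in $\Z[x]$ of degree $\le nA$ whose coefficients are bounded (by binomial symmetric-function estimates) in terms of $n$, $A$, $s$, $\epsilon$ and $q_v$. Only finitely many such polynomials exist, so the $\gamma_i$ take only finitely many values; consequently the unordered multiset $\{\gamma_1,\ldots,\gamma_n\}$, which determines $\Pi_v$ via its Satake parameters, ranges over a finite set, and Step 1 concludes the proof. The only delicate point is the rationality step, but this is an immediate consequence of \eqref{e:mL_K-twist} combined with the fact that the Frobenius characteristic polynomial is an invariant of the isomorphism class of a Weil--Deligne representation.
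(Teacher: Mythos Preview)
Your proof is correct and follows essentially the same route as the paper: reduce to $\bG$ via the finite fibres of $\eta_*$ and Lemma \ref{l:unram-functoriality}(ii), then pin down the Frobenius eigenvalues using integrality, the Weil bound, and a degree bound coming from $[\Q(\Pi_v):\Q]\le A$. One small caveat: your parenthetical that passes the rationality of the characteristic polynomial from $\mL_{F^+_v}(\Pi_v)$ to its twist by $|\cdot|_v^{-s/2}$ tacitly uses $q_v^{s/2}\in\Q(\Pi_v)$, which need not hold when $s$ is odd and $q_v$ is not a square; this only relaxes the degree bound on $\gamma_i$ from $nA$ to $2nA$ and leaves the finiteness conclusion intact (indeed your bound is already more careful than the paper's stated $m\le A$, which should really be $m\le nA$).
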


\begin{proof}

   Since the map $\eta_*:\Irr^{\ur}(G(F^+_v))\ra \Irr^{\ur}(\bG(F^+_v))$ has finite fibers (Lemma \ref{l:bound-fiber}) it suffices to prove the finiteness of the set of $\Pi_v\in \Irr^{\ur}(GL_n(F_v))$ such that $\mL_{F^+_v}(\Pi_v)|\cdot|_v^{-s/2}$ is strictly pure of weight $n-1+s$ and integral with $[\Q(\Pi_v):\Q]\le  A$. A first observation is that any $\Pi_v=\eta_*\pi_v$ for $\pi_v$ as in the lemma lands in the set just defined, where the inequality follows from Lemma \ref{l:unram-functoriality}.(ii). Next consider the bijection $\mS:\Irr^{\ur}(GL_n(F_v))\ra(\C^\times)^n/S_n$ coming from the Satake isomorphism for $GL_n$. Then each complex number appearing in $\mS(\Pi_v|\cdot|_v^{\frac{-(n-1+s-\epsilon)}{2}})$ must be a root of an (irreducible) monic polynomial $x^m+a_{m-1}x^{m-1}+\cdots + a_0$ with \beq\label{e:fin-unr-1}1\le m\le  A, \quad a_{m-1},...,a_0\in \Z\eeq by integrality and the bound on $[\Q(\Pi_v):\Q]$. The condition on purity and weight (``Weil bounds'') implies that $|\lambda|_v\le q_v^{\frac{-(n-1+s-\epsilon)}{2}}$ for all roots $\lambda\in \C$ of the above polynomial, imposing a constraint \beq\label{e:fin-unr-2}|a_i|_v\le {m \choose i} q_v^{\frac{-(n-1+s-\epsilon)}{2}},\quad \forall 0\le i\le m-1.\eeq As there are only finitely many polynomials satisfying \eqref{e:fin-unr-1} and \eqref{e:fin-unr-2}, we are done.
\end{proof}

\subsection{Rationality of endoscopic transfer}\label{sub:rational-transfer}

  Keep the notation of the previous subsection.
  We start by studying the behavior of the functorial lifting $\eta_*$ relative to automorphisms of $\C$.

\begin{prop}\label{p:coeff-field-G-GL(n)} Let $\pi_v\in \Irr^{\temp}(G(F^+_v))$. 
Then \beq\label{e:sigma-transfer}\eta_*(\pi_v^\sigma)=(\eta_*\pi_v)^\sigma,\quad \forall\sigma\in \Aut(\C).\eeq
If moreover $\Q(\pi_v)$ is finite over $\Q$ then \benu
\item $\Q(\eta_*\pi_v)$ is also finite over $\Q$.
\item $\Q(\pi_v)$ contains $\Q(\eta_*\pi_v)$ and is contained in a finite extension of $\Q(\eta_*\pi_v)$ of degree at most $m_G!$. In particular
$[\Q(\eta_*\pi_v):\Q]\le [\Q(\pi_v):\Q]\le m_G!\,[\Q(\eta_*\pi_v):\Q]$.
\eenu


\end{prop}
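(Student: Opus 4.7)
The plan is to deduce \eqref{e:sigma-transfer} by applying a field automorphism $\sigma \in \Aut(\C)$ to both sides of the spectral character identity of Proposition \ref{p:trans-rational2} and invoking the rationality of the Whittaker transfer factors (Proposition \ref{p:trans-rational}). Parts (i) and (ii) will then be formal consequences of \eqref{e:sigma-transfer} together with the uniform bound on $|\tilde{LP}(\varphi_v)|$ provided by Proposition \ref{p:packet-size}.

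First I will verify that the Kottwitz--Langlands--Shelstad geometric transfer \eqref{e:KLS-trans} commutes with $\sigma$-twisting: writing $\phi^\sigma$ for the function obtained by applying $\sigma$ to the values of $\phi$, and normalizing Haar measures so that compact open subgroups have rational volumes, one has $O_\gamma^G(\phi^\sigma) = \sigma(O_\gamma^G(\phi))$ (and similarly for the stable twisted orbital integrals on $\bG$), since the orbital integrals reduce to finite sums of values of $\phi$ weighted by rational volumes. Combined with $\Delta_v^{\Wh}(\gamma,\delta) \in \Q$ from Proposition \ref{p:trans-rational}, this shows that $\phi^\sigma$ is a $\Delta_v^{\Wh}$-transfer of $f^\sigma$ whenever $\phi$ is a $\Delta_v^{\Wh}$-transfer of $f$. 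Next, applying $\sigma$ to the spectral identity \eqref{e:spec-trans} and using the identities $\Theta_{\pi_v^\sigma}(\phi^\sigma) = \sigma(\Theta_{\pi_v}(\phi))$ and $\Theta_{(\Pi_v)^\sigma,\theta}(f^\sigma) = \sigma(\Theta_{\Pi_v,\theta}(f))$ (the latter requiring a $\sigma$-equivariant normalization of the intertwiner defining $\Theta_{\Pi_v,\theta}$, afforded by Arthur's Whittaker normalization) will yield
\[
\sum_{\pi'_v \in \tilde{LP}(\varphi_v)} \Theta_{(\pi'_v)^\sigma}(\phi^\sigma) = \Theta_{(\Pi_v)^\sigma,\theta}(f^\sigma),
\]
with $(\phi^\sigma, f^\sigma)$ ranging over all transfer pairs as $(\phi,f)$ does. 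The uniqueness statement in Proposition \ref{p:trans-rational2} will then force $\{(\pi'_v)^\sigma : \pi'_v \in \tilde{LP}(\varphi_v)\}$ to coincide with $\tilde{LP}(\varphi'_v)$ for some tempered $L$-parameter $\varphi'_v$, and $\eta_*(\pi_v^\sigma) = (\Pi_v)^\sigma = (\eta_*\pi_v)^\sigma$, which is \eqref{e:sigma-transfer}.

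For any $\sigma \in \Aut(\C/\Q(\pi_v))$ we will have $\pi_v^\sigma \simeq \pi_v$, so \eqref{e:sigma-transfer} gives $(\eta_*\pi_v)^\sigma \simeq \eta_*\pi_v$, whence $\Q(\eta_*\pi_v) \subset \Q(\pi_v)$; this yields (i) and the left inequality of (ii). Conversely, every $\sigma \in \Aut(\C/\Q(\eta_*\pi_v))$ satisfies $\eta_*(\pi_v^\sigma) = \eta_*\pi_v$, so by Proposition \ref{p:trans-rational2} the twist $\pi_v^\sigma$ must lie in the same packet $\tilde{LP}(\varphi_v)$ as $\pi_v$. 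The resulting homomorphism $\Aut(\C/\Q(\eta_*\pi_v)) \to \Sym(\tilde{LP}(\varphi_v))$ has kernel contained in $\Aut(\C/\Q(\pi_v))$, and by Proposition \ref{p:packet-size} the index of this kernel is at most $m_G!$, giving $[\Q(\pi_v):\Q(\eta_*\pi_v)] \le m_G!$.

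The main anticipated obstacle is the careful book-keeping of $\sigma$-equivariance in the first two steps: one has to fix compatible rational normalizations of the Haar measures on orbits, to pin down a $\sigma$-equivariant normalization of the intertwiner defining the twisted trace $\Theta_{\Pi_v,\theta}$, and in case (ii)(b) to handle the half-integral twist $|\cdot|^{\epsilon/2}$, which a priori introduces sign characters of the kind appearing in Lemma \ref{l:twist-unram-rep}. These signs should cancel thanks to the C-preserving property of $\eta$ established in Lemma \ref{l:property-of-eta}, exactly as in the unramified analysis of Lemma \ref{l:unram-functoriality}.
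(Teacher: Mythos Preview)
Your proposal is correct and follows essentially the same route as the paper: use the $\Q$-rationality of $\Delta_v^{\Wh}$ (Proposition~\ref{p:trans-rational}) to show that $(\phi_v^\sigma,f_v^\sigma)$ is again a transfer pair, twist the spectral identity \eqref{e:spec-trans} by $\sigma$, and then deduce (i) and (ii) formally from \eqref{e:sigma-transfer} via the permutation action of $\Aut(\C/\Q(\eta_*\pi_v))$ on $\eta_*^{-1}(\Pi_v)$ together with the bound $|\eta_*^{-1}(\Pi_v)|\le m_G$ from Proposition~\ref{p:packet-size}.

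The one place where the paper is a bit more explicit than your sketch is the passage from the $\sigma$-twisted identity to \eqref{e:sigma-transfer}. You invoke ``the uniqueness statement in Proposition~\ref{p:trans-rational2}'', but that uniqueness is stated for a \emph{given} tempered parameter, so to apply it you implicitly need $(\Pi_v)^\sigma$ to be of the form $\rec^{-1}(\eta\varphi'_v)|\cdot|^{\epsilon/2}$ for some tempered $\varphi'_v$. The paper instead writes down Arthur's identity for $(\Pi_v)^\sigma$ directly, subtracts, and then uses two facts: (a) the $\Delta_v^{\Wh}$-transfers of $C_c^\infty(\bG(F_v^+))$ span the space of stable distributions on $G(F_v^+)$ (Remark~1 below \cite[Thm~2.2.1]{Arthur}), so the equality of stable distributions upgrades to an equality of virtual characters; and (b) linear independence of irreducible characters then forces $\Theta_{\pi_v^\sigma}$ to appear on the right-hand side, giving $\eta_*(\pi_v^\sigma)=(\Pi_v)^\sigma$. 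It would strengthen your write-up to make this surjectivity onto stable distributions explicit. The technical worries you flag (rational Haar measures, Whittaker-normalized intertwiners, the $|\cdot|^{\epsilon/2}$ twist in case~(ii)(b)) are exactly the right ones; the paper handles them implicitly by adopting Arthur's normalizations throughout (cf.\ \S\ref{sub:notation}) and by the computation in Proposition~\ref{p:trans-rational} showing $\Delta_v^{\Wh}\in\Q$ even in case~(ii)(b).
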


\begin{rem}
  Only the left inequality in (ii) will be needed in our main results. The proposition extends Lemma \ref{l:unram-functoriality} from unramified (possibly non-tempered) representations to tempered representations in the case of classical groups.
\end{rem}

\begin{proof}
  Put $\Pi_v:=\eta_*\pi_v$. Since (i) is an immediate consequence of (ii), it suffices to verify (ii).


  When $\pi_v$ is tempered, we would like to verify \eqref{e:sigma-transfer}.
  For any $f_v\in C^\infty_c(\bG(F^+_v))$ let $\phi_v\in C^\infty_c(G(F^+_v))$ be its $\Delta_v^{\Wh}$-transfer.
  For every $\sigma\in \Aut(\C)$ we obtain from Proposition \ref{p:trans-rational} and \eqref{e:KLS-trans} that $$STO^{\bG(F^+_v)}_{\delta}(f^{\sigma}_v)=\sum_{\gamma\sim_{\mathrm{st}} \delta} \Delta_v^{\Wh}(\gamma,\delta) O^{G(F^+_v)}_\gamma (\phi^{\sigma}_v).$$ Hence $\phi^{\sigma}_v$ is a KLS-transfer of $f^{\sigma}_v$.
  On the other hand, twisting \eqref{e:spec-trans} by $\sigma$ leads to an identity
   \beq\label{e:coeff-field-1} \Theta_{\Pi^{\sigma}_v,\theta}(f^{\sigma}_v)=\sum_{\eta_*(\rho_v)=\Pi_v}  \Theta_{\rho^{\sigma}_v}(\phi^{\sigma}_v).\eeq
  Plugging in $f^{\sigma^{-1}}_v$ and $\phi^{\sigma^{-1}}_v$ in place of $f_v$ and $\phi_v$ (noting that $f^{\sigma^{-1}}_v$ is a $\Delta_v^{\Wh}$-transfer of $\phi^{\sigma^{-1}}_v$) we derive
  \beq\label{e:coeff-field-2} \Theta_{\Pi^{\sigma}_v,\theta}(f_v)=\sum_{\eta_*(\rho_v)=\Pi_v} \Theta_{\rho^{\sigma}_v}(\phi_v).\eeq
  Comparing with $\Theta_{\Pi^{\sigma}_v,\theta}(f_v)=\sum_{\eta_*(\rho_v)=\Pi^\sigma_v} \Theta_{\rho_v}(\phi_v)$, cf. \eqref{e:spec-trans}, we obtain an equality of stable characters (evaluated on elements of $ C^\infty_c(G(F^+_v))$)
   \beq\label{e:st-char-equal}\sum_{\eta_*(\rho_v)=\Pi_v}  \Theta_{\rho^{\sigma}_v}=\sum_{\eta_*(\rho_v)=\Pi^\sigma_v}  \Theta_{\rho_v}\eeq
   since $\Delta_v^{\Wh}$-transfers of $C^\infty_c(\bG(F^+_v))$ generate the space of stable distributions on $G(F^+_v)$. (In the language of Remark 1 below Theorem 2.2.1 of \cite{Arthur}, the map $\tilde{f}\mapsto \tilde{f}^G$ is \emph{onto}.) Then \eqref{e:st-char-equal} holds true also as the equality of finite character sums. Since $\Theta_{\pi^\sigma_v}$ appears as a summand on the left hand side, it should also on the other side by linear independence of characters. We have established \eqref{e:sigma-transfer}.

   Formula \eqref{e:sigma-transfer} readily implies that if $\sigma\in \Aut(\C/\Q(\pi_v))$ then $\Pi^\sigma_v=\eta_*(\pi^\sigma_v)=\eta_*\pi_v=\Pi_v$. Therefore $\Q(\Pi_v)\subset \Q(\pi_v)$ and in particular $\Q(\Pi_v)$ is finite over $\Q$.

   Now if $\sigma\in \Aut(\C/\Q(\Pi_v))$ then $\eta_*(\pi_v^\sigma)=\Pi_v^\sigma=\Pi_v$. One deduces from \eqref{e:st-char-equal} that $\pi_v,\pi_v^\sigma\in \eta_*^{-1}(\Pi_v)$. Thereby one obtains a group homomorphism $$\Upsilon:\Aut(\C/\Q(\Pi_v)) \ra \Perm(\eta_*^{-1}(\Pi_v)),\quad \Upsilon(\sigma):\pi_v\mapsto \pi_v^\sigma$$ where $\Perm(\cdot)$ denotes the permutation group. Since $|\eta_*^{-1}(\Pi_v)|\le m_G$ by Proposition \ref{p:packet-size}, the kernel of $\Upsilon$ has index $\le m_G!$ in $\Aut(\C/\Q(\Pi_v))$. Then the fixed field of $\ker \Upsilon$ is a finite extension of $\Q(\Pi_v)$ of degree $\le m_G!$ and contains $\Q(\pi_v)$. The proof of (ii) is finished.
\end{proof}

\begin{rem}
  Alternatively \eqref{e:sigma-transfer} may be proved using a global argument (explained to us by Wee Teck Gan):
  Reduce to the case where $\pi_v$ is a discrete series. When $\pi_v$ is discrete, globalize $\pi_v$ to a $\pi$. Consider $\eta_*(\pi)$ and $\eta_*(\pi^\sigma)$ (assumed isobaric). By comparing $\eta_*(\pi)^\sigma$ and $\eta_*(\pi^\sigma)$ at almost all unramified places, one deduces from strong multiplicity one that $\eta_*(\pi_v)^\sigma=\eta_*(\pi^\sigma_v)$ at the place $v$ of interest. (Use Clozel's result that the Langlands quotient is compatible with $\sigma$.)
\end{rem}

\subsection{Sparsity of arithmetic points in the unitary dual}\label{sub:sparsity}

\begin{prop}\label{p:sparsity-GL(n)}
  Fix $A\ge 1$, a finite place $v$ of $F^+$, an open compact subgroup $K_v\subset \bG(F^+_v)$ and an irreducible algebraic $\bG_\infty$-representation $\Xi$.
  The set of $\Pi_v\in \Irr(\bG(F^+_v))$ satisfying (i) and (ii) below is finite:
  \benu
  \item $\Pi_v$ appears as the $v$-component of some $\Xi$-cohomological isobaric representation $\Pi=\boxplus_{i=1}^s \Pi_i$ of $\bG(\A_{F^+})$ such that $\Pi_i$ are cuspidal and $\Pi_i^\vee\simeq \Pi_i^c \otimes (\det\circ \chi_i)$ for $\chi:F^{\times}\bs \A^\times_F\ra \C^\times$ such that  $\chi_v(-1)$ is the same for every $v\in S_\infty$,
  \item $[\Q(\Pi_v):\Q]\le A$.
  \eenu
\end{prop}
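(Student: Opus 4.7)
The plan is to combine three ingredients: finiteness of the archimedean shape of the isobaric decomposition, purity and integrality at $v$ of the local parameter via Proposition \ref{p:Galois-reps}, and the local depth bound from Corollary \ref{c:bounded-conductor}, and then to run an argument in the spirit of Lemma \ref{l:finite-unr} in the possibly ramified setting.

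First I would show that only finitely many tuples $(n_i,\Pi_{i,\infty})_{1\le i\le s}$ arise, up to reordering. The $\Xi$-cohomology condition fixes the infinitesimal character of $\Pi_\infty$ to be that of $\Xi^\vee$, and Clozel's observation \cite[Lem 4.9]{Clo90} forces each cuspidal piece $\Pi_{i,\infty}$ to be essentially tempered. Since only finitely many essentially tempered representations of any $GL_{n_i}(F^+\otimes_\Q\R)$ have a prescribed infinitesimal character, and only finitely many ways exist to split the infinitesimal character of $\Xi^\vee$ among the pieces, the claim follows. For each fixed shape, I would then apply Proposition \ref{p:Galois-reps} to each cuspidal $\Pi_i$: the hypothesis $\Pi_i^\vee\simeq\Pi_i^c\otimes(\chi_i\circ\det)$ and the uniformity of $\chi_{i,v}(-1)$ for $v\mid\infty$ permit, via an auxiliary twist by an algebraic Hecke character and if necessary a cyclic CM base change, a reduction to case (CM) of the proposition. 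This yields $s_i\ge 0$, depending only on $\Pi_{i,\infty}$, such that $\mL_{F^+_v}(\Pi_{i,v})|\cdot|_v^{-s_i/2}$ is pure of weight $n_i-1+s_i$ and integral. Because $\Pi_v$ is the isobaric sum of the $\Pi_{i,v}$,
\[
\mL_{F^+_v}(\Pi_v)\;=\;\bigoplus_{i=1}^{s}\mL_{F^+_v}(\Pi_{i,v})\otimes|\cdot|_v^{(n_i-n)/2},
\]
so, up to a uniformly bounded central twist, $\mL_{F^+_v}(\Pi_v)$ is a direct sum of pure integral Weil--Deligne representations of weights in a fixed finite set.

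To extract finiteness, I would invoke the cohomological-implies-C-algebraic Lemma \ref{l:coh=C-alg} together with $[\Q(\Pi_v):\Q]\le A$, so that Corollary \ref{c:bounded-conductor} bounds $\dep(\mL_{F^+_v}(\Pi_v))$. The proof of Lemma \ref{l:bounded-depth} then confines the inertia representation to a finite set of isomorphism classes (the image of inertia factors through a finite group of uniformly bounded order). On each pure summand the Frobenius eigenvalues are algebraic integers whose archimedean absolute values are controlled by the Weil bound and whose minimal polynomials have degree $O_n(A)$; hence they lie among the roots of finitely many monic polynomials in $\Z[x]$, exactly as in Lemma \ref{l:finite-unr}. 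The nilpotent monodromy $N$ is then determined up to finite choice by compatibility with purity. Altogether $\mL_{F^+_v}(\Pi_v)$ ranges over a finite set, and the local Langlands correspondence yields finiteness of $\Pi_v$.

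The principal obstacle I expect is controlling $[\Q(\mL_{F^+_v}(\Pi_{i,v})):\Q]$ in terms of $\Q(\Pi_v)$: an element of $\Aut(\C/\Q(\Pi_v))$ may permute the $s\le n$ summands, but a subgroup of index at most $n!$ preserves the decomposition componentwise, giving $[\Q(\Pi_{i,v}):\Q]\le n!\,A$ (in the spirit of Lemma \ref{l:unram-functoriality}(iii)). A secondary subtlety is verifying that the reduction to case (CM) of Proposition \ref{p:Galois-reps} can be performed uniformly across all $\Pi$ sharing a given archimedean shape, so that $s_i$ genuinely depends only on $\Pi_{i,\infty}$.
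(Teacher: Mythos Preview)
Your approach is correct and uses the same three ingredients as the paper (the depth bound of Corollary \ref{c:bounded-conductor}, purity and integrality from Proposition \ref{p:Galois-reps}, and a Weil-number count as in Lemma \ref{l:finite-unr}), but the organization differs. The paper first invokes the depth bound to reduce to finitely many Bernstein components, then fixes a base point $\Pi^0_v$ in each component and writes every other $\Pi_v$ there via unramified twists $\lambda_i$ of the supercuspidal support of $\Pi^0_v$; a permutation argument bounds $[\Q(\lambda_i):\Q]$, and purity plus integrality (applied to \emph{both} $\Pi^0_v$ and $\Pi_v$) force each $\lambda_i$ to be a Weil $q_v$-number of weight $0$ with $q_v^{s+n-1}\lambda_i$ integral, whence finiteness. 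Your version works directly on the Galois side (finitely many inertia types, then Frobenius eigenvalues constrained as Weil integers of bounded degree), which is equivalent but requires the additional step---which you do not spell out---of arguing that the inertia type together with the multiset of Frobenius eigenvalues pins down the Frobenius-semisimple Weil--Deligne representation up to finitely many isomorphism classes; the paper's Bernstein-component parameterization sidesteps this by reducing everything to the scalars $\lambda_i$. Your explicit treatment of the finitely many archimedean shapes $(n_i,\Pi_{i,\infty})$ is something the paper leaves implicit in the phrase ``$s$ depending on $\Xi$''.
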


\begin{rem}\label{r:parity-issue} Since C-algebraicity is incompatible with $\boxplus$ (which is locally the Langlands quotient for the normalized induction), $\Pi$ being C-algebraic implies not
  $\Pi_i$ but $\Pi_i|\cdot|^{\frac{n_i-n}{2}}$  is C-algebraic in condition (i).
\end{rem}

\begin{proof}
  By (ii) and Corollary 3.13 the depth (or conductor) of $\Pi_v$ is bounded, so the set of $\Pi_v$ is contained in finitely many Bernstein components.
 We may show that the set of $\Pi_v$ satisfying (i) and (ii) is finite in each Bernstein component $\cB$. Suppose $\Pi^0_v\in \cB$ satisfies (i) and (ii). Write $\mL_{F^+_v}(\scusp(\Pi^0_v))=\oplus_{i=1}^k V_i$ where $V_i$ are irreducible WD representations. For any other $\Pi_v\in \cB$,
    $$\mL_{F^+_v}(\scusp(\Pi_v))=\oplus_{i=1}^k V_i\otimes \mathrm{unr}(\lambda_i)$$
  where $\mathrm{unr}(\lambda_i):GL_1(F^+_v)\ra \C^\times$ is the unramified character mapping every uniformizer of $F^+_v$ to $\lambda_i\in \C^\times$. Since $\Q(\oplus_{i=1}^k V_i)\subset \Q(\Pi^0_v)$ (cf. \eqref{e:mL_K-twist}), we have $[\Q(\oplus_{i=1}^k V_i):\Q]\le A$. Put $E:=\Q(V_1)\Q(V_2)\cdots\Q(V_k)$. Since $\Gal(\ol{\Q}/\Q(\oplus_{i=1}^k V_i))$ acts on $\{V_1,...,V_k\}$ (a multi-set) as permutations,
  $$ [E:\Q]\le k! A\le n! A.$$
  Consider the action of $\Gal(\ol{\Q}/E)$ on the unordered set $\{V_i\otimes \unr(\lambda_i)\}_{i=1}^k$. Clearly there exists an extension $E'/E$ of degree $\le k!$ such that $\Gal(\ol{\Q}/E')$ fixes the isomorphism class of $V_i\otimes \unr(\lambda_i)$ for every $i$. Observe that every $\lambda\in \C^\times$ such that $V_i\otimes \unr(\lambda)\simeq V_i$ satisfies $\lambda^n=1$. (For this consider the equality of the determinants.) Setting $E'':=E(\mu_n)$, we conclude that $\Gal(\ol{\Q}/E'')$ fixes $\lambda_i$ for every $i$. In particular
  \beq\label{e:Sparsity}[\Q(\lambda_i):\Q]\le k!n!\varphi(n)A, \quad \forall 1\le i\le k.\eeq

  By (i) and Proposition \ref{p:Galois-reps} there exists $s>0$ (depending on $\Xi$) such that for every $\Pi_v\in \cB$ satisfying (i) and (ii), $\mL_{F^+_v}(\scusp(\Pi_v))|\cdot|^{-s/2}$ is pure of weight $s+(n-1)$ and integral. (To deduce this, apply Proposition \ref{p:Galois-reps} to each $\Pi_i|\cdot|^{\frac{n_i-n}{2}}$ , cf. Remark \ref{r:parity-issue}.) As Lemma \ref{l:pure-shape} applies to the present situation with all $s_i$ in the lemma equal to 1, we see that both $V_i|\cdot|^{-s}$ and $V_i|\cdot|^{-s}\otimes \mathrm{unr}(\lambda_i)$ are strictly pure of weight in $s+(n-1)$ and integral. We claim that $\lambda_i$ is a Weil $q_v$-number of weight 0 such that $q_v^{s+(n-1)}\lambda_i$ is integral. Indeed, for any any eigenvalue $\omega$ of a lift of geometric Frobenius on $V_i$, we know from the above that $q_v^{\frac{s+n-1}{2}}/\omega$ and $\omega\lambda_i$ are integral. In view of the integrality of $q_v^{s+(n-1)}\lambda_i$ and \eqref{e:Sparsity}, an argument as in Lemma \ref{l:finite-unr} shows that there are only finitely many $\lambda_i$ with these properties. Therefore the set of $\Pi_v$ as above is finite.


\end{proof}

  Let $G$ be as in (i), (ii) or (iii) of \S\ref{sub:transfer-app}. Recall that Arthur and Mok associate to
  $\pi_v\in \Irr^{\temp}(G(F^+_v))$ a tempered $L$-parameter $\varphi_{\pi_v}:W_{F^+_v}\times SL_2(\C)\ra {}^L G$. In a standard manner this extends to the construction of all $L$-packets via Langlands quotients. Now we are about to state a result providing a crucial input for the main results of \S\ref{sub:growth-level}.


\begin{cor}\label{c:sparsity}
  Fix $A\ge 1$ and an irreducible algebraic $\Res_{F^+/\Q} G$-representation $\xi$. Suppose that $\xi$ has regular highest weight.
  Then the set of $\pi_v\in \Irr^{\temp}(G(F^+_v))$ satisfying the two properties below is finite.
  \benu
  \item $\pi_v$ appears as the $v$-component of some $\xi$-cohomological discrete automorphic representation $\pi$ of $G(\A_{F^+})$,
  \item $[\Q(\pi_v):\Q]\le A$.
  \eenu
\end{cor}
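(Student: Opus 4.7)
The strategy is to reduce the problem on the classical group $G$ to the already-solved problem on the general linear group $\bG$ (Proposition \ref{p:sparsity-GL(n)}) via the twisted endoscopic transfer $\eta_*$ of \eqref{e:eta_*}. The rationality of the transfer (Proposition \ref{p:coeff-field-G-GL(n)}) and the bounded size of tempered $L$-packets (Proposition \ref{p:packet-size}) will allow us to transport the finiteness statement back from $\bG$ to $G$.

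First, suppose $\pi_v$ satisfies (i) and (ii), arising as the $v$-component of a $\xi$-cohomological discrete automorphic representation $\pi$ of $G(\A_{F^+})$. Since the highest weight of $\xi$ is regular, Corollary \ref{c:pi_v-pure} gives that $\pi_v$ is tempered and that the associated parameter $\psi=(r,\{(n_i,\Pi_i,1)\}_{i=1}^r)\in\tilde{\Psi}_2(G)$ has all $\nu_i=1$. Form the isobaric representation $\Pi:=\boxplus_{i=1}^r\Pi_i$ of $\bG(\A_{F^+})$. By Proposition \ref{p:twisted-endoscopy}(ii), $\eta_*(\pi_v)$ agrees with $\Pi_v|\cdot|^{\epsilon/2}$. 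The cuspidality and the conjugate self-duality $\Pi_i^\vee\simeq\Pi_i^c$ are built into the definition of $\tilde{\Psi}_2(G)$, so hypothesis (i) of Proposition \ref{p:sparsity-GL(n)} is satisfied with trivial $\chi_i$.

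Second, I would argue that $\Pi$ is $\Xi$-cohomological for some $\Xi$ drawn from a \emph{finite} set $\Sigma(\xi)$ depending only on $\xi$ and $G$. Indeed, the infinitesimal character of $\Pi_\infty$ is determined by $\eta\psi_\infty|_{W_{\ol{F_w}}}$ and hence by $\xi$ up to finite ambiguity; this argument already appears inside the proof of Corollary \ref{c:pi_v-pure}. Each $\Pi_i|\cdot|^{\frac{\epsilon+n_i-n}{2}}$ is regular C-algebraic cuspidal by that corollary, so by Lemma \ref{l:GL(n)-cohomlogical} each is cohomological, and the possible $\Xi$'s for $\Pi$ form a finite set $\Sigma(\xi)$. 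To control the field of rationality on the $\bG$-side, I would apply Proposition \ref{p:coeff-field-G-GL(n)}(ii) to get $[\Q(\eta_*\pi_v):\Q]\le[\Q(\pi_v):\Q]\le A$; since the twist by $|\cdot|^{-\epsilon/2}$ multiplies the degree by at most $2$ (adjoining $q_v^{1/2}$ only in the even orthogonal case $\epsilon=1$), we get $[\Q(\Pi_v):\Q]\le 2A$.

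Finally, applying Proposition \ref{p:sparsity-GL(n)} once for each $\Xi\in\Sigma(\xi)$ (with $A$ replaced by $2A$) yields finitely many possibilities for $\Pi_v$. Each $\pi_v$ lies in the fiber $\eta_*^{-1}(\Pi_v|\cdot|^{\epsilon/2})$, which by the partition property in Proposition \ref{p:trans-rational2} is a single tempered $L$-packet $\tilde{LP}(\varphi_{\pi_v})$ of cardinality at most $m_G$ by Proposition \ref{p:packet-size}. Hence only finitely many $\pi_v$ can occur. The main subtlety I anticipate is the second step: one must confirm cleanly that the cohomological hypothesis of Proposition \ref{p:sparsity-GL(n)} holds for $\Pi$ (equivalently, that the individual $\Pi_i$ meet the hypothesis of Proposition \ref{p:Galois-reps}) and that the associated archimedean data ranges over a \emph{finite} set as $\pi$ varies; all other steps are essentially formal once the rationality of the endoscopic transfer (Proposition \ref{p:coeff-field-G-GL(n)}) is in hand.
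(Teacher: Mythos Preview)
Your approach is essentially identical to the paper's: transfer to $\bG$ via $\eta_*$, invoke Proposition~\ref{p:sparsity-GL(n)}, and pull back using the finite fibers of $\eta_*$. One bookkeeping point deserves correction, however. You set $\Pi:=\boxplus_{i=1}^r\Pi_i$ without the $|\cdot|^{\epsilon/2}$ twist and then compensate at the rationality step by passing from $A$ to $2A$. But this creates a problem in the even orthogonal case $\epsilon=1$: your $\Pi$ is then off by a half-integral twist at infinity and is \emph{not} $\Xi$-cohomological, so hypothesis (i) of Proposition~\ref{p:sparsity-GL(n)} fails. Concretely, Corollary~\ref{c:pi_v-pure} gives that $\Pi_i|\cdot|^{\frac{\epsilon+n_i-n}{2}}$ is regular C-algebraic, whereas Proposition~\ref{p:sparsity-GL(n)} (cf.\ Remark~\ref{r:parity-issue}) needs $\Pi_i|\cdot|^{\frac{n_i-n}{2}}$ to be C-algebraic; these differ by a half-twist when $\epsilon=1$, and adjusting the rationality bound does not repair the archimedean hypothesis.

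The paper avoids this by building the twist into $\Pi$ from the outset, setting $\Pi:=\boxplus_{i=1}^r\Pi_i|\cdot|^{\epsilon/2}$. Then $\Pi_v=\eta_*(\pi_v)$ on the nose, so Proposition~\ref{p:coeff-field-G-GL(n)}(ii) gives $[\Q(\Pi_v):\Q]\le A$ with no factor of $2$; and $\Pi_\infty=\eta_*(\pi_\infty)$ is regular C-algebraic by the C-preserving property of $\eta$ (Lemma~\ref{l:property-of-eta}), whence $\Pi$ is $\Xi$-cohomological for a single $\Xi$ determined by $\xi$ (not merely a finite set $\Sigma(\xi)$). With this adjustment your argument goes through exactly as you outlined.
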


\begin{rem}
  In particular the set of such $\pi_v$ is measurable with respect to the Plancherel measure on $G(F^+_v)^{\wedge}$. We caution the reader that its measure may not be zero. Indeed it has positive Plancherel measure precisely when it contains discrete series.
\end{rem}

\begin{rem}\label{r:regularity}
  The regularity assumption on $\xi$ should be unnecessary for the corollary to be true. We imposed it for simplicity and also for the reason that the same hypothesis will be in place for applications in Section~\ref{s:growth-coeff}.
\end{rem}

\begin{proof}

  Let $C(\xi,A)$ be the set of $\pi_v$ as above. We need to show $|C(\xi,A)|<\infty$. Since $\eta_*$ is a finite-to-one map, we will be done if $\eta_*$ is shown to map $C(\xi,A)$ into the union of the sets of Proposition \ref{p:sparsity-GL(n)} for some $\Xi$, where $\Xi$ depends only on $\xi$.

   Each $\pi_v\in C(\xi,A)$ is the $v$-component of some $\pi$ as in the corollary. Let $\psi=(r,\{(n_i,\Pi_i,\nu_i)\}^r_{i=1}$ be the data associated with $\pi$ and put $$\Pi:=\boxplus_{i=1}^r\left( (\Pi_i\otimes |\cdot|^{\frac{1-\nu_i}{2}}) \boxplus(\Pi_i\otimes |\cdot|^{\frac{3-\nu_i}{2}})
   \boxplus \cdots \boxplus (\Pi_i\otimes |\cdot|^{\frac{\nu_i-1}{2}})\right)\otimes |\cdot|^{\epsilon/2} .$$
   The infinitesimal character of $\pi_v$ at each infinite place $v$ of $F^+$ is the same as that of $\xi^\vee_v$, where $\xi=\otimes_{v|\infty} \xi_v$ is a tensor product of irreducible representations with regular highest weights. Remark \ref{r:nontempered-case} tells us that this infinitesimal character transfers via $\eta$ to that of $\Pi_v$. The regularity on the former implies via the explicit description of $\eta$ that the infinitesimal character of $\Pi_v$ is the same as that of (the tensor product of two) irreducible algebraic representations of $\bG$ of regular highest weight. In particular $\nu_i$ must be all trivial in $\psi$.

    So $\Pi=\boxplus_{i=1}^r \Pi_i|\cdot|^{\epsilon/2}$ and $\Pi_v=\eta_*(\pi_v)$ by (ii) of Proposition \ref{p:twisted-endoscopy}, cf. Remark \ref{r:nontempered-case}.  Thanks to Proposition \ref{p:coeff-field-G-GL(n)} we know $[\Q(\Pi_v):\Q]\le A$, which is (ii) of Proposition \ref{p:sparsity-GL(n)}. It remains to verify (i) of that proposition. This is clear except possibly the property that $\Pi$ is cohomological (for some $\Xi$), which we now explain. Since $\pi_\infty$ is $\xi$-cohomological it is regular C-algebraic. This implies that $\Pi_\infty=\eta_*(\pi_\infty)$ is also regular C-algebraic (for $GL_n$) by Lemma \ref{l:property-of-eta} (and the sentence right below Definition \ref{d:C-preserving}). Now Lemma \ref{l:GL(n)-cohomlogical} tells us that $\Pi_\infty$ is $\Xi$-cohomological for some $\Xi$. Moreover $\Xi$ is determined by the infinitesimal character of $\Pi_\infty$ hence also by that of $\pi_\infty$, or just by $\xi$. We are done.

\end{proof}

\subsection{A finiteness conjecture}\label{sub:fin-conj}

  We think this is a good place to state an interesting finiteness conjecture on automorphic representations in the spirit of the Shafarevich conjecture (Theorem \ref{t:intro-Faltings}). 
  Earlier Fontaine and Mazur proposed the analogue of the Shafarevich conjecture for $l$-adic Galois representations (\cite[I.\S3]{FM95}). While their conjecture is still mostly open in dimension $>1$ to our knowledge, we are able to verify \emph{our} conjecture in many cases including $G=GL_n$. This opens up the possibility for an automorphic proof of Fontaine-Mazur's finiteness conjecture via the Langlands correspondence. At the moment we are unable to get many cases of their conjecture since the correspondence is established in only limited cases. We wish to return to this problem in the future.

  In the conjecture $G$ is allowed to be an arbitrary connected reductive group over any number field $F$. Let $S_{\ram}$ be the finite set of finite places $v$ such that $G\times_F F_v$ is ramified (i.e. either non-quasi-split over $F_v$ or non-split over any finite unramified extension of $F_v$.) Recall that hyperspecial subgroups outside $S_{\ram}$ are fixed as in \S\ref{sub:notation} once and for all, and unramified representations are considered with respect to this data. Denote by $Z(\fkg)$ the center of the universal enveloping algebra of $\Lie G(F\otimes_\Q \C)$.


\begin{conj}\label{c:finiteness}
  Fix $A\in\Z_{\ge 1}$, $S$ a finite set of places of $F$ containing $S_{\ram}$ and all infinite places, and a $\C$-algebra character $\chi_\infty:Z(\fkg)\ra \C$. Then the set of discrete automorphic representations $\pi$ of $G(\A_F)$ with the following properties is finite:
  \bit
  \item $\pi^S$ is unramified,
  \item $\pi_\infty$ has infinitesimal character $\chi_\infty$, and
  \item $[\Q(\pi):\Q]\le A$.
  \eit
\end{conj}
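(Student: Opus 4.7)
The plan is to establish the conjecture for $G=GL_n$ and for quasi-split classical groups, by combining the local ramification bound of Corollary \ref{c:bounded-conductor} with Harish-Chandra's finiteness theorem for automorphic forms of fixed level and fixed infinitesimal character. The key reduction is the containment $\Q(\pi_v)\subseteq \Q(\pi)$, which forces $[\Q(\pi_v):\Q]\le A$ at every finite place $v$.

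For $G=GL_n$ the argument is direct. At each finite $v$, apply Lemma \ref{l:bounded-depth} to the $n$-dimensional F-ss Weil-Deligne representation $\mL_{F_v}(\pi_v)$, whose field of rationality equals $\Q(\pi_v)$ by \eqref{e:mL_K-twist}. This yields a uniform bound on $\dep(\pi_v)$ (using the depth identification from the proof of Corollary \ref{c:bounded-conductor}) and hence on $\cond(\pi_v)$ via Lemma \ref{l:depth-conductor}; at $v\notin S$ the component $\pi_v$ is unramified by hypothesis. Consequently there is an open compact $U_0\subseteq GL_n(\A_F^\infty)$, depending only on $S$, $n$ and $A$, with $(\pi^\infty)^{U_0}\ne 0$ for every $\pi$ in the set of interest. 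Since $\pi_\infty$ has fixed infinitesimal character $\chi_\infty$, Harish-Chandra's finiteness theorem for the discrete spectrum at level $U_0$ and infinitesimal character $\chi_\infty$ concludes the proof.

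For a quasi-split classical $G$ the same scheme applies once the conductor bound is transferred through the twisted endoscopic correspondence of \S\ref{sub:transfer-app}--\S\ref{sub:rational-transfer}. Given $\pi$ as above, Proposition \ref{p:twisted-endoscopy} supplies an Arthur-Mok parameter $\psi$ with $\pi_v\in\tilde{AP}(\psi_v)$, and $\eta\psi_v$ is an $n$-dimensional parameter on the $\bG$-side. Extending Proposition \ref{p:coeff-field-G-GL(n)} from tempered $\pi_v$ to an arbitrary member of $\tilde{AP}(\psi_v)$ via the uniform packet-size bound $|\tilde{AP}(\psi_v)|\le m_G$ of Proposition \ref{p:packet-size}, one obtains $[\Q(\eta\psi_v):\Q]=O_{n,G}(A)$. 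Lemma \ref{l:bounded-depth} then bounds $\dep(\eta\psi_v)$, hence the ramification of $\psi_v$, and this ramification confines $\pi_v$ to a finite union of Bernstein components determined only by $n$, $A$ and $v$. A uniform open compact $U_0\subseteq G(\A_{F^+}^\infty)$ with $(\pi^\infty)^{U_0}\ne 0$ is again produced, and one concludes via Harish-Chandra exactly as in the $GL_n$-case.

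The main obstacle is the non-tempered case on the classical-group side: Proposition \ref{p:coeff-field-G-GL(n)} is stated under a tempered hypothesis, and extending it to arbitrary $\pi_v\in\tilde{AP}(\psi_v)$ requires a rationality statement for the $A$-packets $\tilde{\Pi}_{\psi_v}$ and their irreducible subquotients. This should follow from the same twisted trace-formula identities and rationality of transfer factors (Proposition \ref{p:trans-rational}) as in the tempered case, but must be checked against the $A$-packet constructions in \cite{Arthur} and \cite{Mok}. A secondary issue is that depth preservation of the local Langlands correspondence is not uniformly known beyond $GL_n$; the natural workaround is to bound $\cond(\eta\psi_v)$ on the $\bG$-side and exploit that only finitely many Bernstein blocks of $\Irr(G(F_v^+))$ map into any fixed block on the $\bG$-side under $\eta_*$.
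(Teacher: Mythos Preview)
Your $GL_n$ argument is correct and is exactly the paper's proof of Theorem \ref{t:fin-GL_n}.

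For classical groups, however, the paper takes a different route that sidesteps both obstacles you flag. Rather than trying to bound the ramification of $\pi_v$ on the $G$-side at $v\in S$, the paper works \emph{globally} with the Arthur--Mok parameter $\psi=(r,\{(n_i,\Pi_i,\nu_i)\})$. Using only the \emph{unramified} rationality statement (Lemma \ref{l:unram-functoriality}) at places $v\notin S$, together with Proposition \ref{p:twisted-endoscopy}(iii) and strong multiplicity one for $GL_n$, one shows that $\Aut(\C/\Q(\pi))$ permutes the cuspidal constituents $\{\Pi_1,\dots,\Pi_r\}$; hence each $[\Q(\Pi_i):\Q]\le n!\,A$. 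A short infinitesimal-character argument at $\infty$ confines the $\Pi_{i,\infty}$ to a finite set $X$ depending only on $\chi_\infty$. The already-proved $GL_n$ case then shows that the set of possible $\Pi_i$ is finite, so the image of $C(G,S,\chi_\infty,A)\to\tilde\Psi_2(G)$ is finite. Finally the fiber over each $\psi$ is finite because $\pi_v$ is the unique unramified member of $\tilde{AP}(\psi_v)$ for $v\notin S$ and lies in the finite set $\tilde{AP}(\psi_v)$ for $v\in S$ or $v\mid\infty$.

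Note what this buys: the paper never invokes Proposition \ref{p:coeff-field-G-GL(n)} (so temperedness of $\pi_v$ at bad places is irrelevant), never needs depth preservation for the classical-group LLC, and never needs to produce a uniform level subgroup on $G(\A_{F^+}^\infty)$. Your local strategy, by contrast, runs into exactly the two difficulties you identify, and the second one is more serious than you indicate: even granting a rationality statement for $A$-packets, passing from ``$\eta\psi_v$ has bounded conductor on $\bG$'' to ``$\pi_v$ has $U_{0,v}$-fixed vectors on $G$'' requires control over how $A$-packet members sit in Bernstein components of $G(F_v^+)$, which is not supplied by anything in \S\ref{sub:tw-end}. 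The global detour through finitely many $\psi$ is the key idea you are missing.
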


\begin{rem}\label{r:modest}
  To state a more modest conjecture, one may replace the condition $[\Q(\pi):\Q]\le A$ by the condition that $\Q(\pi)$ is contained in a fixed finite extension of $\Q$ in $\C$.
\end{rem}

\begin{rem}
  Since there are up to isomorphism only finitely many $\pi_\infty$ with a fixed infinitesimal character $\chi_\infty$, one may replace the above condition on the infinitesimal character by the condition that $\pi_\infty$ is isomorphic to a fixed irreducible $G(F\otimes_\R \C)$-representation $\pi^0_\infty$.
\end{rem}

\begin{rem}
  The $\pi$ as in the conjecture should be C-algebraic according to the ``if'' part of Conjecture \ref{c:alg-arith}, which may well belong to the realm of transcendental number theory and would be difficult to check. Fortunately we can still verify the conjecture in many cases without a priori knowledge that $\pi$ is C-algebraic. cf. \S\ref{sub:fin-results} below.
\end{rem}


\begin{rem}
  It should be stressed that no bound on ramification is imposed at places in $S$. (Otherwise the conjecture would be uninteresting.) Such a bound is only a consequence of the condition that $\Q(\pi)\subset E$, at least in the setting of \S\ref{sub:fin-results} below. The conjecture is certainly false if the condition $\Q(\pi)\subset E$ is omitted, as it is often well known that there are infinitely many discrete automorphic representations if arbitrary ramification is allowed at one place, cf. \cite{Shi-Plan}.
\end{rem}


\begin{rem}
   On the Galois side (as opposed to the automorphic side) the analogues of the Fontaine-Mazur finiteness conjecture for complex and mod $l$ Galois representations have been proposed and investigated by \cite{ABCZ94} and \cite{Kha00}. The result of Anderson-Blasius-Coleman-Zettler~\cite{ABCZ94} is as follows: Given a number field $K$, there are finitely many complex representations of the Weil group $W_K$ of bounded degree and bounded Artin conductor (their proof uses Jordan theorem that finite subgroups of $\GL(d,\C)$ are virtually abelian). Their result confirms some very special cases of the Fontaine-Mazur finiteness conjecture.

\end{rem}

\subsection{Results on the finiteness conjecture}\label{sub:fin-results}

  The aim of this section is to prove Conjecture \ref{c:finiteness} in some important cases. Namely the conjecture will be established first in the case of general linear groups taking Lemma \ref{sub:bounded-ram} and Harish-Chandra's finiteness theorem (Proposition \ref{p:HC-finite} below) as crucial inputs, and next in the case of classical groups via functorial transfer to general linear groups.

\begin{prop}\label{p:HC-finite}
  For any $\C$-algebra character $\chi_\infty:Z(\fkg)\ra \C$ and for any open compact subgroup $U$ of $G(\A_F^\infty)$, the set of isomorphism classes of discrete automorphic representations $\pi$ of $G(\A_F)$ satisfying the following is finite.
  \bit
  \item $\pi^\infty$ has a nonzero $U$-fixed vector and
  \item $\pi_\infty$ has infinitesimal character $\chi_\infty$.
  \eit
\end{prop}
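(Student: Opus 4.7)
The plan is to reduce the proposition to two well-known finiteness results of Harish-Chandra by decomposing the space $L^2_{\mathrm{disc}}(G(F)\bs G(\A_F))$ into isotypic components.

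Let $G_\infty := G(F\otimes_\Q \R)$ and let $K_\infty$ be a maximal compact subgroup. First, I would invoke Harish-Chandra's classification result that, for the fixed infinitesimal character $\chi_\infty$, the set $\Irr(G_\infty;\chi_\infty)$ of isomorphism classes of irreducible admissible $(\fkg,K_\infty)$-modules with infinitesimal character $\chi_\infty$ is finite. (This follows, for instance, from the subquotient theorem together with the finiteness of Langlands data compatible with $\chi_\infty$.) So it suffices to fix an isomorphism class $V \in \Irr(G_\infty;\chi_\infty)$ and prove that the set of discrete automorphic representations $\pi = \pi_\infty \otimes \pi^\infty$ with $\pi_\infty \cong V$ and $(\pi^\infty)^U \neq 0$ is finite.

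Next, pick any irreducible $K_\infty$-type $\tau$ occurring in $V$, and set $d := \dim V[\tau] \ge 1$. Consider the subspace
\[
A(U,\tau,\chi_\infty) \;\subset\; L^2_{\mathrm{disc}}(G(F)\bs G(\A_F))
\]
consisting of smooth $K_\infty$-finite vectors that are $U$-invariant, of $K_\infty$-type $\tau$, and on which $Z(\fkg)$ acts through $\chi_\infty$. Harish-Chandra's finiteness theorem for automorphic forms (applied to the arithmetic quotient $G(F)\bs G(\A_F)/U$, or more precisely to the connected components of the associated locally symmetric space) guarantees that $A(U,\tau,\chi_\infty)$ is finite-dimensional.

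Now decompose the discrete spectrum as $L^2_{\mathrm{disc}} = \bigoplus_\pi m_{\mathrm{disc}}(\pi) \cdot \pi$ with finite multiplicities. Taking $\tau$-isotypic part, $U$-invariants, and restricting to the $\chi_\infty$-eigenspace, we obtain
\[
A(U,\tau,\chi_\infty) \;=\; \bigoplus_{\pi} m_{\mathrm{disc}}(\pi)\, \cdot \,\pi_\infty[\tau]\, \otimes\, (\pi^\infty)^U,
\]
where the sum is over discrete $\pi$ with $\pi_\infty$ of infinitesimal character $\chi_\infty$. Every $\pi$ as in the previous paragraph (i.e.\ with $\pi_\infty \cong V$, $(\pi^\infty)^U \neq 0$, $m_{\mathrm{disc}}(\pi)\ge 1$) contributes at least $d$ linearly independent vectors to $A(U,\tau,\chi_\infty)$. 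Thus the number of such $\pi$ is bounded by $\dim A(U,\tau,\chi_\infty)/d < \infty$, which combined with the first step completes the proof.

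The main (and only) potential obstacle is making sure the two inputs are available in the generality claimed: namely, Harish-Chandra's finiteness of irreducible Harish-Chandra modules with a fixed infinitesimal character (which works for any real reductive group, so applies to $G_\infty$ viewed through restriction of scalars) and Harish-Chandra's theorem on finite-dimensionality of $A(U,\tau,\chi_\infty)$ for the full discrete spectrum (not merely the cuspidal part). Both are standard; the former is classical, and the latter can be extracted either from Harish-Chandra's original treatment of automorphic forms on $G(\A_F)$ or, alternatively, from the fact that each connected component of $S_{U}(G) = G(F)\bs G(\A_F)/UK_\infty^0$ is an arithmetic locally symmetric space for which the $(\tau,\chi_\infty)$-eigenspace of square-integrable automorphic forms is finite-dimensional.
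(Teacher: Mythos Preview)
Your proof is correct and follows essentially the same route as the paper: both invoke Harish-Chandra's finiteness theorem for automorphic forms (the paper cites it directly as Theorem 1 of \cite{HC68}, extended to reductive groups via \cite{Bor07}), while you spell out the standard deduction from the finite-dimensionality of $A(U,\tau,\chi_\infty)$ together with the finiteness of irreducible $(\fkg,K_\infty)$-modules of fixed infinitesimal character. There is no substantive difference in strategy.
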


\begin{rem}
  We are using a weaker version of Harish-Chandra's theorem in that our attention is restricted to the discrete automorphic spectrum.
\end{rem}


\begin{proof}
  The proposition results immediately from Harish-Chandra's theorem 1 in \cite{HC68}. (The proof in \textit{loc. cit.} for semisimple groups is extended to the case of reductive groups as explained in \cite[Th 7.4]{Bor07}.)
\end{proof}

\begin{thm}\label{t:fin-GL_n}
  Conjecture \ref{c:finiteness} is true in the case of $G=GL_n$ for any $n\in \Z_{\ge1}$ and any ground field $F$.
\end{thm}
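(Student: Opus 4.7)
The plan is to reduce the global finiteness statement to Harish--Chandra's theorem (Proposition~\ref{p:HC-finite}) by showing that the hypothesis $[\Q(\pi):\Q]\le A$ forces a uniform upper bound on the conductor of $\pi_v$ at each finite $v\in S$, producing an open compact subgroup $U\subset GL_n(\A_F^\infty)$ that works for every $\pi$ in the collection.

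First I would pass from the global field of rationality to the local one. Since $\Q(\pi)$ is the compositum of the $\Q(\pi_v)$ over all finite places, the inclusion $\Q(\pi_v)\subset \Q(\pi)$ yields $[\Q(\pi_v):\Q]\le A$ for every finite $v$. Fix a finite place $v\in S$ and consider the twisted local Langlands parameter $\mL_{F_v}(\pi_v)=(V,\rho,N)$. Formula \eqref{e:mL_K-twist} says $\mL_{F_v}(\pi_v^\sigma)=\mL_{F_v}(\pi_v)^\sigma$ for every $\sigma\in\Aut(\C/\Q)$, so the field of rationality of $(V,\rho,N)$ is contained in $\Q(\pi_v)$ and in particular has degree at most $A$ over $\Q$. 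Lemma~\ref{l:bounded-depth} then supplies a constant $d_v=d(n,F_v,A)$ with $\dep(V,\rho,N)\le d_v$, and the identification $\dep(\pi_v)=\dep(V,\rho,N)$ together with Lemma~\ref{l:depth-conductor} yields
\[
\cond(\pi_v)\le n(d_v+1).
\]
Thus the conductor of $\pi_v$ is bounded by a constant depending only on $n$, $F_v$, and $A$.

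Next I would assemble the local bounds into a global level. For each finite $v\in S$ choose an open compact subgroup $U_v\subset GL_n(F_v)$ (for instance, a principal congruence subgroup of level $n(d_v+1)$) such that every irreducible smooth representation of $GL_n(F_v)$ of conductor $\le n(d_v+1)$ admits a nonzero $U_v$-fixed vector; this $U_v$ depends only on $n$, $F_v$, and $A$. For finite $v\notin S$, set $U_v:=U_v^{\hs}$, the fixed hyperspecial subgroup, which fixes a nonzero vector in the unramified representation $\pi_v$. Let $U:=\prod_{v\nmid\infty}U_v$; this is an open compact subgroup of $GL_n(\A_F^\infty)$ depending only on the data $(n,S,A,\{F_v\}_{v\in S})$, and $(\pi^\infty)^U\neq 0$ for every $\pi$ in the set under consideration.

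Finally I would invoke Proposition~\ref{p:HC-finite}: the set of discrete automorphic representations $\pi$ of $GL_n(\A_F)$ with a nonzero $U$-fixed vector in $\pi^\infty$ and with infinitesimal character $\chi_\infty$ at $\infty$ is finite. Since our collection is contained in this finite set, the theorem follows. The only non-routine ingredient is the local depth bound in the first paragraph; once Lemma~\ref{l:bounded-depth} and the preservation of depth under $\mL_{F_v}$ are in hand, the remainder is a direct appeal to the finiteness of automorphic forms with bounded level and fixed infinitesimal character.
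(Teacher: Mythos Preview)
Your proof is correct and follows essentially the same route as the paper: bound the local conductor at each $v\in S$ using the fact that $[\Q(\pi_v):\Q]\le A$, then invoke Harish--Chandra's finiteness (Proposition~\ref{p:HC-finite}). The paper packages your first two paragraphs into a single citation of Corollary~\ref{c:bounded-conductor}; you have simply unwound that corollary into its ingredients (Lemma~\ref{l:bounded-depth}, the depth-preservation under $\mL_{F_v}$, and Lemma~\ref{l:depth-conductor}), which is harmless and arguably cleaner since it makes transparent that no C-algebraicity hypothesis on $\pi_v$ is needed.
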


\begin{proof}
  Suppose that $\pi$ satisfies the condition of Conjecture \ref{c:finiteness}. Corollary \ref{c:bounded-conductor} tells us that $\pi_v$ has bounded conductor (depending only on $A$, $F_v$ and $n$) at every $v\in S$. Therefore the cardinality of such $\pi$ is finite by Proposition \ref{p:HC-finite}.
\end{proof}

\begin{thm}\label{t:fin-classical}
  Conjecture \ref{c:finiteness} is true for quasi-split classical groups as in (i), (ii) and (iii) of \S\ref{sub:transfer-app} (if Hypothesis \ref{hypo:TwTF} is assumed).
\end{thm}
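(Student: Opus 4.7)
The plan is to reduce Conjecture \ref{c:finiteness} for classical groups to the general linear case (Theorem \ref{t:fin-GL_n}) by means of the endoscopic classification of Arthur and Mok, invoked here through Proposition \ref{p:twisted-endoscopy}. Fix $A$, $S$, and $\chi_\infty$ as in the conjecture, and let $\pi$ be any discrete automorphic representation of $G(\A_{F^+})$ satisfying the three hypotheses. Attach to $\pi$ its global parameter $\psi=(r,\{(n_i,\Pi_i,\nu_i)\}_{i=1}^r)\in\tilde{\Psi}_2(G)$, together with the associated isobaric representation $\Pi=\boxplus_{i=1}^r\bigl(\Pi_i\otimes|\cdot|^{(1-\nu_i)/2}\boxplus\cdots\boxplus\Pi_i\otimes|\cdot|^{(\nu_i-1)/2}\bigr)\otimes|\cdot|^{\epsilon/2}$ of $GL_n(\A_F)$. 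The theorem will follow from two assertions: (a) only finitely many $\psi$ arise, and (b) for each $\psi$, only finitely many $\pi$ map to it.

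For (a), the discrete data $r$, $(n_i)$, $(\nu_i)$ with $\sum n_i\nu_i=n$ lie in a finite set, so it suffices to bound the cuspidal constituents $\Pi_i$. I will verify that each $\Pi_i$ satisfies the hypotheses of Theorem \ref{t:fin-GL_n} over $F$. Unramifiedness of $\Pi_i$ outside the set $S'$ of places of $F$ above $S$ follows from Proposition \ref{p:twisted-endoscopy}(iii) combined with the unramifiedness of $\pi^S$. The infinitesimal character of each $\Pi_{i,\infty}$ lies in a finite set depending only on $\chi_\infty$, since the constraint on $\psi_\infty$ imposed by Proposition \ref{p:twisted-endoscopy}(i) via $\eta$ is finite-to-one on infinitesimal characters. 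The crucial point is the bound on $[\Q(\Pi_i):\Q]$: at every $v\notin S$, one has $\Pi_v=\eta_*(\pi_v)$ by Proposition \ref{p:twisted-endoscopy}(iii), so Lemma \ref{l:unram-functoriality}(ii) (together with the addendum covering the even orthogonal case) yields $[\Q(\Pi_v):\Q]\le[\Q(\pi_v):\Q]\le A$. Since $\Pi$ is an isobaric representation of $GL_n(\A_F)$, strong multiplicity one of Jacquet--Shalika implies $\Q(\Pi)=\Q(\Pi^{S'})$, hence $[\Q(\Pi):\Q]\le A$. The group $\Aut(\C/\Q(\Pi))$ permutes the cuspidal summands $\Pi_1,\ldots,\Pi_r$, so $[\Q(\Pi_i):\Q]\le n!\cdot A$. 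Theorem \ref{t:fin-GL_n} applied to $GL_{n_i}$ then bounds the set of $\Pi_i$.

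For (b), fix $\psi$. By Proposition \ref{p:twisted-endoscopy}(i), each local component $\pi_v$ belongs to the finite set $\tilde{AP}(\psi_v)$, and for $v\notin S$ there is at most one unramified member by (iii) of the same proposition. The global multiplicity of $\pi$ in the discrete spectrum attached to $\psi$ is finite by Arthur's multiplicity formula, so only finitely many $\pi$ map to a given $\psi$.

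The main obstacle I anticipate is the careful treatment of the non-tempered situation (where the strict identity $\eta_*(\pi_v)=\Pi_v$ at ramified places is not directly available) and of the even orthogonal case (ii)(b), where $\eta$ involves a half-power twist so that the hypotheses of Lemma \ref{l:unram-functoriality} are not literally satisfied; however, all the essential rationality input is needed only at the unramified places outside $S$, and the excerpt has already recorded that Lemma \ref{l:unram-functoriality} extends to case (ii)(b). Once these compatibilities are in place, the reduction to Theorem \ref{t:fin-GL_n} is clean and Hypothesis \ref{hypo:TwTF} enters only through the appeal to Propositions \ref{p:trans-rational2} and \ref{p:twisted-endoscopy}.
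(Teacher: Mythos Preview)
Your proposal is correct and follows essentially the same route as the paper: attach to each $\pi$ its Arthur--Mok parameter $\psi$, use the C-preservation of $\eta$ at unramified places (Lemma \ref{l:unram-functoriality} and Proposition \ref{p:twisted-endoscopy}(iii)) together with strong multiplicity one to bound $[\Q(\Pi_i):\Q]$ by $n!\,A$, control the infinitesimal characters of the $\Pi_i$ from $\chi_\infty$, invoke Theorem \ref{t:fin-GL_n} for each $GL_{n_i}$, and finish by the finiteness of local packets. The only cosmetic difference is that the paper argues the permutation of $\{\Pi_1,\dots,\Pi_r\}$ by $\Aut(\C/\Q(\pi))$ directly rather than passing through $\Q(\Pi)=\Q(\Pi^{S'})$, and it does not invoke the multiplicity formula in step (b) (the finiteness of $\tilde{AP}(\psi_v)$ plus the uniqueness of the unramified member already suffices).
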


\begin{proof} Write $C(G,S,\chi_\infty,A)$ for the set of Conjecture \ref{c:finiteness} (but we adopt the notation of \S\ref{sub:transfer-app} in this proof, so $F^+$ plays the role of $F$ in the conjecture). Consider the association
 $$\begin{array}{ccc}C(G,S,\chi_\infty,A)&\ra& \tilde{\Psi}_2(G)\\
 \pi&\mapsto& \psi=(r,\{(n_i,\Pi_i,\nu_i)\}_{i=1}^r \end{array}$$ as in Proposition \ref{p:twisted-endoscopy}. Since $\pi$ is unramified outside $S$, the associated $\Pi_i$ enjoys the same property for every $i$. Since $\eta$ is C-preserving by inspection, Lemma \ref{l:unram-functoriality}, Proposition \ref{p:twisted-endoscopy}.(iii), and the strong multiplicity one theorem imply that $\Aut(\C/\Q(\pi))$ permutes the set $\{\Pi_1,...,\Pi_r\}$. Hence there exists some $E\supset \Q(\pi)$ with $[E:\Q(\pi)]\le r!$ such that $\Aut(\C/E)$ fixes all $\Pi_1$, ..., $\Pi_r$. Note that $[E:\Q]\le r! [\Q(\pi):\Q]\le n! A$.

 Let us make some observation about infinitesimal characters. It is standard that $\chi_\infty$ corresponds to a collection of complex $L$-parameters $\varphi_{\chi_\infty,w}:W_{\C}\ra \hat{G}$ where $w$ runs over the infinite places of $F$. For each infinite place $v$ of $F^+$, it follows from $\pi_v\in \tilde{AP}(\psi_v)$ that $\psi_v|_{W_{\ol{F^+_v}}}$ is isomorphic to  $\varphi_{\chi_\infty,w}$ up to $\Out(\hat{G})$-action when $v|w$. Let $X_v$ be the set of all infinitesimal characters of $GL_m(F\otimes_{F^+} F^+_v)$ with $1\le m\le n$ corresponding to a $W_{\ol{F^+_v}}$-subrepresentation of $\eta\psi_v|_{W_{\ol{F^+_v}}}$ at each $v|\infty$. Clearly $X:=\prod_{v|\infty} X_v$ is a finite set.
  Proposition \ref{p:twisted-endoscopy}, cf. the proof of Corollary \ref{c:pi_v-pure}, tells us that the infinitesimal character of $\Pi_{i,v}$ belongs to $X_v$.

  Let $D(GL_{\le n},S,X,n!A)$ be the set of cuspidal automorphic representations $\Pi$ of $GL_m(\A_F)$ with $1\le m\le n$ which are unramified outside $S$, have the infinitesimal character of $\Pi_\infty$ in $X$, and satisfy $[\Q(\Pi):\Q]\le n!A$. According to Theorem \ref{t:fin-GL_n}, $D(GL_{\le n},S,X,n!A)$ is a finite set. We have seen that for any $(r,\{(n_i,\Pi_i,\nu_i)\}_{i=1}^r\in \tilde{\Psi}_2(G)$ coming from $\pi\in C(G,S,\chi_\infty,A)$, every $\Pi_i$ belongs to $D(GL_{\le n},S,X,n!A)$. Hence the image of $C(G,S,\chi_\infty,A)$ in $\tilde{\Psi}_2(G)$ is finite. The the proof boils down to showing that each fiber of the arrow $C(G,S,\chi_\infty,A)\ra\tilde{\Psi}_2(G)$ is finite. This is a consequence of Proposition \ref{p:twisted-endoscopy}.(iii): If $\pi$ is in the preimage of $\psi$ then $\pi_v$ at every finite place $v\notin S$ is determined to the unique unramified member of $\tilde{AP}(\psi_v)$. For each $v\in S$ or $v|\infty$, $\pi_v$ must lie in the finite set $\tilde{AP}(\psi_v)$.

\end{proof}

\section{Growth of fields of rationality in automorphic families}\label{s:growth-coeff}

Let $G$ be a quasi-split classical group as in (i), (ii)' or (iii) of ~\S\ref{sub:transfer-app} from here up to \S\ref{sub:approx}. In particular $F^+$ denotes the base field of $G$. Note that this is different from the convention of \cite{ST11cf}, which will be frequently cited in this section, where $F$ is the base field. (We restrict from (ii) to (ii)' since we need results from \cite{ST11cf} established under the assumption that $G$ has discrete series at the real places of $F^+$.) In \S\ref{sub:unitary-groups} we concisely explain how the earlier part of this section can be adapted to obtain an unconditional result for (non-quasi-split) unitary groups. Throughout this section it is assumed that $G$ is nontrivial so that the absolute rank of $G$ is at least one.

\subsection{Growth of fields of rationality in level aspect}\label{sub:growth-level}

We start by recalling the level-aspect families $\cF_\xi(U_x)$ of automorphic representations of $G(\A_{F^+})$ of weight $\xi$ and level subgroups $U_x$ as in~\cite[\S9.3]{ST11cf} or \cite{Shi-Plan}.

Let $U_x$ be a sequence of level $\Fmn_x$-subgroups of $G(\A_{F^+}^\infty)$. Here $\Fmn_x$ is a sequence of integral ideals of $\cO_{F^+}$ such that $\N(\Fmn_x):=[\cO_{F^+}:\Fmn_x]\to \infty$. When $G$ is a split group over $F$, the sequence $U_x$ is defined as
$$ U_x = \ker(G(\cO_{F^+})\ra G(\cO_{F^+}/\Fmn_x))$$
using the Chevalley group scheme for $G$ over $\Z$. In general we refer to \cite[\S8]{ST11cf} for the precise definition via Moy-Prasad filtrations.
We have a product decomposition $U_x=\prod_{v\nmid \infty} U_{x,v}$ such that each $U_{x,v}$ is a compact open subgroup of $G(F^+_v)$. Set $U_x^v:=\prod_{w\nmid \infty,w\neq v} U_{x,w}$.

A variant of the level sequence would be a tower of bounded depth in the sense of~\cite{DH99}, which corresponds to $U_{x+1}\subset U_{x}$ or $\Fmn_x\mid \Fmn_{x+1}$. But here we prefer to work more generally with the condition that $\N(\Fmn_x)\to \infty$. 


Let $\xi$ be an irreducible algebraic representation of $\Res_{{F^+}/\Q}G$ over $\C$, which can be viewed as a representation of $\prod_{v|\infty} G\times_{F^+} F^+_v$ where $v$ runs over infinite places of ${F^+}$ (see~\S\ref{sub:Clozel-BHR}). In this section we assume, except for Corollary~\ref{c:growth-finite}, that the highest weight for every representation of $G\times_{F^+} F^+_v$ induced by $\xi$ is \emph{regular}. The regularity assumption is made mainly because the equidistribution theorems as in \cite{Shi-Plan} and \cite{ST11cf} rely on it. (This is why we also made the assumption earlier for simplicity, cf. Remark \ref{r:regularity}).

Let $S_0$ be a (possibly empty) finite set of places disjoint from $\Fmn_x$ for all $x$. Let $\hat f_{S_0}$ be a well-behaved function on the unitary dual of $G(F^+_{S_0})$ in the sense of~\cite[\S7]{Sau97} and~\cite[\S9.1]{ST11cf}. (Such functions are very useful in prescribing interesting local conditions. Namely we can impose that $\pi_{S_0}$ belongs to a bounded measurable subset of $G(F^+_{S_0})^{\wedge}$ whose boundary has zero Plancherel measure and whose image in the Bernstein variety ($\Theta(G)$ in \cite[pp.164-165]{Sau97}) has compact closure. In fact there is essentially no loss of generality in assuming that $\hat f_{S_0}$ is a characteristic function of such a subset.) Henceforth we will assume that $\pl_{S_0}(\hat{f}_{S_0})>0$ and that $\hat{f}_{S_0}$ takes nonnegative real values on the unitary dual.

Let $\cF_x:=\cF(U_x,\hat f_{S_0},\xi)$ be the set (or family) of discrete automorphic representations $\pi$ of $G(\A_{F^+})$ such that
\begin{itemize}
  \item $\pi_{S_0}$ belongs to the support of $\hat f_{S_0}$, i.e. $\hat f_{S_0}(\pi_{S_0})\neq 0$,
	\item $\pi^\infty$ has a nonzero $U_x$-fixed vector and
	\item $\pi_\infty$ is $\xi$-cohomological (Definition~\ref{d:cohomological}).
\end{itemize}

To be precise this is a multi-set with a density function $a_{\cF_x}(\pi)$ as in~\cite{RS87}, \cite[\S3.3]{Shi-Plan} and~\cite[\S9.2]{ST11cf}. In the present case we have for all $\pi \in \cF_x$,
\begin{equation}
  a_{\cF_x}(\pi):=
  m_{\text{disc}}(\pi)
  \hat f_{S_0}(\pi_{S_0})
  \dim ( (\pi^{S_0,\infty})^{U^{S_0}_{x}} )
\end{equation}
where $m_{\text{disc}}(\pi)$ is the multiplicity of $\pi$ in the discrete automorphic spectrum. The exact nature of the formula for $a_{\cF_x}(\pi)$ does not play an explicit role in what follows but is needed in \cite{ST11cf}, which we are going to cite from there. Note that it is non-negative and if $\hat f_{S_0}$ is a characteristic function, it takes integral values. The cardinality of a subset of the multi-set $\cF_x$ is considered in the obvious sense. For instance $|\cF_x|$ is defined to be the number of $\pi$'s in $\cF_x$ counted with multiplicities $a_{\cF_x}(\pi)$.

\begin{thm}\label{t:growth-coeff}
	Let $v\not \in S_0$ be a fixed place of ${F^+}$ such that either
	\benu
  \item	$U_{x,v}$ is maximal hyperspecial for all but finitely many $x$, or
  \item $\ord_v(\Fmn_x) \to \infty$ as $x\to \infty$,
	\eenu
	Then $|\{\pi\in \cF_x: [\Q(\pi_v):\Q]\le A\}|/|\cF_x|$ tends to 0 as $x\ra \infty$.
\end{thm}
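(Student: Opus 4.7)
The plan is to combine the local finiteness of Corollary~\ref{c:sparsity} with the Plancherel equidistribution theorem for level-aspect families proved in~\cite{Shi-Plan,ST11cf}. First I will reduce the question to a local one at the place $v$: since the highest weight of $\xi$ is regular, Corollary~\ref{c:pi_v-pure} ensures that $\pi_v$ is tempered for every $\pi\in\cF_x$, and then Corollary~\ref{c:sparsity} produces a finite subset $E=C(\xi,A)\subset\Irr^{\temp}(G(F^+_v))$ containing every $\pi_v$ that arises from a $\xi$-cohomological discrete automorphic representation with $[\Q(\pi_v):\Q]\le A$. It therefore suffices to prove $\cN_x/|\cF_x|\to 0$, where $\cN_x:=|\{\pi\in\cF_x:\pi_v\in E\}|$ (counted with the multiplicities $a_{\cF_x}(\pi)$).

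For case (i), the level $U_{x,v}$ is eventually equal to the fixed hyperspecial subgroup $U^{\hs}_v$, so the only relevant representations in $E$ lie in $E_{\ur}:=E\cap\Irr^{\ur}(G(F^+_v))$, a finite subset of the tempered unramified unitary dual. Via the Satake isomorphism the latter is a quotient of a compact torus on which the Plancherel measure is absolutely continuous, hence $\hat\mu^{\pl}_v(E_{\ur})=0$. Given $\epsilon>0$, I will construct $\phi_v\in\cH_{U^{\hs}_v}(G(F^+_v),\C)$ whose Satake transform $\hat\phi_v$ is nonnegative on the unramified tempered dual, dominates $\mathbf{1}_{E_{\ur}}$, and satisfies $\int\hat\phi_v\,d\hat\mu^{\pl}_v<\epsilon$; this is possible by the absolute continuity of the Plancherel density combined with standard approximation of characteristic functions by Hecke transforms of bounded degree. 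Applying the Plancherel equidistribution of~\cite{ST11cf} to $\hat\phi_v$ then gives
\[
\frac{\cN_x}{|\cF_x|}\;\le\;\frac{1}{|\cF_x|}\sum_{\pi\in\cF_x}a_{\cF_x}(\pi)\hat\phi_v(\pi_v)\;\longrightarrow\;\int\hat\phi_v\,d\hat\mu^{\pl}_v<\epsilon,
\]
and letting $\epsilon\to 0$ concludes this case.

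For case (ii), in which $\ord_v(\Fmn_x)\to\infty$ and so $\vol(U_{x,v})\to 0$, I will use the factorization $U_x^{S_0}=U_{x,v}\times U_x^{S_0,v}$ and $\pi^{S_0,\infty}=\pi_v\otimes\pi^{S_0,\infty,v}$ to write
\[
\cN_x=\sum_{\sigma\in E}\dim\sigma^{U_{x,v}}\cdot M_x(\sigma),
\]
where $M_x(\sigma)$ collects the remaining factors of $a_{\cF_x}(\pi)$ summed over $\pi\in\cF_x$ with $\pi_v\simeq\sigma$. Viewing $M_x(\sigma)$ as counting an auxiliary family in which $v$ is adjoined to $S_0$ under the local condition $\pi_v\simeq\sigma$, the trace-formula input underlying~\cite{ST11cf} will give $M_x(\sigma)=O_\sigma(\vol(U_x^{S_0,v})^{-1})$, while the same input gives $|\cF_x|\asymp\vol(U_{x,v})^{-1}\vol(U_x^{S_0,v})^{-1}$. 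Hence $\cN_x/|\cF_x|=O\bigl(\vol(U_{x,v})\sum_{\sigma\in E}\dim\sigma^{U_{x,v}}\bigr)$. The key local input is that $\vol(U_{x,v})\dim\sigma^{U_{x,v}}\to 0$ for every fixed irreducible admissible $\sigma$, which follows from admissibility together with the fact that the Gelfand--Kirillov (equivalently, wave-front) dimension of any irreducible smooth representation of $G(F^+_v)$ is strictly less than $\dim G$. Since $E$ is finite, summing over $\sigma\in E$ concludes this case.

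The hardest step will be making the asymptotic comparison of $M_x(\sigma)$ and $|\cF_x|$ in case (ii) completely rigorous through the trace-formula/equidistribution technology of~\cite{ST11cf}. Unlike in case (i), the Plancherel mass of $E$ may be \emph{nonzero} because $E$ can contain discrete-series atoms, so the argument must instead exploit the growth of $\vol(U_{x,v})^{-1}$ in the denominator. The quantitative version of Plancherel equidistribution in~\cite{ST11cf}, together with a uniform approximation of $\mathbf{1}_{E_{\ur}}$ by Hecke transforms of controlled degree, will further be needed to extract the logarithmic rate of decay claimed in Theorem~\ref{t:intro-growth}.
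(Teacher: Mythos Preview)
Your treatment of case~(i) is essentially the paper's: reduce to a finite set of unramified tempered points via Corollaries~\ref{c:pi_v-pure} and~\ref{c:sparsity}, then invoke Plancherel equidistribution and the atomlessness of $\hat\mu^{\pl}_v$ on the unramified tempered dual. The paper applies \cite[Cor~9.22]{ST11cf} directly to the finite (hence $\hat\mu^{\pl}_v$-regular) set $\cZ^{\ur}$, while you spell out the equivalent majorization by Hecke transforms; these are the same argument.

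For case~(ii), however, you are working harder than necessary, and the ``hardest step'' you flag can be bypassed entirely. The paper does \emph{not} decompose over $\sigma\in E$, does not need the bound $M_x(\sigma)=O_\sigma(\vol(U_x^{S_0,v})^{-1})$, and does not invoke Gelfand--Kirillov dimension. Instead it reuses the identity
\[
|\{\pi\in\cF_x:\pi_v\in\cZ\}|=\frac{\tau'(G)\dim\xi}{\vol(U_x^v)}\,\hat\mu_x(\cZ)
\]
(valid for $x\gg 1$ so that every $\sigma\in\cZ$ has a $U_{x,v}$-fixed vector) and observes that the same equidistribution theorem \cite[Cor~9.22]{ST11cf} still gives $\hat\mu_x(\cZ)\to\hat\mu^{\pl}_v(\cZ)$, a \emph{finite} number even though it may be positive because of discrete series in $\cZ$. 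Combining with $|\cF_x|\sim \tau'(G)\dim\xi/\vol(U_x)$ (from \cite[Prop~5.2]{Shi-Plan}) yields
\[
\frac{|\{\pi\in\cF_x:\pi_v\in\cZ\}|}{|\cF_x|}\;\sim\;\vol(U_{x,v})\cdot\hat\mu_x(\cZ)\;\longrightarrow\;0,
\]
since $\vol(U_{x,v})\to 0$ and $\hat\mu_x(\cZ)$ stays bounded. Thus the only input beyond case~(i) is the trivial fact $\vol(U_{x,v})\to 0$.

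Your route is not wrong in spirit, but the step $M_x(\sigma)=O_\sigma(\vol(U_x^{S_0,v})^{-1})$ is not something \cite{ST11cf} hands you directly: it asks you to run the trace formula with $v$ absorbed into $S_0$ under the pointwise condition $\pi_v\simeq\sigma$, and in case~(ii) the level \emph{away} from $v$ need not tend to infinity at all, so you would have to split into subcases. Likewise $\vol(U_{x,v})\dim\sigma^{U_{x,v}}\to 0$ is true (via Howe's local character expansion and the fact that nontrivial nilpotent orbits have dimension $<\dim G$) but is an extra ingredient the paper avoids. I would recommend replacing your case~(ii) argument with the two-line computation above; your final paragraph about quantitative equidistribution and logarithmic savings pertains to Theorem~\ref{p:level-prime}, not to the present theorem.
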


\begin{rem}
We recall that the case $G=\GL(2)_\Q$ under assumption (i) is due to Serre~\cite{Serre:pl}, see Theorem~\ref{t:intro-Serre}.
\end{rem}

\begin{proof}
  We have seen in Corollary~\ref{c:pi_v-pure} that $\pi_v$ is tempered for all $\pi\in \cF_x$. We are in position to apply Corollary \ref{c:sparsity} which implies that the set $\cZ^{\mathrm{ur}}$ (resp. $\cZ$) of all $\pi_v\in G(F^+_v)^{\wedge,\mathrm{ur}}$ (resp. $\pi_v\in G(F^+_v)^{\wedge}$) for $\pi \in \cF_x$ with $[\Q(\pi_v):\Q]\le A$ is finite. For part (i), concerning $\cZ^{\mathrm{ur}}$, we could use alternatively the easier fact that there are only finitely many associated Weil numbers (Lemma \ref{l:finite-unr}). Since $\cZ^{\mathrm{ur}}$ and $\cZ$ are finite, they are certainly a $\pl_v$-regular relatively compact subset of $G(F^+_v)^{\wedge}$.

  We will follow the notation of \cite{ST11cf} and all measures are chosen as in that paper. We have
  \begin{equation}\label{pf:ineq-cFk}
	\begin{aligned}	
		|\{\pi\in \cF_x: [\Q(\pi_v):\Q]\le A\}|
		&= |\{\pi\in \cF_x: \pi_v \in \cZ^{\mathrm{ur}}\}| \\
		& = \frac{\tau'(G)\dim \xi}{\vol(U_x^v)} \hat \mu_{x}(\cZ^{\mathrm{ur}}),
	\end{aligned}
	\end{equation}
where $\tau'(G)$ is the volume of $G({F^+})\bs G(\A_{F^+})/A_{G,\infty}$, and $\hat \mu_{x}(\cZ^{\mathrm{ur}})$ is the automorphic counting measure for $\cF_x$. (See \S6.6 and (9.5) with $S_0=\{v\}$ and $S_1=\emptyset$ in \cite{ST11cf}.) The same as \eqref{pf:ineq-cFk} holds true for $x\gg1$ with $\cZ$ in place of $\cZ^{\mathrm{ur}}$. (We want $x\gg1$ so that every member of $\cZ$ has level at most $\fkn_x$ at $v$.) A key ingredient for both (i) and (ii) is the automorphic Plancherel equidistribution theorem~\cite[Cor~9.22]{ST11cf} (also see \cite[Thm~4.4]{Shi-Plan}), stating that
$ \lim_{x\to \infty} \hat \mu_{x}(\cZ^{\mathrm{ur}}) = \pl_v(\cZ^{\mathrm{ur}})$ and the same for $\cZ$ in place of $\cZ^{\mathrm{ur}}$.

(i) According to \cite[Cor~9.25]{ST11cf}, $\lim_{x\ra \infty}\frac{\tau'(G)\dim \xi }{\vol(U_x^v)|\cF_x|}=1$. Hence the limit in the theorem is nothing but $\pl_v(\cZ^{\mathrm{ur}})$, which is zero. (Note that $\cZ^{\mathrm{ur}}$ is a finite subset of the unramified unitary dual which is a torus of positive dimension and the restriction of the Plancherel measure is absolutely continuous with respect to the Lebesgue measure.)

(ii) By \cite[Prop 5.2]{Shi-Plan} and its extension to the setting of \cite{ST11cf} by the same argument, we have
\beq\label{pf:eq-cFk} \lim_{x\ra \infty}\frac{\tau'(G)\dim \xi }{\vol(U_x)|\cF_x|}=1.\eeq
(The corollary 9.25 of \cite{ST11cf} cannot be applied as it assumes that $\fkn_x$ is prime to $v$. Note that \eqref{pf:eq-cFk} is consistent with the formula in the proof of (i), in which case $\vol(U_{x,v})=1$.)
Therefore $$
 \lim_{x\ra \infty}\frac{\tau'(G)\dim \xi}{\vol(U_x^v)|\cF_x|} \hat \mu_{x}(\cZ)
  = \lim_{x\ra \infty} \vol(U_{x,v}) \hat \mu_{x}(\cZ)=0$$
since we have that $ \vol(U_{x,v}) \to 0$ from that $\ord_v(\Fmn_x) \to \infty$. (Note that $\hat \mu_{x}(\cZ)$ tends to $\pl_v(\cZ)$, which may not be zero due to discrete series in $\cZ$ but has bounded value.) The proof is concluded.
\end{proof}

\begin{rem}\label{r:unr}
	It would be interesting to improve on the condition that $U_{x,v}$ be maximal hyperspecial. This is a question of Serre~\cite[\S6.1]{Serre:pl} in the $\GL(2)$ case. The main obstruction is the presence of square-integrable representations $\pi_v\in \cZ$ with $\pi \in \cF_x$. The proof doesn't extend to these representations because $\pl(\pi_v)>0$.	
\end{rem}

  For convenience we introduce the multi-set
$$\cF_x^{\le A}:=\{\pi\in \cF_x:[\Q(\pi):\Q]\le A\}.$$
It is to be understood that if $\pi\in \cF_x^{\le A}$ then $\pi$ appears with the same multiplicity $a_{\cF_x}(\pi)$ in $\cF_x^{\le A}$. This way we make sense of $|\cF_x^{\le A}|$.

\begin{cor}\label{c:growth-coeff}
	Under the same assumptions, $|\cF_x^{\le A}|/|\cF_x|$ tends to 0 as $x\ra \infty$.
\end{cor}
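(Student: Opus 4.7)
The plan is to observe that the corollary is a direct consequence of Theorem~\ref{t:growth-coeff} together with the elementary fact, recorded just after the definition of $\Q(\pi)$ in \S\ref{sub:C-alg}, that $\Q(\pi)$ is the composite subfield of $\C$ generated by all the local fields $\Q(\pi_v)$ for $v\nmid\infty$. In particular for every finite place $v$ one has the inclusion $\Q(\pi_v)\subset\Q(\pi)$, hence $[\Q(\pi_v):\Q]\le[\Q(\pi):\Q]$.

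Fix the place $v$ of $F^+$ supplied by the hypotheses (i) or (ii) of Theorem~\ref{t:growth-coeff}. The preceding inequality gives, multi-set-theoretically,
\[
 \cF_x^{\le A}\ \subset\ \{\pi\in\cF_x:[\Q(\pi_v):\Q]\le A\},
\]
where both sides are understood to inherit the same density function $a_{\cF_x}(\pi)$. Taking cardinalities with multiplicity yields
\[
 \frac{|\cF_x^{\le A}|}{|\cF_x|}\ \le\ \frac{|\{\pi\in\cF_x:[\Q(\pi_v):\Q]\le A\}|}{|\cF_x|},
\]
and the right-hand side tends to $0$ as $x\to\infty$ by Theorem~\ref{t:growth-coeff}.

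There is no real obstacle here; the only point that requires any care is to make sure the comparison of the two multi-sets is genuinely a comparison with the same weights $a_{\cF_x}(\pi)$, which it obviously is since the only change is to restrict the underlying set. Thus the corollary follows at once.
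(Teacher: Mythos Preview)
Your proof is correct and follows exactly the paper's own argument: the paper's proof consists of the single observation that $[\Q(\pi_v):\Q]\le[\Q(\pi):\Q]$, which is precisely the key inequality you use.
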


\begin{proof}
  Obviously $[\Q(\pi_v):\Q]\le [\Q(\pi):\Q]$.
\end{proof}

\subsection{Quantitative estimates}
  One may wonder about the precise size of $\cF_x^{\le A}$ relative to that of $\cF_x$. For instance, the following generalizes another Serre's question for families of modular forms (Remarques 2 below Th\'eor\`eme 6 of \cite{Serre:pl}).

\begin{qu}\label{q:coeff1}
  Does there exist $\delta<1$ such that $|\cF_x^{\le A}|=O(|\cF_x|^{\delta})$?
\end{qu}

As a weaker variant (cf. Remark \ref{r:modest}), for a fixed finite extension $E$ of $\Q$ in $\C$ one may ask whether there exist $\delta<1$ such that $|\{\pi\in \cF_x:  \Q(\pi)\subset E \}|= O(|\cF_x|^{\delta})$.
We establish the following estimate towards a positive answer to Question~\ref{q:coeff1}. 	Define $S_{\unr}$ to be the set of finite places $v$ of ${F^+}$ such that $U_{x,v}$ is hyperspecial at $v$ for all large enough $k$. Let $R_{\unr}$ be the sum of the $F^+_v$-ranks of $G(F^+_v)$ for all $v \in S_{\unr}$ (it could be infinity).

\begin{thm}\label{p:level-prime}
Suppose that $S_{\unr}$ is not empty (but it could be an infinite set).  Then, as $x\to \infty$,
	\begin{equation}\label{eq:p:level-prime}
	|\cF_x^{\le A}| \ll_R  \frac{|\cF_x|}{(\log |\cF_x|)^{R}},
\end{equation}
for all $R\le R_{\unr}$.
\end{thm}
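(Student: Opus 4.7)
The plan is to combine Corollary~\ref{c:sparsity} with the quantitative automorphic Plancherel equidistribution theorem of \cite{ST11cf} and a Fej\'er-type approximation of the (finitely many) arithmetic points in the unramified tempered dual; the logarithmic saving will come from optimizing a Hecke-degree parameter $\kappa$ against $|\cF_x|$.

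Given $R\le R_{\unr}$, first I would pick a finite subset $S=\{v_1,\dots,v_s\}\subset S_{\unr}$ with $r_S:=\sum_{v\in S}\rank G_{F^+_v}\ge R$, and discard the finitely many $x$ for which some $U_{x,v}$ at $v\in S$ fails to be hyperspecial. Since $\xi$ has regular highest weight, Corollary~\ref{c:pi_v-pure} guarantees that every $\pi\in\cF_x$ is tempered at all places, so by Corollary~\ref{c:sparsity} the set
\[
\cZ_v:=\bigl\{\pi_v:\pi\in\cF_x^{\le A}\bigr\}\subset \Irr^{\ur}(G(F^+_v))\cap \Irr^{\temp}(G(F^+_v))
\]
is finite for each $v\in S$, with cardinality depending only on $A$, $\xi$, $v$. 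Set $\cZ_S:=\prod_{v\in S}\cZ_v$, which via the Satake/Harish-Chandra parametrization is a finite subset of a Weyl-quotient of a compact torus of dimension $r_S$.

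Next, for each integer $\kappa\ge 1$ I would construct, by the standard Fej\'er-kernel method on the torus applied at each point of $\cZ_S$ and symmetrized under the Weyl group, a non-negative unramified Hecke function $\phi^{(\kappa)}_S\in\cH(G(F^+_S),\C)$ whose Satake support has ``degree'' at most $\kappa$ and whose Satake transform $\hat\phi^{(\kappa)}_S$ satisfies
\[
\hat\phi^{(\kappa)}_S(\pi_S)\ge 1\quad\forall\pi_S\in\cZ_S,\qquad \pl_S(\hat\phi^{(\kappa)}_S)\ll_{|\cZ_S|}\kappa^{-r_S}.
\]
Since $\hat\phi^{(\kappa)}_S\ge 0$ and majorizes the indicator function of $\cZ_S$, the counting identity underlying \cite[\S 9]{ST11cf} (applied with unramified test functions at places of $S$) yields
\[
|\cF_x^{\le A}|\;\le\;\sum_{\pi\in\cF_x} a_{\cF_x}(\pi)\,\hat\phi^{(\kappa)}_S(\pi_S)\;=\;\frac{\tau'(G)\dim\xi}{\vol(U_x^S)}\,\hat\mu_x(\hat\phi^{(\kappa)}_S).
\]

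Finally, I would apply the quantitative version of the Plancherel equidistribution from \cite{ST11cf} to write $\hat\mu_x(\hat\phi^{(\kappa)}_S)=\pl_S(\hat\phi^{(\kappa)}_S)+E(\kappa,x)$, where the error $E(\kappa,x)$ originates from the non-identity geometric terms of the trace formula, grows at most exponentially in $\kappa$, and is $o(|\cF_x|/\vol(U_x^S))$ as $x\to\infty$. Combined with $\tau'(G)\dim\xi/(\vol(U_x^S)|\cF_x|)\to 1$ (used already in the proof of Theorem~\ref{t:growth-coeff}(i)), this produces a bound of the shape
\[
|\cF_x^{\le A}|\ll |\cF_x|\,\kappa^{-r_S}+q_S^{C\kappa}
\]
for constants $C=C(G,S)$ and $q_S=q_S(G,S)$. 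Setting $\kappa=c\log|\cF_x|$ with $c>0$ sufficiently small absorbs the second summand into the first and delivers \eqref{eq:p:level-prime}. The main obstacle will be matching the Fej\'er construction to the quantitative equidistribution: one must verify that the error term in \cite{ST11cf} admits the claimed exponential-in-$\kappa$ control with the correct $x$-dependence, and that the Weyl-symmetric Fej\'er kernel attains the sharp Plancherel mass $\kappa^{-r_S}$ uniformly over $\cZ_S$. Once these two ingredients are aligned, the optimization in $\kappa$ and the reduction from $R_{\unr}$ to a finite $S$ are immediate.
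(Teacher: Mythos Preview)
Your proposal is correct and follows essentially the same route as the paper: fix a finite $S\subset S_{\unr}$ with rank sum $\ge R$, invoke Corollary~\ref{c:sparsity} (via Corollary~\ref{c:pi_v-pure}) to get a finite arithmetic locus $\cZ_S$ in the unramified tempered dual, majorize $\mathds{1}_{\cZ_S}$ by a nonnegative Hecke function of bounded degree with Plancherel mass $\ll\kappa^{-r_S}$, feed this into the quantitative Plancherel theorem \cite[Thm~9.16]{ST11cf}, and optimize $\kappa\sim\log|\cF_x|$. The paper uses the Beurling--Selberg--Vaaler majorant (Lemma~\ref{l:app-dual}) rather than a Fej\'er kernel, but either construction yields the required $\kappa^{-r_S}$ bound; your displayed error shape $|\cF_x|\kappa^{-r_S}+q_S^{C\kappa}$ is slightly imprecise (the trace-formula error carries a negative power of $|\cF_x|$ as well), but the optimization is unaffected.
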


\begin{ex}
  In a typical example if $U_x$ is a principal congruence subgroup of prime level $\fkn_x$ then the set $S_{\unr}$ contains all finite places and $R_{\unr}$ is infinite: the statement holds for all $R>0$ which is an indication for an affirmative answer to Question~\ref{q:coeff1} in this case.	 If $R_{\unr}\ge 1$ is finite then it is best to choose $R=R_{\unr}$. Note that the possibility $R_{\unr}=0$ is excluded from the proposition because $S_{\unr}$  is not empty; this is the case discussed in Remark~\ref{r:unr}.
\end{ex}
	
\begin{proof}
	We fix a finite set of unramified places $S_1\subset S_{\unr}$ disjoint from $S_0$. Let $\cR$ be a rectangle in $G(F^+_{S_1})^{\wedge,\unr,\temp}$. Lemma~\ref{l:app-dual} yields the existence of $\phi_{S_1} \in \cH^{\unr}(G(F^+_{S_1}))^{\le c\kappa}$ which is such that $\hat \phi_\kappa$ approximates the characteristic function of $\cR$. (The definition of $\cH^{\unr}(G(F^+_{S_1}))^{\le c\kappa}$ is recalled in \S\ref{sub:approx} below. The constant $c>0$ depends on a choice fixed once and for all for $G$.)

Applying the automorphic Plancherel theorem with error bound~\cite[Thm~9.16]{ST11cf} to the family $\cF_x$, we deduce that for all integer $\kappa \ge 1$,
\begin{equation*}
  \hat \mu_{\cF_x,S_1}(\cR) = \pl_{S_1}(\cR)  + O( q_{S_1}^{A_{l}+B_{l}\kappa} |\cF_x|^{-C_{l}} ) + O(\kappa^{-R}).
\end{equation*}
where $A_{l},B_{l},C_{l}>0$ are absolute constants and $R\ge 0$ is the sum of the ranks of $G(F^+_v)$ for $v\in S_1$. Note that by choosing $S_1\subset S_{\unr}$ arbitrary large, the integer $R$ is arbitrary large subject to the condition that $R\le R_{\unr}$.

The optimal choice is $\kappa = O(\log |\cF_x|)$, which yields
\begin{equation}\label{small-balls}
  \hat \mu_{\cF_x,S_1}(\cR) = \pl_{S_1}(\cR) + O( (\log |\cF_x|)^{-R} ).
\end{equation}
Note that the constant in the remainder term does not depend on $\cR$. In particular $\cR$ can be chosen to be a single element in which case $\pl_{S_1}(\cR)=0$ since the Plancherel measure is atomless. We deduce that the following estimate holds for any finite set $\cZ$ in $G(F^+_{S_1})^{\wedge,\unr,\temp}$,
\[
\hat \mu_{\cF_x,S_1}(\cZ) \ll \frac{|\cZ|}{(\log |\cF_x|)^R}.
\]

We apply this to the set
\[
\cZ:=\{\pi_{S_1}:\ \pi\in \cF_x, \ [\Q(\pi):\Q]\le A\},
\]
since it follows as before from Corollary~\ref{c:sparsity} (or alternatively from Lemma \ref{l:finite-unr} and the first assertion of Corollary \ref{c:pi_v-pure}) that $\cZ$ is a finite set. Thus we can conclude the proof of the proposition since
\[
|\{\pi\in \cF_x: [\Q(\pi):\Q]\le A\}| \ll |\cF_x| \hat \mu_{\cF_x,S_1}(\cZ).
\]
\end{proof}

We now consider the case where the automorphic family admits ramification at only finitely many fixed places $S$. Theorem~\ref{p:level-prime} applies for any $R>0$ since $R_{\unr}$ is infinite, but we can prove a stronger bound. Indeed Theorem \ref{t:fin-GL_n} may be rephrased as a strong answer to Question \ref{q:coeff1}. For this it is \emph{unnecessary} to assume that the highest weight of $\xi$ is regular (thus $\pi_\infty$ may not be a discrete series). In fact we will prescribe a condition at infinity which is weaker than the $\xi$-cohomological condition. For a $\C$-algebra morphism $Z(\fkg)\ra \C$ (cf. \S\ref{sub:fin-results}) and an open compact subgroup $U_x\subset G(\A_{F^+}^\infty)$, define $\cF(U_x,\chi_\infty)$ to be the set of discrete automorphic representations $\pi$ of $G(\A_{F^+})$ such that (for the corollary it is unimportant to think of $\cF(U_x,\chi_\infty)$ as a multi-set, i.e. the multiplicity of each member may be taken to be one)
\bit
\item $(\pi^\infty)^{U_x}\neq 0$,
\item $\pi_\infty$ has infinitesimal character $\chi_\infty$.
\eit

\begin{cor}\label{c:growth-finite} Fix $A\in \Z_{\ge1}$. Let $G$ be either
\bit
\item $G=GL_n$ over an arbitrary number field $F$ or
\item $G$ is a quasi-split classical group of \eqref{sub:transfer-app} over a totally real field $F$.\footnote{For uniformity of notation we write $F$ rather than $F^+$ here.}
\eit
Suppose that there exists a finite set $S$ such that for every $x$, the level subgroup $U_x$ has the form $U_x=U_{S,x}U^{S,\infty}$, where $U^{S,\infty}$ is a product of hyperspecial subgroups of $G(F_v)$ for all finite $v\notin S$. Then there is a constant $C=C(A,G,\chi_\infty,S)$ such that for all $x$
$$|\{\pi\in \cF(U_x,\chi_\infty): [\Q(\pi):\Q]\le A\}| \le C.$$
\end{cor}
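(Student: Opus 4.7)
The proof will be an immediate consequence of Theorems \ref{t:fin-GL_n} and \ref{t:fin-classical} (which establish Conjecture \ref{c:finiteness} in the two cases under consideration), once we observe that the set we wish to bound is actually contained in a fixed finite set independent of $x$.

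First, without loss of generality, I would enlarge $S$ to contain $S_\infty$ and $S_{\ram}$; this only weakens the hypothesis on $U_x$. The key observation is the following: since $U^{S,\infty}=\prod_{v\notin S, v\nmid\infty}U^{\hs}_v$ is a product of the fixed hyperspecial subgroups of \S\ref{sub:notation}, any $\pi\in\cF(U_x,\chi_\infty)$ satisfies $(\pi_v)^{U^{\hs}_v}\neq 0$ at every finite place $v\notin S$, so $\pi_v$ is unramified there. Together with the infinitesimal character condition at infinity and the bound $[\Q(\pi):\Q]\le A$, this yields the inclusion
\[
\{\pi\in\cF(U_x,\chi_\infty):[\Q(\pi):\Q]\le A\}\ \subseteq\ D(G,S,\chi_\infty,A),
\]
where $D(G,S,\chi_\infty,A)$ denotes the (isomorphism classes of) discrete automorphic representations $\pi$ of $G(\A_F)$ such that $\pi^S$ is unramified, $\pi_\infty$ has infinitesimal character $\chi_\infty$, and $[\Q(\pi):\Q]\le A$.

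By Theorem \ref{t:fin-GL_n} in the general linear case and Theorem \ref{t:fin-classical} in the classical group case, the set $D(G,S,\chi_\infty,A)$ is finite. Crucially, this set depends only on the data $G$, $S$, $\chi_\infty$, and $A$ and is visibly independent of $x$, so setting
\[
C=C(A,G,\chi_\infty,S):=|D(G,S,\chi_\infty,A)|<\infty
\]
yields the desired uniform bound. There is no real obstacle here: all the analytic and endoscopic work has been done in \S\ref{sub:fin-results}, and the corollary merely rephrases those finiteness theorems in the language of level-aspect families. The only point that requires attention is the trivial verification that the level condition $(\pi^\infty)^{U_x}\neq 0$ with $U_x\supset U^{S,\infty}$ implies unramifiedness of $\pi^S$, which is immediate from the convention on hyperspecial subgroups.
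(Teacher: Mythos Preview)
Your argument is correct and is exactly the paper's approach: the paper's proof reads ``Immediate from Theorems \ref{t:fin-GL_n} and \ref{t:fin-classical}.'' You have simply spelled out the (trivial) containment $\{\pi\in\cF(U_x,\chi_\infty):[\Q(\pi):\Q]\le A\}\subseteq D(G,S,\chi_\infty,A)$ that makes those theorems apply.
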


\begin{proof}
  Immediate from Theorems \ref{t:fin-GL_n} and \ref{t:fin-classical}.
\end{proof}

For instance when $G=GL_2$ over $\Q$, the theorem applies to C-algebraic automorphic representations arising from Maass forms, namely those with Laplace eigenvalue $1/4$. It is worth comparing our results in this subsection with previous work in the case of elliptic curves:

\begin{rem}\label{e:elliptic-curves}
  We briefly discuss the most basic case of $G=\GL(2)_{\Q}$, weight $2$ and $\Q(\pi)=\Q$ (that is $A=1$). See also the remarks following~\cite[Th\'eor\`eme~7]{Serre:pl}. Modular forms of weight two with integer coefficients are attached to elliptic curves and thus more precise results than~\eqref{eq:p:level-prime} are available.

For an integer $N\in \Z_{\ge 1}$, let $Ell(N)$ be the number of isogeny classes of elliptic curves over $\Q$ of conductor $N$. The following is currently known~\cite[\S3.1]{DK00}:
\begin{equation*}
  X^{\frac56} \ll \sum_{1\le N\le X} Ell(N) \ll_\epsilon X^{1+\epsilon}.
\end{equation*}
On the other hand, by counting $S$-integral points on curves of given genus, it is shown by Helfgott--Venkatesh~\cite[\S4.2]{HV06} that $Ell(N)=O(N^\delta)$ for some $\delta<\frac12$, improving earlier bounds by Evertse, Silverman and Brumer. The numerical value is improved further in~\cite{EV07} into $\delta=0.169\ldots$.
\end{rem}

\subsection{Order of growth}\label{sub:growth-gl2}
It follows from Theorem~\ref{t:growth-coeff} that there are automorphic representations $\pi_x\in \cF_x$ such that $[\Q(\pi_x):\Q]\ra\infty$ as $x\ra \infty$. It is interesting to study the order of growth of $[\Q(\pi_x):\Q]$ as $x\to \infty$. We establish the following which generalizes a result of Royer~\cite[Thm~1.1]{Royer:dimension-rang} in the case of $G=\GL(2)_\Q$. By the degree of a Weil number $\alpha$ (or any algebraic number) we will mean $[\Q(\alpha):\Q]$.

\begin{prop}\label{p:order-growth}
  Let assumptions be as in (i) in Theorem~\ref{t:growth-coeff}. Then as $x\to \infty$ there exists an automorphic representation $\pi_x\in \cF_x$ such that
  \begin{equation}\label{e:p:og}
  [\Q(\pi_x):\Q] \gg (\log \log \N(\Fmn_x) )^{\frac12}.
\end{equation}
\end{prop}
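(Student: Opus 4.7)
The plan is to follow the $\GL(2)/\Q$ strategy of \cite{Royer:dimension-rang}: balance an upper bound on the number of possible $v$-components $\sigma_v$ of $\pi\in\cF_x^{\le A}$ against the quantitative Plancherel equidistribution used in the proof of Proposition~\ref{p:level-prime}, then invert to obtain $A$ as a function of $\N(\Fmn_x)$. The square-root loss in~\eqref{e:p:og} is the shape of inverting an exponential-in-$A^2$ count against a single logarithmic gain.

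First, fix the place $v$ supplied by hypothesis (i), at which $U_{x,v}$ is hyperspecial for $x\gg 1$, and set
\[
\cZ_A:=\{\sigma_v\in G(F^+_v)^{\wedge,\unr,\temp}:\sigma_v\text{ is the }v\text{-component of some }\pi\in\cF_x^{\le A}\}.
\]
For every $\pi\in\cF_x$, Corollary~\ref{c:pi_v-pure} ensures that $\pi_v$ is tempered, unramified for $x$ large, and that $\eta\varphi_{\pi_v}$ is strictly pure of weight $-\epsilon$. Proposition~\ref{p:coeff-field-G-GL(n)}(ii) bounds $[\Q(\eta_*\pi_v):\Q]\le A$, so the Frobenius eigenvalues of $\eta\varphi_{\pi_v}$ are algebraic of degree $\le nA$ over $\Q$ and, after the power-of-$q_v$ shift of Proposition~\ref{p:Galois-reps}, algebraic integers of absolute value $\ll q_v^{O(1)}$. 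Counting integral monic polynomials of degree $\le nA$ with coefficients bounded via these Weil bounds (as in the proof of Lemma~\ref{l:finite-unr}), combined with the fibers of $\eta_*$ of size $\le m_G$ (Proposition~\ref{p:packet-size}), yields
\[
|\cZ_A|\le q_v^{cA^2}
\]
for a constant $c=c(G,\xi)>0$ independent of $x$ and $A$.

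Next I would apply the quantitative automorphic Plancherel theorem in the form used in the proof of Proposition~\ref{p:level-prime}, with $S_1=\{v\}$ and $R=1\le R_{\unr}$. Since the Plancherel measure vanishes on singletons of the unramified tempered dual, the approximation of Lemma~\ref{l:app-dual} with $\kappa=O(\log|\cF_x|)$ gives
\[
\hat\mu_{\cF_x,v}(\{\sigma_v\})\ll(\log|\cF_x|)^{-1}
\]
uniformly in $\sigma_v\in G(F^+_v)^{\wedge,\unr,\temp}$. Summing this over $\cZ_A$ and combining with the identity $|\cF_x^{\le A}|\asymp|\cF_x|\hat\mu_{\cF_x,v}(\cZ_A)$ derived as in the proof of Theorem~\ref{t:growth-coeff} via \cite[Cor~9.25]{ST11cf}, together with $\log|\cF_x|\asymp\log\N(\Fmn_x)$ from \cite[Prop~5.2]{Shi-Plan}, we get
\[
\frac{|\cF_x^{\le A}|}{|\cF_x|}\ll\frac{q_v^{cA^2}}{\log\N(\Fmn_x)}.
\]

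Finally, choose
\[
A(x):=\left\lfloor(2c\log q_v)^{-1/2}(\log\log\N(\Fmn_x))^{1/2}\right\rfloor.
\]
Then $q_v^{cA(x)^2}=o(\log\N(\Fmn_x))$, so the right-hand side tends to $0$; in particular it is strictly less than $1$ for $x\gg 1$. Hence $\cF_x^{\le A(x)}\subsetneq\cF_x$, and any $\pi_x\in\cF_x\setminus\cF_x^{\le A(x)}$ satisfies $[\Q(\pi_x):\Q]>A(x)\gg(\log\log\N(\Fmn_x))^{1/2}$, which is~\eqref{e:p:og}. The main technical point is uniformity: the counting constant $c$ must not depend on $\pi_v$ (this is handled by Corollary~\ref{c:pi_v-pure}, since the shift $s(\xi)$ depends only on $\xi$), and the equidistribution error must be uniform across singletons, which is precisely what the proof of Proposition~\ref{p:level-prime} already provides.
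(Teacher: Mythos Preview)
Your proof is correct and follows essentially the same approach as the paper. Both rely on the two key ingredients: the bound $|\cZ_A|\le q_v^{O(A^2)}$ from counting Weil integers of bounded degree (the paper cites the argument of Lemma~\ref{l:finite-unr}), and the uniform singleton estimate $\hat\mu_{\cF_x,v}(\{\sigma_v\})\ll(\log|\cF_x|)^{-1}$ from \eqref{small-balls}. The paper phrases the combination slightly differently---it pigeonholes directly, observing that \eqref{small-balls} forces $\gg\log\N(\Fmn_x)$ \emph{distinct} values of $\pi_v$, while you instead pass through $|\cF_x^{\le A}|/|\cF_x|<1$---but this is purely cosmetic. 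One minor point: your relation $|\cF_x^{\le A}|\asymp|\cF_x|\hat\mu_{\cF_x,v}(\cZ_A)$ should be $\ll$ rather than $\asymp$ (some $\pi$ with $\pi_v\in\cZ_A$ may still satisfy $[\Q(\pi):\Q]>A$), but only that direction is needed.
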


\begin{proof} Consider the set of local representations $\pi_v\in G(F^+_v)^{\wedge,unr}$ as $\pi_v$ ranges over $\cF_x$. We see from~\eqref{small-balls} that there are $\gg \log \N(\fkn_x)$ distinct such representations $\pi_v$. On the other hand the number of $q_v$-Weil integers of weight 1 and degree $\le d$ is at most $q_v^{O(d^2)}$. (The $O(d^2)$-bound is easily seen from the argument of the last paragraph in the proof of Lemma \ref{l:finite-unr}.)
\end{proof}

In the depth aspect, that is under condition (ii) in Theorem~\ref{t:growth-coeff}, we can also give a lower bound for the order of growth. Suppose that $\Fmn_x$ is supported on a fixed finite set of primes. Then using the estimate in~\eqref{explicit-d-bound} we can deduce that there exists $\pi_x\in \cF_x$ such that
 \[
 [\Q(\pi_x):\Q] \gg (\log \N(\Fmn_x))^{\frac1n}.
  \]
 We have removed a logarithm compared with the order of growth~\eqref{e:p:og} obtained in the level aspect.

The remainder of this subsection is devoted to discuss the case of $G=\GL(2)_\Q$ and weight $2$ forms, where interestingly there is another method to establish the bound~\eqref{e:p:og}. This is based on the following result about curves over finite fields which is of independent interest.
\begin{prop}[Serre {\cite[\S7]{Serre:pl}}]\label{t:Serre-finite}
  There are only finitely many curves over $\F_q$ whose Jacobian is isogenous to a product of abelian varieties of dimension at most $d$.
\end{prop}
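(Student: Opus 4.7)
My plan combines three ingredients: (i) Honda--Tate, which confines the possible Frobenius eigenvalues to a finite set $S$ of Weil $q$-numbers of weight $1$; (ii) a quantitative Diophantine argument bounding the genus $g$ of $C$ in terms of $q$ and $d$; and (iii) Torelli's theorem together with the finiteness of principal polarizations on a given abelian variety, which converts finiteness of isogeny classes into finiteness of curves.

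By Honda--Tate, the set of isogeny classes of abelian varieties over $\F_q$ of dimension $\le d$ is finite, so the union of their Frobenius eigenvalues forms a finite, Galois-stable subset $S \subset \ol \Q$. Fix an embedding $\ol\Q \hookrightarrow \C$ and write each complex conjugate of an element of $S$ as $\sqrt q \cdot e^{2\pi i \theta}$; let $\{\theta_1,\dots,\theta_k\} \subset [0,1)$ be the resulting finite set of angles, whose cardinality depends only on $q$ and $d$. If $\Jac(C) \sim \prod_i A_i$ with $\dim A_i \le d$, then by Tate's theorem the characteristic polynomial of Frobenius on $\Jac(C)$ is a product of minimal polynomials of elements of $S$, and the multiset of complex eigenvalues $\alpha_1, \dots, \alpha_{2g}$ is drawn from the conjugates of $S$.

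The decisive step is bounding $g$. By Dirichlet's simultaneous approximation theorem applied to $\theta_1,\dots,\theta_k$, there exists an integer $n$ with $1 \le n \le 6^k$ and $\lVert n \theta_j \rVert \le 1/6$ for every $j$ (where $\lVert \cdot \rVert$ denotes the distance to the nearest integer). For this $n$, every Frobenius eigenvalue satisfies $\Re(\alpha^n) \ge q^{n/2}\cos(\pi/3) = q^{n/2}/2$. Since $\sum_i \alpha_i^n \in \Z$ is real, summing yields
\[
\sum_{i=1}^{2g} \alpha_i^n \;=\; \Re \sum_i \alpha_i^n \;\ge\; g\, q^{n/2}.
\]
Combined with the non-negativity $\#C(\F_{q^n}) = q^n + 1 - \sum_i \alpha_i^n \ge 0$, this gives $g \le q^{n/2} + q^{-n/2} \le q^{6^k/2}+1$, a bound depending only on $q$ and $d$.

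Once $g$ is bounded, the characteristic polynomial of Frobenius on $\Jac(C)$ lies in a finite set (it is a product of minimal polynomials of elements of $S$ with total degree $2g$), so by Tate's theorem only finitely many isogeny classes $[A]$ arise. For each such $[A]$, the set of principal polarizations on $A$ is finite up to $\Aut(A)$, and by the Torelli theorem every principally polarized abelian variety is the Jacobian of at most one smooth projective curve up to isomorphism. Combining these three finiteness statements yields the proposition. The main obstacle is the bound on $g$: the argument above is qualitative and gives a far-from-optimal estimate, and one could refine it via Weil's explicit formulas as in \cite{Serre:pl}, but any such quantitative bound suffices for the conclusion.
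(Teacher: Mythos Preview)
Your genus bound via Dirichlet simultaneous approximation and point counting is correct and is exactly Serre's trigonometric method, which the paper summarizes immediately after stating the proposition (the inequality $2q^{n/2}\sum_j \cos(n\theta_j) \le q^n+1$ coming from $\#C(\F_{q^n})\ge 0$). Your use of Dirichlet makes explicit the choice of $n$ that Serre leaves implicit, and the resulting bound $g\le q^{6^k/2}+1$ is of the same doubly-exponential shape as the effective bound the paper quotes from Elkies--Howe--Ritzenthaler.

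Step (iii), however, is both overcomplicated and has a small gap. You pass from finitely many isogeny classes $[A]$ to finitely many curves via Torelli, but in between you tacitly need finitely many \emph{isomorphism} classes of abelian varieties within each isogeny class: you write ``principal polarizations on $A$'' as though $A$ were a fixed abelian variety, yet $[A]$ denotes only an isogeny class. Over $\F_q$ this finiteness is true, but it is itself a nontrivial theorem you have not invoked. The direct route avoids all of this: once $g$ is bounded, there are only finitely many smooth projective curves of genus $\le g$ over the finite field $\F_q$ (for instance because the relevant Hilbert scheme is of finite type, so has finitely many $\F_q$-points), and the proposition follows immediately without Torelli, polarizations, or Tate's isogeny theorem.
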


The method of Serre is effective, see~\cite[p.\,93]{Serre:pl} for the example of $q=2$ and $d=1$. It doesn't produce immediately an explicit upper-bound in general but there have been several works in this direction, in particular we quote the following.
\begin{prop}[Elkies--Howe--Ritzenthaler~\cite{EHR:genus-bounds}]\label{p:elr}
  Let $S\subset [0,\pi]$ be a finite set. If $C/\F_q$ is a curve of genus $g$ with Frobenius angles in $S$, then
 \begin{equation*}
   g \le 23 |S|^2 q^{2|S|} \log q.
\end{equation*}
\end{prop}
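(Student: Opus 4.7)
The plan is to reduce the proposition to an explicit-formula argument combined with simultaneous Diophantine approximation applied to the angles in $S$. Writing the Frobenius angles of $C$ (with multiplicity) as $\theta_1,\dots,\theta_g\in S$, the Lefschetz trace formula gives, for every $n\ge 1$,
\begin{equation*}
  N_n := |C(\F_{q^n})| = q^n + 1 - 2q^{n/2}\sum_{\theta\in S} m_\theta\cos(n\theta),
\end{equation*}
where $m_\theta\in\Z_{\ge 0}$ denotes the multiplicity of $\theta$ among the $\theta_j$, so $\sum_\theta m_\theta = g$. The only global inputs to be used are the positivity constraint $N_n\ge 0$ (applied at carefully chosen $n$) and the Weil bounds $|\alpha_j|=q^{1/2}$ already built into the formula above.

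First I would apply Dirichlet's simultaneous approximation theorem to the real numbers $\{\theta/(2\pi):\theta\in S\}$: for every integer $Q\ge 2$ there exists $n\in\{1,2,\dots,Q^{|S|}\}$ with $\|n\theta/(2\pi)\|\le 1/Q$ for every $\theta\in S$. For any such $n$ one has $\cos(n\theta)\ge 1-2\pi^2/Q^2$ uniformly in $\theta\in S$, and the nonnegativity $N_n\ge 0$ together with $\sum_\theta m_\theta=g$ yields
\begin{equation*}
  g\ \le\ \frac{q^n+1}{2q^{n/2}}\cdot\Bigl(1-\tfrac{2\pi^2}{Q^2}\Bigr)^{-1}.
\end{equation*}
So any choice of $(n,Q)$ satisfying both the Dirichlet constraint $n\le Q^{|S|}$ and, say, $Q\ge 2\pi$ already produces a genus bound of the shape $g\le 2q^{n/2}$.

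Next, one must tune the parameters to turn this into the stated bound $g\le 23|S|^2 q^{2|S|}\log q$. The naive Dirichlet choice $n=Q^{|S|}$ gives only a doubly exponential bound in $|S|$, so one has to do better: the refinement used in \cite{EHR:genus-bounds} is to exploit that one has the positivity of $N_n$ for \emph{every} $n$, not just one, and to replace the single cosine inequality by a non-negative trigonometric polynomial tailored to vanish on $S\cup(-S)$. Concretely, one tests against a kernel of the form $K(\theta)=\prod_{\theta_0\in S}|e^{i\theta}-e^{i\theta_0}|^2|e^{i\theta}-e^{-i\theta_0}|^2$, whose Fourier expansion has $O(|S|)$ terms and whose values at the $\theta_j$ vanish, extracting the genus contribution from the constant coefficient after expanding $K(\theta_j)$. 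Balancing the cutoff in the Fourier expansion against the Weil archimedean bound on each power sum $\sum_j(\alpha_j^n+\bar\alpha_j^n)$ produces the linear-in-$|S|$ exponent on $q$, with the $|S|^2\log q$ factors arising from the number of Fourier coefficients and from converting between multiplicative and additive scales.

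The hard part is precisely this last step: the Dirichlet-only argument, which I would carry out first as a warm-up, gives the correct mechanism but the wrong (doubly-exponential) constant, and the gain to $q^{2|S|}$ requires simultaneously controlling all power sums rather than picking one good $n$. Since the statement is cited from \cite{EHR:genus-bounds}, I would at this point simply quote their explicit-formula/kernel computation for the numerical value $23$, using the plan above only to motivate why a bound of this shape should hold.
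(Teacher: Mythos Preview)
The paper does not prove this proposition: it is quoted from \cite{EHR:genus-bounds}, and the only comment the paper adds is the single displayed inequality $2q^{n/2}\sum_j\cos(n\theta_j)\le q^n+1$ (i.e.\ the positivity of $\#C(\F_{q^n})$) with the remark that the proof is ``based on trigonometric inequalities.'' So your proposal is already more detailed than the paper's own treatment, and your eventual move of deferring to the cited reference for the constant $23$ is exactly what the paper does.

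Two remarks on the content of your sketch. First, your Dirichlet simultaneous-approximation argument is precisely Serre's original mechanism behind Proposition~\ref{t:Serre-finite}, and you are right that it only yields a doubly-exponential bound in $|S|$; that is the gap the Elkies--Howe--Ritzenthaler refinement closes. Second, your kernel $K(\theta)=\prod_{\theta_0\in S}|e^{i\theta}-e^{i\theta_0}|^2|e^{i\theta}-e^{-i\theta_0}|^2$ is not quite the right test object: since every Frobenius angle $\theta_j$ lies in $S$, you have $K(\theta_j)=0$ for all $j$, so summing gives a trivial identity rather than an inequality isolating $g$. The actual explicit-formula argument uses a nonnegative trigonometric polynomial that is \emph{bounded below} (not vanishing) at the points of $S$, so that the constant Fourier coefficient times $g$ is controlled by the remaining $O(|S|)$ power sums via the Weil bound. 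This is the step where the exponent $2|S|$ and the factor $|S|^2\log q$ genuinely appear.
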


The proof of Proposition~\ref{t:Serre-finite} and of the effective bounds such as in Proposition~\ref{p:elr} is based on trigonometric inequalities. Precisely one uses the fact that there are $\theta_j \in [0,\pi]$, $1\le j \le g$, such that $q^{\frac12}e^{i\theta_j}$ (and also $q^{\frac12}e^{-i\theta_j}$) are $q$-Weil integers of weight 1 and
\[
2 q^{\frac{n}{2}} \sum_{j=1}^{g} \cos(n \theta_j) \le q^n+1, \quad \text{for any integer $n\ge 1$.}
\]
(The $\theta_j$ are the Frobenius angles and this holds because the right-hand side of the inequality minus the left-hand side is equal to $\#C(\F_{q^n})$, the number of points of $C$ over $\F_{q^n}$.)

The Proposition~\ref{p:elr} implies the following effective estimate in the case of simple isogeny factors of dimension at most $d$.
\begin{cor}\label{prop:DJ}
If the Jacobian of a curve of genus $g$ over $\F_q$ is isogenous to a product of abelian varieties of dimension at most $d$, then
 \begin{equation*}
   g \le  q^{q^{O(d^2)}}.
 \end{equation*}
The underlying constant in $O(d^2)$ is absolute (independent of $q$ and $d^2$).
\end{cor}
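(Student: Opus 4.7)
The plan is to apply Proposition~\ref{p:elr} after bounding the cardinality of the set $S$ of Frobenius angles of the Jacobian in terms of $q$ and $d$. So the main task is to produce an effective bound $|S| \le q^{O(d^2)}$, after which the conclusion follows by routine substitution.

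First I would reduce to the case where the factors are simple. Up to isogeny we may refine the decomposition $J \sim \prod_i A_i$ into simple factors; since simple isogeny factors of a $B$ of dimension $\le d$ still have dimension $\le d$, we may assume each $A_i$ is simple of dimension $e_i \le d$. The Frobenius eigenvalues on $J_{\overline{\F}_q}$ are then, with multiplicity, the union of the Frobenius eigenvalues on the $A_i$. Each such eigenvalue is a $q$-Weil integer of weight $1$ whose minimal polynomial over $\Z$ has degree at most $2e_i \le 2d$. Hence the set $S\subset[0,\pi]$ of Frobenius angles of $J$ (i.e.\ the $\theta \in [0,\pi]$ such that $q^{1/2}e^{\pm i\theta}$ is a Frobenius eigenvalue) consists of at most as many elements as there are $q$-Weil integers of weight $1$ whose minimal polynomial has degree at most $2d$.

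Next I would bound the number of such Weil integers by the elementary argument already used in the proof of Lemma~\ref{l:finite-unr} (and recalled in the proof of Proposition~\ref{p:order-growth}). Any such Weil integer $\alpha$ is a root of a monic polynomial $x^m+a_{m-1}x^{m-1}+\cdots+a_0 \in \Z[x]$ with $m \le 2d$, and the Weil bound $|\iota(\alpha)| = q^{1/2}$ for every embedding $\iota$ forces $|a_i| \le \binom{m}{i} q^{(m-i)/2}$. The number of integer tuples $(a_0,\dots,a_{m-1})$ satisfying these inequalities is at most $\prod_{i=0}^{m-1} \bigl(2\binom{m}{i} q^{(m-i)/2}+1\bigr) \le q^{Cd^2}$ for an absolute constant $C$, and summing over $m\le 2d$ loses only a factor that is absorbed into the exponent. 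This gives $|S| \le q^{Cd^2}$ with $C$ absolute.

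Finally I would plug into Proposition~\ref{p:elr}:
\[
g \;\le\; 23\,|S|^2\, q^{2|S|} \log q \;\le\; 23\,q^{2Cd^2}\cdot q^{2q^{Cd^2}}\cdot \log q.
\]
Since $q\ge 2$, the terms $23$, $q^{2Cd^2}$, $\log q$ are all bounded by $q^{O(d^2)}$, which is swamped by the double-exponential factor; absorbing the factor $2$ in the exponent using $2q^{Cd^2} \le q^{(C+1)d^2}$, one obtains $g \le q^{q^{O(d^2)}}$ with an absolute implied constant, as required. I do not anticipate any real obstacle: the only nontrivial input is the already-cited Proposition~\ref{p:elr}, and the counting of Weil integers of bounded degree is entirely elementary.
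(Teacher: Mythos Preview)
Your proposal is correct and is exactly the derivation the paper intends: the corollary is introduced with the sentence ``Proposition~\ref{p:elr} implies the following effective estimate,'' and the $q^{O(d^2)}$ count of Weil $q$-integers of weight $1$ and degree $\le 2d$ is precisely the ingredient the paper cites elsewhere (see the proof of Proposition~\ref{p:order-growth}, which refers back to the coefficient-counting argument of Lemma~\ref{l:finite-unr}).

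It is worth knowing that the paper also records a genuinely different proof, due to de Jong, which avoids Proposition~\ref{p:elr} entirely. That argument combines the Madan--Madden lower bound $E \gg (g/\log^3 g)^{1/4}$ for the exponent $E$ of $\Jac(C)(\F_q)$ with the observation that $E$ divides $P(1)$, where $P\in\Z[X]$ is the minimal polynomial of the $q$-Frobenius on $\Jac(C)$; since $P$ is a product of characteristic polynomials of Frobenius on the simple factors, and there are only $q^{O(d^2)}$ isogeny classes of simple abelian varieties over $\F_q$ of dimension $\le d$ (Honda--Tate plus the same Weil-integer count you used), one gets $P(1)\le q^{q^{O(d^2)}}$ and hence the bound on $g$. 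Your route via Elkies--Howe--Ritzenthaler is more direct given that Proposition~\ref{p:elr} is already at hand; de Jong's route is independent of that input and gives a second, conceptually distinct reason for the bound.
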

\begin{ex}
  Let $q=p$ be a prime number and $r\in \Z_{\ge1}$. The Fermat curve
 \[
 C_r: X^{p^r+1} + Y^{p^r+1} + Z^{p^r+1} =0
 \]
 is such that all eigenvalues of Frobenius are $2r$-th roots of $-p^r$ (see~\cite{GR:Fermat}). Thus $\Jac(C_r)$ is isogenous to a product of abelian varieties over $\F_p$ of dimension at most $4r$. On the other hand $C_r$ has genus $\frac{p^r(p^r-1)}{2}$.
 Also it may be verified that the exponent of the class group $\Jac(C)(\F_p)$ is $\le p^r+1$, which is asymptotically the square-root of the genus and may be compared with~\eqref{MM} below. Note that $C_r$ is a \emph{hermitian curve} over $\F_{p^{2r}}$ and it is a maximal curve in the sense that $C_r(\F_{p^{2r}})$ is of cardinality $1+p^{2r}+2gp^r$ which achieves equality in the Weil bound.
\end{ex}

In fact the same result as in Corollary~\ref{prop:DJ} was established around 2000 by A.~de Jong using a different method. We would like to thank de Jong for explaining his result to us which had remained unpublished.
\begin{proof}[Alternative proof of the Corollary. (de Jong)]
  A theorem of Madan-Madden~\cite{MM:exp} states that the exponent $E$ of the class group of a curve $C$ of genus $g$ over $\F_q$ satisfies
\begin{equation}\label{MM}
  E \gg \left(\frac{g}{\log^3 g}\right)^{\frac14}.
\end{equation}
(Note that their arguments do apply uniformly in $q$ and thus the above multiplicative constant is absolute, though this is not explicitly stated in their paper. Precisely it can be verified that each estimate in their proof improves when $q$ gets large)

On the other hand let $\Fr_q$ be the $q$-th power Frobenius endomorphism of $\Jac(C)$ and let $P\in \Z[X]$ be its minimal monic polynomial. Note that $P$ has integral coefficients because $\Fr_q$ is an element of the endomorphism ring of $\Jac(C)$ which is an order in a semisimple algebra. Since $\Fr_q$ is a semisimple endomorphism by Tate's theorem, $P(\Fr_q)$ acts as 0 on $\Jac(C)$. Since $\Fr_q$ acts as the identity on $\Jac(C)(\F_q)$, we deduce that $P(1)\in \Z$ acts as $0$ on $\Jac(C)(\F_q)$. Therefore
\begin{equation*}
  E \mid P(1).
\end{equation*}
The polynomial $P$ divides the product of the characteristic polynomials of Frobenius on the abelian varieties which are the simple isogeny factors of $\Jac(C)$. By assumption these abelian varieties have dimension $\le d$ and there are $q^{O(d^2)}$ isogeny classes of them by counting the Weil $q$-integers of weight 1 given via Honda-Tate theory, cf. the proof of Proposition \ref{p:order-growth}. Thus
\[
P(1) \le q^{q^{O(d^2)}}.
\]
Note that $P(1)\neq 0$ because $\Fr_q$ is always a non-trivial endomorphism.
Combining the three estimates we conclude the proof of the proposition.
\end{proof}

\begin{proof}[Alternative proof of Proposition~\ref{p:order-growth} for $G=\GL(2)_{\Q}$ in weight $2$]
  Consider the modular curve $X_0(N)$ which is a smooth algebraic curve over $\Q$ of genus $g_0(N)$.
Let $(A_i)_{i\in I}$ be the simple isogeny factors of its Jacobian $J_0(N)$, counted with multiplicity, so that there exists an injective isogeny $\prod_{i\in I}A_i\hra J_0(N)$ over $\Q$ (\cite[Prop 10.1]{Mil86}). By the theorem of Eichler-Shimura we are reduced to finding a lower bound for the maximal dimension
\[
d := \max_{i\in I} \dim A_i.
\]

Suppose that the fixed prime $p$ does not divide $N$. From now on we work over $\Q_p$ and with a small abuse of notation we still write $X_0(N)$, $J_0(N)$ and $A_i$ for their base change $X_0(N)\otimes_\Q \Q_p$, $J_0(N)\otimes_\Q \Q_p$ and $A_i\otimes_\Q \Q_p$ respectively.

There exists an integral model $\XN$ over $\Z_p$ and its reduction modulo $p$ is smooth irreducible over $\F_p$. Also there exists a relative Picard scheme $\JN$ which is a smooth abelian group scheme over $\Z_p$. The generic fiber $\JN\otimes_{\Z_p} \Q_p$ can be identified with $J_0(N)$. Since $J_0(N)$ has good reduction at $p$, the N\'eron--Ogg--Shafarevich criterion tells us that $A_i$ has good reduction at $p$ for each $i\in I$. Let $\mathcal{A}_i$ denote the integral model of $A_i$ over $\Z_p$ which is an abelian scheme. By the property of a N\'{e}ron model, the injection $\prod_{i\in I}A_i\hra J_0(N)$ extends to an injection $\prod_{i\in I}\mathcal{A}_i\hra \JN$. (The latter is an injection because the kernel is flat over $\Z_p$ with trivial group scheme as the generic fiber.) As an injection between abelian schemes of the same dimension, it is also an isogeny.

%
 Reducing modulo $p$ we find that $\JN \otimes_{\Z_p} \F_p$ is isogenous to the product $\prod\limits_{i\in I} \mathcal{A}_i\otimes_{\Z_p} \F_p$. Each simple isogeny factor of $\JN \otimes_{\Z_p} \F_p$ is a factor of $\mathcal{A}_i\otimes_{\Z_p} \F_p$ for some $i\in I$. In particular all isogeny factors of $\JN \otimes_{\Z_p} \F_p$ have dimension $\le d$.

Since $\XN \otimes_{\Z_p} \F_p$ is an irreducible smooth curve of genus $g_0(N)$ whose Jacobian can be identified with $\JN \otimes_{\Z_p} \F_p$ we are in position to apply Proposition~\ref{prop:DJ} which yields
\[
g_0(N) \le p^{p^{O(d^2)}}.
\]
Since $g_0(N)\asymp N$ as $N\to \infty$, this concludes the proof of Proposition~\ref{p:order-growth} for $G=\GL(2)_\Q$.
\end{proof}


\subsection{Uniform approximation in the unitary dual}\label{sub:approx}
In this subsection we record some lemmas on approximation by functions in the local Hecke algebra of bounded degree. Only in this subsection let $G$ be a connected reductive group over a $p$-adic field $K$. Write $U^{\hs}$ for a fixed hyperspecial subgroup of $G(K)$ and $\Omega_K$ for the Weyl group for $G$ relative to $K$.

We begin with the classical problem of approximating periodic functions by trigonometric polynomials. The following result is a version with sharp constants that comes from the work of Beurling in the 1930's and rediscovered by Selberg in the context of the large sieve inequality.
We identify $\T=\R/\Z$ with the unit circle $S^1$ inside $\C$. Thus a trigonometric polynomial is viewed as an element of $\C[z,z^{-1}$.
\begin{lem}[Vaaler~\cite{Vaaler85}]\label{lem:vaaler}
  Let $f$ be a function on $\T$ of bounded variation $V(f)\in \R_{\ge 0}$. For every integer $\kappa\in \N$ there are trigonometric polynomials $P_\kappa^{\pm}$ of degree $\kappa$ such that $P_\kappa^- \le f \le P_\kappa^+$ and
\begin{equation}
  \int_{\T} P_\kappa^+ - P_\kappa^- = \frac{V(f)}{\kappa+1}.
\end{equation}
In particular $||P_\kappa^{\pm}||_1 \le ||f||_1+ \frac{V(f)}{\kappa+1}$ by the triangle inequality.
Also the $n$-th coefficients of $P^{\pm}_\kappa$ are uniformly bounded by $\ll \frac{V(f)}{|n|}$ for all $n\neq 0$.
\end{lem}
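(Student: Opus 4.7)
The plan is to reduce the general bounded variation case to the approximation of characteristic functions of arcs on $\T$, which in turn reduces via pull-back and periodization to the Beurling--Selberg extremal problem for the signum function on the real line. This two-step reduction is the standard route to Vaaler's sharp constants.

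First I would recall the Beurling--Selberg construction on $\R$. There exist entire functions $M$ and $m$ of exponential type $2\pi$, real-valued and integrable on $\R$, such that $m(x)\le \sgn(x)\le M(x)$ for all $x\in\R$ and $\int_\R(M-\sgn)=\int_\R(\sgn-m)=1$; by Paley--Wiener these are Fourier transforms of measures supported in $[-1,1]$. Rescaling $M_\delta(x):=M(\delta x)$ and $m_\delta(x):=m(\delta x)$ yields type $2\pi\delta$ majorants/minorants of $\sgn(x)$ with $\int_\R(M_\delta-m_\delta)=2/\delta$. Setting $\delta=\kappa+1$ and periodizing by the formula $\widetilde{M}(\theta):=\sum_{n\in\Z}M_\delta(\theta+n)-\mathrm{const}$ (so that the defect equals $\sum_{n\in\Z}\int(M_\delta-\sgn)$), one obtains after a suitable normalization a trigonometric polynomial of degree at most $\kappa$ that majorizes (resp. minorizes) the sawtooth function $\psi(\theta):=\theta-\lfloor\theta\rfloor-1/2$ on $\T$, with the sharp $L^1$-defect $\tfrac{1}{2(\kappa+1)}$ on each side. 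That the periodized function is a trigonometric polynomial of degree $\le\kappa$ follows from Poisson summation and the Paley--Wiener support of the Fourier transform of $M_\delta$; this step is where the exponential-type scaling is converted into the degree bound.

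Next I would handle a general real-valued $f$ of bounded variation $V(f)$ on $\T$. Up to an additive constant, $f$ can be written as $f(\theta)=\int_\T\chi_{[\alpha,\theta]}(\theta)\,d\mu(\alpha)$ for a signed Borel measure $\mu$ on $\T$ with $|\mu|(\T)=V(f)$, and each characteristic function $\chi_{[\alpha,\beta]}$ can in turn be expressed as an affine combination of two translates of the sawtooth $\psi$. Applying the sharp sawtooth majorants/minorants pointwise for each $\alpha$ and integrating against $\mu$ (splitting into positive and negative parts) produces trigonometric polynomials $P_\kappa^\pm$ of degree $\le\kappa$ with $P_\kappa^-\le f\le P_\kappa^+$ and $\int_\T(P_\kappa^+-P_\kappa^-)=V(f)/(\kappa+1)$, which is the first claim. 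The $L^1$ bound $\|P_\kappa^\pm\|_1\le\|f\|_1+V(f)/(\kappa+1)$ is then immediate from $P_\kappa^+-f\ge 0$, $f-P_\kappa^-\ge 0$, and the triangle inequality.

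Finally, for the Fourier coefficient bound, I would compute the $n$-th Fourier coefficient of $\psi$ explicitly as $-1/(2\pi i n)$ and note that the Beurling--Selberg construction yields $|\widehat{M_\delta}(n)-\widehat{\sgn}(n)|\ll 1/|n|$ uniformly in $\delta=\kappa+1$ (in fact the difference of Fourier coefficients is bounded by the Fejér kernel convolution estimate). Pushing this through the integral representation above gives $|\widehat{P_\kappa^\pm}(n)|\ll V(f)/|n|$ for $n\ne 0$. The main obstacle I anticipate is the bookkeeping to obtain the \emph{sharp} constant $V(f)/(\kappa+1)$ rather than just $O(V(f)/\kappa)$: this forces us to use the true Beurling--Selberg extremal functions rather than any convenient smoothing, and to periodize them correctly (a naive periodization picks up a constant that must be absorbed). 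All other steps are routine Fourier-analytic manipulations.
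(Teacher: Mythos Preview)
Your approach is correct and is essentially the route Vaaler himself takes in \cite{Vaaler85}: reduce to the sawtooth via the Beurling--Selberg majorant/minorant of $\sgn$, periodize, then write a general BV function as a superposition of translates of the sawtooth against the signed measure $df$ and integrate the extremal approximants. (One notational slip: your formula $f(\theta)=\int_\T\chi_{[\alpha,\theta]}(\theta)\,d\mu(\alpha)$ has the variable in the wrong slot; you mean something like $f(\theta)-f(0)=\int_\T \chi_{(0,\theta]}(\alpha)\,df(\alpha)$, but the intent is clear.) The sharp constant $V(f)/(\kappa+1)$ survives this superposition exactly because one uses the Jordan decomposition $df=d\mu^+-d\mu^-$ and swaps majorant/minorant on the negative part, so that $\int(P^+_\kappa-P^-_\kappa)=|df|(\T)\cdot\frac{1}{\kappa+1}$.

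The paper proceeds differently in presentation. Rather than redo the sawtooth reduction, it simply quotes Vaaler's closed-form expression
\[
P_{\kappa}^{\pm}(z)=\sum_{|n|\le \kappa}\Bigl[\hat J\Bigl(\tfrac{n}{\kappa+1}\Bigr)\hat f(n)\pm \tfrac{n}{2(\kappa+1)}\hat K\Bigl(\tfrac{n}{\kappa+1}\Bigr)\hat g(n)\Bigr]z^n,
\]
where $g$ is the variation function of $f$ and $\hat J,\hat K$ are the (bounded) Fourier transforms of Beurling's entire functions. From this formula the coefficient bound is immediate: $\hat J,\hat K$ are $O(1)$, the factor $n/(2\kappa+2)$ is at most $1/2$, and integration by parts gives $\hat f(n),\hat g(n)\ll V(f)/|n|$ for any BV function. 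Your argument and the paper's are the same construction viewed from opposite ends: you explain how the formula is built, the paper writes down the finished formula and reads off the last assertion in one line. For the purposes of this lemma the paper's shortcut is more efficient; your approach has the advantage of being self-contained.
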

\begin{proof} This is \cite[Thm~19]{Vaaler85} where it is also shown that the constants are sharp if $f$ is a sign function. We briefly recall the the construction of the polynomials:
  \begin{equation*}
	P_{\kappa}^{\pm}(z)=\sum_{|n|\le \kappa}
	\left[\hat J\left( \frac{n}{\kappa+1} \right) \hat f(n) \pm \frac{n}{(2\kappa+2)} \hat K\left( \frac{n}{\kappa+1} \right) \hat g(n) \right] z^n,
  \end{equation*}
 for all $z\in \T$. Here $\hat f(n)$ (resp. $\hat g(n)$) are the Fourier coefficients of $f$ (resp. the variation function of $f$). The Beurling functions $J$ and $K$ are entire of exponential type $2\pi$ with Fourier transform:
\begin{equation*}
  \hat J(t) := \pi t (1-|t|) \cot(\pi t) + |t|, \quad \hat K(t):=1-|t|, \quad |t|<1.
\end{equation*}
The properties of $J$ and $K$ and some arguments in Fourier analysis imply the first two assertions of the lemma. Since $\hat f(n), \hat g(n)\ll \frac{V(f)}{|n|}$ for all $n\neq 0$, we deduce the third and final assertion on the decay of coefficients.
\end{proof}

The Satake isomorphism induces a topological isomorphism $G(K)^{\wedge,\unr,\temp}\simeq \hat A_c/\Omega_K$ where $\hat A_c\simeq \T^r$ is a complex torus with $r$ the $K$-rank of $G$. For $\phi\in\cH^{\unr}(G(K))$ we write $\hat{\phi}$ for the corresponding function on the real torus $\hat{A}_c$ or its quotient $\hat{A}_c/\Omega_K$. The truncated Hecke algebra $\cH^{\unr}(G(K))^{\le \kappa}$ is defined in \cite[\S2]{ST11cf} so that the following holds (which is all we need to know here): there exists a constant $c>0$ (depending on a fixed choice of basis in the character group of a maximal torus in $G$ over $\ol{K}$) such that for every $\kappa\in \Z_{>0}$, the set of $\phi\in\cH^{\unr}(G(K))$ such that $\hat{\phi}$ is a ($\Omega_K$-invariant) polynomial of degree $\le \kappa$ on $\hat A_c$ contains $\cH^{\unr}(G(K))^{\le \kappa/c}$ and is contained in $\cH^{\unr}(G(K))^{\le c\kappa}$. (Use \cite[\S2.4]{ST11cf} to see this.)

\begin{lem}\label{l:app-dual} Let $c>0$ be as above. For every integer $\kappa \ge 1$, and every rectangle $\cR \subset G(K)^{\wedge,\unr,\temp}$, there is a Hecke function $\phi_\kappa \in \cH^{\unr}(G(K))^{\le c \kappa}$ such that $\hat \phi_\kappa \ge 0$ on $G(K)^{\wedge,\unr,\temp}$, $\hat \phi_\kappa \ge 1$ on $\cR$ while $\pl(\hat \phi_\kappa) \ll \pl(\cR)+\kappa^{-r}$ and $|\phi_\kappa|\ll 1$. Here $r$ is the rank of $G(K)$.
\end{lem}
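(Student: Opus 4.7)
The unramified tempered dual $G(K)^{\wedge,\unr,\temp}$ is identified by the Satake isomorphism with $\hat{A}_c/\Omega_K$, where $\hat{A}_c \simeq \T^r$ is the compact torus of dimension $r$ equal to the $K$-rank of $G$ and $\Omega_K$ is the relative Weyl group; the Plancherel density on $\hat{A}_c$ is $\Omega_K$-invariant, continuous and bounded by Macdonald's formula. After lifting, the rectangle $\cR$ corresponds to a $\Omega_K$-orbit of a product of intervals $I_1\times\cdots\times I_r\subset\T^r$, and the task reduces to constructing a nonnegative $\Omega_K$-invariant trigonometric polynomial on $\hat{A}_c$ of suitably bounded degree that majorizes the indicator of this set, with controlled Plancherel integral.

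The plan is to build $\hat{\phi}_\kappa$ as a Weyl symmetrization of a tensor product of one-dimensional Beurling--Selberg majorants. First apply Lemma~\ref{lem:vaaler} to each $\chi_{I_j}$, which has total variation at most $2$, to obtain a real trigonometric polynomial $P_j^+$ of degree $\le\kappa$ on $\T$ satisfying $P_j^+\ge\chi_{I_j}$ pointwise (in particular $P_j^+\ge 0$ everywhere on $\T$), $\int_\T P_j^+\le\ell(I_j)+2/(\kappa+1)$, and with Fourier coefficients of size $O(1/|n|)$ for $n\neq 0$ and bounded zeroth coefficient. Set $\hat{\phi}_0(x_1,\dots,x_r):=\prod_{j=1}^r P_j^+(x_j)$ and define
\[
\hat{\phi}_\kappa(x):=\sum_{w\in\Omega_K}\hat{\phi}_0(w\cdot x),
\]
which is nonnegative, $\Omega_K$-invariant, and of degree $\le\kappa$ in each coordinate; via the Satake transform it therefore corresponds to a Hecke function $\phi_\kappa\in\cH^{\unr}(G(K))^{\le c\kappa}$ for the constant $c$ fixed in \S\ref{sub:approx}. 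At any point of $\cR$ some Weyl translate lies in $I_1\times\cdots\times I_r$, where $\hat{\phi}_0\ge 1$, so $\hat{\phi}_\kappa\ge 1$ on $\cR$.

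It remains to verify the Plancherel and sup-norm estimates. Bounding the Plancherel density by its supremum on $\hat{A}_c$ and invoking Fubini gives
\[
\pl(\hat{\phi}_\kappa)\ \ll\ |\Omega_K|\prod_{j=1}^r\bigl(\ell(I_j)+2/(\kappa+1)\bigr).
\]
The main term of the expansion is $\prod_j\ell(I_j)$, comparable to $\pl(\cR)$ up to the bounded density; to handle the remainder, partition the indices according to whether $\ell(I_j)\ge 1/\kappa$ or $\ell(I_j)<1/\kappa$ and expand, so that the mixed cross-terms are absorbed into $\pl(\cR)$ while the pure error coming from the case that all indices are short contributes at worst $\kappa^{-r}$, yielding $\pl(\hat{\phi}_\kappa)\ll\pl(\cR)+\kappa^{-r}$. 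The uniform bound $|\phi_\kappa|\ll 1$ follows from the inverse Satake formula: $\phi_\kappa$ is a finite linear combination of basis Hecke operators with coefficients given by the Fourier coefficients of $\hat{\phi}_\kappa$, which are products of one-dimensional coefficients of size $O(1/|n_j|)$ (with bounded zeroth term), yielding an absolutely convergent sum whose supremum is bounded independently of $\kappa$ and $\cR$. The main technical obstacle is the Plancherel bookkeeping that extracts the sharp $\kappa^{-r}$ shape of the error rather than the crude $\kappa^{-1}$ that the tensor product naively gives.
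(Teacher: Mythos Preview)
Your construction (tensor product of one-dimensional Beurling--Selberg majorants, then Weyl symmetrization) is exactly the paper's, and the verification that $\hat\phi_\kappa\ge 0$, $\hat\phi_\kappa\ge 1$ on $\cR$, and $\phi_\kappa\in\cH^{\unr}(G(K))^{\le c\kappa}$ is fine. The two remaining estimates, however, contain genuine gaps.

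For the Plancherel bound, the partition-and-expand argument does not deliver $\pl(\cR)+\kappa^{-r}$. Take $r=2$, $\ell(I_1)=1/2$, $\ell(I_2)=0$: then $\pl(\cR)=0$ and $\kappa^{-r}=\kappa^{-2}$, yet $\prod_j(\ell(I_j)+2/(\kappa+1))\asymp\kappa^{-1}$. More generally, when some sides are long ($\ell\ge 1/\kappa$) and $s$ are short, the bound $\prod_{\mathrm{long}}\ell(I_j)\cdot\kappa^{-s}$ is \emph{not} dominated by $\prod_j\ell(I_j)+\kappa^{-r}$, so the mixed cross-terms are not absorbed into $\pl(\cR)$ as you claim. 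The paper argues differently, invoking a two-sided approximation $P_\kappa^-\le\mathds{1}_\cR\le P_\kappa^+$ with $\int(P_\kappa^+-P_\kappa^-)\ll\kappa^{-r}$ together with upper and lower bounds on the Plancherel density; working only with the majorant and its raw Lebesgue integral cannot close the estimate. For $|\phi_\kappa|\ll 1$, your argument fails on two counts: the inverse Satake map does not carry Fourier monomials on $\hat A_c$ to characteristic functions of double cosets with the same coefficients (the change of basis involves powers of $q$), and even granting the coefficient bound, $\sum_{0<|n_j|\le\kappa}\prod_j|n_j|^{-1}\asymp(\log\kappa)^r$ is not $O(1)$. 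The paper's approach here is entirely different and much cleaner: from the spherical Plancherel inversion formula $\phi(g)=\int\hat\phi(\pi)M_\pi(g)\,d\pl(\pi)$ together with $|M_\pi(g)|\le 1$ and $\hat\phi_\kappa\ge 0$, one obtains $|\phi_\kappa(g)|\le\phi_\kappa(1)=\pl(\hat\phi_\kappa)\ll 1$ directly.
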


\begin{proof}
  We can apply Lemma~\ref{lem:vaaler} to the characteristic function $\mathds{1}_I$ of any interval $I$ of $\T$, in which case the total variation is $V(\mathds{1}_I)=2$. Then it is not difficult to deduce the following statement in higher dimension. Let $\cR=I_1\times \ldots \times I_r$ be a rectangle in $\T^r$. There are trigonometric polynomials $P_{\kappa}^{\pm}$ of degree $\le \kappa$ in $r$ variables such that $P_\kappa^- \le \mathds{1}_{\cR} \le P_\kappa^+$ and
  \begin{equation}\label{pf:Pkappa}
    \int_{\T^d} P_\kappa^+ - P_\kappa^- \ll \kappa^{-r}.
   \end{equation}

We choose $\hat \phi_\kappa$ to be the $\Omega_K$-average of $P_\kappa^+$. Then $\phi_\kappa\in \cH^{\unr}(G(K))$ giving rise to $\hat\phi_\kappa$ via the Satake isomorphism belongs to $ \cH^{\unr}(G(K))^{\le c\kappa}$. Note that the first two assertions follow from the inequality $\mathds{1}_{\cR} \le P_\kappa^+$.

The estimate of $\pl(\hat \phi_\kappa)$ follows from~\eqref{pf:Pkappa} and the fact that the Plancherel density on $G(K)^{\wedge,\unr,\temp}$ given by Macdonald formula is uniformly bounded below (see~\cite[Prop.~3.3]{ST11cf}). In other words we used that the Lebesgue measure on $\hat A_c/\Omega_K$ is absolutely continuous with respect to the Plancherel measure.

The Harish-Chandra Plancherel formula $\phi(1)=\pl(\hat \phi)$ holds for all smooth functions $\phi$ thus in particular for all $\phi\in \cH^{\unr}(G(K))$. In the unramified case (see \cite[Thm VIII.1.1]{Wal03} for the general case) we have more generally the relation
\begin{equation}\label{pf:pl-inv}
  \phi(g) = \int_{G(K)^{\wedge,\unr,\temp}} \hat \phi(\pi) M_\pi(g) d\hat{\mu}^{\mathrm{pl}}(\pi),\quad g\in G(K),
\end{equation}
where $M_\pi(g) = (v_\circ,g v_\circ)$ is a spherical matrix coefficient of $\pi$, that is $v_\circ$ is a unit $U^{\hs}$-fixed vector in the representation space $V_\pi$.
Let us justify formula~\eqref{pf:pl-inv} by computing the trace of $\pi(\phi) \circ \pi(g)^{-1}$ on $V_\pi$ and the Plancherel formula for $\phi(1)$. Note that $\pi(\phi)$ has image in $\C v_\circ$ because $\phi$ is left $U^{\hs}$-invariant. Using also the right $U^{\hs}$-invariance, we infer that
\[
\pi(\phi) w = \hat \phi(\pi) (w,v_\circ) v_\circ
\]
for all vector $w\in V_\pi$. Thus $\pi(\phi) g^{-1}v_\circ = \hat \phi(\pi) M_\pi(g) v_\circ$.
Since $\pi(\phi) \circ \pi(g)^{-1}$ maps $V_\pi$ into $\C v_\circ$, this implies that its trace is $\hat \phi(\pi) M_\pi(g)$.

From~\eqref{pf:pl-inv} we deduce that $|\phi_\kappa(g)|\le \phi_\kappa(1)$ for all $g\in G(K)$. Thus we deduce from the estimate for $\pl(\hat \phi_\kappa)$ that $|\phi_\kappa|\ll 1$.
\end{proof}

\subsection{The case of unitary groups}\label{sub:unitary-groups}

In this subsection let $G$ be a unitary group as in \S\ref{sub:transfer-app} or its inner form and assume that $[F^+:\Q]\ge 2$.
We would like to explain unconditional results on the growth of field of rationality which are already available from our current knowledge. Let us be brief; 
eventually complete unconditional results for non quasi-split unitary (resp. symplectic/orthogonal) groups will follow from our earlier arguments once the unitary group analogue of \cite{Mok} (resp. \cite{Arthur}) is extended to inner forms and Hypothesis \ref{hypo:TwTF} is verified.

We assert below that Theorem \ref{t:growth-coeff}.(i) and Theorem \ref{p:level-prime} hold true for unitary groups without any hypothesis. Let $\cF_x=\cF(U_x)$ be a level aspect family constructed for $G$, now a unitary group, as in \S\ref{sub:growth-level}. Let us define $S_{\unr}$ and $R_{\unr}$ for $G$ and $\cF_x$ as in Theorem \ref{p:level-prime}.

\begin{thm}\label{t:unitary-groups} Suppose that the highest weight of $\xi$ is regular, that $[F^+:\Q]\ge 2$, and
  that $S_{\unr}\neq \emptyset$ so that $R_{\unr}$ is defined. Then for all $R\le R_{\unr}$,
  $$|\cF_x^{\le A}|/|\cF_x|\ll_R |\cF_x|/(\log |\cF_x|)^R,\quad \mbox{as}~x\ra\infty.$$
\end{thm}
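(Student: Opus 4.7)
The proof would follow the template of Theorems \ref{t:growth-coeff}.(i) and \ref{p:level-prime}, replacing Arthur's and Mok's endoscopic classification (conditional on Hypothesis \ref{hypo:TwTF}) with the unconditional base change of Labesse \cite{Lab} and White \cite{Whi} available for cohomological representations of (not necessarily quasi-split) unitary groups. The first step is to associate, to each $\pi \in \cF_x$, its strong base change $\Pi = \boxplus_{i=1}^r \Pi_i$ to $GL_n(\A_F)$, an isobaric sum of conjugate self-dual cuspidal representations. Since the highest weight of $\xi$ is regular, $\pi_\infty$ is a discrete series at each infinite place and the base change is available, characterized by $L$-parameter equality at the unramified split places and by local character identities elsewhere. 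Since $[F^+:\Q]\ge 2$, Proposition \ref{p:Galois-reps} applies unconditionally to each $\Pi_i$ (the compact unitary Shimura varieties of \cite{Shi11,CH,Caraiani,Caraiani2} being available exactly when $F^+ \neq \Q$), providing purity, temperedness at all finite places, and integrality of the associated Weil--Deligne parameters after a twist depending only on $\Pi_{i,\infty}$.

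The next step is to show the rationality of this base change in the sense that $[\Q(\Pi_v):\Q]$ controls $[\Q(\pi_v):\Q]$ up to a constant depending only on $n$. At a place $v = ww^c$ split in $F/F^+$, the base change is tautological since $G(F^+_v)\simeq GL_n(F^+_v)$, so $\Q(\Pi_w) = \Q(\pi_v)$. At a finite unramified place $v$ (split or inert), the argument of Lemma \ref{l:unram-functoriality} applies verbatim to the C-preserving $L$-morphism of Lemma \ref{l:property-of-eta}. At the places in $S_0$ where local conditions are prescribed by $\hat f_{S_0}$, the Labesse--White character identities combined with the rationality of twisted transfer factors (Proposition \ref{p:trans-rational}, case (iii)) and the bound on $L$-packet sizes (Proposition \ref{p:packet-size}) yield the analogue of Proposition \ref{p:coeff-field-G-GL(n)}. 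Combining this rationality with Proposition \ref{p:sparsity-GL(n)} provides, for any fixed finite set $S_1$ of unramified finite places of $F^+$, the finiteness of the set $\cZ := \{\pi_{S_1}: \pi \in \cF_x,\ [\Q(\pi):\Q]\le A\}$, which is the unconditional analogue of Corollary \ref{c:sparsity} in the unitary case.

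With this finiteness in hand, the estimate is obtained by feeding $\cZ$ into the equidistribution machinery of \cite{ST11cf} exactly as in the proof of Theorem \ref{p:level-prime}: applying the automorphic Plancherel theorem with error bound \cite[Thm~9.16]{ST11cf}, combined with uniform approximation of characteristic functions of rectangles in the unramified tempered dual by Hecke functions of bounded degree (Lemma \ref{l:app-dual}) and using atomlessness of the Plancherel measure, yields, for any finite $S_1 \subset S_{\unr}$ with $\sum_{v \in S_1}\rank\,G_{F^+_v}\ge R$, the bound $|\cF_x^{\le A}|/|\cF_x| \ll_R (\log |\cF_x|)^{-R}$. The main obstacle is verifying the rationality of the Labesse--White base change at ramified non-split places: this requires extracting from \cite{Lab,Whi} enough character identities to replicate the argument of \S\ref{sub:rational-transfer} using only those stabilizations of the twisted trace formula that are unconditionally available for cohomological representations. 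Once this technical verification is completed, the rest of the argument transfers mutatis mutandis from the quasi-split classical case to the (possibly non-quasi-split) unitary setting, and the hypothesis $[F^+:\Q]\ge 2$ enters only through the construction of Galois representations in Proposition \ref{p:Galois-reps}.
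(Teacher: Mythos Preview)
Your overall architecture is right, but you have made the proof harder than it needs to be and have misidentified the main obstacle. The places $S_1$ at which you probe the family lie in $S_{\unr}$, so every $\pi_{S_1}$ you ever look at is unramified. Consequently the rationality statement you need is only the unramified one, namely Lemma~\ref{l:unram-functoriality} (a pure Satake-parameter computation, independent of any endoscopy), and the finiteness input you need is Lemma~\ref{l:finite-unr} together with the first assertion of Corollary~\ref{c:pi_v-pure}. This is exactly what the paper does: it invokes the \emph{weak} base change of \cite[Cor~5.3]{Lab}, which only controls unramified places, derives the purity/integrality of the unramified Satake parameters via Proposition~\ref{p:Galois-reps}, and then feeds Lemma~\ref{l:finite-unr} into the equidistribution machinery of \cite{ST11cf} exactly as in Theorem~\ref{p:level-prime}. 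There is no need for Proposition~\ref{p:coeff-field-G-GL(n)} or Corollary~\ref{c:sparsity} here, and hence no need to verify rationality of the Labesse--White transfer at ramified non-split places; the ``main obstacle'' you flag simply does not arise.

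Two smaller points. First, your explanation of the hypothesis $[F^+:\Q]\ge 2$ is off: Proposition~\ref{p:Galois-reps} is stated and proved for $GL_n$ over any CM or totally real field and carries no such restriction. The restriction enters through the base change results of \cite{Lab} for (possibly non-quasi-split) unitary groups, not through the construction of Galois representations. Second, once you drop the unnecessary ramified-place analysis, your proof collapses to precisely the paper's: weak base change $\Rightarrow$ purity and integrality of Satake parameters at $S_1$ $\Rightarrow$ Lemma~\ref{l:finite-unr} $\Rightarrow$ the equidistribution estimate~\eqref{small-balls}.
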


  The argument is the same as in Theorem \ref{p:level-prime} (also see Theorem \ref{t:growth-coeff}). The theorem relies on some of the earlier results, which we need to justify for unitary groups, but this is not so complicated as we are concerned only with the unramified local components here. The necessary results are provided by \cite[Cor 5.3]{Lab}, especially the weaker analogue of Proposition \ref{p:twisted-endoscopy} (here ``weaker'' means that no information is available at finitely many $v$ where $\pi$, $\eta$ or the extension $F/F^+$ is ramified at $v$). In Corollary \ref{c:pi_v-pure}, only the first assertion is needed and derived from the latter substitute. Then the methods of proof for Theorems \ref{t:growth-coeff} and \ref{p:level-prime} justify Theorem \ref{t:unitary-groups} once it is noted that the final main ingredients, namely Lemma \ref{l:finite-unr} and the level-aspect Plancherel equidistribution theorem with error terms (\cite{ST11cf}), are still valid for unitary groups.


\subsection{Concluding remarks}

  As we have noted earlier, the arguments and main results of this paper should apply to non-quasi-split classical groups as soon as the work \cite{Arthur} and \cite{Mok} are extended to those groups.
  There are several directions in which our work may be generalized. An obvious problem is to deal with other reductive groups. As for the growth of field of rationality, we raised the question of removing the hypotheses from Theorem \ref{t:intro-growth} and power saving in Question \ref{q:coeff1}. Any quantitative refinement such as power saving would be of arithmetic significance, already in the case of weight 2 modular forms and field of rationality $\Q$, cf. Remark~\ref{e:elliptic-curves}. Another widely open question is how much of \S\ref{s:growth-coeff} remains valid for families in the weight aspect (for instance as defined in \cite{ST11cf}). In this respect even the case of modular forms is still unsolved (Maeda's conjecture). Note that the finiteness of Weil numbers in the argument for Theorem \ref{t:intro-Serre} fails if weight grows to infinity. Finally we would like to mention Hida's recent study of field of rationality (``Hecke field'' in his terminology) for $p$-adic families of modular forms and arithmetic applications (\cite{Hid11}, \cite{Hid12}), providing a perspective different from ours.

\bibliographystyle{abbrv}
\bibliography{bib,bib2,all}

\end{document}